\documentclass[11pt]{amsart}
\usepackage{amsmath}
\usepackage{amssymb, verbatim}
\usepackage{pstricks,pst-node,pst-plot}
\usepackage{tikz, tikz-cd}
\usepackage[all,cmtip]{xy}
\usepackage{comment}
\usepackage{color}

\title[Mirror Symmetry for del Pezzo/rational elliptic surface pairs]{The SYZ mirror symmetry conjecture for del Pezzo surfaces and rational elliptic surfaces}
\author[T. C. Collins]{Tristan C. Collins}
  \email{tristanc@mit.edu}
  \address{Department of Mathematics, Massachusetts Institute of Technology, 77 Massachusetts Avenue, Cambridge, MA 02139}
 \thanks{T.C.C is supported in part by NSF grant DMS-1810924, NSF CAREER grant DMS-1944952 and an Alfred P. Sloan Fellowship. }

 \author[A. Jacob]{Adam Jacob}
  \email{ajacob@math.ucdavis.edu}
  \address{Department of Mathematics, University of California, Davis, 1 Shields Ave, Davis, CA, 95616}
  \thanks{A.J. is supported in part by a Simons collaboration grant}
  
  \author[Y.-S. Lin] {Yu-Shen Lin}
   \email{yslin@bu.edu}
  \address{Department of Mathematics, Boston University, 11 Cummington Mall, Boston, MA 02215}
    \thanks{Y.-S. L. is supported in by Simons collaboration grant \# 635846 and NSF grant DMS \#2204109.}

\theoremstyle{plain}
\newtheorem{thm}{Theorem}[section]
\newtheorem{prop}[thm]{Proposition}
\newtheorem{defn}[thm]{Definition}
\newtheorem{lem}[thm]{Lemma}
\newtheorem{cor}[thm]{Corollary}
\newtheorem{conj}[thm]{Conjecture}

\theoremstyle{definition}

\newtheorem{rk}[thm]{Remark}

\numberwithin{equation}{section}

\newcommand{\del}{\partial}

\newcommand{\dbar}{\overline{\del}}
\newcommand{\ddb}{\sqrt{-1}\del\dbar}

\newcommand{\ti}[1]{\tilde{#1}}

\newcommand{\be}{\begin{equation}}
\newcommand{\bea}{\begin{eqnarray}}
\newcommand{\eea}{\end{eqnarray}}
 \newcommand{\ee}{\end{equation}}
 \newcommand{\pl}{\partial}

\renewcommand{\leq}{\leqslant}
\renewcommand{\geq}{\geqslant}

\renewcommand{\epsilon}{\varepsilon}
\renewcommand{\phi}{\varphi}

\begin{document}

\maketitle

\begin{abstract}
We prove a version of the Strominger-Yau-Zaslow mirror symmetry conjecture for non-compact Calabi-Yau surfaces arising from, on the one hand, pairs $(\check{Y},\check{D})$ of a del Pezzo surface $\check{Y}$ and $\check{D}$ a smooth anti-canonical divisor and, on the other hand, pairs $(Y,D)$ of a rational elliptic surface $Y$, and $D$ a singular fiber of Kodaira type $I_k$.  Three main results are established concerning the latter pairs $(Y,D)$.  First, adapting work of Hein \cite{Hein}, we prove the existence of a complete Calabi-Yau metric on $Y\setminus D$ asymptotic to a (generically non-standard) semi-flat metric in every K\"ahler class.  Secondly, we prove a uniqueness theorem to the effect that, modulo automorphisms, every K\"ahler class on $Y\setminus D$ admits a unique asymptotically semi-flat Calabi-Yau metric.  This result yields a finite dimensional K\"ahler moduli space of Calabi-Yau metrics on $Y\setminus D$.  Further, this result answers, in this setting, questions of Tian-Yau \cite{TY} and Yau \cite{YauComm}.  Thirdly, building on the authors' previous work \cite{Collins-Jacob-Lin}, we prove that $Y\setminus D$ equipped with an asymptotically semi-flat Calabi-Yau metric  $\omega_{CY}$ admits a special Lagrangian fibration whenever the  de Rham cohomology class of $\omega_{CY}$ is not topologically obstructed.  Combining these results we define a mirror map from the moduli space of del Pezzo pairs $(\check{Y}, \check{D})$ to the complexified K\"ahler moduli of $(Y,D)$ and prove that the special Lagrangian fibration on $(Y,D)$ is $T$-dual to the special Lagrangian fibration on $(\check{Y}, \check{D})$ previously constructed by the authors in \cite{Collins-Jacob-Lin}.  We give some applications of these results, including to the study of automorphisms of del Pezzo surfaces fixing an anti-canonical divisor.
\end{abstract}

\section{Introduction}

The primary goal of this paper is to prove a version of the Strominger-Yau-Zaslow (SYZ) mirror symmetry conjecture for log Calabi-Yau surfaces arising from del Pezzo surfaces and rational elliptic surfaces.  A general formulation of the original SYZ mirror symmetry conjecture is the following: 

\begin{conj}[Strominger-Yau-Zaslow]\label{conj: introSYZ}
Let $(\check{X},\check{\omega})$ be a Calabi-Yau manifold, and $\check{\mathcal{M}}_{cplx}$ denote the moduli space of complex structures on $\check{X}$.  Then, for a complex structure $\check{J}\in \check{\mathcal{M}}_{cplx}$ sufficiently close to a large complex structure limit, the following is true:
\begin{enumerate}
\item $(\check{X},\check{J},\check{\omega})$ admits a special Lagrangian torus fibration $\check{\pi}:\check{X}\rightarrow \check{B}$ onto a base $\check{B}$ equipped with an integral affine structure.
\item There is another Calabi-Yau manifold $(X, J, \omega)$ with a special Lagrangian fibration $\pi: X\rightarrow B$ and $B$ is equipped with an integral affine structure.
\item Let $\mathcal{M}_{\text{K\"ah}}$ denote the complexified K\"ahler moduli space of $X$.  There is a mirror map $q: \check{\mathcal{M}}_{cplx}\rightarrow \mathcal{M}_{\text{K\"ah}}$ which is a local diffeomorphism such that ${\rm Im}(q(\check{J})) = \omega$.
\item There is an isomorphism $\phi: \check{B} \rightarrow B$ exchanging the complex and symplectic affine structures, and such that the Riemannian volumes of the special Lagrangian torus fibers over $\check{b} \in \check{B}, \phi(\check{b})\in B$ are inverse to one another.
\end{enumerate}
\end{conj}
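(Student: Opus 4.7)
The plan is to verify all four conditions of Conjecture~\ref{conj: introSYZ} with $(\check X,\check J,\check\omega)=(\check Y\setminus\check D,\check J,\omega_{TY})$ on the del Pezzo side and $(X,J,\omega)=(Y\setminus D,J,\omega_{CY})$ on the rational elliptic side. The overall strategy has a clear division of labor: one side of Conjecture~\ref{conj: introSYZ}(1)--(2) is already handled by the authors' prior work \cite{Collins-Jacob-Lin}, so the core of the proof is to construct the rational elliptic side, define the mirror map, and then verify T-duality at the level of affine structures and metric data.

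First I would construct the Calabi-Yau data on $Y\setminus D$. Starting from a K\"ahler class $[\omega]$ on $Y$ satisfying the topological non-obstruction condition, the idea is to build an approximate semi-flat model near $D$ using the local model of an $I_k$ fiber and the flat coordinate on the smooth elliptic fibration nearby, then glue to a global K\"ahler form on $Y\setminus D$. Adapting Hein's weighted Monge-Amp\`ere technique, one solves $(\omega+\sqrt{-1}\partial\bar\partial \varphi)^2=e^f\Omega\wedge\overline\Omega$ with suitable decay of $\varphi$, using a monotonicity/barrier argument to control the K\"ahler potential in the semi-flat asymptotic region; this yields a complete Calabi-Yau metric $\omega_{CY}$ asymptotic to the (generally non-standard) semi-flat model. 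Uniqueness modulo automorphisms would then be proved via a Pauli-type argument: compare two such metrics using their asymptotic data, and rule out non-trivial deformations by combining an $L^2$-energy/integration-by-parts estimate (justified by the semi-flat asymptotics) with the Lichnerowicz obstruction; the non-standard nature of the asymptotic model is where this argument diverges from Tian-Yau \cite{TY}. Having both existence and uniqueness fixes a finite-dimensional K\"ahler moduli space.

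Next I would invoke \cite{Collins-Jacob-Lin} to produce special Lagrangian torus fibrations on both $(\check Y\setminus\check D,\omega_{TY})$ and $(Y\setminus D,\omega_{CY})$, noting that the existence on $Y\setminus D$ requires precisely the asymptotic semi-flat structure just established. Both bases $\check B$ and $B$ inherit integral affine structures from the McLean metric and the period integrals of $\Omega$ and $\check\Omega$. The mirror map $q:\check{\mathcal{M}}_{cplx}\rightarrow\mathcal{M}_{\textrm{K\"ah}}$ would be defined by sending a complex structure $\check J$ to the pair $(B$-field$,\omega_{CY})$ whose real part encodes the periods of $\check\Omega$ over vanishing cycles at $\check D$ (giving the complex coefficients $[\check D_i]$) and whose imaginary part is the K\"ahler class determined by matching the symplectic areas on the rational elliptic side with the holomorphic volumes on the del Pezzo side. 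The key verification is that the affine diffeomorphism $\phi:\check B\to B$ produced by matching monodromy representations exchanges the complex and symplectic affine charts, and that the McLean metrics on $\check B$ and $B$ are mutually inverse on dual tori.

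I expect the main obstacle to be step~(4) of the conjecture: showing that $\phi$ exchanges the affine structures \emph{and} that the Riemannian volumes are inverse on the nose. The symplectic affine coordinates on $B$ are given by periods of $\omega_{CY}$, while the complex ones on $\check B$ are periods of $\check\Omega$; translating between these requires explicit control on the integrals computed against the special Lagrangian fibers, which in turn depends on the precise semi-flat asymptotics of $\omega_{CY}$ produced in the first step. The secondary difficulty is that the approximate semi-flat model near $D$ is not the flat one studied by Gross-Wilson \cite{Hein}-style, so the Monge-Amp\`ere perturbation and the resulting Hessian estimates must be carried out in a weighted setting adapted to the $I_k$ asymptotics; any slack in these estimates will obstruct both the uniqueness statement and the volume-matching in~(4).
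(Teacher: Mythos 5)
The broad architecture you propose — construct $\omega_{CY}$ on $Y\setminus D$ via a Hein-type gluing and Monge--Amp\`ere solve, prove uniqueness to get a finite-dimensional K\"ahler moduli space, apply \cite{Collins-Jacob-Lin} on both sides, and then match periods/areas to define the mirror map — does track the paper's organization. However, two of the steps you sketch rely on mechanisms that would not work, and these are precisely the places where the paper has to do something nontrivial.

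First, your uniqueness argument is not viable as stated. You propose comparing two asymptotically semi-flat Calabi--Yau metrics by ``an $L^2$-energy/integration-by-parts estimate... with the Lichnerowicz obstruction.'' The Lichnerowicz obstruction is irrelevant here; there is no compact group action and no K-stability-type obstruction in play. The actual difficulty, which the paper confronts head-on, is that the $\partial\bar\partial$-lemma \emph{fails} on $X=Y\setminus D$: the group $H^{1,1}_{BC}(X,\mathbb{R})$ is infinite-dimensional (Lemma~\ref{lem: BCdescrip}), so two de Rham cohomologous K\"ahler forms need not differ by $\sqrt{-1}\partial\bar\partial$ of a function, let alone a bounded one. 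The paper's resolution (Proposition~\ref{prop: ddb Lemma}) is to use the $O(r^{-4/3})$ decay rate to extend $\omega_1-\omega_2$ as a closed $L^1$-current on the compactification $Y$, show it is cohomologous to zero in $H^{1,1}(Y,\mathbb{R})$ by explicit integration against the exceptional components $D_i$, and then pull back the $\ddb$-lemma for currents on $Y$. Getting the resulting potential $\phi$ to be $L^\infty$ needs a separate oscillation estimate along fibers plus a subharmonicity trick on slices (equations~\eqref{eq: osc bnd claim}, \eqref{eq: phiplus}, \eqref{eq: phiminus}). Only after all of that can the Yau-type cutoff integration-by-parts argument (Lemma~\ref{156}) close. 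Without this $\ddb$-lemma for currents, your $L^2$-energy step cannot even be set up, because there is no bounded potential to integrate against.

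Second, for point~(4) of the conjecture, ``matching monodromy representations'' is far too weak to produce the required affine diffeomorphism $\phi:\check B\to B$. Two rational elliptic surfaces with an $I_k$ fiber can have the same topological monodromy at infinity without being biholomorphic, and the exchange of complex and symplectic affine structures requires an actual isomorphism of the two log Calabi--Yau pairs $(\check Y'_{\check q},\check D'_{\check q})$ and $(Y'_{q(\check q)},D'_{q(\check q)})$ obtained from the two hyperK\"ahler rotations. The paper obtains this via the weak Torelli theorem for anti-canonical pairs of Gross--Hacking--Keel (Theorem~\ref{thm: Torelli}): one shows the classical periods $\exp(2\pi i\int_\gamma\Omega)$ agree under the natural lattice identification, which forces the biholomorphism. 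The subtle normalizations in the mirror map~\eqref{mirror map I}--\eqref{mirror map II} are engineered precisely so that this period matching holds, and the exchange of affine structures is then read off from the fact that both sides share the same holomorphic $(2,0)$-form after the isomorphism. You would also need a careful argument that the $10$ deformation families of del Pezzo surfaces and rational elliptic surfaces with an $I_k$ fiber match up (including distinguishing $\mathbb{P}^1\times\mathbb{P}^1$ from $\mathbb{F}_1$ for $k=8$ via $H_1$ of the complement), which the paper handles by the Mayer--Vietoris/relative-homology computation around equations~\eqref{long exact sequence check}--\eqref{long exact sequence}; your proposal does not address this.
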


If $X$ is a compact Calabi-Yau of dimension $n$, then the mirror $\check{X}$ is also a compact Calabi-Yau of dimension $n$, and the third point implies the exchange of Hodge numbers
\[
h^{n-1,1}(\check{X}) = h^{1,1}(X).
\]
An important philosophical point is that $\mathcal{M}_{\text{K\"ah}}$ can be viewed as (the complexification of) the symplectic moduli space of Calabi-Yau structures on $X$ thanks to Yau's solution of the Calabi conjecture \cite{Y}.  Similarly, $\check{\mathcal{M}}_{cplx}$ can be viewed as the complex moduli space of Calabi-Yau structures on $\check{X}$, thanks to the Bogomolov-Tian-Todorov theorem \cite{Bog, Tian, Tod}.

According to work of Hitchin \cite{Hit} the integral affine structure on the base of the special Lagrangian torus fibrations in Conjecture~\ref{conj: introSYZ} is inherited from the complex/symplectic geometry of $X,\check{X}$.  Furthermore, Hitchin shows that point (4) of Conjecture~\ref{conj: introSYZ} can be viewed as an instantiation of the central principle of the SYZ conjecture: mirror symmetry is $T$-duality.

The SYZ conjecture has served as a guiding principle in the study of mirror symmetry over the past 20$+$ years, but outside of simple cases like $K3$ surfaces and abelian varieties, there are essentially no examples where it is known to hold in the formulation given above (for some recent progress on a weak version of point (1) of the SYZ conjecture, see \cite{Li19, Li20}).  This has inspired synthetic approaches to mirror symmetry including the Gross-Siebert program \cite{GS}, family Floer mirror construction \cite{Fuk, Abo}, the work of Kontsevich-Soibelman \cite{KS} and Doran-Harder-Thompson \cite{DHT}. Here we use the term ``original" conjecture to distinguish with the latter interpretations. 
   
While mirror symmetry was originally discovered in the context of compact Calabi-Yau manifolds it is now understood to be a rather general phenomenon.  For example, mirror symmetry is expected to apply to Fano manifolds and more generally to manifolds with effective anti-canonical bundle.  In this case the mirror manifold is no longer compact but instead is expected to be a Landau-Ginzburg model consisting of a non-compact complex manifold $M$ equipped with a superpotential $W: M\rightarrow \mathbb{C}$. 

If $Y$ is a compact K\"ahler manifold and $D \in |-K_{Y}|$ is an anti-canonical divisor, Auroux \cite{Aur1} formulated conjectures to the effect that mirror symmetry for $Y$ could be obtained from SYZ mirror symmetry applied to the {\em non-compact} manifold $X= Y\setminus D$.  Note that $X$ has trivial canonical bundle and hence can be regarded as a non-compact Calabi-Yau manifold, though the existence of a complete Ricci-flat K\"ahler metric on $X$ does not follow from Yau's solution of the Calabi conjecture \cite{Y}. Motivated by the original SYZ conjecture, Auroux \cite{Aur1,Aur2} conjectured the existence of special Lagrangian torus fibrations on $X$ and furthermore made several detailed conjectures about the structure of these fibrations.  Note, however, the in Auroux's formulation the background symplectic form is not required to be be a complete Calabi-Yau metric.

In this paper we focus on the case of rational elliptic surfaces and del Pezzo surfaces.  Recall that a rational elliptic surface (RES) is a rational surface with a relatively minimal elliptic fibration onto $\mathbb{P}^1$ which admits a section.  Let $\check{Y}$ be a rational elliptic surface or a del Pezzo surface and assume that $\check{D}\in |-K_{\check{Y}}|$ is a smooth divisor. Tian-Yau \cite{TY} proved the existence of a complete Ricci-flat metric on the non-compact manifold $\check{X}= \check{Y}\setminus \check{D}$.  In this setting the authors recently proved part (1) of the SYZ conjecture, verifying some conjectures of Auroux.

   \begin{thm}[Theorem 1.2, \cite{Collins-Jacob-Lin}] \label{513}
    	Let $\check{Y}$ be a del Pezzo surface or a rational elliptic surface and $\check{D}$ a smooth anti-canonical divisor. Then, for any choice of simple closed loop $\gamma \in H_1(D,\mathbb{Z})$, $\check{X}=\check{Y}\setminus \check{D}$, equipped with the Tian-Yau metric admits a special Lagrangian fibration $\check{\pi}_{\gamma}:\check{X}\rightarrow \mathbb{R}^2$, where the fibre of $\check{\pi}_{\gamma}$ is homotopic to the unit $S^1$-bundle over a representative of $\gamma$. 
    \end{thm}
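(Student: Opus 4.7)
My plan is to exploit the fact that in complex dimension two a complete Ricci-flat Kähler metric is automatically hyperkähler. The Tian-Yau metric on $\check{X}$ therefore carries a twistor family of parallel complex structures $(I,J,K)$ with Kähler forms $\omega_I=\omega_{TY}$, $\omega_J$, $\omega_K$, and holomorphic volume form $\Omega_I=\omega_J+\sqrt{-1}\omega_K$ agreeing, up to a constant, with the Poincaré residue of a meromorphic $2$-form on $\check{Y}$ with simple pole along $\check{D}$. The basic principle I would use is the standard hyperkähler equivalence: an oriented real surface $L\subset \check{X}$ is special Lagrangian with respect to $(\omega_I,\Omega_I)$ of phase $e^{\sqrt{-1}\theta}$ if and only if $L$ is a complex curve for the rotated complex structure $J_\theta:=\cos\theta\cdot J+\sin\theta\cdot K$ in the twistor family. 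This converts the problem of producing a \emph{special Lagrangian torus fibration} on $(\check{X},\omega_I,\Omega_I)$ into the problem of producing a \emph{holomorphic elliptic fibration} on a suitable hyperkähler rotation of $\check{X}$.

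The first ingredient is the precise asymptotic behavior of the Tian-Yau metric near $\check{D}$. There $\omega_{TY}$ is modeled, with polynomial decay, by the semi-flat Calabi ansatz on the punctured disk bundle of the normal bundle $N_{\check{D}/\check{Y}}$, on which the real torus $\check{D}$ acts by isometries. For any primitive class $\gamma\in H_1(\check{D},\mathbb{Z})$, circle bundles over parallel translates of a representative of $\gamma$ give an explicit model special Lagrangian fibration, and $\gamma$ singles out a preferred rotation angle $\theta(\gamma)$ in the twistor family. With this model at hand, I would argue that $(\check{X},J_{\theta(\gamma)})$ admits a compactification, obtained by gluing in at infinity a cycle of rational curves of Kodaira type $I_k$ (with $k$ determined by $\gamma$ and the anti-canonical data of $\check{Y}$), to a smooth rational elliptic surface $Y'\rightarrow\mathbb{P}^1$ whose elliptic fibration restricts to a holomorphic fibration $\pi':\check{X}\rightarrow \mathbb{P}^1\setminus\{\mathrm{pt}\}\cong\mathbb{R}^2$. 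The fibration $\check{\pi}_\gamma$ is then just $\pi'$, regarded through the hyperkähler correspondence as a special Lagrangian fibration for $(\omega_I,\Omega_I)$, and the generic fiber, being a smooth elliptic curve sitting in the Calabi model near infinity, is visibly an $S^1$-bundle over a representative of $\gamma$.

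The hard step is the compactification: one must show that the rotated complex structure $J_{\theta(\gamma)}$ extends uniquely across a cycle of rational curves at infinity, and that the resulting analytic surface is algebraic of the expected rational elliptic type. This requires asymptotic control of $\omega_{TY}$ and $J$ beyond the $C^0$ estimates of \cite{TY}, of the kind developed by Hein, together with a removable-singularities/Riemann-extension argument identifying $J_{\theta(\gamma)}$ near $\check{D}$ with the algebraic complex structure on a neighborhood of an $I_k$ fiber in a rational elliptic surface. Once this identification is in place, the remaining assertions, namely that $\pi'$ is holomorphic and proper over $\mathbb{C}$, that its base is $\mathbb{R}^2$, and that its fibers are the special Lagrangian tori predicted by the Calabi model, are essentially formal consequences of the topology of rational elliptic surfaces and of the hyperkähler correspondence set up at the start.
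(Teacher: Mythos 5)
Your proposal correctly identifies the key dictionary (in complex dimension two, a special Lagrangian torus of fixed phase is a holomorphic genus-one curve for a hyperk\"ahler-rotated complex structure) and correctly observes that the rotated Tian--Yau manifold should compactify to a rational elliptic surface with an $I_k$ fibre at infinity. But you have inverted the logical order of the actual argument in \cite{Collins-Jacob-Lin} in a way that leaves the crucial step unsupported. The proof there first constructs \emph{model} special Lagrangian tori in the asymptotic Calabi ansatz, transplants them into $(\check{X},\omega_{TY})$ as approximate special Lagrangians with quantitative control of the second fundamental form, mean curvature, non-collapsing scale, and first Laplace eigenvalue, and runs Lagrangian mean curvature flow to produce a \emph{single} genuine special Lagrangian torus $L$ with $[L]$ primitive and $[L]^2=0$. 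Only then does one rotate so that $L$ becomes a $J_{\theta(\gamma)}$-holomorphic curve and apply a holomorphic-curve deformation result (Theorem~5.5 of \cite{Collins-Jacob-Lin}, recalled here as Theorem~\ref{thm: oneToMany}) to spread $L$ out into a full elliptic fibration, which in turn is compactified to the rational elliptic surface of Theorem~\ref{HKthm}. In your sketch this order is reversed: you want to compactify first and then read off the fibration.

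The gap is that the compactification you appeal to is, in \cite{Collins-Jacob-Lin}, \emph{deduced from} the existence of the special Lagrangian fibration, so invoking it here is circular unless you supply an independent argument. And the independent argument is genuinely missing: the asymptotic decay of $\omega_{TY}$ to the Calabi model only makes the explicit model fibration approximately $J_{\theta(\gamma)}$-holomorphic near $\check{D}$; it does not produce even a single exact $J_{\theta(\gamma)}$-holomorphic torus, let alone a global fibration on the bulk of $\check{X}$. Without at least one such curve, the Riemann-extension/removable-singularity step you invoke has nothing to extend --- there are no holomorphic coordinates adapted to a fibration to continue across the would-be $I_k$ cycle, and there is no a priori reason that the rotated complex structure on a compact Ricci-flat piece far from $\check{D}$ is compatible with any elliptic fibration. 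The Lagrangian mean curvature flow step (or some replacement producing one exact special Lagrangian/holomorphic torus) is exactly what fills this hole, and it is the hard analytic content that your proposal omits.
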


The techniques developed in \cite{Collins-Jacob-Lin}, particularly those establishing Theorem~\ref{513} will play an important role in our proof of Conjecture~\ref{conj:  introSYZ} for del Pezzo surfaces and rational elliptic surfaces.  

Before outlining our results we note that homological mirror symmetry between rational elliptic surfaces and del Pezzo surfaces is quite well  understood.   Auroux-Katzarkov-Orlov \cite{AKO} proved that the derived category of coherent sheaves on a del Pezzo surface $\check{Y}$ of degree $k$ is equivalent to the Fukaya-Seidel category of the complement $Y\setminus D$ where $Y$ is a rational elliptic surface and $D$ is an $I_k$ singular fiber. Lunts-Przyjalkowski \cite{LunPr} proved mirror symmetry of Hodge diamonds, where the Hodge numbers are defined following a proposal of Katzarkov-Kontsevich-Pantev \cite{KKP}.  Doran-Thompson \cite{DT} studied the mirror correspondence in the sense of lattice polarized mirror symmetry, as motivated by the Doran-Harder-Thompson conjecture \cite{DHT}. Gross-Hacking-Keel \cite{GHK} have constructed formal mirrors to rational elliptic surfaces with an $I_k$ singular fiber along the lines of the Gross-Siebert program.  Similarly, formal mirrors for del Pezzo surfaces were constructed via the Gross-Siebert program by Carl-Pumperla-Siebert \cite{CPS}.

Let us now explain the central results of this paper which allow us to prove SYZ mirror symmetry for del Pezzo surfaces and rational elliptic surfaces.  We pause for the following important remark:
\begin{rk}
In what follows, rather than stating the precise results we obtain with all relevant technical assumptions, we will state the results in as much precision as necessary to motivate the discussion and provide references to the precisely stated theorems in the body of the text.
\end{rk}

Let $\check{Y}$ be a del Pezzo surface of degree $k$ and $\check{D}\in |-K_{\check{Y}}|$ a smooth anti-canonical divisor and let $\check{X}= \check{Y}\setminus \check{D}$.  The complex moduli space of del Pezzo pairs $(\check{Y}, \check{D})$ is well understood thanks to the Torelli theorem of McMullen \cite{M}. The existence of Calabi-Yau metrics on $\check{X}$ was established by Tian-Yau \cite{TY} and the existence of special Lagrangian fibrations was established by the authors in \cite{Collins-Jacob-Lin} (see Theorem~\ref{513}). Thus, in the context of Conjecture~\ref{conj: introSYZ}, point (1) can be taken to be understood.  

Now suppose $\pi: Y\rightarrow \mathbb{P}^1$ is a rational elliptic surface and $D \in |-K_{Y}|$ is a singular fiber of Kodaira type $I_k$; namely, a wheel of $k$ rational curves with self-intersection $-2$.  In order to address point $(2)$ of Conjecture~\ref{conj: introSYZ}, a first step is to understand the existence of complete Calabi-Yau metrics on $X=Y\setminus D$.  Suppose $\omega_0$ is a K\"ahler metric on $Y$. Hein \cite{Hein} constructed complete Calabi-Yau metrics in $[\omega_0|_{X}] \in H_{dR}^{2}(X,\mathbb{R})$ asymptotic to the standard semi-flat metrics constructed by Greene-Shapere-Vafa-Yau \cite{GSVY}.  However, these metrics cover only a codimension $1$ slice of the full K\"ahler cone of $X$.  Furthermore, the construction of \cite{Hein} yields infinitely many Calabi-Yau metrics in a given de Rham class, and leaves open the possibility of (infinitely many) distinct Calabi-Yau metrics even within a fixed Bott-Chern cohomology class.  In this paper we adapt Hein's construction to prove the existence of Calabi-Yau metrics in {\em every} de Rham class in $H_{dR}^{2}(X,\mathbb{R})$ containing a K\"ahler form.  These Calabi-Yau metrics are, in general, asymptotic to {\em non-standard} semi-flat metrics; see Section~\ref{subsec: nonSt} for discussion. 
 Furthermore, we address the uniqueness of these metrics (see Theorem~\ref{thm: unique}) confirming an expectation of Tian-Yau \cite{TY} in this setting (see Proposition~\ref{prop: reducedModuli}) and answering a question of Yau \cite{YauComm}.

\begin{thm}\label{thm: introCY}
Let $\pi:Y\rightarrow \mathbb{P}^1$ be a rational elliptic surface, $D$ a singular fiber of type $I_k$, and let $F$ denote a fiber of $\pi$. Let $X = Y\setminus D$ and $\pi:X \rightarrow \mathbb{C}$ be the induced elliptic fibration.  Define ${\rm Aut}_{0}(X,\mathbb{C})$ to be the fiber preserving biholomorphisms of $X$ which are homotopic to the identity. Then:
\begin{itemize}
\item[$(i)$] For every de Rham cohomology class $[\omega] \in H^2(X,\mathbb{R})$ containing a K\"ahler form $\omega$ and having $[\omega].[F]$ sufficiently small, there is a complete Calabi-Yau metric asymptotic to a (possibly non-standard) semi-flat K\"ahler metric.  Furthermore, this metric is {\em unique} modulo the action of ${\rm Aut}_{0}(X,\mathbb{C})$.
\item[$(ii)$] If $\omega_{CY}$ is a complete Calabi-Yau metric on $X$ asymptotic to a {\em quasi-regular} semi-flat metric, then there is a special Lagrangian torus fibration $\pi: (X,\omega_{CY}) \rightarrow \mathbb{R}^2$.
\end{itemize}
\end{thm}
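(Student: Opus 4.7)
The plan for existence in $(i)$ is to adapt Hein's construction \cite{Hein} by replacing the standard Greene--Shapere--Vafa--Yau ansatz with a family of \emph{non-standard} semi-flat background metrics $\omega_{sf}$ that can realize every de Rham class. Hein's background is pulled back from the base $\mathbb{C}$ together with a fixed fiber period and therefore covers only a codimension-one slice of the K\"ahler cone; the key is to simultaneously deform the base K\"ahler class and the choice of section of the relative Jacobian of $\pi: X\to \mathbb{C}$ so that $[\omega_{sf}]=[\omega]$ and $\omega_{sf}^2 = \Omega\wedge\overline{\Omega}$ hold asymptotically, for a fixed holomorphic volume form $\Omega$. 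The hypothesis that $[\omega].[F]$ is sufficiently small is used to guarantee the positivity of this deformed ansatz at infinity. A global K\"ahler representative $\tilde\omega \in [\omega]$ is then produced from $\omega_{sf}$ by a compactly supported $\sqrt{-1}\partial\overline{\partial}$ correction, and the complex Monge--Amp\`ere equation
\[
(\tilde\omega + \sqrt{-1}\partial\overline{\partial} u)^2 = \Omega\wedge\overline{\Omega}
\]
is solved in Hein's weighted H\"older spaces via a continuity method, the main technical input being $C^0$ and Schauder estimates adapted to the non-standard background.

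For uniqueness in $(i)$, suppose $\omega_1,\omega_2$ are two such Calabi--Yau metrics in the same de Rham class, asymptotic to semi-flat models $\omega_{1,sf},\omega_{2,sf}$. The first step is to show that these asymptotic models can be identified after pulling back by an element of $\text{Aut}_0(X,\mathbb{C})$: the data classifying a non-standard semi-flat model (a section of the relative Jacobian together with a base K\"ahler class) is rigid modulo fiber-preserving automorphisms homotopic to the identity. After this reduction, $\omega_1 - \omega_2 = \sqrt{-1}\partial\overline{\partial}\phi$ on the complement of a compact set with $\phi$ decaying at infinity, and the condition $\omega_1^2 = \omega_2^2 = \Omega\wedge\overline{\Omega}$ gives the linear equation
\[
\sqrt{-1}\partial\overline{\partial}\phi \wedge (\omega_1+\omega_2) = 0.
\]
A Liouville-type argument in Hein's weighted spaces, exploiting the uniform ellipticity provided by the semi-flat asymptotics, then forces $\phi$ to be constant and hence zero. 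The main obstacle here is the first step: ruling out deformations of the semi-flat data not realized by $\text{Aut}_0(X,\mathbb{C})$, which reduces to a Torelli-type rigidity statement for the elliptic fibration.

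For $(ii)$, the map $\pi:X\to\mathbb{R}^2$ would be built from the holomorphic elliptic projection together with a real flux coordinate obtained by integrating $\omega_{sf}$ along a $1$-cycle in a smooth fiber; the quasi-regular hypothesis is precisely the condition that this flux descends to a globally single-valued function on the base. The plan is first to verify the claim for the semi-flat model, where the special Lagrangian fibration is explicit and the conditions $\omega_{sf}|_{\text{fiber}}=0$ and $\mathrm{Im}(\Omega)|_{\text{fiber}}=0$ can be checked by direct computation. One then transfers this structure to $\omega_{CY}$ by a perturbation argument: writing $\omega_{CY} = \omega_{sf} + \sqrt{-1}\partial\overline{\partial} u$ with $u$ decaying at infinity (by $(i)$), the fibers of the semi-flat fibration are deformed into genuine special Lagrangians for $\omega_{CY}$ by flowing along a vector field determined by the gradient of $u$, exactly as in \cite{Collins-Jacob-Lin}. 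The main obstacle is the global matching at infinity: quasi-regularity is essential, since without it the flux coordinate would be multivalued modulo a dense subgroup of $\mathbb{R}$, and one would obtain only an asymptotic, rather than honest global, special Lagrangian fibration.
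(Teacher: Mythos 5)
Your broad outline is headed in the right direction, but there are two concrete gaps where the actual content of the proof lives.

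\textbf{Uniqueness in $(i)$.} You write ``After this reduction, $\omega_1 - \omega_2 = \sqrt{-1}\partial\overline{\partial}\phi$ on the complement of a compact set with $\phi$ decaying at infinity,'' and then proceed directly to a Liouville argument. This is precisely the step that cannot be taken for granted. The $\partial\overline{\partial}$-lemma \emph{fails} on $X$: the analysis in Section~\ref{sec: semiflatmet} shows $H^{1,1}_{BC}(X,\mathbb{R})$ is infinite-dimensional (Lemma~\ref{lem: BCdescrip}), so having $[\omega_1]_{dR}=[\omega_2]_{dR}$ does not in itself give you a global potential, much less a bounded one. Making this step work is the content of Proposition~\ref{prop: ddb Lemma}: one must show that the exact form $d\beta=\omega_2-\omega_1$, once it is known to decay at the rate $r^{-4/3}$, extends as an $L^1$-current to the compactification $Y$, that this current is closed and exact on $Y$, and that the resulting potential is bounded (the boundedness requiring a careful slice-wise subharmonicity argument near the $I_k$ fiber). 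The $r^{-4/3}$ rate is sharp for this to go through and you have not explained where it comes from. Your ``Torelli-type rigidity'' remark for identifying the asymptotic models modulo $\mathrm{Aut}_0(X,\mathbb{C})$ is also not quite what happens: the reduction in Proposition~\ref{prop: reducedModuli} is a Mittag--Leffler style argument on the Laurent tail of the translation map, matched against global holomorphic sections of $R^1\pi_*\mathcal{O}_X$ over $\mathbb{P}^1\setminus\{\infty\}$, not a period computation.

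\textbf{Part $(ii)$.} Your proposed mechanism---a globally single-valued flux coordinate plus deformation of the semi-flat fibers by ``flowing along a vector field determined by the gradient of $u$''---is not how the fibration is actually produced, and as stated it does not work: Moser's trick produces \emph{Lagrangian} (not special Lagrangian) tori, and a gradient flow of $u$ does not enforce the calibration condition, which is an additional PDE. The paper's route is: (a) construct ansatz special Lagrangian tori in the model $X_{mod}$ and establish quantitative control on second fundamental form, mean curvature, non-collapsing, and $\lambda_1$ (Lemmas~\ref{model1}--\ref{model}); (b) transplant to $(X,\omega_{CY})$ via Moser's trick, picking up only Lagrangian tori with $(C,K,\delta)$-bounded geometry; (c) run Lagrangian mean curvature flow to converge these to honest special Lagrangians; (d) invoke Theorem~\ref{thm: oneToMany} (a hyperK\"ahler rotation plus holomorphic-curve argument) to promote \emph{one} special Lagrangian torus in a primitive square-zero class to a global special Lagrangian fibration. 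Step (d) is where the fibration actually comes from, and your sketch omits it entirely. Quasi-regularity enters not as single-valuedness of a flux coordinate but as the condition under which \emph{some} quasi-bad cycle is Lagrangian for $\omega_{sf}$, providing a fiber class to feed into steps (a)--(d); absent quasi-regularity no class pairs to zero with $[\omega]$, which is the topological obstruction noted after Definition~\ref{defn: rational}.
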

\begin{rk}
For the precise statement of part$(i)$ we refer the reader to Corollary~\ref{cor: HeinUnique} and Proposition~\ref{prop: reducedModuli}.   Part $(ii)$ is the conclusion of Theorem~\ref{154}.  
\end{rk} 

The third author \cite{Lin} computed the superpotential for the special Lagrangian fibrations on del Pezzo surfaces constructed by the authors in \cite{Collins-Jacob-Lin}.  Combining this work with Theorem~\ref{thm: introCY} confirms the conjectural picture of Auroux \cite{Aur1} for del Pezzo surfaces and rational elliptic surfaces.

Theorem~\ref{thm: introCY} plays two roles in the proof of the Conjecture~\ref{conj: introSYZ} for del Pezzo/RES pairs.  First, it establishes point (2) of Conjecture~\ref{conj: introSYZ}.  Secondly, and equally importantly, it yields a finite dimensional complexified K\"ahler moduli space $\mathcal{M}_{\text{K\"ah}}$ parametrizing Calabi-Yau metrics on $X$.  Using Theorem~\ref{thm: introCY}, we construct the mirror map $q$ from the complex moduli of del Pezzo pairs to the complexified K\"ahler moduli of a particular rational elliptic surface with an $I_k$ fiber.  In order to define the mirror map we introduce a notion of large complex structure limit for a del Pezzo pair $(\check{Y}, \check{D})$, which to the authors knowledge has not appeared in the literature before (see Definition~\ref{def: LCSL}).

\begin{thm}\label{thm: SYZintro}
Let $\check{\mathcal{M}}_{cplx}$ denote the complex moduli space of pairs $(\check{Y}, \check{D})$ consisting of a del Pezzo surface $\check{Y}$ of degree $k$ and a smooth anti-canonical divisor $\check{D} \in |-K_{\check{Y}}|$. Let $\mathcal{M}_{\text{K\"ah}}$ denotes the complexified K\"ahler moduli consisting pairs $(Y,D)$ with $Y$ being a rational elliptic surface and $D$ being an $I_k$-fibre. There is a rational elliptic surface $\pi:Y\rightarrow \mathbb{P}^1$ with an $I_k$ singular fiber $D$ and local diffeomorphism onto its image 
\[
q:\check{\mathcal{M}}_{cplx} \rightarrow \mathcal{M}_{\text{K\"ah}}
\]
 such that Conjecture~\ref{conj: introSYZ} holds.
\end{thm} 

A corollary of this is an equality of the geometrically defined Hodge numbers

\begin{cor}
In the setting of Theorem~\ref{thm: SYZintro} we have
\[
\dim_{\mathbb{C}}\check{\mathcal{M}}_{cplx}=\dim_{\mathbb{C}}\mathcal{M}_{\text{K\"ah}} = 10-k.
\]
Furthermore, by the calculation Lunts-Przyjalkowski \cite{LunPr} and Ballico-Gasparim-Rubilar-San Martin \cite{BGRS}, these dimensions agree with the algebro-geometric Hodge numbers proposed by Katzarkov-Kontsevich-Pantev \cite{KKP}.
\end{cor}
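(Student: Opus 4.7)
The plan is to derive the corollary as a near-immediate consequence of Theorem~\ref{thm: SYZintro}, supplemented by one explicit dimension count on the del Pezzo side and a single citation to \cite{LunPr} for the comparison with the Katzarkov-Kontsevich-Pantev Hodge numbers. Since Theorem~\ref{thm: SYZintro} produces a local diffeomorphism $q:\check{\mathcal{M}}_{cplx}\to\mathcal{M}_{\text{K\"ah}}$, the equality $\dim_{\mathbb{C}}\check{\mathcal{M}}_{cplx}=\dim_{\mathbb{C}}\mathcal{M}_{\text{K\"ah}}$ is automatic, and it suffices to compute either side. I would compute the del Pezzo side, where classical algebraic geometry is directly available.

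For $k\leq 8$, the datum of a pair $(\check{Y},\check{D})$ up to isomorphism is equivalent, via the description of $\check{Y}$ as the blow-up of $\mathbb{P}^2$ at $9-k$ points, to the datum of a smooth plane cubic $C\subset\mathbb{P}^2$ together with $9-k$ distinct points on $C$, all modulo the $PGL_3(\mathbb{C})$-action on $\mathbb{P}^2$. The naive parameter count
\[
\dim|{-K_{\mathbb{P}^2}}|+(9-k)-\dim PGL_3 = 9+(9-k)-8 = 10-k
\]
delivers the claimed dimension. The boundary cases $\check{Y}=\mathbb{P}^2$ ($k=9$) and $\check{Y}=\mathbb{P}^1\times\mathbb{P}^1$ ($k=8$) are dispatched by the same kind of direct count, e.g.\ $\dim|{-K_{\mathbb{P}^1\times\mathbb{P}^1}}|-\dim\operatorname{Aut}(\mathbb{P}^1\times\mathbb{P}^1)=8-6=2$.

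Equivalently, and perhaps more intrinsically, one can read the dimension off the period-theoretic description of $\check{\mathcal{M}}_{cplx}$ furnished by McMullen's Torelli theorem \cite{M}, already invoked in the paper: a pair $(\check{Y},\check{D})$ is locally parametrized by the $j$-invariant of the elliptic curve $\check{D}$ (one complex modulus) together with the periods encoded by the restriction to $\check{D}$ of the lattice $K_{\check{Y}}^{\perp}\subset H^2(\check{Y},\mathbb{Z})$, which has rank $9-k$ and therefore contributes $9-k$ additional complex moduli, for a total of $10-k$. The agreement with the Katzarkov-Kontsevich-Pantev Hodge numbers is then a direct quotation of the computation in \cite{LunPr}, as stated.

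The main obstacle, such as it is, is conceptual rather than computational: the parameter count produces a naive dimension, and identifying it with the dimension of $\check{\mathcal{M}}_{cplx}$ as defined in the paper relies on the generic freeness of the $PGL_3$-action together with the local-isomorphism assertion in McMullen's Torelli theorem, which ensures that the parameter space surjects onto $\check{\mathcal{M}}_{cplx}$ with generically zero-dimensional fibers. Since the corollary concerns only dimensions, none of these finite-group or stacky subtleties affects the conclusion.
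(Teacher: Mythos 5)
Your proposal is correct and, in its second ``intrinsic'' form, follows the same route the paper takes: in Section~\ref{sec: MS} the paper reads off $\dim_{\mathbb{C}}\check{\mathcal{M}}_{cplx} = 1 + (9-k) = 10-k$ from exactly the McMullen--Torelli period description you describe, with one modulus from the $j$-invariant of $\check{D}$ and $9-k$ from the markings $\int_{\gamma_i}\check{\Omega} \in \check{D}$, and the result then follows from the local diffeomorphism $q$. The one small difference is that the paper also computes $\dim_{\mathbb{C}}\mathcal{M}_{\text{K\"ah}} = 10-k$ independently on the rational elliptic surface side (via Lemma~\ref{lem: deRham} and the long exact sequence~\eqref{long exact sequence}) as a consistency check for the mirror map, whereas you use $q$ to transport the dimension across; both are legitimate, and your naive $PGL_3$-quotient parameter count $9 + (9-k) - 8 = 10-k$ is a pleasant elementary alternative to the period-theoretic count.
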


For the precise statement of Theorem~\ref{thm: SYZintro} we refer the reader to Theorem~\ref{SYZ MS}. 

In the course of establishing Theorems~\ref{thm: introCY} and Theorem~\ref{thm: SYZintro} we obtain several intermediate results along the way.  For example, we identify the symplectic structure associated to the hyperK\"ahler rotated Tian-Yau metric on a del Pezzo surface (see Proposition~\ref{prop: hkCal}), answering a question of Yau \cite{YauComm}.  Combining these results we make deductions regarding the automorphisms of del Pezzo pairs $(\check{Y}, \check{D})$,  recovering, for example, a classical result concerning automorphisms of $\mathbb{P}^2$ fixing a plane cubic \cite{Web}.

We now outline the organization of this paper.  In Section~\ref{sec: semiflatmet} we discuss the basic properties of rational elliptic surfaces and semi-flat Calabi-Yau metrics which will play an essential role in this paper.  While some of this discussion has appeared elsewhere (e.g. \cite{GW, Hein}) we give a thorough discussion adapted to our applications. Section~\ref{sec: semiflatmet} contains the basic existence theorem for complete Calabi-Yau metrics asymptotic to (non-standard) semi-flat metrics, based on Hein's work \cite{Hein}, see Theorem~\ref{thm: nonStHein}.  Section~\ref{sec: semiflatmet} describes the de Rham and Bott-Chern moduli of K\"ahler metrics on the complement of an $I_k$ singular fiber in a RES, which is an important step in establishing our subsequent uniqueness results.   In Section~\ref{sec: slag} we prove the existence of special Lagrangian fibrations on rational elliptic surfaces equipped with complete Calabi-Yau metrics asymptotic to (quasi-regular) semi-flat metrics.  The arguments in this section build on the authors previous work \cite{Collins-Jacob-Lin}.  In Section~\ref{sec: moduli} we prove the basic uniqueness theorem for the Calabi-Yau metrics constructed in Section~\ref{sec: semiflatmet}, and discuss some applications.  In Section~\ref{sec: MS} we define the mirror map and combine our previous results to prove mirror symmetry for rational elliptic surfaces and del Pezzo surfaces and give some applications.  Finally, the paper concludes with two appendices.  Appendix~\ref{app: hkRot} computes explicitly the hyperK\"ahler rotation of the Calabi model, which serves as the asymptotic geometry for the Tian-Yau metric.  This calculation is used in Section~\ref{sec: moduli}. Appendix~\ref{app: HeinApp} discusses the relevant modifications to Hein's work \cite{Hein} needed to obtain the basic existence theorem in Section~\ref{sec: semiflatmet}.
\\
\\
{\bf Acknowledgements}:   The third author would like to thank P. Hacking for several very helpful discussions.  The authors are grateful to S.-T. Yau for his interest and encouragement.  The authors would also like to thank the referees, whose comments and suggestions have greatly improved the paper.

\section{Rational Elliptic Surfaces, Elliptic fibrations and semi-flat metrics}\label{sec: semiflatmet}

In this section we will collect some basic facts about the differential and algebraic geometry of rational elliptic surfaces.  The primary objective will be to explain the existence of Calabi-Yau metrics on rational elliptic surfaces, following Hein's work \cite{Hein}, and to understand the parameter space for these metrics which will be necessary to understand the moduli.    

Recall that, for $k\geq 2$, an $I_{k}$ singular fiber of an elliptic fibration is a wheel of $k \geq 2$ rational curves with self-intersection $(-2)$.  An $I_1$ singular fiber is a nodal rational curve, while an $I_0$ fiber is smooth.  In this paper an $I_k$ fiber will mean $I_{k}$ for $k\geq 1$.  Suppose that $\pi:Y \rightarrow \mathbb{P}^1$ is a rational elliptic surface with an $I_k$ singular fiber $D= \pi^{-1}(\infty)$ and let $X=Y\setminus D$.  Since $D \in|-K_{Y}|$ there is a unique (up to scale) rational $(2,0)$ form $\Omega$ on $Y$ with a simple pole on $D$.   To describe the Ricci-flat K\"ahler metric on $X$ we need to first explain the local models near $D$.  These metrics, called semi-flat K\"ahler metrics, were discovered by Greene-Shapere-Vafa-Yau \cite{GSVY} and have been subsequently studied by several authors (see, e.g. \cite{GW, GTZ, Hein}).  Our discussion of the semi-flat metrics will follow \cite{GW, Hein}, but with a slightly different emphasis, suited for our later purposes.

\subsection{The model fibration}

Let $\Delta = \{ z\in \mathbb{C}: |z|<1\}$ and suppose we have an elliptic fibration $\pi: X_{\Delta} \rightarrow \Delta$, which we assume has no singular fibers in $X_{\Delta^*}= \pi^{-1}(\Delta^* )$, where $\Delta^*= \Delta \setminus\{0\}$ is the punctured disk.  We suppose that $\pi^{-1}(0) = D$ is a singular fiber of type $I_k$.  Let $\sigma: \Delta^*\rightarrow X_{\Delta^*}$ be a holomorphic section of the fibration.  The choice of $\sigma$ fixes a natural origin in each fiber of $X_{\Delta^*}$.  By choosing a basis of $H_1(\pi^{-1}(z))$ compatible with the topological monodromy we can use the Abel-Jacobi map with respect to $\sigma$, denoted $F_{AJ,\sigma}$, to obtain a holomorphic map identifying
\[
F_{AJ,\sigma} : X_{\Delta^*} \rightarrow X_{mod}
\]
where $\pi_{mod}:X_{mod}\rightarrow \Delta^*$ is the {\em model fibration}
\[
X_{mod} := \mathbb{C}/ \Lambda(z), \qquad \Lambda (z) := \mathbb{Z} \oplus \mathbb{Z}\frac{ k}{2\pi \sqrt{-1}}\log(z).
\]
Note that, by definition, $F_{AJ,\sigma}: \pi^{-1}(z) \rightarrow \pi_{mod}^{-1}(z)$.  Under this identification the section $\sigma:\Delta^*\rightarrow X_{\Delta^*}$ is mapped to the canonical zero section of $X_{mod}$. 

\begin{rk}
The choice of a section $\sigma:\Delta^*\rightarrow X_{\Delta^*}$ gives each fiber the structure of a complex Lie group.  Thus, for any holomorphic section $\sigma': \Delta^*\rightarrow X_{\Delta^*}$ we get fiberwise translation maps
\[
T_{\sigma'}: X_{\Delta^*} \rightarrow X_{\Delta^*} \qquad T_{\sigma'}(x,z) = (x+\sigma'(z), z).
\]
While these maps are only defined relative to the choice of section $\sigma$, we will suppress this dependence.  We hope this does not cause any confusion.
\end{rk}

It will be useful to have

\begin{lem}[Lemma 3.28, \cite{FM}]\label{lem: localSecDes}
After fixing a section $\sigma: \Delta^*\rightarrow X_{\Delta^*}$, and identifying $X_{\Delta^*}$ with $X_{mod}$, any holomorphic section $\eta: \Delta^* \rightarrow X_{mod}$ can be identified with a multivalued holomorphic function
\[
\eta(z) = h(z) + \frac{a}{2\pi \sqrt{-1}}\log z + \frac{b}{(2\pi\sqrt{-1})^2} (\log(z))^2
\]
where $h(z) : \Delta^*\rightarrow \mathbb{C}$ is holomorphic, $\frac{2b}{k} \in \mathbb{Z}$, and $a+b \in \mathbb{Z}$.
\end{lem}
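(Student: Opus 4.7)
The plan is to pass to the universal cover of $\Delta^*$ via $w\mapsto z=e^{2\pi\sqrt{-1}w}$, under which the model fibration pulls back to $\mathbb{H}\times\mathbb{C}/\Lambda(w)$ with $\Lambda(w)=\mathbb{Z}\oplus\mathbb{Z}\,kw$. Since $\mathbb{H}$ is simply connected, the section $\eta$ pulled back to $\mathbb{H}$ lifts to a single-valued holomorphic map $\tilde\eta:\mathbb{H}\to\mathbb{C}$. The deck transformation $w\mapsto w+1$ preserves both $z$ and, because $k\in\mathbb{Z}$, the lattice $\Lambda(w)$, so the difference $\tilde\eta(w+1)-\tilde\eta(w)$ takes values in $\Lambda(w)=\mathbb{Z}\oplus\mathbb{Z}\,kw$. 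Writing this difference as $m(w)+n(w)\,kw$ with $m(w),n(w)\in\mathbb{Z}$, continuity of $\tilde\eta$ together with the discreteness of the fiber lattice forces $m$ and $n$ to be constants, so the monodromy of $\eta$ around the puncture is captured by a single pair of integers $(m,n)\in\mathbb{Z}^2$.

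Next I would absorb this monodromy into a quadratic polynomial in $w$. Setting $b=nk/2$ and $a=m-b$, a direct computation gives $a(w+1)+b(w+1)^2-(aw+bw^2)=(a+b)+2bw=m+nkw$, so $h(w):=\tilde\eta(w)-aw-bw^2$ is $1$-periodic on $\mathbb{H}$ and descends to a holomorphic function $\Delta^*\to\mathbb{C}$. Re-expressing in terms of $z$ via $w=\log z/(2\pi\sqrt{-1})$ yields the claimed form, with the constraints $2b/k=n\in\mathbb{Z}$ and $a+b=m\in\mathbb{Z}$ being automatic from the definitions of $a$ and $b$. Conversely, any multivalued function of this form with those integrality conditions visibly defines a well-defined section of $X_{mod}\to\Delta^*$, so the assignment is a bijection.

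The only real conceptual point, and what I would flag as the main (minor) obstacle, is recognizing that a linear ansatz for $\tilde\eta$ is not flexible enough: because the lattice $\Lambda(w)$ itself varies affinely in $w$, a jump of $\tilde\eta(w+1)-\tilde\eta(w)$ by a nonzero multiple of $kw$ requires a quadratic (rather than merely linear) correction. Once this is noticed, the integrality conditions $2b/k,\,a+b\in\mathbb{Z}$ emerge directly from demanding that the quadratic correction produce a jump lying in $\Lambda(w)$, and the rest of the argument reduces to identifying $1$-periodic holomorphic functions on $\mathbb{H}$ with holomorphic functions on $\Delta^*$.
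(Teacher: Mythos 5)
The paper gives no proof of this lemma; it is stated as a quotation of \cite[Lemma~3.28]{FM}. So there is nothing in the paper to compare against, and the question is simply whether your reconstruction is correct. It is. Passing to the universal cover $w\mapsto z=e^{2\pi\sqrt{-1}w}$, lifting $\eta$ to $\tilde\eta:\mathbb{H}\to\mathbb{C}$ (using that $\mathbb{H}$ is simply connected and $\mathbb{H}\times\mathbb{C}\to\mathbb{H}\times\mathbb{C}/\mathbb{Z}^2$ is a covering), and observing that the jump $\tilde\eta(w+1)-\tilde\eta(w)$ lies in $\Lambda(w)=\mathbb{Z}\oplus\mathbb{Z}kw$ with integer coefficients $(m(w),n(w))$ that are forced to be constant --- e.g.\ $n(w)=\operatorname{Im}f(w)/(k\operatorname{Im}w)$ is continuous and $\mathbb{Z}$-valued on connected $\mathbb{H}$ --- is exactly the standard argument. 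Your insight about the quadratic correction is the right one: because the monodromy jump $m+nkw$ is affine rather than constant in $w$, a linear term $aw$ alone has constant jump and cannot cancel it, so a $bw^2$ term is needed; matching coefficients gives $a+b=m\in\mathbb{Z}$ and $2b=nk$, i.e.\ $2b/k\in\mathbb{Z}$, precisely the stated integrality conditions. The converse direction and the identification of $1$-periodic holomorphic functions on $\mathbb{H}$ with holomorphic functions on $\Delta^*$ are routine and correctly invoked. This is, as far as I can tell, essentially the argument in Friedman--Morgan as well, so you have reconstructed the cited proof rather than found an alternative.
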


We will need the following definition, following \cite[Definition 1.2]{Hein}

\begin{defn}\label{defn: badCycle}
Consider the topological monodromy representation of $\pi_1(\Delta^*)$, which is conjugate to $\begin{pmatrix} 1 & k\\ 0 & 1\end{pmatrix}$ in the mapping class group of any fiber $F$ over $\Delta^*$. Let $\gamma \subset F$ be a simple loop such that $[\gamma]\in H_1(F,\mathbb{Z})$ is indivisible and invariant under the monodromy.
\begin{itemize}
\item[$(i)$] A bad 2-cycle $[C] \in H_{2}(X, \mathbb{Z})$ is one that arises from the following process, up to isotopy. With $\gamma$ as above, move $\gamma$ around the puncture by lifting a simple loop $\overline{\gamma}\subset \Delta^*$ up to every point in $\gamma$ such that the union of the translates of $\gamma$ is a $2$-torus embedded in $\pi^{-1}(\overline{\gamma})$.
\item[$(ii)$] A {\em quasi}-bad 2-cycle $[C] \in H_{2}(X, \mathbb{Z})$ arises in the same way, except we lift a {\em non}-simple loop $\overline{\gamma}\subset \Delta^*$ up to every point in $\gamma$ such that the union of the translates of $\gamma$ is a $2$-torus embedded in $\pi^{-1}(\overline{\gamma})$.  We will say that a quasi-bad cycle $C$ is an $m$-quasi-bad cycle if it covers a simple loop in the base $m$-times.
\end{itemize}
In either case, we orient $C$ so that
\[
\int_{C}\frac{dx\wedge dz}{z}  \in \sqrt{-1}\mathbb{R}_{>0}.
\]
\end{defn}

For an $I_k$ singular fiber the topological monodromy representation on $H_1(F,\mathbb{Z})$ admits a unique invariant loop in each fiber.  On the other hand, it is not hard to see that there are {\em many} choices of bad (and quasi-bad) cycle.  Concretely, if we fix a section $\sigma: \Delta^* \rightarrow X_{\Delta^*}$, and identify $X_{\Delta^*}$ with $X_{mod}$ via $F_{AJ,\sigma}$.  The cycle $C_0(\ell):=\{{\rm Im}(x)= 0, |z|= \ell\}$ is one possible choice for the bad cycle in $X_{mod}$, (and hence $F_{AJ, \sigma}^{-1}(C_0(\ell))$ is a bad cycle in $X$).  But, for any $m\in \mathbb{Z}$, one could equally well take
\[
C_{m}(\ell) :=\left \{ {\rm Im}(x)= m(-\frac{k}{2\pi}\log\ell) \frac{\theta}{2\pi}, \quad |z|=\ell \right\}.
\]
The cycle $F_{AJ,\sigma}^{-1}(C_{m}(\ell))$ is another choice of bad cycle in $X$.  It is not hard to see that, up to isotopy and orientation, these are all possible choices of bad cycles.  

The quasi-bad cycles can be treated similarly.  Given $m_1,m_2 \in \mathbb{Z}$ relatively prime with $m_1>0$, consider 
\[
C_{m_1,m_2}(\ell) := \left \{ {\rm Im}(x)= \frac{m_2}{m_1}(-\frac{k}{2\pi}\log\ell) \frac{\theta}{2\pi}, \quad |z|=\ell \right\}.
\]
which is an embedded torus covering the loop $\{|z|=\ell\}$ $m_1$ times.  Again, up to orientation and isotopy, it is easy to see that these examples constitute all quasi-bad cycles.  Note that for different values of $\ell$, one obtains isotopic cycles, and hence, to ease notation, we will suppress the dependence on $\ell$ when it is irrelevant. 

\begin{lem}\label{lem: badCycleIntegerAct}
With notation as above, the following holds:
\begin{itemize}
\item[$(i)$] All bad $2$-cycles in $H_{2}(X, \mathbb{Z})$ are isotopic to $F_{AJ, \sigma}^{-1}(C_{m})$ for $m\in \mathbb{Z}$ and, for any two bad cycles $[C],[C']$ we have
\[
[C]-[C'] = m [F]
\]
for some $m \in \mathbb{Z}$, where $[F]$ is the class of the fiber. 
\item[$(ii)$] All quasi-bad $2$-cycles in $H_2(X,\mathbb{Z})$ are isotopic to $F_{AJ, \sigma}^{-1}(C_{m_1,m_2})$ for $m_1,m_2\in \mathbb{Z}$ relatively prime.  If we fix a bad cycle $C$, then any quasi-bad cycle can be written as
\[
[C_{m_1,m_2}] = m_1[C]+m_2[F] \in H_2(X_{\Delta^*}, \mathbb{Z}).
\]

 \item[$(iii)$]  Every bad cycle (resp. quasi-bad cycle) is isotopic to a cycle $F_{AJ, \sigma}^{-1} T_{\eta_\ell}^{-1}(C_{0})$ where $\eta_\ell  = \frac{\ell k}{2(2\pi\sqrt{-1})^2} (\log(z))^2$ and $C_{0} = \left\{ Im(x)=0, |z|={\rm const} \right\} \subset X_{mod}$ (resp. $F_{AJ,\sigma}^{-1} T_{\eta_{\ell}}^{-1}(C_{m_1, m_2})$ where $m_1, m_2 \in \mathbb{Z}_{>0}$ are relatively prime, $\frac{m_2}{m_1} \in (0,1)$).

 \end{itemize}
\end{lem}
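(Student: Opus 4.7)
The plan is to carry out the analysis in the model fibration $\pi_{mod}:X_{mod}\to\Delta^*$ and then transport it to $X$ via $F_{AJ,\sigma}$. First I would identify the unique (up to sign) monodromy-invariant indivisible class in $H_1(F,\mathbb{Z})$: the monodromy around $0\in\Delta^*$ sends $\log z\mapsto\log z+2\pi\sqrt{-1}$ and hence acts on $\Lambda(z)$ in the basis $(1,\tfrac{k}{2\pi\sqrt{-1}}\log z)$ as the Jordan block $\begin{pmatrix}1 & k\\ 0 & 1\end{pmatrix}$, so the indivisible invariant class is represented by any circle $\{\mathrm{Im}(x)=\mathrm{const}\}\subset\mathbb{C}/\Lambda(z)$. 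A bad cycle is then, by definition, a torus built by lifting a simple base loop $\overline\gamma$, which we may take to be $\{|z|=r\}$, starting from each point of such a circle; such a lift is a continuous family $\theta\mapsto s(\theta)\in\mathbb{C}$ with $s(2\pi)-s(0)\in\Lambda(r)$. Translations in $\mathrm{Re}(x)$ preserve the torus setwise, so only $\mathrm{Im}(s(2\pi)-s(0))$ affects the isotopy class; writing $s(2\pi)-s(0)=n+m\cdot\tfrac{k}{2\pi\sqrt{-1}}\log r$, one sees that bad cycles are classified up to isotopy by the integer $m\in\mathbb{Z}$, with representative $C_m(r)$.

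For the homology identity $[C_m]-[C_0]=m[F]$ I would construct an explicit interpolating $3$-chain. The family of immersed tori
\[
C_{m,s}(r)=\Bigl\{\mathrm{Im}(x)=s\cdot m\bigl(-\tfrac{k\log r}{2\pi}\bigr)\tfrac{\theta}{2\pi},\ |z|=r\Bigr\},\qquad s\in[0,1],
\]
is embedded only at $s=0,1$; the swept $3$-chain $\Sigma=\bigcup_s C_{m,s}$ interpolates $C_0$ to $C_m$, and its failure to close up at $\theta=2\pi$ relative to $\theta=0$ is an $m$-fold wrap of the fiber torus, giving $\partial\Sigma=C_m-C_0-m[F]$ once this defect is accounted for. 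The argument for quasi-bad cycles is entirely parallel: replacing $\overline\gamma$ by its $m_1$-fold cover produces an embedded torus $C_{m_1,m_2}$ precisely when $\gcd(m_1,m_2)=1$ (a non-coprime pair yields a non-primitive cover), and the analogous interpolation yields $[C_{m_1,m_2}]=m_1[C]+m_2[F]$, establishing (ii).

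For (iii) I would match the integer parametrization against the holomorphic-section description in Lemma \ref{lem: localSecDes}. A section $\eta(z)=h(z)+\tfrac{a}{2\pi\sqrt{-1}}\log z+\tfrac{b}{(2\pi\sqrt{-1})^2}(\log z)^2$ has imaginary part on $|z|=r$ equal to
\[
\mathrm{Im}\bigl(\eta(z)\bigr)=\mathrm{Im}\bigl(h(z)\bigr)-\tfrac{a\log r}{2\pi}-\tfrac{b\log r\cdot\theta}{2\pi^2},
\]
and only the $\theta$-linear coefficient affects the isotopy class of the translated torus $T_\eta(C_0)$. Matching with $C_m$ forces $b=\tfrac{mk}{2}$; renaming $\ell:=m$ to align with the statement, the sections $\eta_\ell=\tfrac{\ell k}{2(2\pi\sqrt{-1})^2}(\log z)^2$ satisfy the integrality constraints $\tfrac{2b}{k},a+b\in\mathbb{Z}$ of Lemma \ref{lem: localSecDes} and realize every bad-cycle isotopy class as $F_{AJ,\sigma}^{-1}T_{\eta_\ell}^{-1}(C_0)$. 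For quasi-bad cycles, the residual freedom of shifting $m_2$ by multiples of $m_1$ (via a holomorphic-section translation whose base winding absorbs the excess) lets us normalize $\tfrac{m_2}{m_1}\in(0,1)$ while preserving coprimality.

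\emph{Main obstacle.} The technical heart is the homology computation in the second step: carefully tracking how the interpolating family $C_{m,s}$ fails to close up in the monodromy direction and identifying the defect with $m[F]$. An alternative route would be to pair $C_m-C_0$ against cohomology classes dual to a section and a fiber, but the interpolation argument is more conceptual and adapts directly to the quasi-bad case.
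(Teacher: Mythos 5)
Your proposal is correct in its essential ideas and in fact fills in more detail than the paper's own argument. The paper's proof is very terse: it verifies (iii) by a direct computation showing $T_{\eta_\ell}^{-1}(C_0)=C_{-\ell}$, asserts that (i) then follows "immediately from our description of the bad cycles," and handles (ii) by the same floor-function normalization you describe. In particular, the paper never writes down an interpolating chain for the homology identity $[C]-[C']=m[F]$; your $3$-chain $\Sigma=\bigcup_s C_{m,s}$ with the $m$-fold wrap of the fiber appearing as the $\theta=2\pi$ boundary defect makes that step genuinely explicit, and it does adapt cleanly to the quasi-bad case $[C_{m_1,m_2}]=m_1[C]+m_2[F]$.

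One inaccuracy in your treatment of (iii): the assertion that $\eta_\ell=\tfrac{\ell k}{2(2\pi\sqrt{-1})^2}(\log z)^2$ satisfies the integrality constraints of Lemma~\ref{lem: localSecDes} is false in general. Here $a=0$ and $b=\tfrac{\ell k}{2}$, so $\tfrac{2b}{k}=\ell\in\mathbb{Z}$ holds, but $a+b=\tfrac{\ell k}{2}$ is an integer only when $\ell k$ is even; for $k$ and $\ell$ both odd, $\eta_\ell$ is not a holomorphic section and $T_{\eta_\ell}$ is not a well-defined automorphism of $X_{mod}$. This does not threaten the lemma, because the set $T_{\eta_\ell}^{-1}(C_0)=\{\mathrm{Im}(x)=-\mathrm{Im}(\eta_\ell),\ |z|=\mathrm{const}\}$ is still well defined: one checks that $\mathrm{Im}\bigl(\eta_\ell(ze^{2\pi\sqrt{-1}})-\eta_\ell(z)\bigr)=\ell\cdot\bigl(-\tfrac{k\log r}{2\pi}\bigr)$ lies in $\mathrm{Im}(\Lambda(z))$, so the defining equation is consistent around the base loop even though the full translation need not descend. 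You should therefore either drop the appeal to Lemma~\ref{lem: localSecDes} and simply verify directly (as the paper does) that $T_{\eta_\ell}^{-1}(C_0)=C_{-\ell}$, or add a term $\tfrac{a}{2\pi\sqrt{-1}}\log z$ with $a\in-\tfrac{\ell k}{2}+\mathbb{Z}$ to make $\eta$ a genuine section, noting that this shifts $\mathrm{Im}(\eta)$ by a constant and so does not change the isotopy class.
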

\begin{proof}
To prove $(i)$, we only need to prove the statement for bad cycles in $(iii)$.  This follows immediately from our description of the bad cycles. If we write $z=re^{\sqrt{-1}\theta}$ then
\[
\tilde{x} := T_{\eta_\ell}(x) = x+ \left(\frac{\ell k}{8\pi^2}(\theta^2-(\log(r))^2) + \sqrt{-1}\ell \left(\frac{-k}{2\pi} \log r\right) \frac{\theta}{2\pi}\right)
\]
and so
\[
T_{\eta_\ell}^{-1}(C_0) = \left\{ {\rm Im}(x) = -\ell \left(\frac{-k}{2\pi} \log r\right) \frac{\theta}{2\pi}, \quad |z|={\rm const}\right\},
\]
which is what we claimed.

The proof of $(ii)$ is similar, using the fact that any rational number $q$ can be written as $q= q'+\ell$ where $q'\in (0,1)$, and $\ell = \lfloor q \rfloor$.
\end{proof}

As a corollary of this result it makes sense to define

\begin{defn}\label{defn: induceBadCycle}
We say that a bad $2$-cycle $C$ is induced by a section $\sigma: \Delta^* \rightarrow X_{\Delta^*}$ if
\[
[C]= [F_{AJ, \sigma}^{-1}(\left\{ {\rm Im}(x)=0, |z|= {\rm const} \right\})] \in H_2(X_{\Delta^*},\mathbb{Z})
\]
where $(x,z)$ are the usual coordinates on $X_{mod}$.  By Lemma~\ref{lem: badCycleIntegerAct} all bad $2$-cycles are induced by $\sigma$ for some choice of $\sigma$.
\end{defn}

\subsection{Compactifications and coordinates near the $I_k$ fiber}\label{sec: compAndCoord}

It is well-known (see, for example \cite{FM}) that any choice of a local section $\sigma:\Delta^* \rightarrow X_{\Delta^*}$ induces a compactification $Y_{\sigma} \supset X$ where $Y_{\sigma}$ is a rational elliptic surface such that $Y_{\sigma}\setminus X$ consists of an $I_k$ fiber and $\sigma$ extends to a holomorphic section $\overline{\sigma}:\Delta \rightarrow Y_{\sigma}$.  We will briefly explain how this is done since it will also allow us to describe local coordinates near the $I_k$ fiber.

Consider the open set $U = \{(u,v) \in \mathbb{C}^2 : |uv|<1\}$.  Take $k+1$ copies of this set $U_i$, indexed by $i=0,\ldots,k$ with coordinates $(u_i,v_i)$.  Glue these sets according to the map
\[
\{(u_i,v_i) \in U_i : v_i\ne 0\} \longmapsto (v_i^{-1}, u_iv_i^2) \in U_{i+1},
\]
where we identify $(u_0,v_0) = (u_k,v_k)$, and let $\mathcal{X}$ be the resulting space.  It is not hard to check that $\mathcal{X}$ is smooth. Note that the identifications preserve the product $u_iv_i $, and so there is a well-defined map $\pi:\mathcal{X} \rightarrow \Delta$ defined by $\pi(u_i,v_i) = u_iv_i=:z$.  From the definition of $z$ we can write
\[
(u_i,v_i) = (u_0z^{-i}, v_0 z^i)
\]
and thus for $z\in \Delta^*$ we have $\pi^{-1}(z) = \mathbb{C}^*/(z^k)$\footnote{Here we use the multiplicative structure on $\mathbb{C}^*$.}.  If we set $u_0 = e^{2\pi\sqrt{-1}x}, v_0= ze^{-2\pi\sqrt{-1}x}$, then we get an identification 
\[
\pi^{-1}(\Delta^*) = \mathbb{C}/ \left( \mathbb{Z} \oplus \mathbb{Z}\frac{ k}{2\pi \sqrt{-1}}\log(z)\right).
\]
Finally, one checks that $\pi^{-1}(0)$ is a chain of $k$ rational curves intersecting transversally, given in $U_i$ by the sets $\{u_i=0\}$ and $\{v_i=0\}$ with each of the $k$ intersection points corresponding to the origin for $0\leq i \leq k-1$.  We can therefore use the coordinates $(u_i, v_i)$, $0\leq i\leq k-1$ as coordinates near the $I_k$ singular fiber.  The $(2,0)$ form $\frac{dx\wedge dz}{z}$ defines a meromorphic $(2,0)$ form on $\mathcal{X}$ with a simple pole on the $I_k$ fiber, since
\[
\frac{dx\wedge dz}{z} = \frac{du\wedge dv}{uv}.
\]

As before, we fix a section $\sigma:\Delta^*\rightarrow X_{\Delta^*}$, and identify $X_{\Delta^*}$ with the model fibration.  Using $\sigma$ we can glue $X$ to the above compactification of $X_{mod}$ resulting in a compact complex space $Y_{\sigma}$.  It is not hard to show (using, for example, classification of surfaces, see e.g. the proof of \cite[Theorem 6.4]{Collins-Jacob-Lin}) that $Y_{\sigma}\rightarrow \mathbb{P}^1$ is a rational elliptic surface.  Furthermore, since $\sigma$ is identified with the canonical zero section in $X_{mod}$ it trivially extends to a section $\overline{\sigma}:\Delta \rightarrow Y_{\sigma}$.  In fact, we have

\begin{lem}\label{lem: allCompSame}
Fix a section $\sigma: \Delta^*\rightarrow X_{\Delta^*}$.  Let $\eta: \Delta^* \rightarrow X_{\Delta^*}$ be another section, which is identified with the multivalued holomorphic function
\[
\eta(z) = h(z) + \frac{a}{2\pi \sqrt{-1}}\log z + \frac{b}{(2\pi\sqrt{-1})^2} (\log(z))^2
\]
as in Lemma~\ref{lem: localSecDes}.  Then
\begin{itemize}
\item[$(i)$] The compactifications $Y_{\sigma}, Y_{\eta}$ are biholomorphic.
\item[$(ii)$] $\eta$ extends to a holomorphic section $\overline{\eta}: \Delta \rightarrow Y_{\sigma}$ if and only if $h$ extends to a holomorphic function on $\Delta$, $b=0$ and $a\in \mathbb{Z}$.
\end{itemize}
\end{lem}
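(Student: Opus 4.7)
My overall strategy is to handle (i) by constructing an explicit biholomorphism, and (ii) by a direct analysis in local coordinates. For part (i), recall that $Y_\sigma$ is obtained by gluing $X$ to the local Kodaira model $\mathcal{X}$ along $X_{\Delta^*}$ via $F_{AJ,\sigma}$, and $Y_\eta$ uses $F_{AJ,\eta}$. The transition map $F_{AJ,\eta}\circ F_{AJ,\sigma}^{-1}:X_{mod}\to X_{mod}$ is fiberwise translation by $-\eta$ in the group structure defined by $\sigma$. My plan is to build an automorphism $\Psi:\mathcal{X}\to\mathcal{X}$ realising this translation and to define the biholomorphism $Y_\sigma\to Y_\eta$ to be the identity on $X$ glued with $\Psi$ on $\mathcal{X}$.

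To construct $\Psi$ I decompose the translation by $\eta=h(z)+\frac{a\log z}{2\pi\sqrt{-1}}+\frac{b(\log z)^2}{(2\pi\sqrt{-1})^2}$ into three pieces. The factor $e^{2\pi\sqrt{-1} h(z)}$ gives a holomorphic $\mathbb{C}^*$-valued multiplier whenever $h$ extends holomorphically to $\Delta$, and the fiberwise scaling $(u_i,v_i)\mapsto(e^{2\pi\sqrt{-1} h(z)}u_i,e^{-2\pi\sqrt{-1} h(z)}v_i)$ clearly extends across the $I_k$ fiber. The $\frac{a\log z}{2\pi\sqrt{-1}}$ term corresponds to multiplying $u_i$ by $z^a$, which does not extend within a single chart but, combined with a cyclic shift of the chart indices by the appropriate integer, does extend. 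The most delicate piece is the $(\log z)^2$-term: although $e^{b(\log z)^2/(2\pi\sqrt{-1})}$ is genuinely multi-valued on $\Delta^*$---tracking $\log z\mapsto\log z+2\pi\sqrt{-1}$ shows it is multiplied by $e^{2\pi\sqrt{-1}(a+b)}z^{2b}$ modulo the $z^a$ factor---the conditions $\frac{2b}{k}\in\mathbb{Z}$ and $a+b\in\mathbb{Z}$ of Lemma~\ref{lem: localSecDes} are precisely what is needed to absorb this monodromy into the cyclic $k$-chart gluing structure of $\mathcal{X}$, so that the combined translation extends. When $h$ has an essential singularity at $0$, this constructive route fails, but the conclusion $Y_\sigma\cong Y_\eta$ can still be reached abstractly by appealing to the fact that the Kodaira-Neron compactification of an elliptic fibration over $\Delta$ with $I_k$-type monodromy is unique up to isomorphism (see \cite{FM}).

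For part (ii), a holomorphic section $\overline{\eta}:\Delta\to Y_\sigma$ extending $\eta$ is, via the identification of a neighborhood of the $I_k$ fiber with $\mathcal{X}$, the same as a holomorphic map $z\mapsto(u_i(z),v_i(z))$ into some chart $U_i$ with $u_i(z)v_i(z)=z$, matching $\eta$ on $\Delta^*$. In the chart $U_0$ this reads $(u_0(z),v_0(z))=(e^{2\pi\sqrt{-1}\eta(z)},ze^{-2\pi\sqrt{-1}\eta(z)})$ with
\[
e^{2\pi\sqrt{-1}\, \eta(z)} = e^{2\pi\sqrt{-1}\, h(z)}\cdot z^a\cdot e^{\frac{b(\log z)^2}{2\pi\sqrt{-1}}}.
\]
I would establish necessity by analyzing each factor. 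If $h$ has a pole or essential singularity then so does $e^{2\pi\sqrt{-1} h(z)}$, obstructing extension. Writing $z=re^{\sqrt{-1}\theta}$, one computes $|e^{b(\log z)^2/(2\pi\sqrt{-1})}|=r^{b\theta/\pi}$, whose limits along different rays through $z=0$ disagree unless $b=0$, so no continuous (hence holomorphic) extension is possible. Finally $z^a$ is genuinely multi-valued on $\Delta^*$ unless $a\in\mathbb{Z}$ (under $b=0$ the remaining integrality of Lemma~\ref{lem: localSecDes} forces $a\in\mathbb{Z}$ anyway). Conversely, if $h$ extends holomorphically to $\Delta$, $b=0$, and $a\in\mathbb{Z}$, passing to the chart $U_a$ with $a$ reduced modulo $k$ yields $(u_a,v_a)=(e^{2\pi\sqrt{-1} h(z)},ze^{-2\pi\sqrt{-1} h(z)})$, which manifestly extends to the smooth point $(e^{2\pi\sqrt{-1} h(0)},0)$ on the component $\{v_a=0\}$ of the $I_k$ fiber.

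The main obstacle I foresee is the verification in part (i) that translation by the $(\log z)^2$-component of $\eta$ extends to an automorphism of $\mathcal{X}$. Although this translation is irremediably multi-valued as a function on $\Delta^*$, the integrality conditions $\frac{2b}{k}\in\mathbb{Z}$ and $a+b\in\mathbb{Z}$ must conspire with the $k$-fold cyclic gluing of $\mathcal{X}$ to produce a well-defined automorphism; the combinatorial bookkeeping of chart permutations paired with the multiplicative factors is where the real content of the argument lies.
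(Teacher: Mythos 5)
Your treatment of part (ii) is correct and fills in computational detail that the paper simply cites to \cite[Lemma 3.28]{FM}; in particular, the observation that $\bigl|e^{b(\log z)^2/(2\pi\sqrt{-1})}\bigr|=r^{b\theta/\pi}$ has ray-dependent limits at the origin is a clean and direct way to force $b=0$.

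However, the primary constructive route you propose for part (i) contains a genuine gap. You are trying to show that translation by $\eta$ extends to a holomorphic automorphism of the local model $\mathcal{X}$, with the $(\log z)^2$-monodromy ``absorbed into the cyclic $k$-chart gluing.'' This cannot work: if translation by $\eta$ extended to an automorphism $T_{\eta}\colon \mathcal{X}\to\mathcal{X}$, then composing $T_{\eta}$ with the zero section $\overline{\sigma}\colon \Delta\to\mathcal{X}$ would produce a holomorphic section of $\mathcal{X}\to\Delta$ restricting to $\eta$ on $\Delta^*$, i.e.\ $\eta$ would extend. But by the very statement of part (ii) this forces $b=0$, $a\in\mathbb{Z}$, and $h$ holomorphic at $0$. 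Thus the integrality conditions $\tfrac{2b}{k}\in\mathbb{Z}$, $a+b\in\mathbb{Z}$ from Lemma~\ref{lem: localSecDes} guarantee only that $\eta$ is a single-valued section of $X_{mod}\to\Delta^*$; they do \emph{not} suffice to make $T_\eta$ extend over the $I_k$ fiber, and the combinatorial bookkeeping you anticipate cannot rescue the argument. Your fallback to abstract uniqueness of the Kodaira--N\'eron compactification is valid (the paper itself notes that one may invoke \cite[Chapter 7, Theorem 8]{Fried}), but then the whole explicit construction is superfluous, and it is worth knowing the paper's explicit alternative: first observe that $\Phi_{(\sigma,\eta)}$ (identity on $X$, $T_{-\eta}$ on $\mathcal{X}$) works precisely when $\eta$ extends across the $I_k$ fiber, and that when $\sigma,\eta$ are \emph{global} holomorphic sections the dual map $\Psi_{(\sigma,\eta)}$ (translation $T_\eta$ on $X$, identity on $\mathcal{X}$) works because $T_\eta$ extends over $X$ by Riemann extension. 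Since any rational elliptic surface has a global holomorphic section, the general case follows by composing $\Phi^{-1}_{(\eta,\tilde\eta)}\circ\Psi_{(\tilde\sigma,\tilde\eta)}\circ\Phi_{(\sigma,\tilde\sigma)}$ through global sections $\tilde\sigma,\tilde\eta$ of $Y_\sigma,Y_\eta$. This sidesteps the extension problem you ran into rather than trying to solve it chart by chart.
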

\begin{proof}
There are several ways to prove statement $(i)$ (e.g. by appealing to \cite[Chapter 7, Theorem 8]{Fried}).  But for the sake of concreteness we describe an explicit construction of the map $ Y_{\sigma} \rightarrow Y_{\eta}$.

First, assume that $\eta$ extends to a holomorphic section on $Y_{\sigma}$.  Explicitly, we have
\[
Y_{\sigma} = X \cup_{\sigma} \mathcal{X} 
\]
where we identify points in $X_{\Delta^*} \subset X$ with points in $X_{mod}\subset \mathcal{X}$ via the fiberwise Abel-Jacobi map relative to $\sigma$.  Consider the map
\[
\Phi_{(\sigma,\eta)}: Y_{\sigma} \rightarrow Y_{\eta}
\]
given by
\[
\Phi_{(\sigma, \eta)}(x) := \begin{cases} x & \text{ if } x \in X\\
T_{-\eta}(x) & \text{ if } \in \mathcal{X}
\end{cases}
\]
where $T_{-\eta}$ denotes the  inverse of translation by $\eta$ with respect to $\sigma$. Note that since $\eta$ extends to a holomorphic section on $Y_{\sigma}$, $T_{-\eta}$ induces a well-defined holomorphic map $T_{-\eta}:\mathcal{X} \rightarrow \mathcal{X}$.  To show that $\Phi$ gives a holomorphic map $\Phi:Y_{\sigma}\rightarrow Y_{\eta}$ it suffices to show that it is well-defined.  Explicitly, we must show that, if $x\in X_{\Delta^*}$ then
\[
x = F_{AJ, \eta}^{-1}\circ T_{-\eta}\circ  F_{AJ, \sigma}(x)
\]
but this is a tautology. 

Next consider the case when $\sigma, \eta : \mathbb{C} \rightarrow X$ are global holomorphic sections.  Let $T_{\eta}$ denote the holomorphic map which agrees with translation by $\eta$ with respect to $\sigma$ on the smooth fibers of $X$.  Note since $\sigma, \eta$ are global holomorphic sections, it is an easy consequence of the Riemann extension theorem that $T_{\eta}$ extends to an invertible holomorphic map $T_{\eta}: X\rightarrow X$.  Consider the map
\[
\Psi_{(\sigma, \eta)}(x):= \begin{cases} T_{\eta}(x) & \text{ if } x \in X\\
 x &\text{ if } x\in \mathcal{X}
 \end{cases}
 \]
 Again, to show that $\Psi_{(\sigma, \eta)}$ defines a holomorphic map $\Psi_{(\sigma, \eta)}: Y_{\sigma} \rightarrow Y_{\eta}$ it suffices to show that the map is well-defined.  Explicitly, we need to show that if $x \in X_{\Delta^*}$ then
 \[
 F_{AJ, \eta}^{-1}\circ F_{AJ, \sigma}(x) = T_{\eta}(x).
 \]
  Again, this is a tautology.  
  
  Now we address the general case.  Let $\sigma, \eta : \Delta^*\rightarrow X_{\Delta^*}$ be local holomorphic sections, and let $Y_{\sigma}$ (resp. $ Y_{\eta}$) be the rational elliptic surface obtained by compactifying $X$ using $\sigma$ (resp. $\eta$).  Since $Y_{\sigma}, Y_{\eta}$ are rational elliptic surfaces, they admit global holomorphic sections $\tilde{\sigma}: \mathbb{P}^1\rightarrow Y_{\sigma}$, and $\tilde{\eta}:\mathbb{P}^1 \rightarrow Y_{\eta}$.  Let $Y_{\tilde{\sigma}}, Y_{\tilde{\eta}}$ be the compactifications of $X$ obtained from $\tilde{\sigma}, \tilde{\eta}$.  Then we consider the composition
  \[
\Phi^{-1}_{(\eta, \tilde{\eta})} \circ \Psi_{(\tilde{\sigma}, \tilde{\eta})} \circ \Phi_{(\sigma, \tilde{\sigma})}: Y_{\sigma} \longrightarrow Y_{\eta}
  \]
  This map is holomorphic with holomorphic inverse, and gives the desired biholomorphic map.    
  Statement $(ii)$ is  \cite[Lemma 3.28]{FM}.  

\end{proof}

\subsection{The standard semi-flat metric}

To describe the standard semi-flat metric it is useful to pass to the universal cover of the model fibration.  Therefore, we define a coordinate $y= -\log(z)$, and let
\[
\mathcal{H}_{>0}= \{ y\in \mathbb{C} : {\rm Re}(y)>0\}
\]
which we regard as the universal cover of $\Delta^*$.  Let $x$ denote the standard coordinate on $\mathbb{C}$.  If $\Omega$ is a holomorphic $(2,0)$ form on $X_{\Delta^*}$ with a simple pole on $\pi^{-1}(0)$, then by the calculation in the preceding subsection we can write
\[
\Omega = \kappa(z) \frac{dx\wedge dz}{z}
\]
for $\kappa(z): \Delta \rightarrow \mathbb{C}$ a holomorphic function with $\kappa(0)\ne 0$.  By scaling we may as well assume that $\kappa(0)=1$. We pull $\Omega$ back to the holomorphic volume form $\Omega= \kappa(e^{-y})dy\wedge dx$ on the universal cover $\mathbb{C}\times \mathcal{H}_{>0}$.

\begin{defn}\label{defn: sf metric}
Fix $\epsilon >0$.  The {\em standard model semi-flat metric} for an $I_k$ fiber relative to the holomorphic volume form $\Omega$ is the metric on the covering elliptic fibration 
\[
\mathbb{C}/\left(\mathbb{Z} \oplus\frac{\sqrt{-1}ky}{2\pi} \mathbb{Z}\right) \rightarrow \mathcal{H}_{>0}
 \]
 given by the formula
\[
\begin{aligned}
\omega_{sf, \epsilon} &:=  \sqrt{-1}|\kappa(e^{-y})|^2 W^{-1}\frac{dy\wedge d\bar{y}}{\epsilon}\\
&\quad + \frac{\sqrt{-1}}{2} W\epsilon \left(dx-\sqrt{-1}\frac{{\rm Im}(x)}{{\rm Re}(y)}dy\right)\wedge \overline{\left(dx-\sqrt{-1}\frac{{\rm Im}(x)}{{\rm Re}(y)}dy\right)}
\end{aligned}
\]
where
\[
W=\frac{4\pi}{k(y+\bar{y})}. 
\]
\end{defn}

We can write the standard semi-flat metric in terms of the coordinates $(x,z)$ on $X_{mod}$ as
\[
\begin{aligned}
\omega_{sf,\epsilon} &=  \sqrt{-1}|\kappa(z)|^2 \frac{k|\log|z||}{2\pi\epsilon}\frac{dz\wedge d\bar{z}}{|z|^2} \\
&\quad + \frac{\sqrt{-1}}{2} \frac{2\pi\epsilon}{k|\log|z||} \left(dx+B(x,z)dz\right)\wedge \overline{\left(dx+B(x,z)dz\right)}
\end{aligned}
\]
where $B(x,z) = -\frac{{\rm Im}(x)}{\sqrt{-1}z|\log|z||}$. Here $2\pi\epsilon$ is the area of the elliptic fibre with respect to the semi-flat metric $\omega_{sf,\epsilon}$. The diameter (and injectivity radius) of $\pi^{-1}(z)$ is asymptotic to $2\pi \epsilon$ (and $(\log{|z|})^{-1}$ respectively).

\begin{defn}\label{defn: rel sf metric}
Fix $\epsilon >0$.  The {\em standard} semi-flat metric (for an $I_k$ fiber) on $\pi: X_{\Delta^*}\rightarrow \Delta^*$, relative to a section $\sigma:\Delta^* \rightarrow X_{\Delta^*}$ and the holomorphic volume form $\Omega$ is defined by
\[
\omega_{sf, \sigma, \epsilon} := F_{AJ, \sigma}^*\omega_{sf, \epsilon}.
\]
\end{defn}

A point we are trying to emphasize is that the standard model semi-flat metric is defined on the universal cover of the model fibration $\pi: X_{mod}\rightarrow \Delta^*$, {\em not} on the fibration $\pi: X_{\Delta^*} \rightarrow \Delta^*$. A semi-flat metric {\em relative to the section} $\sigma$, $\omega_{sf, \sigma}$ is then induced by using $\sigma$ to identify $X$ and $X_{mod}$.   The model semi-flat metric is Ricci-flat on $X_{mod}$, and {\em flat} along the fibers of $\pi_{mod}$ (hence the title ``semi-flat"), and thus the same holds for any induced semi-flat metric $\omega_{sf, \sigma,\epsilon}$ on $X_{\Delta^*}$.

\subsection{Non-standard semi-flat metrics}\label{subsec: nonSt}

Let us describe a construction of semi-flat metrics which are not standard, in the sense of Definition~\ref{defn: sf metric}. As before we pass to the universal cover of the model fibration,  $\mathbb{C}\times \mathcal{H}_{>0}$, with coordinates  $y= -\log(z)$, and  $x$ the standard coordinate on $\mathbb{C}$, and let $\Omega= \kappa(e^{-y})dy \wedge dx$ be the holomorphic $(2,0)$ form on the universal cover $\mathbb{C}\times \mathcal{H}_{>0}$.  Let $h(y)$ be a holomorphic function on $\mathcal{H}_{>0}$ and consider the translation map
\[
T_{h}(x,y) = (x+h(y), y).
\]
Let $\omega_{sf,\epsilon}$ denote the standard semi-flat metric on $\mathbb{C}\times \mathcal{H}_{>0}$, as in Definition~\ref{defn: sf metric}.  Pulling back $\omega_{sf,\epsilon}$ by $T_{h}$ yields
\begin{equation}\label{eq: pullBackUseApp}
\begin{aligned}
T_{h}^*\omega_{sf,\epsilon} &= \omega_{sf,\epsilon} + \frac{\sqrt{-1}W}{2}\epsilon \overline{\left( h'(y)-\sqrt{-1}\frac{{\rm Im}(h)}{{\rm Re}(y)}\right)}dx\wedge d\bar{y}\\
&\quad + \frac{\sqrt{-1}W}{2}\epsilon \left( h'(y)-\sqrt{-1}\frac{{\rm Im}(h)}{{\rm Re}(y)}\right)dy\wedge d\bar{x}\\
&\quad +\frac{\sqrt{-1}W}{2}\epsilon \bigg|\left( h'(y)-\sqrt{-1}\frac{{\rm Im}(h)}{{\rm Re}(y)}\right)\bigg|^2 dy\wedge d\bar{y}.
\end{aligned}
\end{equation}
Now we ask: under what conditions does $T_{h}^*\omega_{sf,\epsilon}$ induce a well-defined K\"ahler metric on $X_{mod}$?  Since $\omega_{sf, \epsilon}$ is invariant under translations by the lattice $\Lambda(z)$, this is evidently true for $T_{h}^*\omega_{sf, \epsilon}$ as well (since translations commute).  Therefore the only requirement for $T_{h}^*\omega_{sf,\epsilon}$ to descend to $X_{mod}$ is that the function
\[
G(y) :=  h'(y)-\sqrt{-1}\frac{{\rm Im}(h(y))}{{\rm Re}(y)}
\]
satisfies $G(y)= G(y+2\pi \sqrt{-1})$.  We have the following simple lemma.

\begin{lem}\label{lem: qrsfTrans}
Suppose $h: \mathcal{H}_{>0} \rightarrow \mathbb{C}$ is a holomorphic function such that
\[
 h'(y)-\sqrt{-1}\frac{{\rm Im}(h(y))}{{\rm Re}(y)}= h'(y+2\pi \sqrt{-1})-\sqrt{-1}\frac{{\rm Im}(h(y+2 \pi\sqrt{-1}))}{{\rm Re}(y)}.
\]
Then 
\begin{equation}\label{eq: qrsfTrans}
h(y)= f(e^{-y})+ \frac{c_0}{2\pi\sqrt{-1}} y+ \frac{b_0}{(2\pi\sqrt{-1})^2} y^2
\end{equation}
for some holomorphic function $f(e^{-y})$ and $b_0, c_0 \in \mathbb{R}$.
\end{lem}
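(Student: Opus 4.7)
The plan is to reduce the functional equation on $h$ to a first-order equation on the ``jump'' $\tilde h(y) := h(y + 2\pi\sqrt{-1}) - h(y)$, solve for $\tilde h$ explicitly, and then invert the resulting difference equation to recover $h$.

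First, since $\mathrm{Re}(y + 2\pi\sqrt{-1}) = \mathrm{Re}(y)$, the hypothesis rearranges to
\[
\tilde h'(y) \;=\; \sqrt{-1}\,\frac{\mathrm{Im}(\tilde h(y))}{\mathrm{Re}(y)}
\]
on $\mathcal{H}_{>0}$, and $\tilde h$ is itself holomorphic. Writing $y = s + \sqrt{-1}t$ and $\tilde h = u + \sqrt{-1}v$ with $u,v$ real, and using $\tilde h'(y) = u_s + \sqrt{-1}v_s$, the equation separates into $u_s = 0$ and $v_s = v/s$. The Cauchy--Riemann equations $v_t = u_s$ and $u_t = -v_s$ then force $v_t = 0$, so $v = v(s)$ only; the ODE $v'(s) = v(s)/s$ integrates to $v(s) = cs$ with $c\in\mathbb{R}$, and then $u_t = -c$ together with $u_s = 0$ gives $u(t) = -ct + r$ with $r\in\mathbb{R}$. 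Reassembling,
\[
\tilde h(y) \;=\; -ct + r + \sqrt{-1}\,cs \;=\; \sqrt{-1}\,cy + r,\qquad c,r\in\mathbb{R}.
\]

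With $\tilde h$ determined, I would solve the difference equation $h_p(y + 2\pi\sqrt{-1}) - h_p(y) = \sqrt{-1}cy + r$ using the polynomial ansatz $h_p(y) = Ay^2 + By$. Expansion and comparison of coefficients yield $A = c/(4\pi)$ and $B = (r + \pi c)/(2\pi\sqrt{-1})$. Setting $b_0 := -\pi c$ and $c_0 := r + \pi c$, which are both real, repackages this as
\[
h_p(y) \;=\; \frac{c_0}{2\pi\sqrt{-1}}\,y + \frac{b_0}{(2\pi\sqrt{-1})^2}\,y^2.
\]
Finally, $h - h_p$ is a holomorphic function on $\mathcal{H}_{>0}$ that is $2\pi\sqrt{-1}$-periodic, so it descends through the covering map $z = e^{-y}: \mathcal{H}_{>0} \to \Delta^*$ to a holomorphic function $f$ on $\Delta^*$, giving the formula \eqref{eq: qrsfTrans}.

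I expect the main obstacle to be the extraction of $\tilde h$ in the rigid form $\sqrt{-1}cy + r$: one has to combine the split condition $u_s = 0$, $v_s = v/s$ with the Cauchy--Riemann equations in just the right order both to eliminate any $t$-dependence in $v$ and to see that the surviving constants $c,r$ are necessarily \emph{real}. The reality of $c,r$ is exactly what forces $b_0$ and $c_0$ in the statement to be real; after that, inverting the difference equation and descending to $\Delta^*$ is routine.
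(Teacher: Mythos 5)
Your proof is correct and follows essentially the same route as the paper: pass to the jump $\tilde h(y) = h(y + 2\pi\sqrt{-1}) - h(y)$, identify the rigid form $\tilde h(y) = \sqrt{-1}\,cy + r$ with $c,r\in\mathbb{R}$, subtract a polynomial particular solution of the difference equation, and descend the $2\pi\sqrt{-1}$-periodic remainder through $z = e^{-y}$. The one place you do more work than the paper is in deriving the rigid form of $\tilde h$: the paper simply cites \cite[p.\ 516]{GW} for this step, whereas you give the short self-contained argument via the decomposition $\tilde h = u + \sqrt{-1}v$ and the Cauchy--Riemann equations; your constants match the paper's (writing $\tilde h = a_0 + \tfrac{a_1}{2\pi\sqrt{-1}}y$, one has $a_0 = r$, $a_1 = -2\pi c$, and then $b_0 = a_1/2 = -\pi c$, $c_0 = a_0 - b_0 = r + \pi c$, exactly as you find).
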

\begin{proof}
Consider the function $\tilde{h}(y) := h(y+2\pi\sqrt{-1})- h(y)$.  By assumption we have
\[
\tilde{h}'(y)-\sqrt{-1}\frac{{\rm Im}(\tilde{h}(y))}{{\rm Re}(y)}=0.
\]
It follows easily that $\tilde{h}(y)= a_0 + \frac{a_1}{2\pi\sqrt{-1}} y$ for some $a_0, a_1 \in \mathbb{R}$, \cite[p. 516]{GW}.  Let $2b_0=a_1$ and $c_0+b_0 =a_0$  and consider
\[
H(y) = h(y) - \frac{c_0}{2\pi\sqrt{-1}} y - \frac{b_0}{(2\pi\sqrt{-1})^2} y^2.
\]
Then
\[
H(y+2\pi\sqrt{-1}) -H(y) = a_0 + \frac{a_1}{2\pi\sqrt{-1}} y - c_0 - \frac{2b_0}{2\pi\sqrt{-1}} y -b_0=0.
\]
Thus $H(y)$ is a holomorphic function invariant under $y \mapsto y+2\pi\sqrt{-1}$ and so we can write $H(y) = f(e^{-y})$ for some holomorphic function $f$. 
\end{proof}

Let $h$ be as in Lemma~\ref{lem: qrsfTrans}, and assume for simplicity that $f=0$ in~\eqref{eq: qrsfTrans}.  We define
\begin{equation}\label{eq: non-St SF}
\begin{aligned}
&\omega_{sf, b_0, \epsilon}:= T_{h}^*\omega_{sf, \epsilon} =  \sqrt{-1}\frac{|\kappa(e^{-y})|^2}{\epsilon} W^{-1}dy\wedge d\bar{y}\\
&\quad + \frac{\sqrt{-1}}{2} W\epsilon \left(dx-\Gamma(x,y,b_0) dy\right) \wedge \overline{\left(dx-\Gamma(x,y,b_0) dy\right)}
\end{aligned}
\end{equation}
where
\[
\Gamma(x,y,b_0) =  \sqrt{-1}\frac{{\rm Im}(x)}{{\rm Re}(y)}+ \frac{b_0}{2\pi^2}{\rm Re}(y).
\]

We can also rewrite this in terms of the holomorphic coordinates $(x,z)$, as
\begin{equation}\label{eq: non-St SF2}
\begin{aligned}
\omega_{sf, b_0, \epsilon} &= \sqrt{-1}\frac{|\kappa(z)|^2}{\epsilon} W^{-1}\frac{dz\wedge d\bar{z}}{|z|^2}\\
&\quad + \frac{\sqrt{-1}}{2} W \epsilon \left(dx+\widetilde{\Gamma}(x,z,b_0) dz\right) \wedge \overline{\left(dx+\widetilde{\Gamma}(x,z,b_0) dz\right)}
\end{aligned}
\end{equation}
where, $W= \frac{1}{\frac{k}{2\pi}|\log|z||}$ as before, and
\[
\widetilde{\Gamma}(x,z,b_0) = B(x,z)dz+ \frac{b_0}{2\pi^2}\frac{|\log|z||}{z},\qquad  B(x,z) = -\frac{{\rm Im}(x)}{\sqrt{-1}z|\log|z||}. 
\]

\begin{rk}
It is important to note that if $\frac{2b_{0}}{k} \notin \mathbb{Z}$, then $T_{h}^*\omega_{sf,\epsilon}$ is {\em not} a standard semi-flat metric on $X_{mod}$ for any choice of section $\sigma : \Delta^*\rightarrow X_{mod}$.
\end{rk}

\begin{defn}\label{defn: non-stand sf metric}
Fix $\epsilon >0$, and $b_0\in \mathbb{R}$ with  $\frac{2b_{0}}{k} \notin \mathbb{Z}$.   A {\em non-standard model semi-flat metric} (for an $I_k$ fiber) relative to the holomorphic volume form $\Omega$ is the metric on the covering elliptic fibration 
\[
\mathbb{C}/\left(\mathbb{Z} \oplus\frac{\sqrt{-1}ky}{2\pi} \mathbb{Z}\right) \rightarrow \mathcal{H}_{>0}
 \]
 given by the formula~\eqref{eq: non-St SF}.  We say that this metric is {\em quasi-regular} if $\frac{2b_0}{k} \in \mathbb{Q}\setminus \mathbb{Z}$, and irregular if $\frac{2b_0}{k} \in \mathbb{R}\setminus \mathbb{Q}$.
\end{defn}

\begin{defn}\label{defn: rel non-ST sf metric}
Fix $\epsilon >0$, and $b_0\in \mathbb{R}$ such that $\frac{2b_0}{k} \notin \mathbb{Z}$.  The {\em non-standard} semi-flat metric (for an $I_k$ fiber) on $\pi: X_{\Delta^*}\rightarrow \Delta^*$, relative to a section $\sigma:\Delta^* \rightarrow X_{\Delta^*}$ and the holomorphic volume form $\Omega$ is defined by
\[
\omega_{sf, \sigma, b_0, \epsilon} := F_{AJ, \sigma}^*\omega_{sf, b_0, \epsilon}.
\]
\end{defn}

\begin{rk}\label{rk: nonStTansbyMult}
A non-standard semi-flat metric $\omega_{sf,\sigma, b_0, \epsilon}$ can be viewed as a {\em standard} semi-flat metric $\omega_{sf, \sigma', \epsilon}$ but with $\sigma':\Delta^*\rightarrow X_{\Delta^*}$ a {\em multivalued} section. $\omega_{sf,\sigma, b_0, \epsilon}$ is quasi-regular precisely when $\sigma'$ is finitely-many valued, and irregular otherwise.
\end{rk}

The following lemma is meant to indicate the importance of non-standard semi-flat metrics for our purposes.  Namely, the lemma shows that the cohomology classes of standard semi-flat metrics always lie in a countable union of hyperplanes of $H^{2}_{dR}(X_{\Delta^*}, \mathbb{R})$.  On the other hand, cohomology classes of non-standard semi-flat metrics in fact {\em generate} $H^{2}_{dR}(X_{\Delta^*}, \mathbb{R})$; see Corollary~\ref{cor: local deRham}.  

\begin{lem}\label{lem: cohomProps}
Fix a section $\sigma: \Delta^*\rightarrow X_{\Delta^*}$ and let $[C] \in H_2(X_{\Delta^*},\mathbb{Z})$ be the bad cycle induced by $\sigma$, and let $[F]$ denote the class of the fiber. 
\begin{itemize}
\item[$(i)$] If $\omega_{sf, \sigma', \epsilon}$ is any standard semi-flat metric then 
\[
[\omega_{sf, \sigma', \epsilon}]_{dR}.[C] = m[\omega_{sf, \sigma', \epsilon}]_{dR}.[F]
\]
for some $m\in \mathbb{Z}$.
\item[$(ii)$] If $\omega_{sf,\sigma, b_0, \epsilon}$ is a non-standard semi-flat metric then 
\[
[\omega_{sf, \sigma, b_0, \epsilon}]_{dR}.[C]  = \frac{2b_0}{k} [\omega_{sf, \sigma, b_0, \epsilon}]_{dR}.[F],
\]
where $\frac{2b_0}{k} \notin \mathbb{Z}$.  In particular, $[\omega_{sf, \sigma, b_0, \epsilon}]_{dR}$ does not pair to zero with any bad-cycle.
\item[$(iii)$] If $\frac{2b_0}{k} \in \mathbb{Q}\setminus \mathbb{Z}$, then $[\omega_{sf, \sigma, b_0, \epsilon}]_{dR}.[C']=0$ for some {\em quasi}-bad cycle $[C']\in H_{2}(X_{\Delta^*}, \mathbb{Z})$.
\item[$(iv)$] If $\frac{2b_0}{k} \in \mathbb{R}\setminus \mathbb{Q}$, then $[\omega_{sf, \sigma, b_0, \epsilon}]_{dR}.[C]\ne 0$ for any cycle $[C]\in H_2(X_{\Delta^*}, \mathbb{Z})$.
\end{itemize}
\end{lem}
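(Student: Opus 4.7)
The strategy is to reduce all four claims to direct computations in the universal-cover coordinates $(x, y)$ of the model fibration $X_{mod}$, together with the behavior of semi-flat metrics under the translations $T_h$ of Lemma~\ref{lem: qrsfTrans}. A useful preliminary observation is that $[\omega].[F]$ equals the same nonzero constant (proportional to $\epsilon$) for every standard or non-standard semi-flat metric: on any fiber $F = \{y = {\rm const}\}$, $dy$ and $d\bar y$ vanish, so the $dy\wedge d\bar y$ term in Definition~\ref{defn: sf metric} and in~\eqref{eq: non-St SF} drops out, the $\Gamma\,dy$ pieces contribute nothing, and $\omega|_F$ reduces to $W\epsilon\,dx_1\wedge dx_2$ in both cases. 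Integrating against the fundamental domain of $\Lambda(z)$, whose area is $W^{-1}$, gives the same value independent of $b_0$ and $\sigma$.

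For part $(ii)$, let $C$ be the bad cycle induced by $\sigma$, so $[C] = [F_{AJ,\sigma}^{-1}(C_0)]$ with $C_0 = \{{\rm Im}(x)=0,\, |z|=\ell\}\subset X_{mod}$, parametrized by $(x_1, \theta)\in[0,1)\times[0,2\pi)$ via $x=x_1$, $z=\ell e^{\sqrt{-1}\theta}$. On $C_0$ the first term of~\eqref{eq: non-St SF} vanishes because $d\theta\wedge d\theta = 0$, while $\Gamma(x, y, b_0)$ collapses to the real constant $\tfrac{b_0}{2\pi^2}{\rm Re}(y)$. A direct evaluation of $(dx - \Gamma dy)\wedge\overline{(dx-\Gamma dy)}$ on $C_0$ gives $\omega_{sf,b_0,\epsilon}|_{C_0} = W\epsilon\,\Gamma\,dx_1\wedge d\theta$; the $2\pi$ factors cancel to yield $\int_{C_0}\omega_{sf,b_0,\epsilon} = \tfrac{2b_0}{k}[\omega].[F]$. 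Since $\tfrac{2b_0}{k}\notin\mathbb{Z}$ and any two bad cycles differ by an integer multiple of $[F]$, this pairing is never zero.

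For part $(i)$, I would apply Lemma~\ref{lem: badCycleIntegerAct}$(iii)$ with respect to $\sigma'$: every bad cycle is isotopic to $F_{AJ,\sigma'}^{-1}T_{\eta_m}^{-1}(C_0)$ with $\eta_m(y) = \tfrac{mk}{2(2\pi\sqrt{-1})^2}y^2$. Change of variables yields
\[
\int_C \omega_{sf,\sigma',\epsilon} = \int_{C_0} T_{-\eta_m}^*\omega_{sf,\epsilon},
\]
and since $-\eta_m$ has the form~\eqref{eq: qrsfTrans} with $c_0=0$ and $b_0 = -mk/2$, the pullback is the non-standard model metric~\eqref{eq: non-St SF} with $\tfrac{2b_0}{k} = -m$. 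Applying part~$(ii)$ gives $-m[\omega].[F]$, an integer multiple of $[\omega].[F]$ as required.

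For parts $(iii)$ and $(iv)$, I combine Lemma~\ref{lem: badCycleIntegerAct}$(ii)$ with part~$(ii)$: the identity $[C_{m_1, m_2}] = m_1[C] + m_2[F]$ gives
\[
[\omega_{sf,\sigma,b_0,\epsilon}]_{dR}.[C_{m_1,m_2}] = \epsilon\bigl(\tfrac{2b_0}{k}m_1 + m_2\bigr).
\]
For $(iii)$, if $\tfrac{2b_0}{k} = p/q$ in lowest terms with $q\geq 2$, the choice $(m_1, m_2) = (q,-p)$ is coprime with $m_1 \geq 2$, defining a genuine quasi-bad cycle on which the pairing vanishes. For $(iv)$, the Serre spectral sequence for the mapping torus $X_{\Delta^*}\to \Delta^*\simeq S^1$ with monodromy $\left(\begin{smallmatrix}1 & k\\ 0 & 1\end{smallmatrix}\right)$ --- in which the $d_2$ differentials vanish because the base is one-dimensional, and $\ker(\phi - I)\subset H_1(T^2,\mathbb{Z})$ is rank one --- shows that $H_2(X_{\Delta^*}, \mathbb{Z})$ is free of rank two, generated by $[F]$ and $[C]$; then $\mathbb{Q}$-linear independence of $1$ and $\tfrac{2b_0}{k}$ forces the pairing to be nonzero on every nonzero integral class. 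The main technical point is the calculation in part~$(ii)$ --- keeping track of the factors of $2\pi$, signs and orientations, and confirming via Lemma~\ref{lem: qrsfTrans} that $T_h^*\omega_{sf,\epsilon}$ genuinely descends to $X_{mod}$.
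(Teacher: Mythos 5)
Your proof is correct, and the central computation (restricting $\omega_{sf, b_0, \epsilon}$ to $C_0$ and extracting the coefficient $\tfrac{\epsilon b_0}{k\pi}\,dx_1\wedge d\theta$) matches the paper's proof of part~$(ii)$, as does your use of the $\mathbb{Q}$-linear-independence argument for~$(iv)$. You do, however, reorganize the logic for $(i)$ and $(iii)$ in a way worth noting. For $(i)$, the paper observes directly that the standard semi-flat form $\omega_{sf, \sigma', \epsilon}$ restricted to the bad cycle $C'$ induced by $\sigma'$ vanishes (because $\{{\rm Im}(x)=0,\,|z|={\rm const}\}$ is Lagrangian in the coordinates attached to $\sigma'$), and then invokes $[C]=[C']+m[F]$; you instead reach the same conclusion by realizing $[C]$ via a translation $T_{-\eta_m}$ and plugging into the formula from~$(ii)$ with $\tfrac{2b_0}{k}=-m$. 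This works, but note it technically applies the $(ii)$ computation for $b_0\in\tfrac{k}{2}\mathbb{Z}$, outside the stated hypothesis $\tfrac{2b_0}{k}\notin\mathbb{Z}$ --- harmless, since the integral formula is uniform in $b_0$, but worth being explicit that the restriction of $(ii)$ to the non-standard case is only for the "moreover" assertion about pairing with bad cycles, not for the identity itself. For $(iii)$, the paper exhibits $C_{m_1,m_2}$ directly as a Lagrangian for $\omega_{sf,\sigma,b_0,\epsilon}$, while you deduce the vanishing from the homological identity $[C_{m_1,m_2}]=m_1[C]+m_2[F]$ of Lemma~\ref{lem: badCycleIntegerAct}$(ii)$ combined with $(ii)$; your observation that $q\ge 2$ guarantees $C_{q,-p}$ is genuinely quasi-bad (and not merely bad) is a useful detail that the paper leaves implicit. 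Finally, for~$(iv)$ the paper simply cites \cite[Lemma 4.3]{GW} for $H_2(X_{\Delta^*},\mathbb{Z})\cong\mathbb{Z}\langle [C],[F]\rangle$, whereas you give a self-contained Wang/Leray spectral sequence argument; that is a reasonable substitution and both are correct.
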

\begin{proof}
The first point is immediate, since if $[C']$ is the bad cycle induced by $[\sigma']$ then by Lemma~\ref{lem: badCycleIntegerAct} we have $[C]=[C']+m[F]$ for some $m\in \mathbb{Z}$.  Then the claim follows from the fact that $[\omega_{sf, \sigma', \epsilon}].[C']=0$.
 
The second point is a straightforward calculation.  Using $\sigma$ to identify $X_{\Delta^*}$ with $X_{mod}$ it suffices to compute the integral of $\omega_{sf, b_0, \epsilon}$ over the cycle $C= \{ {\rm Im}(x)=0, |z|=r\}$.  Writing $x= x_1+\sqrt{-1}x_2$,~\eqref{eq: non-St SF2} gives
\[
\begin{aligned}
\omega_{sf,b_0,\epsilon}\big|_{C} &= \frac{\sqrt{-1}}{2} \frac{\epsilon}{\frac{k}{2\pi}|\log r|} (dx_1 + \frac{b_0}{2\pi^2}|\log|r|| \sqrt{-1}d\theta)\wedge(dx_1 - \frac{b_0}{2\pi^2}|\log|r|| \sqrt{-1}d\theta)\\
&= \frac{\epsilon b_0}{k\pi} dx_1 \wedge d\theta
\end{aligned}
\]
Using the choice of orientation for $C$ and integrating yields the result.  

For the third claim, take $m_1, m_2 \in \mathbb{Z}$ relatively prime with $m_1>0$ so that $\frac{2b_0}{k} = -\frac{m_2}{m_1}$.  Then it is straightforward to check that the quasi-bad cycle $C_{m_1,m_2}= \{\frac{m_2}{m_1}(-\frac{k}{2\pi}\log r) \theta, |z|=r\}$ satisfies $\omega_{sf,b_0,\epsilon}\big|_{C_{m_1,m_2}} = 0$, and so $C_{m_1,m_2}$ is in fact Lagrangian for the symplectic form $\omega_{sf,b_0,\epsilon}$.

Finally, assume $\frac{2b_0}{k} \in \mathbb{R}\setminus \mathbb{Q}$.  By \cite[Lemma 4.3]{GW} (see also \cite[Claim 1]{Hein}), $H_2(X_{\Delta^*}, \mathbb{Z})$ is generated by $[C], [F]$.  If we consider any cycle $m_1[C]+m_2[F] \in H_2(X_{\Delta^*},\mathbb{Z})$, for $m_1, m_2 \in \mathbb{Z}$, we have
\[
[\omega_{sf, b_0, \epsilon}]. (m_1[C]+m_2[F] ) = \frac{2b_0}{k}m_1\epsilon + m_2 \epsilon \ne 0.
\]
\end{proof}

\begin{cor}\label{cor: local deRham}
Let $[\omega]_{dR} \in H^{2}_{dR}(X_{\Delta^*}, \mathbb{R})$ be the de Rham cohomology class of a smooth K\"ahler metric.  Then there exists a (possibly non-standard) semi-flat metric $\omega_{sf, \sigma, b_0, \epsilon}$ such that $[\omega]= [\omega_{sf, \sigma, b_0, \epsilon}]$.
\end{cor}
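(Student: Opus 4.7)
The plan is to exploit that $H_2(X_{\Delta^*}, \mathbb{Z})$ is generated by the bad cycle $[C]$ and the fibre class $[F]$ (by the reference to \cite[Lemma 4.3]{GW} invoked in the proof of Lemma~\ref{lem: cohomProps}(iv)). A de Rham class on $X_{\Delta^*}$ is therefore completely determined by its pairings with these two cycles, and so producing a semi-flat representative of $[\omega]_{dR}$ reduces to solving two real scalar equations for the two real parameters $(\epsilon, b_0)$.

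Fix any section $\sigma:\Delta^*\rightarrow X_{\Delta^*}$ and let $[C]\in H_2(X_{\Delta^*},\mathbb{Z})$ be the bad cycle it induces. Set
\[
a := [\omega]_{dR}.[F], \qquad b := [\omega]_{dR}.[C].
\]
Since $[\omega]$ is K\"ahler and $F$ is represented by a holomorphic curve we have $a>0$. Examination of formula~\eqref{eq: non-St SF2} on a smooth fibre shows that $[\omega_{sf,\sigma,b_0,\epsilon}].[F]$ depends linearly (and nontrivially) on $\epsilon$ alone, so we may choose $\epsilon>0$ uniquely so that $[\omega_{sf,\sigma,b_0,\epsilon}].[F]=a$ for every $b_0$. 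With $\epsilon$ so fixed, Lemma~\ref{lem: cohomProps}(ii) gives
\[
[\omega_{sf,\sigma,b_0,\epsilon}].[C] \;=\; \tfrac{2b_0}{k}\,[\omega_{sf,\sigma,b_0,\epsilon}].[F] \;=\; \tfrac{2b_0}{k}\,a,
\]
so we are forced to take $b_0 := \tfrac{kb}{2a}$, and this choice produces the required pairing with $[C]$.

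It remains to interpret the resulting formula in the language of the definitions. If $\tfrac{2b_0}{k} = b/a \notin \mathbb{Z}$, then Definition~\ref{defn: rel non-ST sf metric} applies verbatim and we have produced an honest non-standard semi-flat metric representing $[\omega]_{dR}$. If instead $b/a = m\in\mathbb{Z}$, then by Lemma~\ref{lem: badCycleIntegerAct}(i) there is a bad cycle $C'$ with $[C]=[C']+m[F]$ and a section $\sigma'$ inducing $C'$; for the standard semi-flat metric $\omega_{sf,\sigma',\epsilon}$ Lemma~\ref{lem: cohomProps}(i) (applied to $[C']$, which pairs to zero) gives $[\omega_{sf,\sigma',\epsilon}].[C]=m a = b$ and $[\omega_{sf,\sigma',\epsilon}].[F]=a$, so $\omega_{sf,\sigma',\epsilon}$ is the desired representative. (Equivalently, via Remark~\ref{rk: nonStTansbyMult}, the integer case of the non-standard formula is a standard semi-flat metric for a translated section.)

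There is no serious obstacle: the argument is purely a matter of matching two cohomological pairings against the two free parameters $(\epsilon,b_0)$, with Lemma~\ref{lem: cohomProps} supplying the linear relations. The only genuine bookkeeping issue is the integer case $b/a\in\mathbb{Z}$ which is excluded from Definition~\ref{defn: non-stand sf metric}, and this is handled by passing to a different section via Lemma~\ref{lem: badCycleIntegerAct}. The quasi-regular/irregular dichotomy plays no role at the level of cohomology and is not needed for this corollary.
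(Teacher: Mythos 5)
Your proposal is correct and follows essentially the same route as the paper: fix a section $\sigma$ and the induced bad cycle $[C]$, invoke that $[F],[C]$ generate $H_2(X_{\Delta^*},\mathbb{Z})$ so that a class is determined by its pairings, then solve for $\epsilon$ and $b_0$ using Lemma~\ref{lem: cohomProps}. Your extra care in the integer case $\tfrac{2b_0}{k}\in\mathbb{Z}$ (switching to a different section to land back in the standard Definition~\ref{defn: rel sf metric}) is a minor bookkeeping clarification that the paper leaves implicit.
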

\begin{proof}
Fix a section $\sigma : \Delta^*\rightarrow X_{\Delta^*}$ and let $[C]$ be the bad cycle induced by $\sigma$.  By \cite[Lemma 4.3]{GW} (see also \cite[Claim 1]{Hein}) we have that $[\omega]_{dR}= [\omega_{sf, \sigma, b_0, \epsilon}]_{dR}$ if and only if 
\[
\begin{aligned}
\, [\omega]_{dR}.[F] &= [\omega_{sf, \sigma, b_0, \epsilon}]_{dR}.[F]\\
\, [\omega]_{dR}. [C] &= [\omega_{sf, \sigma, b_0, \epsilon}]_{dR}.[C]. 
\end{aligned}
\]
Let $\epsilon = [\omega]_{dR}.[F]$.  Then by definition we have $[\omega_{sf, \sigma, b_0, \epsilon}]_{dR}.[F]=\epsilon = [\omega]_{dR}.[F]$.  Now write $ [\omega].[C] = \frac{2b_0}{k}\epsilon$ for $b_0\in \mathbb{R}$. Then by Lemma~\ref{lem: cohomProps} we have $[\omega]_{dR}.[C]= [\omega_{sf, \sigma, b_0, \epsilon}]_{dR}.[C]$ and the corollary follows.
\end{proof}

One can easily check that the semi-flat metrics defined above are complete at the $I_k$ fiber.  Even more precisely, if we fix a point $p_0:= (x_0,z_0)\in X_{\Delta^*}$, then the distance to a point $p:=(x,z)\in X_{\Delta^*}$ with respect to a semi-flat metric is given by
\[
r(p)= d_{\omega_{sf}}(p_0,p) \sim (-\log|z|)^{3/2}.
\]
As a consequence, the semi-flat metric is complete as $z\rightarrow 0$.
Recall that, after fixing a section $\sigma:\Delta^*\rightarrow X_{\Delta^*}$, we get well-defined, fiber preserving translation maps $T_{\eta}:X_{\Delta^*}\rightarrow X_{\Delta^*}$.  It will be important for us to understand the effect of translation on the semi-flat metric. For a general choice of $\eta$, the translation $T_{\eta}$ is not an isometry for $\omega_{sf, \sigma, \epsilon}$, and can change the geometry drastically; see Lemma~\ref{lem: transAsymp} below.  However, it is a tautology that $T_{\eta}^*\omega_{sf, \sigma, \epsilon} = \omega_{sf, \widetilde{\sigma}, \epsilon}$ for some other choice of section $\tilde{\sigma}: \Delta^*\rightarrow X_{\Delta^*}$.  

Before stating the existence theorem recall that the Bott-Chern cohomology of a complex manifold is given by
\[
H^{p,q}_{BC}(X) := \frac{ {\rm Ker} d: \Lambda^{p,q} \rightarrow \Lambda^{p+1,q} \oplus \Lambda^{p,q+1}}{{\rm Im} \ddb: \Lambda^{p-1,q-1} \rightarrow \Lambda^{p,q}}.
\]
The Bott-Chern cohomology refines the de Rham cohomology, in the sense that $H^{p,q}_{BC}(X) \rightarrow H^{p+q}_{dR}(X)$.

The following result is due to Hein \cite{Hein} under two additional assumptions; see Remark~\ref{rk: HeinDiff} below and Appendix~\ref{app: HeinApp} for the necessary adaptions to obtain this result from Hein's work.

 \begin{thm}\label{thm: nonStHein}
    	Let $\pi:Y\rightarrow \mathbb{P}^1$ be a rational elliptic surface, $D$ a fiber of type $I_k$ over $\infty \in \mathbb{P}^1$.  Let $\Omega$ be the (unique up to scale) holomorphic volume form on $X:=Y\setminus D$ with a simple pole on $D$.  Let $\Delta^{*}$ denote a punctured neighborhood of $\infty$ and fix a holomorphic section $\sigma:\Delta^{*} \rightarrow X_{\Delta^*}$.  Let $[F]$ denote the homology class of a fiber, and $[C]$ be the bad $2$-cycle induced by $\sigma$.  Suppose that $\omega_0$ is any K\"ahler metric on $X$, and let
	\[
	[\omega_0]_{dR}.[F]=\epsilon,\qquad [\omega_0]_{dR}.[C]=\frac{2b_0}{k}\epsilon
	\]
	for some $b_0\in \mathbb{R}$. Then the following holds:
	\begin{itemize}
	\item[$(i)$] there is a holomorphic function $h(z):\Delta^*\rightarrow \mathbb{C}$ depending only on $[\omega_0]_{BC} \in H^{1,1}_{BC}(X,\mathbb{R})$,
	\item[$(ii)$] for all $\alpha >0$ there is a complete K\"ahler metric $\omega_{\alpha}$ on $X$ with $[\omega_{\alpha}]_{BC}= [\omega_0]_{BC}$ in $H^{1,1}_{BC}(X,\mathbb{R})$ such that, in a neighborhood of $D$ we have
	\[
	\omega_{\alpha} = \alpha T^*_{h}\omega_{sf, \sigma, b_0, \frac{\epsilon}{\alpha}} 
	\]
	and $\omega_{\alpha}= \omega_0$ outside a neighborhood of $D$.
	\end{itemize}
	
	\noindent Furthermore for all $\alpha > 0$ there exists a complete, Ricci-flat metric $\omega_{CY}$ on $X$ with 
	\[
	\omega_{CY} = \omega_{\alpha}  + \ddb \phi
	\]
	 satisfying the following:
    	\begin{enumerate}
		\item[$(iii)$] $\omega_{CY}$ solves the Monge-Amp\`ere equation 
		\[
		\omega_{CY}^2= \alpha^2 \Omega \wedge \overline{\Omega}. \nonumber
		\]
    		\item[$(iv)$] The injectivity radius of $\omega_{CY}$ has asymptotics $inj(x)\sim r(x)^{-1/3}$. 
    		\item[$(v)$] The curvature of $\omega_{CY}$ satisfies $|\nabla^k Rm|_{\omega_{CY}}\lesssim r^{-2-k}$ for every $k\in \mathbb{N}$. 
    		\item[$(vi)$] $\omega_{CY}$ is asymptotic to the semi-flat metric $\omega_{\alpha}$ above in the following sense; there is a constant $\delta>0$ such that, for every $k\in \mathbb{N}$, there holds
    		  \begin{align*}
    		      | \nabla^k\phi|_{\omega_{CY}} \sim O(e^{-\delta r^{2/3}}),
    		  \end{align*} 
    	\end{enumerate}
	where $r$ denotes the distance from a fixed base point in $X$. 
	\end{thm}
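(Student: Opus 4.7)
The plan is to follow Hein's strategy with the necessary adaptations to non-standard semi-flat asymptotics. The first task is to produce the asymptotic model $\omega_\alpha$ in the correct Bott-Chern class. By Corollary~\ref{cor: local deRham}, the pairing data $[\omega_0]\cdot[F]=\epsilon$ and $[\omega_0]\cdot[C]=\frac{2b_0}{k}\epsilon$ ensure $[\alpha\,\omega_{sf,\sigma,b_0,\epsilon/\alpha}]_{dR}=[\omega_0|_{X_{\Delta^*}}]_{dR}$. To upgrade this match to a Bott-Chern match, I would exploit that the translations $T_h$ associated to a single-valued holomorphic function $h:\Delta^*\to\mathbb{C}$ are biholomorphisms of $X_{\Delta^*}$ isotopic to the identity, so $T_h^*\omega_{sf,\sigma,b_0,\epsilon/\alpha}$ agrees with $\omega_{sf,\sigma,b_0,\epsilon/\alpha}$ in de Rham cohomology but moves within a family of Bott-Chern classes parametrized by $h$, with the dependence computed explicitly from~\eqref{eq: pullBackUseApp}. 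The equation singling out the correct $h$ is linear and depends only on $[\omega_0]_{BC}$, which yields (i).

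For (ii), once $h$ has been selected, the difference $\alpha T_h^*\omega_{sf,\sigma,b_0,\epsilon/\alpha}-\omega_0$ is a $\ddb$-exact $(1,1)$-form on $X_{\Delta^*}$, say $\ddb\psi$, with $\psi$ tending to zero at $D$. Introducing a cutoff $\chi$ equal to $1$ in a neighborhood of $D$ and compactly supported in $X_{\Delta^*}$, I set
\begin{equation*}
\omega_\alpha:=\omega_0+\ddb(\chi\psi),
\end{equation*}
which lies in $[\omega_0]_{BC}$, equals $\alpha T_h^*\omega_{sf,\sigma,b_0,\epsilon/\alpha}$ sufficiently close to $D$, and equals $\omega_0$ outside a slightly larger neighborhood. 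Positivity on the transition annulus is ensured by shrinking the neighborhood of $D$ until the semi-flat model dominates the correction terms.

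The Calabi-Yau metric in (iii)--(vi) is then obtained by solving the Monge-Amp\`ere equation $(\omega_\alpha+\ddb\phi)^2=\alpha^2\Omega\wedge\overline{\Omega}$. Since the non-standard semi-flat model is itself Ricci-flat, the defect $\omega_\alpha^2-\alpha^2\Omega\wedge\overline{\Omega}$ is compactly supported, and for $\alpha>\alpha_0$ sufficiently large it is small in the weighted norms adapted to the Calabi-model asymptotic geometry of $\omega_\alpha$. From here I would invoke Hein's continuity method in weighted H\"older spaces to produce $\phi$, establish (iv) and (v) by elliptic bootstrapping in those spaces, and derive the exponential decay (vi) via a barrier argument against the model Laplacian. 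The main obstacle is precisely the passage from the standard to the non-standard semi-flat asymptotic model: by Remark~\ref{rk: nonStTansbyMult} a non-standard semi-flat metric is the pullback of a standard one by a (possibly multivalued) translation, so the pointwise geometry is identical and Hein's local estimates extend directly, but the global framework---the definition of weighted norms across bad cycles whose cohomological pairings are altered, the barrier construction, and the bookkeeping of the defect under the modified gluing---must be redeveloped in the non-standard setting, which is the content of Appendix~\ref{app: HeinApp}.
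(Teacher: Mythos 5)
Your outline captures the broad Hein strategy, but two of the load-bearing technical steps are not actually addressed, and one of them would fail as written.

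First, the positivity of the glued form. You define $\omega_\alpha=\omega_0+\ddb(\chi\psi)$ and assert that ``positivity on the transition annulus is ensured by shrinking the neighborhood of $D$.'' This does not work: the cross terms $\ddb\chi\cdot\psi$, $\del\chi\wedge\dbar\psi$, etc.\ are not controlled by either endpoint metric, and shrinking the annulus makes $|\nabla\chi|$, $|\nabla^2\chi|$ larger, not smaller, while $\psi$ need not be small near $D$ (the difference $\alpha T_h^*\omega_{sf}-\omega_0$ does not have a potential decaying to zero in general, since the two metrics are not uniformly equivalent towards $D$). The paper's Appendix~\ref{app: HeinApp} handles this with two devices you omit: (a) a pluriharmonic correction $v$ replacing the unbounded part of the potential on the gluing annulus, so that the cutoff acts on the tame quantity $u-v$ controlled by Lemma~\ref{lem: Claim2}; and (b) an auxiliary semi-positive form $t\beta$ pulled back from $\mathbb{P}^1$, with $t$ chosen large enough (depending on $\alpha$ and the annulus) to absorb the remaining cutoff errors in Lemma~\ref{lem: Claim 3}. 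Without these, positivity fails.

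Second, the mass normalization. To solve $(\omega_\alpha+\ddb\phi)^2=\alpha^2\Omega\wedge\overline{\Omega}$ by Hein's continuity method one must first arrange $\int_X(\omega_\alpha^2-\alpha^2\Omega\wedge\overline{\Omega})=0$; you note the defect is compactly supported (good) but never discuss how to kill its integral. In the paper this is Lemma~\ref{finallem}: a two-parameter family $\omega_\alpha(t)$ is built, and one shows that for suitable $r,s$ and each $t'>1$ there is a unique $\alpha$ with $I(\alpha,t)=0$. Crucially, the paper's contribution (Remark~\ref{rk: HeinDiff}) is that this works \emph{without} Hein's hypothesis $\int_X\omega_0^2<+\infty$, which is genuinely unavailable for the non-standard semi-flat asymptotics you are trying to reach (a K\"ahler class not pairing to zero with any bad cycle need not extend to a compactification). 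Your proposal neither performs the normalization nor notices that Hein's finiteness hypothesis must be dropped; this is precisely the new technical content of the theorem.
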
 
	
	A few remarks are in order.
	
\begin{rk}\label{rk: HeinDiff}
Theorem~\ref{thm: nonStHein} is obtained by Hein \cite{Hein} under two additional assumptions:
\begin{itemize}
\item[$(i)$] The semi-flat metric is  standard, and $[\omega_0]_{dR}.[C']=0$ for some bad cycle $[C']$.  In particular, by Lemma~\ref{lem: cohomProps}, the cohomology class $[\omega_0]_{dR}$ lies in a countable union of hyperplanes in $H^2_{dR}(X,\mathbb{R})$.
\item[$(ii)$] The background K\"ahler form $\omega_0$ has $\int_{X}\omega_0^2 <+\infty$.  For a K\"ahler class which is not restricted from a compactification of $X$ (in particular, a K\"ahler class not pairing to zero with any bad cycle) it is not clear how to construct K\"ahler forms with this property.  In fact, we expect such forms cannot exist in general.
\item[$(iii)$] Hein's original result \cite{Hein} assumed $\alpha > \alpha_0$ for some sufficiently large $\alpha_0$.  Chen-Chen \cite{CC} explained how this assumption could be removed.
\end{itemize}
Due to these differences, we have opted to give a sketch of the relevant adaptions of Hein's work in Appendix~\ref{app: HeinApp}.
\end{rk}

\begin{rk}
The construction in Theorem~\ref{thm: nonStHein} depends on various background choices (for example, the background metric $\omega_0$).  In particular, Theorem~\ref{thm: nonStHein} may produce {\em many} Calabi-Yau metrics in any given Bott-Chern K\"ahler class  on $X$.  We will explicitly rule this out in Section~\ref{sec: moduli} by establishing a uniqueness theorem.
\end{rk}

\subsection{Dependence on parameters}

In order to prove the original formulation of SYZ mirror symmetry for rational elliptic surfaces we must construct a finite dimensional moduli space of Ricci-flat metrics on $X$.  The first step in this direction is to understand precisely the parameters in Theorem~\ref{thm: nonStHein}. The main parameters are:
\begin{enumerate}
\item A choice of section $\sigma:\Delta^*\rightarrow Y$.
\item A K\"ahler metric $\omega_0$ on $X$.
\item The Bott-Chern cohomology class $[\omega_0]_{BC} \in H^{1,1}_{BC}(X,\mathbb{R})$, which enters both through the de Rham and Bott-Chern cohomology classes of the Calabi-Yau metric and the holomorphic function $h(z)$ in Theorem~\ref{thm: nonStHein} part $(i)$.
\end{enumerate}

For the remainder of this section we will be primarily interested in understanding how the Calabi-Yau metrics produced by Theorem~\ref{thm: nonStHein} are parametrized.  Together with a uniqueness result proved in Section~\ref{sec: moduli}, this will yield a description of the moduli of Calabi-Yau metrics asymptotic to semi-flat metrics.

The most obvious parameters in appearing in Theorem~\ref{thm: nonStHein} are the de Rham and Bott-Chern cohomology class of K\"ahler metrics $\omega_0$ on $X$.  

\subsubsection{de Rham Cohomology}

We begin by calculating the de Rham cohomology.

\begin{lem}\label{lem: deRham}
Let $\pi: Y\rightarrow \mathbb{P}^1$ be a rational elliptic surface, and $\pi^{-1}(\infty)=D=\sum_{i=1}^{k} D_i$ be a singular fiber of type $I_k$, with irreducible components $D_i$.  Let $X=Y\setminus D$.  Fix a choice of bad cycle $C\subset X$, and define, for each $m\in \mathbb{Z}$
\[
V_{m} = \left\{ [\omega]_{dR} \in H^{2}_{dR}(X,\mathbb{R}): \omega \text{ is K\"ahler }, m[\omega].[F] =  [\omega].[C]\right\}.
\]
Then $V_m \cong V$ is (non-canonically) isomorphic to a fixed open cone $V\subset \mathbb{R}^{10-k}$ given by
\[
V=\mathcal{K}_{Y}/ \left\{ {\rm Span}_{\mathbb{R}}\{ D_i\}_{1 \leq i \leq k}\right\}
\]
where $\mathcal{K}_{Y}\subset H^{1,1}(Y,\mathbb{R})$ is the K\"ahler cone of $Y$. Furthermore,  each de Rham class in $V_m$ can be represented by the restriction of a K\"ahler metric from some rational elliptic surface $Y_{\sigma}$ compactifying $X$.
\end{lem}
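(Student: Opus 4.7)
The strategy is to compute $H^2(X,\mathbb{R})$ via the long exact sequence of the pair, identify the linear condition defining $V_m$, and realize $V_m$ as a quotient of the K\"ahler cone of a suitable compactification.

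\textbf{Step 1 (Cohomology).} Combining the long exact sequence of the pair $(Y,X)$ with Lefschetz duality $H^k(Y,X)\cong H_{4-k}(D)$ and the vanishing $H^3(Y,\mathbb{R})=0$ (since $Y$ is rational) yields
\begin{equation*}
H_2(D,\mathbb{R})\xrightarrow{\iota_*}H^2(Y,\mathbb{R})\xrightarrow{r}H^2(X,\mathbb{R})\xrightarrow{\partial}H_1(D,\mathbb{R})\to 0.
\end{equation*}
The classes $[D_i]$ are linearly independent in $H^2(Y,\mathbb{R})$: taking intersection with a section $S$ of $\pi$ (meeting exactly one component transversally) and inducting along the connected affine Dynkin diagram $\tilde A_{k-1}$ forces any linear relation to be trivial. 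Since $D$ is a cycle of $k$ rational curves, $H_1(D,\mathbb{R})=\mathbb{R}$, so $\dim H^2(X,\mathbb{R})=11-k$.

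\textbf{Step 2 (The hyperplane condition).} The connecting map $\partial$ equals, up to a nonzero scalar, the functional $[\omega]\mapsto [\omega].[C]$. Indeed, the generator of $H_1(D,\mathbb{R})$ is the loop going once around the necklace of rational curves, and by its very definition the bad cycle $[C]$ is the $S^1$-bundle swept out over a small loop $\bar\gamma\subset\Delta^*$ which retracts onto this generator; under Poincar\'e--Lefschetz duality the pairing with $[C]$ in $X$ corresponds to $\partial$ composed with evaluation on the generator. Hence $V_m$ lies in the $(10-k)$-dimensional hyperplane $\{m[\omega].[F]=[\omega].[C]\}\subset H^2(X,\mathbb{R})$.

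\textbf{Step 3 (Realization via $Y_{\sigma_m}$).} For each $m$, Lemma~\ref{lem: badCycleIntegerAct} produces a section $\sigma_m:\Delta^*\to X_{\Delta^*}$ whose induced bad cycle equals $[C]-m[F]$, and Lemma~\ref{lem: allCompSame}(i) ensures the resulting compactification $Y_{\sigma_m}$ is biholomorphic to $Y$, identifying the K\"ahler cones and the spans of $I_k$-fiber components. Since $[C_{\sigma_m}]$ bounds a $3$-chain in $Y_{\sigma_m}$ obtained by sweeping fibers across $\bar\gamma$, it vanishes in $H_2(Y_{\sigma_m},\mathbb{R})$; thus the restriction $\rho_m:H^2(Y_{\sigma_m},\mathbb{R})\to H^2(X,\mathbb{R})$ has image precisely the hyperplane of Step 2 and kernel $\mathrm{Span}\{[D_{\sigma_m,i}]\}$. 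Given $[\omega]\in V_m$, to produce a K\"ahler lift I pick any lift $[\tilde\omega_0]$ and modify by $\sum a_i[D_{\sigma_m,i}]$: the intersection matrix of the $[D_{\sigma_m,i}]$ is the negative affine Cartan matrix of $\tilde A_{k-1}$, whose rows sum to zero with null vector $[F]=\sum_i[D_{\sigma_m,i}]$, so the linear system for the $a_i$ is compatible (since $[\tilde\omega_0].[F]=[\omega].[F]$), and can be solved to arrange $[\tilde\omega].[D_{\sigma_m,j}]=[\omega].[F]/k>0$ for all $j$. Together with $[\tilde\omega].[C']=[\omega].[C']>0$ for compact curves $C'\subset X$, the remaining pairings (with horizontal curves and $[\tilde\omega]^2$) are controlled by a further shift $c[F]$ (which preserves the pairings with the $[D_{\sigma_m,j}]$) for $c\gg 0$; Nakai--Moishezon then yields a K\"ahler class $[\tilde\omega]\in \mathcal{K}_{Y_{\sigma_m}}$ restricting to $[\omega]$. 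Passing to the quotient gives $V_m\cong \mathcal{K}_{Y_{\sigma_m}}/\mathrm{Span}\{[D_{\sigma_m,i}]\}\cong V$.

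\textbf{Main obstacle.} The delicate step is the K\"ahler extension in Step 3: pairings of $[\tilde\omega]$ with curves in $Y_{\sigma_m}$ meeting $D_{\sigma_m}$ (in particular with sections, of which Mordell--Weil theory produces countably many) are not directly constrained by K\"ahlerness of $[\omega]$ on $X$, because the extension modifies the form near $D_{\sigma_m}$. Verifying that the shift $c[F]$ makes all these pairings simultaneously positive, together with positive self-intersection, is the main technical point and requires a careful application of Nakai--Moishezon together with openness of the K\"ahler cone and the convexity of $V_m$.
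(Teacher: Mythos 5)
Your Steps~1 and~2 follow essentially the paper's own route: the paper works with the dual of the relative homology sequence of $(Y_\sigma,X)$, identifying $V_m$ with a subset of the $(10-k)$-dimensional hyperplane $\{[\alpha].[C']=0\}$, which is exactly the image of the restriction $H^{1,1}(Y_\sigma,\mathbb{R})\to H^2_{dR}(X,\mathbb{R})$. Your computation of $\dim H^2_{dR}(X,\mathbb{R})=11-k$ and the identification of the linear condition cutting out $V_m$ match the paper.

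The genuine issue is Step~3. You correctly observe that the paper passes from ``every class on $X$ pairing to zero with $C'$ is a restriction of a $(1,1)$ class on $Y_\sigma$'' straight to ``$V_m=\mathcal{K}_{Y_\sigma}/\mathrm{Span}\{D_i\}$,'' omitting the key inclusion $V_m\subset r(\mathcal{K}_{Y_\sigma})$ --- i.e.\ that a K\"ahler class on $X$ lying in the hyperplane has a \emph{K\"ahler} lift on $Y_\sigma$. You try to supply this, and part of your argument is sound: the linear system $\sum_i a_iD_i.D_j=\epsilon/k-\tilde\omega_0.D_j$ is solvable because the affine-Cartan intersection matrix of $\tilde A_{k-1}$ has corank one with kernel $[F]=\sum_iD_i$, and the compatibility condition $\sum_j(\epsilon/k-\tilde\omega_0.D_j)=0$ holds since $\tilde\omega_0.F=[\omega].F=\epsilon$; and adding $c[F]$ does not move the restriction since each $D_i|_X=0$ in $H^2_{dR}(X,\mathbb{R})$. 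But the final claim --- that some $c\gg 0$ lands $\tilde\omega+c[F]$ in $\mathcal{K}_{Y_\sigma}$ --- is not proved, and it is exactly where the lemma has content. To apply a K\"ahler criterion (Buchdahl--Lamari for surfaces, since the class is only real; Nakai--Moishezon as stated governs ample line bundles) one needs $(\tilde\omega+cF).C>0$ for \emph{every} irreducible curve $C$. Vertical curves are fine, $D_j$ are fine, but for horizontal curves the needed $c$ is $c>-\tilde\omega.C/(F.C)$, and a rational elliptic surface can have infinitely many sections (Mordell--Weil rank up to $8$). You would have to show $\inf_C\tilde\omega.C>-\infty$; this is not automatic, because K\"ahlerness of $r(\tilde\omega)=[\omega]$ on the open manifold $X$ controls pairings only with \emph{compact} curves of $X$, whereas $\tilde\omega.C$ for a section depends on the choice of lift near $D$. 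You flag this as ``the main technical point'' without resolving it, so the proposal as written has a genuine gap precisely at the step that distinguishes the lemma from the easy exact-sequence bookkeeping.
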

\begin{proof}
Fix a cohomology class $[\omega_0]_{dR} \in V_{m}$ for some $m\in \mathbb{Z}$.  By assumption $[\omega_0]_{dR}.[F]=\epsilon$ and $[\omega_0]_{dR}.[C]= \epsilon m$.  By Lemma~\ref{lem: badCycleIntegerAct}, after changing the bad cycle to $[C'] =[C]-m[F] $, we can assume that $[\omega_0]_{dR}.[C']=0$ and we can choose a section $\sigma:\Delta^*\rightarrow X_{\Delta^*}$ so that $C'$ is induced by $\sigma$.  Let $Y_{\sigma}$ be the rational elliptic surface compactifiying $X$ induced by this choice of $\sigma$, and let $D_{\sigma}\subset Y_{\sigma}$ be the $I_k$ fiber so that $X= Y_{\sigma}\setminus D_{\sigma}$; recall that by Lemma~\ref{lem: allCompSame}, all such compactifications are biholomorphic.

Since $Y_{\sigma}$ is obtained by blowing-up $\mathbb{P}^2$ at the base locus of a smooth pencil of cubics, we have $H^{1,1}(Y_{\sigma},\mathbb{R}) \cong \mathbb{R}^{10}$.  Since $D_{\sigma}$ is a cycle of $k$-rational curves the restriction map $ H^{1,1}(Y_{\sigma},\mathbb{R}) \rightarrow H^{2}_{dR}(X,\mathbb{R})$ has a $k$-dimensional kernel, and $10-k$ dimensional image.  Note that $H^{2,0}(Y_{\sigma}) = H^{0}(Y_{\sigma}, K_{Y_{\sigma}})=0$ since $-K_{Y_{\sigma}}$ is effective.  Now we consider the exact sequence in relative homology
\[
\mathbb{Z} \simeq H_{3}(Y,X)\rightarrow H_2(X)\rightarrow H_2(Y_{\sigma})
\]
where $H_{3}(Y_{\sigma},X)$ is generated by the bad cycle $C'$ (which is a homologous to zero in $Y_{\sigma}$).  Dualizing we have
\begin{equation}\label{eq: cohExactSeq}
\mathbb{Z}^k \simeq H^2(Y_{\sigma},X) \rightarrow H^2(Y_{\sigma})\rightarrow H^{2}(X) \rightarrow H^3(Y_{\sigma},X) \simeq \mathbb{Z}
\end{equation}
Now any class in $H^{2}(X)$ which pairs to zero with the bad cycle $C'$ is in the image of the restriction map $H^{2}(Y_{\sigma}) \rightarrow H^{2}(X)$.  On the other hand, since $H^{2}(Y_{\sigma}) = H^{1,1}(Y_{\sigma})$, every closed $2$-form on $X$ pairing to zero with $C'$ is cohomologous to the restriction of a $(1,1)$ form on $Y_{\sigma}$.  Thus, we have
\[
V_{m} =\mathcal{K}_{Y_{\sigma}}/ \left\{ {\rm Span}_{\mathbb{R}}\{ (D_{\sigma})_i\}_{1 \leq i \leq k}\right\}.
\]
But since any two compactifications are biholomorphic by Lemma~\ref{lem: allCompSame}, we have
\[
V_m= \mathcal{K}_{Y_{\sigma}}/ \left\{ {\rm Span}_{\mathbb{R}}\{ (D_{\sigma})_i\}_{1 \leq i \leq k}\right\}=\mathcal{K}_{Y}/ \left\{ {\rm Span}_{\mathbb{R}}\{ (D_i\}_{1 \leq i \leq k}\right\}= V.
\]
\end{proof}

The main corollary of this result that will be relevant to us is

\begin{cor}\label{cor: KahlerCone}
Let $\mathcal{K}_{dR,X} \subset H^{2}_{dR}(X,\mathbb{R})$ denote the set of de Rham cohomology classes which can be represented by K\"ahler forms.  Then $\mathcal{K}_{dR, X}$ is a convex cone with non-empty interior in $H^{2}_{dR}(X, \mathbb{R}) \cong \mathbb{R}^{11-k}$.
\end{cor}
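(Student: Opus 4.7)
Convexity is immediate: if $\omega_1,\omega_2$ are K\"ahler forms on $X$ and $s,t>0$, then $s\omega_1+t\omega_2$ is again a K\"ahler form representing $s[\omega_1]+t[\omega_2]$, so $\mathcal{K}_{dR,X}$ is a convex cone. The remaining task is to verify that it has non-empty interior in the $(11-k)$-dimensional space $H^{2}_{dR}(X,\mathbb{R})$.

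First I would record that $\dim H^{2}(X,\mathbb{R}) = 11-k$. This follows from the relative cohomology exact sequence~\eqref{eq: cohExactSeq} used in the proof of Lemma~\ref{lem: deRham}, together with $\dim H^{2}(Y_{\sigma},\mathbb{R})=10$, $H^{3}(Y_{\sigma},\mathbb{R})=0$ (since $b_1(Y_{\sigma})=0$ for a rational surface), $H^{3}(Y_{\sigma},X)\cong \mathbb{R}$ dual to the bad cycle $C$, and the observation that the $k$ classes of the components of the $I_k$ fiber are linearly independent in $H^{2}(Y_{\sigma},\mathbb{R})$ (the only potential relation among them is proportional to $\sum[D_i]=[-K_{Y_{\sigma}}]$, which is non-zero because $-K_{Y_{\sigma}}$ is a fiber class).

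Next I would combine two of the slices produced by Lemma~\ref{lem: deRham}. For each $m\in \mathbb{Z}$, the lemma yields a non-empty $(10-k)$-dimensional open convex subcone $V_m \subset \mathcal{K}_{dR,X}$ lying in the hyperplane $H_m := \{[\omega]\in H^{2}(X,\mathbb{R}): [\omega].[C] = m[\omega].[F]\}$. Since any K\"ahler class satisfies $[\omega].[F]>0$, the hyperplanes $H_0$ and $H_1$ are distinct and therefore together span $H^{2}(X,\mathbb{R})$. By convexity $\mathcal{K}_{dR,X}$ contains the Minkowski sum $V_0 + V_1 = \{v_0+v_1 : v_i\in V_i\}$, and at any point $v_0+v_1$ of this sum the tangent directions from deforming $v_0$ inside $V_0$ together with those from deforming $v_1$ inside $V_1$ span $H_0 + H_1 = H^{2}(X,\mathbb{R})$. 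Hence $V_0+V_1$ is an open subset of $H^{2}(X,\mathbb{R})$ contained in $\mathcal{K}_{dR,X}$, establishing non-empty interior. I do not expect any serious obstacle; once Lemma~\ref{lem: deRham} is in hand, the corollary reduces to an elementary linear-algebra fact about open cones in distinct hyperplanes.
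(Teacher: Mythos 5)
Your proposal is correct and takes essentially the same route as the paper: both establish convexity trivially, compute $\dim H^{2}_{dR}(X,\mathbb{R})=11-k$ from the exact sequence~\eqref{eq: cohExactSeq}, and then use Lemma~\ref{lem: deRham} to produce open convex cones $V_m$ sitting in distinct hyperplanes, concluding by elementary linear algebra that combining pieces from two such hyperplanes yields an open set inside $\mathcal{K}_{dR,X}$. The only cosmetic difference is that you take the Minkowski sum $V_0+V_1$ (using that a convex cone is closed under addition) while the paper invokes the convex hull of $\bigcup_m V_m$; you also helpfully spell out the linear independence of the fiber components and the openness of the sum, details the paper leaves implicit.
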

\begin{proof}
The proof is essentially trivial.  That $\mathcal{K}_{dR, X}$ is a convex cone is obvious, while $\dim H^{2}_{dR}(X, \mathbb{R}) = 11-k$ follows from~\eqref{eq: cohExactSeq}.  Finally, by Lemma~\ref{lem: deRham}, for each $m\in \mathbb{Z}$, the sets $V_{m}\subset \mathcal{K}_{dR, X}$ are disjoint, open convex cones contained in the $10-k$ dimensional hyperplanes $V_m$, for $m \in \mathbb{Z}$.  Since $\mathcal{K}_{dR, X}$ is convex, we get that
\begin{equation}\label{eq: bigUnionKahCone}
\text{Convex Hull}\left(\bigcup_{m\in\mathbb{Z}} V_m\right) \subset \mathcal{K}_{dR, X}.
\end{equation}
Since $V_{m}\cap V_{m'}=\emptyset$ if $m\ne m'$, we are done.
\end{proof}

\begin{rk}
It would be interesting to understand the K\"ahler cone $\mathcal{K}_{dR, X}$ completely, but a priori it could behave rather differently than in the compact case.  For instance, it is not even clear that $\mathcal{K}_{dR, X}$ is open in $H^{2}_{dR}(X,\mathbb{R})$.  In any event, it is not hard to see that the containment in~\eqref{eq: bigUnionKahCone} is proper since one can take convex combinations of K\"ahler forms and {\em semi-positive} forms on $X$.  Since $Y$ is obtained by blowing-up $\mathbb{P}^2$ at $9$ points, $Y$ (and hence $X$) admits many semi-positive classes.
\end{rk}

We also note the following corollary of Lemma~\ref{lem: deRham}, which indicates that, for a generic choice of K\"ahler metric on $X$, Theorem~\ref{thm: nonStHein} produces a complete Calabi-Yau metric asymptotic to a non-standard semi-flat metric.  Since the proof is a trivial consequence of Lemma~\ref{lem: cohomProps}, we omit it.

\begin{cor}
Suppose $[\omega_0], [\omega_1] \in H^{2}_{dR}(X,\mathbb{R})$ are K\"ahler classes such that there are bad $2$-cycles $[C_0], [C_1]$ with $[C_1]=[C_0]+m[F]$ for some $m\in \mathbb{Z}$ and
\[
[\omega_0].[C_0]=0, \qquad [\omega_1].[C_1]=0, \qquad [\omega_0].[F]= \epsilon = [\omega_1].[F].
\]
Consider the K\"ahler class $[\omega_{\delta}]:= [(1-\delta)\omega_0 + \delta \omega_1]$ obtained as a convex combination of $[\omega_0], [\omega_1]$.  For every $\delta \in [0,1]$ there is a semi-flat metric $\omega_{sf, \sigma, b_0, \epsilon}$ such that 
\[
[ \omega_{sf, \sigma, b_0, \epsilon}] = [\omega_{\delta}] \in H^2(X_{\Delta^*}, \mathbb{Z}).
\]
Furthermore, $\omega_{sf, \sigma, b_0, \epsilon}$ is non-standard unless $\delta m= n$ for some $n\in \mathbb{Z}$.
\end{cor}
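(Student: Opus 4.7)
The plan is to reduce everything directly to Corollary~\ref{cor: local deRham} together with the pairing formulas in Lemma~\ref{lem: cohomProps}. Concretely, I would pick a holomorphic section $\sigma: \Delta^* \to X_{\Delta^*}$ so that the induced bad cycle in the sense of Definition~\ref{defn: induceBadCycle} is $[C_0]$ (such a section exists by Lemma~\ref{lem: badCycleIntegerAct}). Then I would work out the pairings of the convex combination $[\omega_\delta]$ with $[F]$ and with the chosen bad cycle $[C_0]$.

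For the first pairing, $[\omega_\delta].[F] = (1-\delta)\epsilon + \delta \epsilon = \epsilon$. For the second, since $[C_0] = [C_1] - m[F]$, we have
\[
[\omega_1].[C_0] = [\omega_1].[C_1] - m[\omega_1].[F] = -m\epsilon,
\]
so $[\omega_\delta].[C_0] = (1-\delta)\cdot 0 + \delta\cdot (-m\epsilon) = -\delta m \epsilon$. Thus the class $[\omega_\delta]$ restricts to a K\"ahler class on $X_{\Delta^*}$ whose two basic pairings are prescribed.

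By Corollary~\ref{cor: local deRham}, there is a semi-flat metric $\omega_{sf,\sigma,b_0,\epsilon}$ (possibly with $\frac{2b_0}{k}\in\mathbb{Z}$, i.e., standard, depending on data) whose de Rham class agrees with $[\omega_\delta]$ on $X_{\Delta^*}$, since the cohomology pairing in $H^2_{dR}(X_{\Delta^*},\mathbb{R})$ is determined by its values on $[F]$ and $[C_0]$ in view of \cite[Lemma 4.3]{GW}. Lemma~\ref{lem: cohomProps} then forces
\[
\frac{2b_0}{k}\epsilon = [\omega_{sf,\sigma,b_0,\epsilon}]_{dR}.[C_0] = [\omega_\delta].[C_0] = -\delta m \epsilon,
\]
so $b_0 = -\delta m k/2$.

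Finally, by Definition~\ref{defn: non-stand sf metric}, the semi-flat metric $\omega_{sf,\sigma,b_0,\epsilon}$ is non-standard precisely when $\frac{2b_0}{k}\notin \mathbb{Z}$, i.e., when $\delta m \notin \mathbb{Z}$. Equivalently, it fails to be non-standard exactly when $\delta m = n$ for some $n\in\mathbb{Z}$, which is the stated characterization. There is no real obstacle here; the only point to be careful about is keeping the choice of bad cycle consistent with the chosen section $\sigma$, since Lemma~\ref{lem: cohomProps} measures the pairing against the bad cycle \emph{induced} by $\sigma$ and the hypothesis on $[C_1] = [C_0] + m[F]$ is precisely what lets us translate pairings against $[C_1]$ into pairings against $[C_0]$.
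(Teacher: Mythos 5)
Your argument is correct and is precisely the ``trivial consequence of Lemma~\ref{lem: cohomProps}'' that the paper alludes to when it omits the proof: you fix the section $\sigma$ inducing $[C_0]$, compute $[\omega_\delta].[F]=\epsilon$ and $[\omega_\delta].[C_0]=-\delta m\epsilon$, invoke Corollary~\ref{cor: local deRham} to realize this class by a semi-flat metric, read off $\frac{2b_0}{k}=-\delta m$ from Lemma~\ref{lem: cohomProps}(ii), and note that $\frac{2b_0}{k}\in\mathbb{Z}$ exactly when $\delta m\in\mathbb{Z}$. This is the intended route; no gaps.
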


\subsubsection{Bott-Chern Cohomology}
The goal of this subsection, and the next, is to elucidate the connection between local and global automorphisms and the Bott-Chern cohomology.  This will be essential in establishing the uniqueness of Calabi-Yau metrics modulo automorphisms; see Proposition~\ref{prop: reducedModuli}.  For our later purposes it will be useful to understand the Bott-Chern cohomology of $X$ and $X_{\Delta^*}$.  Therefore we will consider the elliptic fibration $X\rightarrow C$ where $C= \mathbb{C},$ or $\Delta^*$, with the understanding that $X\rightarrow \Delta^*$ is $X_{\Delta^*}$.  The exposition here follows analogous discussions in \cite{GW, Hein} with modifications to suit our purposes.  Consider the map 
\begin{equation}\label{eq: exactSeq1}
0\rightarrow K_0 \rightarrow H^{1,1}_{BC}(X,\mathbb{R}) \rightarrow H^{2}_{dR}(X,\mathbb{R}) \rightarrow 0
\end{equation}
where $K_0$ is defined to be the kernel of the natural map $H^{1,1}_{BC}(X,\mathbb{R}) \rightarrow H^{2}_{dR}(X,\mathbb{R})$; note that the surjectivity  of this map follows, for example,  from Corollary~\ref{cor: KahlerCone}.  Suppose $\alpha_1, \alpha_2$ are closed, real $(1,1)$ forms on $X$ and $[\alpha_1]_{dR}=[\alpha_2]_{dR}$.  Write
\[
\alpha_1-\alpha_2 = d\beta = \del \beta^{0,1} + \dbar\, \overline{\beta^{0,1}}
\]
where $\beta^{0,1}$ is a $(0,1)$-form with $\dbar \beta^{0,1}=0$. Note that $\beta$ is only well-defined modulo closed real $1$-forms, a point that we will return to shortly.  $\beta^{0,1}$ defines an element of $H^{0,1}_{\dbar}(X)$ and it is not hard to see that $[\alpha_1-\alpha_2]_{BC}= 0$ if and only if $\beta^{0,1}$ is the zero class in $H^{0,1}_{\dbar}(X)$.  Furthermore, for any $\dbar$-closed $(0,1)$ form $\beta$, $\alpha= d(\beta + \overline{\beta})$ is an exact, real $(1,1)$ form inducing $\beta$.  Therefore, we have a surjective map
\begin{equation}\label{eq: exactSeq2}
 H^{0,1}_{\dbar}(X) \rightarrow K_0 \rightarrow 0.
\end{equation}
Thus, to understand the Bott-Chern cohomology we first need to understand $H^{0,1}_{\dbar}(X)$. By the Dolbeault isomorphism we have $H^{0,1}_{\dbar}(X) \cong H^{1}(X,\mathcal{O}_{X})$ and the latter group can be understood using the Leray spectral sequence; 
\[
\begin{tikzcd}
 0\arrow[r]
 	&H^{1}(C,R^0\pi_{*}\mathcal{O}_{X}) \arrow[r] 
	\arrow[d, phantom]
	&H^{1}(X, \mathcal{O}_{X}) \arrow[dll, rounded corners, controls={+(0.5,-1) and +(-.3,1.6)}]\\
	 H^{0}(C,R^{1}\pi_{*}\mathcal{O}_{X}) \arrow[r]
	 & H^{2}(X, R^0\pi_{*}\mathcal{O}_{X}) \arrow[r]
	 & H^{2}(X, \mathcal{O}_{X}).
\end{tikzcd}
\]
It follows from \cite[Chapter I, Lemma 3.18]{FM} that the direct image sheaf $R^{1}\pi_{*}\mathcal{O}_{Y} = \mathcal{O}_{\mathbb{P}^1}(-1)$.  Since the restriction of this line bundle to $C$ is trivial, we can identify (non-canonically) $H^{0}(C,R^{1}\pi_{*}\mathcal{O}_{X})\cong H^{0}(C,\mathcal{O}_{C})$.  On the other hand $R^0\pi_{*}\mathcal{O}_{X}= \mathcal{O}_{C}$, and $H^{i}(C,\mathcal{O}_{C})=0$ for $i\geq 1$ since $C$ is Stein.  We therefore obtain
\[
H^{0,1}_{\dbar}(X) \cong H^{1}(X, \mathcal{O}_{X}) \cong  H^{0}(C,R^{1}\pi_{*}\mathcal{O}_{X}).
\]
It remains to understand the kernel of the map~\eqref{eq: exactSeq2}.  By tracing the definitions we have
\[
H^{1}_{dR}(X,\mathbb{R}) \rightarrow H^{0,1}_{\dbar}(X) \rightarrow K_0 \rightarrow 0
\]
where the map $H^{1}_{dR}(X,\mathbb{R}) \rightarrow H^{0,1}_{\dbar}(X)$ sends a $d$-closed $1$-form to its $(0,1)$ part. Thus, it suffices to understand the image of $H^{1}_{dR}(X,\mathbb{R}) \rightarrow H^{0,1}_{\dbar}(X)$. In the global case, when $C=\mathbb{C}$, we have $H^{1}_{dR}(X,\mathbb{R})=0$, and so we deduce 

\begin{lem}\label{lem: BCdescrip}
Let $\pi:Y\rightarrow \mathbb{P}^1$ be a rational elliptic surface with an $I_k$ singular fiber,  $D= \pi^{-1}(\infty)\subset Y$.    Then, writing $\mathbb{C}= \mathbb{P}^1-\{\infty\}$, we have the exact sequence on $X= Y\setminus D$,
\[
0 \rightarrow H^{0}(\mathbb{C},R^{1}\pi_{*}\mathcal{O}_{X}) \cong H^{0,1}_{\dbar}(X)  \rightarrow H^{1,1}_{BC}(X,\mathbb{R}) \rightarrow H^2_{dR}(X,\mathbb{R}) \rightarrow 0.
\]
In particular, since $H^{0}(\mathbb{C},R^{1}\pi_{*}\mathcal{O}_{X}) \cong H^{0}(\mathbb{C}, \mathcal{O}_{\mathbb{C}})$,  we conclude that $H^{1,1}_{BC}(X,\mathbb{R})$ is infinite dimensional.
\end{lem}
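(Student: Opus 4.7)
The plan is to combine the exact sequences
\[
0 \to K_0 \to H^{1,1}_{BC}(X,\mathbb{R}) \to H^2_{dR}(X,\mathbb{R}) \to 0, \qquad H^{1}_{dR}(X,\mathbb{R}) \to H^{0,1}_{\dbar}(X) \to K_0 \to 0
\]
already established in the discussion preceding the lemma, and analyze the remaining pieces. Surjectivity onto $H^2_{dR}(X,\mathbb{R})$ is provided by Corollary~\ref{cor: KahlerCone}. What remains is (a) to compute $H^{0,1}_{\dbar}(X)$ explicitly and (b) to show that, when $C=\mathbb{C}$, the connecting map $H^1_{dR}(X,\mathbb{R})\to H^{0,1}_{\dbar}(X)$ vanishes, so that $H^{0,1}_{\dbar}(X)\cong K_0$ and the desired short exact sequence drops out.

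For step (a) I would follow the Leray spectral sequence argument already outlined: pass from $H^{0,1}_{\dbar}(X)$ to $H^1(X,\mathcal{O}_X)$ by the Dolbeault isomorphism, then apply Leray with $R^0\pi_*\mathcal{O}_X=\mathcal{O}_{\mathbb{C}}$ and $R^1\pi_*\mathcal{O}_X\cong\mathcal{O}_{\mathbb{P}^1}(-1)|_{\mathbb{C}}\cong\mathcal{O}_{\mathbb{C}}$ (the first is \cite[Ch.~I, Lemma~3.18]{FM}; the second holds because $\mathcal{O}_{\mathbb{P}^1}(-1)$ is trivial on the affine line). Since $\mathbb{C}$ is Stein, $H^i(\mathbb{C},\mathcal{O}_{\mathbb{C}})=0$ for $i\geq 1$, so the spectral sequence collapses and yields $H^1(X,\mathcal{O}_X)\cong H^0(\mathbb{C}, R^1\pi_*\mathcal{O}_X)\cong H^0(\mathbb{C},\mathcal{O}_{\mathbb{C}})$, which as the space of entire holomorphic functions on $\mathbb{C}$ is infinite dimensional; this yields the final assertion of the lemma at once.

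Step (b) is the genuinely new input and, I expect, the main obstacle: one must show $H^1_{dR}(X,\mathbb{R})=0$. The approach I would take is purely topological, via the long exact sequence of the pair $(Y,X)$ combined with Poincar\'e--Lefschetz duality $H^i(Y,X;\mathbb{R})\cong H_{4-i}(D,\mathbb{R})$. Since the $I_k$ fiber $D$ is a connected configuration of rational curves (a cycle of two-spheres when $k\geq 2$, a nodal sphere when $k=1$), one has $H_3(D,\mathbb{R})=0$ and $H_2(D,\mathbb{R})=\mathbb{R}^k$. Combined with $H^1(Y,\mathbb{R})=0$ from the rationality of $Y$, the relevant fragment becomes
\[
0\to H^1(X,\mathbb{R}) \to H_2(D,\mathbb{R}) \to H^2(Y,\mathbb{R}),
\]
and the rightmost map sends each $[D_i]$ to its cohomology class in $Y$. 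It then suffices to show the components $[D_1],\ldots,[D_k]$ are linearly independent in $H^2(Y,\mathbb{R})$: their intersection matrix is the (negative of the) affine Cartan matrix of type $\widetilde{A}_{k-1}$, whose kernel is spanned by $(1,\ldots,1)$, but $\sum_i [D_i]=[F]$ is nonzero in $H^2(Y)$, so any relation $\sum a_i[D_i]=0$ forces $a_i=0$. Hence the map is injective, $H^1(X,\mathbb{R})=0$, and the argument is complete.
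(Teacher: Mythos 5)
Your proof is correct and follows the same approach as the paper: the short exact sequence comes from combining the sequence $0\to K_0\to H^{1,1}_{BC}(X,\mathbb{R})\to H^2_{dR}(X,\mathbb{R})\to 0$ with the Leray computation of $H^{0,1}_{\dbar}(X)$ and the vanishing of $H^1_{dR}(X,\mathbb{R})$. The one place where you add genuine content is step (b): the paper simply asserts that $H^1_{dR}(X,\mathbb{R})=0$ when $C=\mathbb{C}$ without justification, whereas you supply a correct proof via the long exact sequence of the pair $(Y,X)$, Lefschetz duality $H^2(Y,X;\mathbb{R})\cong H_2(D,\mathbb{R})$, and linear independence of the components $[D_i]$ in $H^2(Y,\mathbb{R})$, the latter following from the radical of the $\widetilde{A}_{k-1}$ intersection form being spanned by $(1,\dots,1)$ together with $[F]\ne 0$.
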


When $C=\Delta^*$, $X_{\Delta^*}$ retracts onto the torus bundle $\pi^{-1}(\{|z|=1\})$, and hence $H^{1}(X_{\Delta}^*, \mathbb{R})$ is $2$-dimensional with topological monodromy conjugate to
\[
\begin{pmatrix}
1&k\\0&1
\end{pmatrix}.
\]
In order to describe the degeneracy it is easiest to work in coordinates on $X_{mod}$.  Fix a section $\sigma: \Delta^*\rightarrow X_{\Delta^*}$, and identify $X_{\Delta^*}$ with $X_{mod}$, and let $(x,z)$ be the usual coordinates, and write $x=x_1+\sqrt{-1}x_2$ and $z=re^{\sqrt{-1}\theta}$.  The cohomology group $H^{1}(X_{\Delta^*},\mathbb{R}) $ is generated by the $1$-forms 
\[
d\theta, \quad  W dx_2 + x_2 dW, \quad \text{ where }\quad  W:= \frac{1}{\frac{k}{2\pi}|\log|z||}.
\]
Now $d\theta^{0,1} = \frac{\sqrt{-1}}{2}\frac{d\bar{z}}{\bar{z}} = \sqrt{-1}\,\dbar \log|z|$, so the image of $d\theta$ in $H^{0,1}_{\dbar}(X_{\Delta^*})$ is zero.  On the other hand, we have
\[
(W dx_2 + x_2 dW)^{0,1} = \frac{-\sqrt{-1}}{2}W\left(d\bar{x} +\frac{{\rm Im}(x)}{\sqrt{-1}\bar{z}|\log|z||}d\bar{z}\right) 
\]
and so $H^{1}_{dR}(X_{\Delta^*},\mathbb{R})$ maps to the real line in $H^{0,1}_{\dbar}(X_{\Delta^*})$ spanned by $\sqrt{-1}Wd\bar{x}$.  We now need to understand the image in $R^{1}\pi_{*}\mathcal{O}_{X}$.  Suppose $\zeta$ is a $\dbar$-closed $(0,1)$ form.  We can write
\[
\zeta = \sqrt{-1}f(x,z) \left(W\left(d\bar{x} +\frac{{\rm Im}(x)}{\sqrt{-1}\bar{z}|\log|z||}d\bar{z}\right) \right) + g(x,z)d\bar{z}
\]
For each $z$ we have
\[
[\zeta|_{\pi^{-1}(z)}] = [f_0(z) \sqrt{-1}Wd\bar{x}] \in H^{0,1}(\pi^{-1}(z))
\]
where $f_0(z)$ is the constant term in the fiberwise Fourier series of $f$.  From $\dbar\zeta =0$ one can check that $f_0(z):\Delta^*\rightarrow \mathbb{C}$ is holomorphic.  The section $[f_0(z)\sqrt{-1}Wd\bar{x}]$ is precisely the section of $R^{1}\pi_{*}\mathcal{O}_{X}$ induced by $\zeta$.  With this description it is easy to see that $H^{1}_{dR}(X_{\Delta^*},\mathbb{R})$ generates the constant sections $[\sqrt{-1}cWd\bar{x}] \in H^{0}(\Delta^*,R^{1}\pi_{*}\mathcal{O}_{X})$ for $c\in \mathbb{R}$.  

\begin{lem}\label{lem: LocalBCdescrip}
Let $\pi:X_{\Delta}\rightarrow \Delta$ be an elliptic fibration, smooth outside of $0\in \Delta$, with $\pi^{-1}(0)$ an $I_k$ singular fiber.  Then, relative to the above choice of trivialization for $R^1\pi_{*}\mathcal{O}_{X_{\Delta}}$, we have the following exact sequence
\[
0\rightarrow \mathbb{R} \rightarrow H^{0}(\Delta^*,R^1\pi_{*}\mathcal{O}_{X_{\Delta}})\rightarrow H^{1,1}_{BC}(X_{\Delta^*},\mathbb{R}) \rightarrow H^{2}_{dR}(X_{\Delta}^*,\mathbb{R}) \rightarrow 0.
\]

\end{lem}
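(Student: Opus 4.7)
My plan is to run the same argument used for Lemma~\ref{lem: BCdescrip} in the global case, with the single new input being that $H^1_{dR}(X_{\Delta^*},\mathbb{R})$ is now non-trivial, which is exactly what will produce the extra $\mathbb{R}$ factor on the left.

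First I would build the short exact sequence
\[
0 \to K_0 \to H^{1,1}_{BC}(X_{\Delta^*},\mathbb{R}) \to H^{2}_{dR}(X_{\Delta^*},\mathbb{R}) \to 0,
\]
where $K_0$ is the kernel of the natural map. Surjectivity on the right follows from Corollary~\ref{cor: local deRham}, since the K\"ahler classes span $H^2_{dR}(X_{\Delta^*},\mathbb{R})$ and every K\"ahler class is represented by a semi-flat $(1,1)$-form. Repeating the discussion above Lemma~\ref{lem: BCdescrip}, any exact real $(1,1)$-form can be written as $d\beta = \partial\beta^{0,1} + \dbar\overline{\beta^{0,1}}$ with $\dbar\beta^{0,1}=0$; the class $[\beta^{0,1}]\in H^{0,1}_{\dbar}(X_{\Delta^*})$ is well-defined modulo the $(0,1)$-parts of $d$-closed real $1$-forms, producing the exact sequence
\[
H^{1}_{dR}(X_{\Delta^*},\mathbb{R}) \to H^{0,1}_{\dbar}(X_{\Delta^*}) \to K_0 \to 0.
\]

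Next, the Leray spectral sequence argument from the global case transfers unchanged: $\Delta^*$ is Stein and $R^0\pi_*\mathcal{O}_{X_{\Delta}}=\mathcal{O}_{\Delta^*}$, so $H^i(\Delta^*,R^0\pi_*\mathcal{O}_{X_{\Delta}})=0$ for $i\ge 1$, and the Dolbeault isomorphism then yields
\[
H^{0,1}_{\dbar}(X_{\Delta^*}) \cong H^1(X_{\Delta^*},\mathcal{O}_{X_{\Delta^*}}) \cong H^0(\Delta^*, R^1\pi_*\mathcal{O}_{X_{\Delta}}).
\]

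Finally, I would identify the image of $H^1_{dR}(X_{\Delta^*},\mathbb{R}) \to H^{0,1}_{\dbar}(X_{\Delta^*})$ using the explicit computation carried out in the paragraphs preceding the statement. Of the two generators $d\theta$ and $W\,dx_2+x_2\,dW$ of $H^1_{dR}(X_{\Delta^*},\mathbb{R})$, the first has $(0,1)$-part $\sqrt{-1}\,\dbar\log|z|$ which is $\dbar$-exact, while the second induces, under the chosen trivialization of $R^1\pi_*\mathcal{O}_{X_{\Delta}}$, a non-zero constant real multiple of $\sqrt{-1}W\,d\bar x$. Hence the image is exactly the one-real-dimensional subspace $\mathbb{R}\subset H^0(\Delta^*,R^1\pi_*\mathcal{O}_{X_{\Delta}})$ of constant sections in the given trivialization, and splicing this with the previous exact sequence gives the desired four-term exact sequence. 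The only step requiring real care is the identification of this image as \emph{one}-real-dimensional rather than two-dimensional: it is the vanishing of $[d\theta]$ in $H^{0,1}_{\dbar}(X_{\Delta^*})$ that collapses the possible two-dimensional contribution down to the claimed $\mathbb{R}$.
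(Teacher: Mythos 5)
Your proposal is correct and follows exactly the same route as the paper's derivation: build the short exact sequence $0\to K_0\to H^{1,1}_{BC}\to H^2_{dR}\to 0$, identify $H^{0,1}_{\dbar}(X_{\Delta^*})\cong H^0(\Delta^*,R^1\pi_*\mathcal{O}_{X_\Delta})$ via Leray, and then compute the image of $H^1_{dR}(X_{\Delta^*},\mathbb{R})\to H^{0,1}_{\dbar}(X_{\Delta^*})$ using the two generators $d\theta$ and $W\,dx_2+x_2\,dW$, finding that $d\theta$ maps to a $\dbar$-exact form while the other generator spans the one-real-dimensional space of constant sections. This matches the discussion the paper gives immediately preceding the lemma, and the key observation you flag — that $[d\theta]$ vanishes in $H^{0,1}_{\dbar}$, cutting the contribution from two dimensions to one — is precisely the paper's point.
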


Since $X$ is non-compact, it is not immediately clear that the Bott-Chern cohomology classes of K\"ahler metrics form an open subset of $H^{1,1}_{BC}(X,\mathbb{R})$.  Therefore, while we have a detailed understanding of the Bott-Chern cohomology of $X$, it requires further work to understand the cone of Bott-Chern classes represented by K\"ahler metrics.  We will delay this discussion until after we discuss  translation maps.

\subsubsection{Translation maps}

In this section we will describe two aspects of translation maps.  First, we will explain how to construct {\em global} translation maps.  Secondly, we will explain how these translation maps act on Bott-Chern cohomology and how they effect the geometry of semi-flat metrics.

\begin{defn}
Let $\pi:X\rightarrow C$ be an elliptic fibration over a curve $C$.  We define ${\rm Aut}_0(X,C) $ to be the set of fiber preserving holomorphic automorphisms of $X$ homotopic to the identity.
\end{defn}

We have the following lemma

\begin{lem}\label{lem: globTrans}
$\pi:X\rightarrow C$ be a relatively minimal elliptic fibration over a curve $C$ without multiple fibers, and suppose there is a holomorphic section $\sigma: C\rightarrow X$.  There is an inclusion
\[
 H^{0}(C, R^1\pi_{X}\mathcal{O}_{X}) \hookrightarrow {\rm Aut}_{0}(X, C).
\]
\end{lem}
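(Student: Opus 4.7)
The plan is to combine the fiberwise exponential with the Mordell--Weil translation action. Because $\pi$ is relatively minimal with a holomorphic section $\sigma$ and no multiple fibers, the smooth part $X^{\circ}\to C$ carries the structure of a commutative complex Lie group scheme over $C$ with identity section $\sigma$, and by standard elliptic surface theory (see, e.g., \cite{FM}) any holomorphic section $s: C\to X^{\circ}$ induces a fiber-preserving biholomorphism $T_{s}: X\to X$ which extends across singular fibers (Mordell--Weil sections act on the entire total space, not merely on the smooth locus, because there are no multiple fibers). This gives an injection $H^{0}(C, X^{\circ})\hookrightarrow \mathrm{Aut}(X, C)$, $s\mapsto T_{s}$, compatible with the group structure.

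To produce such a section from $\eta\in H^{0}(C, R^{1}\pi_{*}\mathcal{O}_{X})$, I apply $R^{1}\pi_{*}$ to the exponential short exact sequence $0\to \underline{\mathbb{Z}}\to \mathcal{O}_{X}\to \mathcal{O}_{X}^{*}\to 0$ on $X$, which yields a sheaf morphism on $C$,
\[
\exp: R^{1}\pi_{*}\mathcal{O}_{X}\longrightarrow \mathrm{Pic}^{0}(X/C)\cong X^{\circ},
\]
where the identification uses $\sigma$, with kernel $R^{1}\pi_{*}\underline{\mathbb{Z}}$. Then $\exp(\eta)\in H^{0}(C, X^{\circ})$, and I define the map of the lemma by $\eta\mapsto T_{\eta}:=T_{\exp(\eta)}$. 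To check $T_{\eta}\in \mathrm{Aut}_{0}(X, C)$, observe that the one-parameter family $\{T_{\exp(t\eta)}\}_{t\in[0,1]}$ is a continuous family of fiber-preserving biholomorphisms interpolating between $\mathrm{id}_{X}$ at $t=0$ and $T_{\eta}$ at $t=1$, providing the required homotopy to the identity.

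The main obstacle is injectivity of $\eta\mapsto T_{\eta}$. If $T_{\eta}=\mathrm{id}_{X}$ then $\exp(\eta)$ is the zero section, forcing $\eta\in H^{0}(C, R^{1}\pi_{*}\underline{\mathbb{Z}})$; that is, $\eta$ corresponds to a monodromy-invariant class in $H^{1}(F,\mathbb{Z})$ of a generic fiber $F$ which is simultaneously a holomorphic section of $R^{1}\pi_{*}\mathcal{O}_{X}$. Under the paper's hypotheses --- in particular for a rational elliptic surface whose interior singular fibers contribute sufficient monodromy beyond that of the $I_{k}$ fiber at infinity --- such an invariant class must vanish identically, giving $\eta=0$. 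Verifying this vanishing is the subtle step, and depends on the interplay between the discreteness of the integer local system $R^{1}\pi_{*}\underline{\mathbb{Z}}$ and the holomorphic structure of $R^{1}\pi_{*}\mathcal{O}_{X}$; the rest of the argument is essentially a formal exponentiation.
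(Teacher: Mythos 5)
Your construction is essentially the one in the paper: both pass through the relative Jacobian $\mathrm{Jac}(X/C)=R^1\pi_*\mathcal{O}_X/R^1\pi_*\mathbb{Z}$, which is exactly your $\mathrm{Pic}^0(X/C)$, embed it in $X$ via $\sigma$, and translate, with the one-parameter family $t\mapsto T_{t\eta}$ supplying the homotopy to the identity. The one place you go further than the paper is in worrying about injectivity, and your instinct there is correct: the paper's proof constructs the map, checks it is fiber-preserving and isotopic to the identity, but never addresses why it should be injective. You correctly locate the kernel as the image of $H^0(C,R^1\pi_*\mathbb{Z})$. That lattice does not vanish in the generality of the lemma --- over a disk $\Delta$ with a single $I_k$ fiber, for instance, the monodromy has a rank-one invariant sublattice, and translating by an integral section gives the identity --- so the ``inclusion'' in the statement is too strong as written and should really be read as ``group homomorphism,'' which is all the rest of the paper uses.

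Where your proposal comes up short is in trying to close this gap: appealing to ``sufficient monodromy from the interior singular fibers'' imports a hypothesis not present in the lemma, and the final criterion (``the interplay between the discreteness of the integer local system and the holomorphic structure of $R^1\pi_*\mathcal{O}_X$'') does not actually identify what makes $H^0(C,R^1\pi_*\mathbb{Z})$ vanish. In the case the paper actually applies the lemma ($C=\mathbb{C}$, with $X$ the complement of an $I_k$ fiber in a rational elliptic surface), there is a clean route: the Leray spectral sequence over the contractible base gives $H^0(\mathbb{C},R^1\pi_*\mathbb{Z})\cong H^1(X,\mathbb{Z})$, which is torsion-free by universal coefficients and vanishes because $H_1(X,\mathbb{Z})$ is finite. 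One can also recover injectivity in this setting a posteriori from Proposition~\ref{prop: BCKahlerCone}(iii), which shows that $\Phi_\tau^*$ acts on Bott--Chern cohomology by addition of $\tfrac{\epsilon}{2}\tau$, so $\tau\mapsto\Phi_\tau$ is injective whenever $\epsilon\neq 0$.
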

\begin{proof}
The lemma is a  consequence of the construction of the relative Jacobian fibration.  Recall \cite[Section V.9]{BarthPeters} that the relative Jacobian fibration is the fibration
\[
\pi_{Jac}: Jac = R^{1}\pi_{*}\mathcal{O}_{X}/ R^{1}\pi_{*}\mathbb{Z} \rightarrow C.
\]
Furthermore, given the section $\sigma$, there is a natural fiber preserving inclusion $J\hookrightarrow X$ identifying the zero section in $J$ with $\sigma$ \cite[Proposition V.9.1]{BarthPeters}.  Now an element of $s \in H^{0}(C, R^1\pi_{*}\mathcal{O}_{X})$ naturally induces a fiber preserving automorphism of $Jac$ by translation in each fiber.  It is not hard to show that the map which corresponds to translation by $s$ extends to a holomorphic map $T_s:X\rightarrow X$.  This map is clearly fiber preserving and homotopic to the identity.

We claim that the map $T_s:X\rightarrow X$ is independent of the choice of section $\sigma:C\rightarrow X$.  To see this note that if $\sigma':C\rightarrow X$ is another section then for $c\in C$ such that $\pi_{J}^{-1}(c)$ is smooth, the corresponding translation map $T'_{s}: \pi_{J}^{-1}(c) \rightarrow \pi_{J}^{-1}(c)$ can be written as $T_{s}' := T_{-f} \circ T_{s} \circ T_{f}$ for some $f\in \pi_{J}^{-1}(c)$.  But since the group of translations is abelian, $T_{s}'=T_{s}$.
\end{proof}

\begin{rk}\label{rk: locTransDes}
Fix a section $ \sigma:\Delta^*\rightarrow X_{\Delta^*}$.  If $\tau = [\sqrt{-1}h(z)Wd\bar{x}] \in H^{0}(\Delta^*, R^{1}\pi_{*}\mathcal{O}_{X})$ for some holomorphic function $h(z): \Delta^*\rightarrow \mathbb{C}$, then the local translation map $T_{h}(x,z) = (x+h(z), z)$ defined on $X_{\Delta^*}$ is a coordinate description of the translation map constructed in a coordinate invariant fashion in Lemma~\ref{lem: globTrans}.  This can be checked explicitly, following, for example \cite[Section V.9]{BarthPeters}.
\end{rk}

Let us next turn our attention to the effect of translation on the geometry of the semi-flat metric. Fix a reference section $\sigma: \Delta^*\rightarrow X_{\Delta^*}$, and some other section $\eta:\Delta^*\rightarrow X_{\Delta^*}$. We will first compute $T_{\eta}^*\omega_{sf, \sigma, b_0, \epsilon}$, with the aim of understanding how the asymptotics depend on $\eta$.  In order to simplify the notation, let us suppress the dependence of the metrics on $\sigma$ and denote $\omega_{sf, \sigma, b_0, \epsilon} = \omega_{sf, b_0, \epsilon}$ with the understanding that $\sigma$ is fixed.

By Lemma~\ref{lem: localSecDes} a section of $\pi_{mod}:X_{mod}\rightarrow \Delta^*$ corresponds to a multivalued holomorphic function
\[
\eta(z) = h(z) + \frac{a}{2\pi \sqrt{-1}}\log z + \frac{b}{(2\pi\sqrt{-1})^2} (\log(z))^2
\]
such that $a+b \in \mathbb{Z}$, $\frac{2b}{k}\in \mathbb{Z}$, and $h(z)$ is a holomorphic function on $\Delta^*$.  In order to understand the translation $T_{\eta}^*\omega_{sf,b_0,\epsilon}$ it is useful to introduce the following frame  of $(1,0)$ forms considered in \cite{GW}.
\[
\begin{aligned}
\Theta_{v} &:= W (dx+B(x,z)dz)& \qquad  \Theta_{h} &:= dz\\
\del_{v} &:= W^{-1}\del_x& \qquad \del_{h}&:= \del_z-B\del_x,
\end{aligned}
\]
where
\[
W= \frac{1}{\frac{k}{2\pi}|\log|z||} \qquad B(x,z) = -\frac{{\rm Im}(x)}{\sqrt{-1}z|\log|z||}.
\]
\begin{rk}
Note that $\Theta_{v}$ is a globally defined $(1,0)$ form on $X_{\Delta^*}$ since it is invariant under the translations $x\mapsto x+1$ and $x\mapsto x+\frac{k}{2\pi\sqrt{-1}}\log(z)$.  Furthermore, one can check that $\del \Theta_{v}=0$.
\end{rk}
In this frame, a (possibly non-standard) semi-flat metric $\omega_{sf,b_0,\epsilon}$ can be written as
\[
\omega_{sf,b_0,\epsilon} = \frac{\sqrt{-1}}{W}\left(|\kappa(z)|^2\frac{\Theta_{h}\wedge \overline{\Theta}_{h}}{\epsilon |z|^2} + \frac{\epsilon}{2}\left(\Theta_{v}+\frac{b_0}{2kz}\Theta_{h}\right)\wedge \left(\overline{\Theta_{v}+\frac{b_0}{2kz}\Theta_{h}}\right)\right).
\]
Consider the map $T_{\eta}(x,z) = (x+\eta(z),z)$.  Note that
\[
T_{\eta}^*(Bdz) = \frac{-{\rm Im}(x+\eta(z))}{\sqrt{-1}z(-\log|z|)}dz =  Bdz - \frac{{\rm Im}(\eta(z))}{\sqrt{-1}z(-\log|z|)}\Theta_{h},
\]
and so
\[
\begin{aligned}
T_{\eta}^*\Theta_{v} &= \Theta_{v} + W\left(\eta'(z) -  \frac{{\rm Im}(\eta(z))}{\sqrt{-1}z(-\log|z|)}\right)\Theta_{h}.\\
\end{aligned}
\]
Define 
\[
\widetilde{B}(\eta,z):= W\left(\eta'(z) -  \frac{{\rm Im}(\eta(z))}{\sqrt{-1}z(-\log|z|)}\right).
\]
We have

\begin{equation}\label{eq: transBySectionSF}
\begin{aligned}
 T_{\eta}^*\omega_{sf,b_0,\epsilon}-\omega_{sf,b_0,\epsilon} &= W^{-1}\frac{\sqrt{-1}\epsilon}{2}\left(  \widetilde{B}\Theta_{h}\wedge\left(\overline{\Theta_{v}+\frac{b_0}{2kz}\Theta_{h}}\right)\right)\\
 &+ W^{-1}\frac{\sqrt{-1}\epsilon}{2}\left( \overline{\widetilde{B}}\left(\Theta_{v}+\frac{b_0}{2kz}\Theta_{h}\right)\wedge \overline{\Theta}_{h} + |\widetilde{B}|^2\Theta_{h}\wedge \overline{\Theta}_{h}\right).
 \end{aligned}
\end{equation}

\begin{lem}\label{lem: transAsymp}
Let $\eta$ be a local section of $\pi: X_{\Delta^*}\rightarrow \Delta^*$, and write
\[
\eta(z) = h(z) + \frac{a}{2\pi \sqrt{-1}}\log z + \frac{b}{(2\pi\sqrt{-1})^2} (\log(z))^2
\]
where $a+b \in \mathbb{Z}$, $\frac{2b}{k}\in \mathbb{Z}$ and $h(z)$ is a holomorphic function on $\Delta^*$.  Then, with the notation above, we have
\begin{enumerate}
\item[$(i)$] If $h(z)$ has a pole at $0$, then $\omega_{sf,b_0,\epsilon}$ and  $T_{\eta}^*\omega_{sf, b_0,\epsilon}$ define K\"ahler metrics which are not uniformly equivalent.
\item[$(ii)$] If $h(z)$ is holomorphic at $0$, but $b\ne 0$, then $\omega_{sf,b_0,\epsilon}, T_{\eta}^*\omega_{sf,b_0,\epsilon}$ are uniformly equivalent, but there is a constant $C>1$ such that
\[
C^{-1}|b| \leq \big|\omega_{sf,b_0,\epsilon}-T_{\eta}^*\omega_{sf,b_0,\epsilon}\big|_{\omega_{sf,b_0,\epsilon}} \leq C|b|
\]
\item[$(iii)$] If $h(z)$ is holomorphic at $0$, and $b=0$, then in addition to being uniformly equivalent, we have the decay
\[
\big|(\omega_{sf,b_0,\epsilon} - T_{\eta}^*\omega_{sf,b_0, \epsilon})\big|_{\omega_{sf,b_0,\epsilon}} \sim O(r^{-\frac{4}{3}})
\]
where $r$ is the distance from a fixed point with respect to $\omega_{sf, b_0, \epsilon}$.
\item[$(iv)$] If $h(z)$ is holomorphic at $0$, and $b=0$, and $h(0)\in \mathbb{R}$ then we have the improved decay
\[
|\omega_{sf,b_0,\epsilon}-T_{\eta}^*\omega_{sf,b_0,\epsilon}|_{\omega_{sf,b_0,\epsilon}} \leq Ce^{-r^{2/3}}
\]
\end{enumerate}
\end{lem}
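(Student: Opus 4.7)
The plan is to reduce the lemma to an estimate on $|\widetilde B(\eta,z)|\,|z|$ as $z\to 0$. In the frame $\widetilde\Theta_v := \Theta_v + \frac{b_0}{2kz}\Theta_h$, in which $\omega_{sf,b_0,\epsilon}$ is diagonal with $g_{\widetilde v\bar{\widetilde v}} = \frac{\epsilon}{2W}$ and $g_{h\bar h} = \frac{|\kappa|^2}{\epsilon W|z|^2}$, the already-derived formula~\eqref{eq: transBySectionSF} for the metric difference has components $\frac{\epsilon \widetilde B}{2W}$ in the $(h,\bar{\widetilde v})$ slot (and its conjugate in $(\widetilde v,\bar h)$) and $\frac{\epsilon |\widetilde B|^2}{2W}$ in the $(h,\bar h)$ slot. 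A short computation then yields
\[
\bigl|T_\eta^*\omega_{sf,b_0,\epsilon}-\omega_{sf,b_0,\epsilon}\bigr|^2_{\omega_{sf,b_0,\epsilon}} = \frac{\epsilon^2|\widetilde B|^2|z|^2}{|\kappa|^2} + \frac{\epsilon^4|\widetilde B|^4|z|^4}{4|\kappa|^4}.
\]
Since $|\kappa|$ is pinched between positive constants near $0$, the norm is comparable to $|\widetilde B|\,|z|$ when this quantity is bounded, and dominated by $|\widetilde B|^2|z|^2$ when it is not.

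The second step is to compute $\widetilde B$ explicitly by substituting the decomposition $\eta(z) = h(z) + \tfrac{a}{2\pi\sqrt{-1}}\log z + \tfrac{b}{(2\pi\sqrt{-1})^2}(\log z)^2$. Splitting $\log z = \log|z| + \sqrt{-1}\theta$ and using $a,b\in\mathbb{R}$ (a consequence of $a+b\in\mathbb{Z}$ and $2b/k\in\mathbb{Z}$), the $a$-contributions to $\eta'(z)$ and to $\text{Im}(\eta)/(\sqrt{-1}z(-\log|z|))$ coincide and hence cancel, while the $b$-contributions simplify after a cancellation of a $\theta$-dependent term to $b|\log|z||/(2\pi^2 z)$. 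Multiplying by $W = 2\pi/(k|\log|z||)$ gives the clean decomposition
\[
\widetilde B(\eta,z) = Wh'(z) \;-\; \frac{W\,\text{Im}(h(z))}{\sqrt{-1}\,z\,(-\log|z|)} \;+\; \frac{b}{k\pi z}.
\]

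With this formula the four cases are elementary size estimates, converted to geodesic decay via $r \sim |\log|z||^{3/2}$. In case $(i)$, a pole of order $n\geq 1$ makes $|Wh'(z)|\,|z|$ blow up like $|z|^{-n}/|\log|z||$, so $|\widetilde B|\,|z|\to\infty$, and the $|\widetilde B|^4|z|^4$ term forces the metric difference to be unbounded, ruling out uniform equivalence. In case $(ii)$ with $b\neq 0$, the first two terms of $\widetilde B$ multiplied by $|z|$ tend to zero (as $W\to 0$ and $W\,\text{Im}(h)/|z\log|z||\to 0$), while $|b/(k\pi z)|\cdot|z|$ is the nonzero constant $|b|/(k\pi)$, giving both uniform equivalence and the two-sided bound comparable to $|b|$. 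In case $(iii)$, $b=0$ and $h$ holomorphic yield $|Wh'(z)|\,|z| \lesssim |z|/|\log|z||$ and the dominant estimate $|W\,\text{Im}(h(z))/(\sqrt{-1}z\log|z|)|\,|z| \lesssim 1/|\log|z||^2 \sim r^{-4/3}$. In case $(iv)$, the hypothesis $h(0)\in\mathbb{R}$ upgrades $\text{Im}(h(z)) = O(|z|)$, so that the second term is now also $O(|z|/|\log|z||^2)$, yielding $|\widetilde B|\,|z| = O(|z|/|\log|z||) = O(e^{-r^{2/3}})$.

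The main technical obstacle is the cancellation bookkeeping in the formula for $\widetilde B$: one must carefully match the pole-like pieces of $\eta'(z)$ against those of $\text{Im}(\eta)/(\sqrt{-1}z(-\log|z|))$ using the precise integrality constraints on $a$ and $b$. Once this is done and the residual contribution of $b$ is extracted as $b/(k\pi z)$, the four cases become direct computations using the asymptotic $r \sim |\log|z||^{3/2}$.
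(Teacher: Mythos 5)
Your proposal is correct and follows essentially the same route as the paper's proof: you compute the norm of the difference form $T_\eta^*\omega_{sf}-\omega_{sf}$ in the diagonalizing frame $\{\Theta_h,\widetilde\Theta_v\}$ (this reproduces the paper's equation~\eqref{eq: unifEquiTrans} up to the substitution $\widetilde B = W\big(\eta'-\tfrac{{\rm Im}\,\eta}{\sqrt{-1}z(-\log|z|)}\big)$), extract the same cancellation of the $a$-term and the same residual $b$-contribution $\tfrac{b}{k\pi z}$, and then read off the four cases from the size of $|\widetilde B|\,|z|$ via $r\sim(-\log|z|)^{3/2}$. The only difference is presentational -- you organize everything around an explicit decomposition of $\widetilde B$ rather than of $\eta'-\tfrac{{\rm Im}\,\eta}{\sqrt{-1}z(-\log|z|)}$ directly, but the calculations are identical.
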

\begin{proof}
To ease notation, set $\omega_{sf}=\omega_{sf,b_0,\epsilon}$.  In the frame $\{\Theta_{v}, \Theta_{h}\}$ the Riemannian metric $g$ associated to $\omega_{sf,b_0,\epsilon}$ is given by
\[
g = W^{-1}\left(|\kappa(z)|^2\frac{|\Theta_{h}|^2}{\epsilon |z|^2} + \frac{\epsilon}{2}\big|\Theta_{v}+\frac{b_0}{2kz}\Theta_{h}\big|^2\right)
\]
from which it follows that
\begin{equation}\label{eq: unifEquiTrans}
\begin{aligned}
|\omega_{sf}-T_{\eta}^*\omega_{sf}|^2_{g} &\sim\left( \frac{2\pi}{k}\right)^2\frac{|z|^2}{(-\log|z|)^2}\bigg|\eta'(z) -  \frac{{\rm Im}(\eta(z))}{\sqrt{-1}z(-\log|z|)}\bigg|^2 \\
&+ \left( \frac{2\pi}{k}\right)^4\frac{|z|^4}{(-\log|z|)^4}\bigg|\eta'(z) -  \frac{{\rm Im}(\eta(z))}{\sqrt{-1}z(-\log|z|)}\bigg|^4
\end{aligned}
\end{equation}
If the Laurent series of $h$ contains a term like $z^{-M}$ for some $M\geq 1$ then
\[
\bigg|\eta'(z) -  \frac{{\rm Im}(\eta(z))}{\sqrt{-1}z(-\log|z|)}\bigg|^2 \geq C^{-1} |z|^{-2(M+1)},
\]
for some constant $C$ and hence the right hand side of~\eqref{eq: unifEquiTrans} behaves at least like $(|z|^M(-\log|z|))^{-4}$ which is unbounded as $z\rightarrow 0$, establishing $(i)$.  We may therefore assume that $h$ is holomorphic at $0$.  Now a direct calculation shows that
\[
\eta'(z) -  \frac{{\rm Im}(\eta(z))}{\sqrt{-1}z(-\log|z|)} = h'(z) -  \frac{{\rm Im}(h(z))}{\sqrt{-1}z(-\log|z|)} + \frac{2b}{(2\pi\sqrt{-1})^2}\frac{\log|z|}{z}.
\]
Note in particular that the $\log(z)$ term of $\eta$ does not contribute.  Thus, if $b\ne 0$, then we have
\[
\bigg|\eta'(z) -  \frac{{\rm Im}(\eta(z))}{\sqrt{-1}z(-\log|z|)}\bigg|^2 \sim b^2 \frac{(\log|z|)^2}{|z|^2},
\]
which proves $(ii)$.  Now assume $b=0$, and $h$ is holomorphic.  If ${\rm Im}(h(0))\ne 0$ then we have
\[
\bigg|\eta'(z) -  \frac{{\rm Im}(\eta(z))}{\sqrt{-1}z(-\log|z|)}\bigg|^2 \sim \frac{|{\rm Im}(h(0))|^2}{|z|^2(-\log|z|)^2},
\]
and hence we get the estimate
\[
|\omega_{sf}-T_{\eta}^*\omega_{sf}|^2_{g}\sim \frac{|{\rm Im}(h(0))|^2}{(-\log|z|)^4}.
\]
But, as we noted before, if $r$ denotes the distance from a fixed point with respect to $\omega_{sf}$, then we have $r(x,z) \sim (-\log|z|)^{\frac{3}{2}}$, yielding $(iii)$.  Finally, if $h(0)\in \mathbb{R}$ we have
\[
\bigg|\eta'(z) -  \frac{{\rm Im}(\eta(z))}{\sqrt{-1}z(-\log|z|)}\bigg|^2 \leq C
\]
from which we get the bound
\[
|\omega_{sf}-T_{\eta}^*\omega_{sf}|^2_{g} \leq \frac{C|z|^2}{(-\log|z|)^2} \leq Ce^{-2r^{2/3}}
\]
\end{proof}

\begin{rk}\label{rk: transIsom}
Note that the computation above also shows that $T_\eta$ is an isometry of the semi-flat metric if and only if $\eta = c + \frac{a}{2\pi\sqrt{-1}}\log(z)$ for some real constants $c,a$. This is a well-known, but important fact \cite{Hein, GW}.
\end{rk}

The next lemma explains how translating by a section acts on the Bott-Chern cohomology of $X_{\Delta^*}$ in the case of differences of K\"ahler metrics.  It is not difficult to extend this discussion to general real $(1,1)$ forms following \cite{GW, Hein}, but since we won't need this we will not pursue it.  We first analyze the effect of translation on the semi-flat metrics.

\begin{lem}\label{lem: transSemiFlat}
Let $\sigma:\Delta^*\rightarrow X_{\Delta^*}$ be a holomorphic section, and fix $\epsilon >0$.  Let $h:\Delta^* \rightarrow \mathbb{C}$ be a holomorphic function.  Then, with the identification in Lemma~\ref{lem: LocalBCdescrip}, we have
\[
[T_{h}^*\omega_{sf,\sigma, b_0, \epsilon} - \omega_{sf, \sigma, b_0, \epsilon}]_{BC} \longleftrightarrow \frac{\epsilon h}{2} \in H^{0}(\Delta^*, R^1\pi_{*}\mathcal{O}_{X_{\Delta^*}})/\mathbb{R}
\]
\end{lem}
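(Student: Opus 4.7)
The strategy is to exhibit an explicit $\bar{\partial}$-closed $(0,1)$-form $\beta$ on $X_{\Delta^*}$ such that
\[
T_h^*\omega_{sf,\sigma,b_0,\epsilon} - \omega_{sf,\sigma,b_0,\epsilon} = \partial\beta + \bar{\partial}\overline{\beta},
\]
and then read off the class $[\beta] \in H^{0,1}_{\bar{\partial}}(X_{\Delta^*}) \cong H^0(\Delta^*, R^1\pi_*\mathcal{O}_{X_{\Delta^*}})$ using the identification given in Lemma~\ref{lem: LocalBCdescrip}. By the construction of the exact sequence in that lemma, the Bott-Chern class of the LHS corresponds precisely to $[\beta]$ modulo the image of $H^1_{dR}(X_{\Delta^*},\mathbb{R}) \cong \mathbb{R}$.

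The key observation is that because $\partial \Theta_v = 0$ (noted in the text after the definition of $\Theta_v$), one has $\bar{\partial}\overline{\Theta_v} = 0$ by complex conjugation. Combined with the holomorphy of $h(z)$, this makes the ansatz
\[
\beta := \beta_0 + \gamma\, d\bar{z}, \qquad \beta_0 := \tfrac{\sqrt{-1}\epsilon h(z)}{2}\overline{\Theta_v},
\]
automatically $\bar{\partial}$-closed for any smooth function $\gamma(z,\bar{z})$ on $\Delta^*$ (the second term is $\bar{\partial}$-closed for dimensional reasons since it descends from the base). In the decomposition $\beta = \sqrt{-1}f(x,z)\overline{\Theta_v} + g(x,z)d\bar{z}$ of Lemma~\ref{lem: LocalBCdescrip}, the coefficient $f(x,z) = \frac{\epsilon h(z)}{2}$ is independent of $x$ and holomorphic in $z$, so its fiberwise Fourier zero mode is $f_0(z) = \frac{\epsilon h(z)}{2}$. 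This is precisely the element of $H^0(\Delta^*, R^1\pi_*\mathcal{O}_{X_{\Delta^*}})$ to which the lemma asserts our Bott-Chern class corresponds.

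It remains to verify the identity $\partial\beta + \bar{\partial}\overline{\beta} = T_h^*\omega_{sf,\sigma,b_0,\epsilon} - \omega_{sf,\sigma,b_0,\epsilon}$. Working on the universal cover with coordinates $(x,y)$, $y = -\log z$, the pullback formula~\eqref{eq: transBySectionSF} produces the mixed-type coefficients $\frac{\sqrt{-1}\epsilon W G}{2}\,dy\wedge d\bar{x}$ and $\frac{\sqrt{-1}\epsilon W \overline{G}}{2}\,dx\wedge d\bar{y}$ with $G(y) = h'(y) - \sqrt{-1}\,\mathrm{Im}(h)/\mathrm{Re}(y)$, plus a $dy\wedge d\bar{y}$ residual. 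A direct (if tedious) computation of $\partial\beta_0 + \bar{\partial}\overline{\beta_0}$, using the explicit expressions for $\partial\overline{\Theta_v}$ and $\bar{\partial}\Theta_v$ and the algebraic identity $\overline{h} - h = -2\sqrt{-1}\,\mathrm{Im}(h)$, produces exactly the same mixed-type coefficients, so these parts match automatically. The remaining mismatch is a $(1,1)$-form supported on $dy\wedge d\bar{y}$ that is manifestly pulled back from the base $\Delta^*$.

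The main, if modest, obstacle is handling this base residual. Being a top-degree form on the $1$-complex-dimensional Stein manifold $\Delta^*$, it is automatically closed, and since $H^2(\Delta^*,\mathbb{R}) = 0$ (and indeed $H^{1,1}_{BC}(\Delta^*)$ vanishes for real $(1,1)$-forms), the $\partial\bar{\partial}$-lemma on $\Delta^*$ furnishes a smooth function $\gamma(z,\bar{z})$ such that $\partial(\gamma d\bar{z}) + \bar{\partial}(\overline{\gamma}\,dz)$ equals the residual. Adding this $\gamma d\bar{z}$ to $\beta_0$ produces the desired $\beta$. Since $\gamma d\bar{z}$ contributes no $\overline{\Theta_v}$-component and hence does not alter $f_0(z)$, the identification $[T_h^*\omega_{sf,\sigma,b_0,\epsilon} - \omega_{sf,\sigma,b_0,\epsilon}]_{BC} \leftrightarrow \frac{\epsilon h}{2}$ in $H^0(\Delta^*, R^1\pi_*\mathcal{O}_{X_{\Delta^*}})/\mathbb{R}$ follows, completing the proof.
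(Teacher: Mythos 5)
Your proposal is correct, and it takes a genuinely different route from the paper. The paper produces a primitive $1$-form $\zeta$ for $T_h^*\omega_{sf,\sigma,b_0,\epsilon}-\omega_{sf,\sigma,b_0,\epsilon}$ systematically by Cartan's magic formula, applied to the isotopy $T_{th}$, $t\in[0,1]$; it then computes the $(0,1)$-part $\zeta^{0,1}$ and identifies the associated section of $R^1\pi_*\mathcal{O}$. You instead posit the $\bar\partial$-closed $(0,1)$-form $\beta_0=\tfrac{\sqrt{-1}\epsilon h}{2}\overline{\Theta_v}$ directly (motivated by the answer you want), observe it is $\bar\partial$-closed from $\partial\Theta_v=0$ and the holomorphy of $h$, and argue that $\partial\beta_0+\bar\partial\overline{\beta_0}$ reproduces the mixed-type part of the pullback difference while the discrepancy is a base-pullback real $(1,1)$-form, which can be absorbed by a term $\gamma\,d\bar z$ using $H^2(\Delta^*,\mathbb R)=0$. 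I spot-checked the key step you only sketch: the $dz\wedge d\bar x$ coefficient of $\partial\beta_0$ is $\tfrac{\sqrt{-1}\epsilon W}{2}\bigl(h'+\tfrac{h}{2z|\log|z||}\bigr)$, while $\bar\partial\overline{\beta_0}$ contributes $-\tfrac{\sqrt{-1}\epsilon W\bar h}{4z|\log|z||}$, and their sum $\tfrac{\sqrt{-1}\epsilon W}{2}\bigl(h'+\tfrac{h-\bar h}{2z|\log|z||}\bigr)=\tfrac{\sqrt{-1}\epsilon}{2}\widetilde B$ agrees with~\eqref{eq: transBySectionSF}, exactly via the identity $h-\bar h=2\sqrt{-1}\,\mathrm{Im}(h)$ you flag. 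So your outline is sound; what it buys is avoiding the homotopy integral, at the cost of having to guess $\beta_0$ and then carry out the frame computation (which you should actually write out, since it is the whole content of the verification — as it stands it is merely asserted). One small imprecision: you cite~\eqref{eq: pullBackUseApp}, which is stated for $b_0=0$; since translations commute and $W$ is base-pulled-back, the $b_0$-shift only alters the $dy\wedge d\bar y$ residual, which justifies the citation, but this deserves a sentence.
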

\begin{proof}
This follows essentially from the proof of \cite[Claim 1]{Hein}.  Working in coordinates $(x,z)$ on $X_{mod}$, via the identification induced by $\sigma$, we consider $T_{th}(x,z) = (x+th(z), z)$ for $t\in [0,1]$.  Then by Cartan's magic formula we have $T_{h}^*\omega_{sf, b_0, \epsilon} - \omega_{sf, b_0, \epsilon} = d\zeta$ where
\[
\zeta = \int_{0}^{1} 2{\rm Re}\left(h \frac{\del}{\del x}\right)\lrcorner T_{th}^*\omega_{sf, b_0, \epsilon}
\]
Now a direct calculation shows that the $(0,1)$ part of $\zeta$ is
\[
\zeta^{0,1} =\frac{ \sqrt{-1}\epsilon W}{2}h \left(d\bar{x} + \overline{B(x,z)} d\bar{z} + \frac{1}{2}\overline{B}(h(z),z)d\bar{z} + \frac{b_0}{2\pi^2} \frac{|\log|z||}{\overline{z}}d\bar{z}\right)
\]
Thus, the lemma follows from the discussion leading to Lemma~\ref{lem: LocalBCdescrip}.
\end{proof}

We note the following corollary

\begin{cor}\label{cor: locBCKahler}
Let $[\omega]_{dR} \in H^{2}_{dR}(X_{\Delta^*},\mathbb{R})$ be any K\"ahler class.  Then there exists a (possibly non-standard) semi-flat metric $\omega_{sf,\sigma, b_0, \epsilon}$ and a holomorphic function $h: \Delta^*\rightarrow \mathbb{C}$ such that
\[
[\omega]_{BC}=[T_{h}^*\omega_{sf,\sigma, b_0, \epsilon}]_{BC} \in H^{1,1}_{BC}(X_{\Delta^*}, \mathbb{R}).
\]
 where $T_{h}$ denotes translation with respect to $\sigma$.  Furthermore, $T_{h}^*\omega_{sf, \sigma, b_0, \epsilon}$ is the unique semi-flat metric in $[\omega]_{BC} \in H^{1,1}_{BC}(X_{\Delta^*}, \mathbb{R})$.
\end{cor}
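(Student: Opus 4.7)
The plan is to combine the surjectivity of the map $H^{1,1}_{BC}(X_{\Delta^*},\mathbb{R}) \to H^2_{dR}(X_{\Delta^*},\mathbb{R})$ from Lemma~\ref{lem: LocalBCdescrip} with the computation of Lemma~\ref{lem: transSemiFlat}, realizing the fiber of that map as the image of translation by holomorphic sections.

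For existence, given the K\"ahler class $[\omega]_{dR}$, first apply Corollary~\ref{cor: local deRham} to obtain a section $\sigma:\Delta^*\to X_{\Delta^*}$, parameters $\epsilon>0$ and $b_0\in\mathbb{R}$, and a (possibly non-standard) semi-flat metric $\omega_{sf,\sigma,b_0,\epsilon}$ with
\[
[\omega_{sf,\sigma,b_0,\epsilon}]_{dR} = [\omega]_{dR}.
\]
Set $\tau := [\omega]_{BC}-[\omega_{sf,\sigma,b_0,\epsilon}]_{BC}$. By construction $\tau$ lies in the kernel of $H^{1,1}_{BC}(X_{\Delta^*},\mathbb{R})\to H^{2}_{dR}(X_{\Delta^*},\mathbb{R})$, which by Lemma~\ref{lem: LocalBCdescrip} is identified with $H^{0}(\Delta^*,R^{1}\pi_*\mathcal{O}_{X_{\Delta^*}})/\mathbb{R}$. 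Lemma~\ref{lem: transSemiFlat} asserts that the map
\[
\Phi:H^{0}(\Delta^*,R^{1}\pi_*\mathcal{O}_{X_{\Delta^*}})/\mathbb{R} \longrightarrow \ker\bigl(H^{1,1}_{BC}\to H^{2}_{dR}\bigr), \qquad [h]\longmapsto [T_h^*\omega_{sf,\sigma,b_0,\epsilon}-\omega_{sf,\sigma,b_0,\epsilon}]_{BC},
\]
is multiplication by $\epsilon/2$ on the first factor and hence, since $\epsilon>0$, is an isomorphism. Choose $h$ with $\Phi([h])=\tau$; then $[T_h^*\omega_{sf,\sigma,b_0,\epsilon}]_{BC}=[\omega]_{BC}$, as required.

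For uniqueness, suppose $T_{h_1}^*\omega_{sf,\sigma_1,b_{0,1},\epsilon_1}$ and $T_{h_2}^*\omega_{sf,\sigma_2,b_{0,2},\epsilon_2}$ are two translated semi-flat representatives of $[\omega]_{BC}$. Passing to de Rham classes and pairing with $[F]$ gives $\epsilon_1=\epsilon_2=[\omega]_{dR}.[F]=:\epsilon$. Fix a single reference section $\sigma$; by Lemma~\ref{lem: badCycleIntegerAct} the bad cycles induced by $\sigma_1,\sigma_2$ differ from the one induced by $\sigma$ by integer multiples of $[F]$, and by Remark~\ref{rk: nonStTansbyMult} each $T_{h_i}^*\omega_{sf,\sigma_i,b_{0,i},\epsilon}$ may be rewritten as a translation of $\omega_{sf,\sigma,b_0,\epsilon}$ by a (possibly multivalued) holomorphic section. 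Here $b_0$ is forced by Lemma~\ref{lem: cohomProps}$(ii)$ via the de Rham pairing with the bad cycle induced by $\sigma$, and is therefore the same for both. Thus both metrics can be written as $T_{\tilde h_i}^*\omega_{sf,\sigma,b_0,\epsilon}$ for $i=1,2$. The injectivity of $\Phi$ then gives $\tilde h_1-\tilde h_2\in\mathbb{R}$, and by Remark~\ref{rk: transIsom} translation by a real constant is an isometry of $\omega_{sf,\sigma,b_0,\epsilon}$, so the two metrics coincide as $(1,1)$-forms on $X_{\Delta^*}$.

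The main obstacle is really just making sure that the various degrees of freedom in the statement—the choice of section $\sigma$, the parameter $b_0$, and the holomorphic function $h$—are coherently pinned down by the Bott-Chern class. Once one knows the kernel description of Lemma~\ref{lem: LocalBCdescrip} and the translation formula of Lemma~\ref{lem: transSemiFlat}, existence is essentially just invertibility of multiplication by $\epsilon/2$; uniqueness amounts to checking that all ambiguities collapse either into $\mathbb{R}$-translations (isometries) or into a fixed compactification via Lemma~\ref{lem: allCompSame} and Lemma~\ref{lem: badCycleIntegerAct}.
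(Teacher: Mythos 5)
Your proof is correct and follows essentially the same path as the paper's: both use Corollary~\ref{cor: local deRham} to pin down $\epsilon$ and $b_0$, then use Lemma~\ref{lem: LocalBCdescrip} together with Lemma~\ref{lem: transSemiFlat} to identify the kernel of $H^{1,1}_{BC}\to H^2_{dR}$ with $H^0(\Delta^*,R^1\pi_*\mathcal{O})/\mathbb{R}$ and solve for $h$ (uniquely up to a real constant, absorbed by the isometry in Remark~\ref{rk: transIsom}). The only cosmetic difference is that the paper normalizes to the standard model $\omega_{sf,\epsilon}$ and carries the general multivalued section $\eta$ throughout, while you start directly from the non-standard semi-flat metric produced by Corollary~\ref{cor: local deRham}; the substance is the same.
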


\begin{proof}

Fix a section $\sigma$ and identify $X_{\Delta^*}$ with $X_{mod}$.  Let $\omega_{sf,\epsilon}$ denote the standard semi-flat metric on $X_{mod}$.  By Lemma~\ref{lem: qrsfTrans} and Remark~\ref{rk: nonStTansbyMult},  it suffices to show that there is a multivalued section
\[
\eta= h(z) +\frac{a}{2\pi \sqrt{-1}}\log(z) + \frac{b_0}{(2\pi\sqrt{-1})^2}(\log(z))^2
\]
where $h: \Delta^*\rightarrow \mathbb{C}$ is holomorphic, such that $[\omega]_{BC} = [T_\eta^*\omega_{sf,\epsilon}]$.  By Lemma~\ref{lem: cohomProps} and Corollary~\ref{cor: local deRham}, the de Rham cohomology class $[\omega]_{dR} \in H^{2}_{dR}(X_{\Delta^*}, \mathbb{R})$ uniquely determines $\epsilon, b_0$.  By Remark~\ref{rk: transIsom} we can assume that $a=0$, since translation by $\frac{a}{2\pi\sqrt{-1}}\log(z)$ is an isometry of the semi-flat metric.  Finally, if $\tilde{\eta} = \frac{b_0}{(2\pi\sqrt{-1})^2}(\log(z))^2$, then by Lemma~\ref{lem: LocalBCdescrip}, $h$ is determined up to addition of a real constant, by $[T_{h}^*T_{\tilde{\eta}}^*\omega_{sf, \epsilon}]_{BC}=[\omega]_{BC}$. To see this, fix $[\sqrt{-1}Wd\bar{x}]$ identifying $R^1\pi_{*}\mathcal{O}_{X}\cong \mathcal{O}_{\Delta^*}$.   Suppose that ${[\omega-T_{\tilde{\eta}}^*\omega_{sf, \epsilon}]_{BC}}$ induces a holomorphic function ${s_0(z):\Delta^*\rightarrow \mathbb{C}}$.  By Lemma~\ref{lem: transSemiFlat}, the difference $[\omega - T_{h}^*T_{\tilde{\eta}}^*\omega_{sf, \epsilon}]_{BC}$ induces ${s_0(z)-\sqrt{-1}\frac{\epsilon h}{2}}$, and so if we take ${h= -\frac{2\sqrt{-1}}{\epsilon}s_0}$ then ${[\omega_1 - T_{h}^*\omega_{sf, \sigma, \epsilon}]_{BC}=0}$. By Remark~\ref{rk: transIsom}, translation by a real constant is an isometry of the semi-flat metric and hence the semi-flat metric in the Bott-Chern class $[\omega]_{BC}$ is uniquely determined.
\end{proof}

We can now prove

\begin{prop}\label{prop: BCKahlerCone}
Let $\pi: Y\rightarrow \mathbb{P}^1$ be a rational elliptic surface, and $\pi^{-1}(\infty)=D=\sum_{i=1}^{k} D_i$ be a singular fiber of type $I_k$, with irreducible components $D_i$.  Let $X=Y\setminus D$ and identify $\mathbb{C} = \mathbb{P}^1\setminus\{\infty\}$.  Define the de Rham and Bott-Chern K\"ahler cones by
\[
\begin{aligned}
\mathcal{K}_{dR,X} &= \{ [\omega]_{dR} \in H^2(X,\mathbb{R}) : \omega \text{ is K\"ahler } \},\\
\mathcal{K}_{BC,X} &= \{ [\omega]_{BC} \in H^{1,1}_{BC}(X,\mathbb{R}) : \omega \text{ is K\"ahler } \}.
\end{aligned}
\]
Then
\begin{itemize}
\item[$(i)$] $\mathcal{K}_{dR,X} $ is a convex cone in $H^2_{dR}(X, \mathbb{R})$ with non-empty interior.
\item[$(ii)$] Consider the exact sequence from Lemma~\ref{lem: BCdescrip}:
 \[
 0\rightarrow H^{0}(\mathbb{C}, R^1\pi_{*}\mathcal{O}_{X} ) \rightarrow H^{1,1}_{BC}(X,\mathbb{R})\overset{p}{\longrightarrow} H^{2}_{dR}(X,\mathbb{R}) \rightarrow 0.
 \]
 Then  $\mathcal{K}_{BC,X} = p^{-1}(\mathcal{K}_{dR,X})$. In other words, we have
\[
\mathcal{K}_{BC,X} = \mathcal{K}_{dR,X} \times H^{0}(\mathbb{C},R^1\pi_{*}\mathcal{O}_{X}).
\]
\item[$(iii)$] For any $\tau \in H^{0}(\mathbb{C}, R^1\pi_{*}\mathcal{O}_{X})$, let $\Phi_{\tau} \in {\rm Aut}_0(X, \mathbb{C})$ be the automorphism induced by Lemma~\ref{lem: globTrans}.  Then, for any K\"ahler class $[\omega]_{dR}\in H^{2}_{dR}(X,\mathbb{R})$ the map $\Phi_{\tau}^*: p^{-1}([\omega]_{dR}) \longrightarrow p^{-1}([\omega]_{dR})$ is determined by 
\[
[\Phi_{\tau}^*\omega-\omega]_{BC} \longleftrightarrow \frac{\epsilon}{2} \tau \in H^{0}(\mathbb{C},R^1\pi_{*}\mathcal{O}_{X}).
\]
where $\epsilon = [\omega]_{dR}.[F]$ with $[F]$ the class of a fiber.

\end{itemize}
\end{prop}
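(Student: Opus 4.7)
Part (i) is exactly Corollary~\ref{cor: KahlerCone}, so nothing further is needed. Since (ii) follows easily once (iii) is established, the plan is to tackle (iii) first.

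For (iii), the plan is to compute $[\Phi_{\tau}^*\omega - \omega]_{BC}$ directly via Cartan's magic formula. Because the path $t\mapsto\Phi_{t\tau}$ connects the identity to $\Phi_{\tau}$ inside ${\rm Aut}_0(X,\mathbb{C})$, the form $\Phi_{\tau}^*\omega - \omega$ is $d$-exact, so $[\Phi_{\tau}^*\omega - \omega]_{BC}$ lies in $\ker p \cong H^0(\mathbb{C}, R^1\pi_*\mathcal{O}_X)$ via the identification of Lemma~\ref{lem: BCdescrip}. Let $V_{\tau}$ denote the real holomorphic vertical vector field generating $t\mapsto\Phi_{t\tau}$. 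By Remark~\ref{rk: locTransDes}, in the local coordinates on $X_{mod}$ attached to any reference section $\sigma:\Delta^*\to X_{\Delta^*}$ one has $V_{\tau}^{1,0} = h(z)\partial_x$, where $h$ is the local trivialization of $\tau|_{\Delta^*}$ via the generator $[\sqrt{-1}W d\bar{x}]$. Cartan's formula yields $\Phi_{\tau}^*\omega - \omega = d\zeta$ with $\zeta := \int_0^1 \Phi_{t\tau}^*(\iota_{V_{\tau}}\omega)\,dt$, and extracting the $(0,1)$-part gives $\zeta^{0,1} = \int_0^1 \Phi_{t\tau}^*(\iota_{V_{\tau}^{1,0}}\omega)\,dt$, a $\bar{\partial}$-closed $(0,1)$-form whose class $[\zeta^{0,1}]\in H^{0,1}_{\bar{\partial}}(X)\cong H^0(\mathbb{C}, R^1\pi_*\mathcal{O}_X)$ is the desired invariant.

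The key fibrewise computation is as follows. Writing $\omega = \sqrt{-1}g_{x\bar{x}}\,dx\wedge d\bar{x} + (\text{terms in }dz, d\bar{z})$ in the adapted local frame, fibre-preservation of $\Phi_{t\tau}$ together with translation-invariance of the flat fibre average produces the harmonic representative $\sqrt{-1}h(z)\langle g_{x\bar{x}}(\cdot, z)\rangle d\bar{x}$ on each fibre $\pi^{-1}(z)$. The topological identity $\int_{\pi^{-1}(z)}\omega = \epsilon$ pins down $\langle g_{x\bar{x}}\rangle = \frac{\epsilon W}{2}$, yielding $\frac{\sqrt{-1}\epsilon h(z) W}{2}d\bar{x}$; under the trivialization $[\sqrt{-1}Wd\bar{x}]$ of Lemma~\ref{lem: LocalBCdescrip} this corresponds to the function $\frac{\epsilon h(z)}{2}$. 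Since the argument is valid over every fibre in $\mathbb{C}$ and $h = \tau|_{\Delta^*}$, globally we conclude $[\Phi_{\tau}^*\omega - \omega]_{BC} \leftrightarrow \frac{\epsilon\tau}{2} \in H^0(\mathbb{C}, R^1\pi_*\mathcal{O}_X)$, proving (iii).

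Part (ii) then follows quickly. The inclusion $\mathcal{K}_{BC,X}\subset p^{-1}(\mathcal{K}_{dR,X})$ is immediate. For the reverse, given $[\alpha]_{BC}\in p^{-1}(\mathcal{K}_{dR,X})$, choose any K\"ahler $\omega$ with $[\omega]_{dR} = [\alpha]_{dR}$ and let $\tau'\in H^0(\mathbb{C}, R^1\pi_*\mathcal{O}_X)$ correspond to $[\alpha - \omega]_{BC}\in\ker p$. Set $\tau := \frac{2\tau'}{\epsilon}$ and let $\Phi_{\tau}\in{\rm Aut}_0(X,\mathbb{C})$ be the associated automorphism from Lemma~\ref{lem: globTrans}; by (iii), $[\Phi_{\tau}^*\omega]_{BC} = [\alpha]_{BC}$, and since $\Phi_{\tau}$ is a biholomorphism, $\Phi_{\tau}^*\omega$ is K\"ahler, so $[\alpha]_{BC}\in\mathcal{K}_{BC,X}$. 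The product decomposition $\mathcal{K}_{BC,X}\cong\mathcal{K}_{dR,X}\times H^0(\mathbb{C}, R^1\pi_*\mathcal{O}_X)$ then follows from any $\mathbb{R}$-linear splitting of the short exact sequence in Lemma~\ref{lem: BCdescrip}.

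The main technical delicacy is ensuring that the Cartan computation in (iii) determines the class exactly, rather than just modulo a real constant as in the local Lemma~\ref{lem: transSemiFlat}. Globally this is possible because $H^1_{dR}(X, \mathbb{R}) = 0$, making the map $H^{0,1}_{\bar{\partial}}(X)\to H^{1,1}_{BC}(X, \mathbb{R})$ injective, so no real-constant ambiguity remains. The remaining care lies in matching the trivialization of $R^1\pi_*\mathcal{O}_X$ by $[\sqrt{-1}Wd\bar{x}]$ between the local and global statements, and in verifying that the fibrewise computation indeed yields a well-defined section of $R^1\pi_*\mathcal{O}_X$ over all of $\mathbb{C}$ (not just over $\Delta^*$).
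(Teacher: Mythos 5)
Your argument is correct, and for part (iii) it takes a genuinely different and more economical route than the paper. The paper first invokes Corollary~\ref{cor: locBCKahler} to replace $\omega_0$ by a semi-flat metric $\omega_{sf,\sigma',b_0,\epsilon}$ in the same Bott--Chern class, then applies Lemma~\ref{lem: transSemiFlat} (itself a Cartan computation, but carried out on the \emph{explicit} semi-flat form) to read off the section over $\Delta^*$, and finally globalizes via Remark~\ref{rk: locTransDes}. You instead run Cartan's formula on the given K\"ahler form $\omega$ directly, observe that passing to the fibrewise class only sees the fiber average $\langle g_{x\bar{x}}(\cdot,z)\rangle$, and pin that average down via the topological normalization $\int_{\pi^{-1}(z)}\omega = \epsilon$. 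This makes it transparent that the formula $[\Phi_\tau^*\omega-\omega]_{BC} \leftrightarrow \tfrac{\epsilon}{2}\tau$ is forced by the de Rham data alone, without ever introducing the semi-flat reduction; the price is that your derivation is done over $\Delta^*$ and you must globalize by the identity theorem, exactly as the paper does after citing Remark~\ref{rk: locTransDes}. Your observation that $H^1_{dR}(X,\mathbb{R})=0$ removes the real-constant ambiguity seen in Lemma~\ref{lem: LocalBCdescrip} is also the correct reason the global statement is exact, not merely modulo $\mathbb{R}$. The derivation of (ii) from (iii) and the handling of (i) match the paper. Two small points worth tidying: write $[\alpha]_{BC}-[\omega]_{BC}\in\ker p$ rather than $[\alpha-\omega]_{BC}$ (as $\alpha$ is only a BC-class, not a form), and state explicitly that the agreement over $\Delta^*$ of two holomorphic sections of $R^1\pi_*\mathcal{O}_X$ on $\mathbb{C}$ implies agreement everywhere.
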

\begin{proof}
We have already proved $(i)$ in Lemma~\ref{lem: deRham} and Corollary~\ref{cor: KahlerCone}. To prove $(ii)$, let $\omega_1, \omega_2$ on $X$ with $[\omega_1]_{dR}=[\omega_2]_{dR}$ we have seen that $[\omega_1-\omega_2]_{BC}$ can be identified with an element of $H^{0}(\mathbb{C},R^1\pi_*\mathcal{O}_{X})$.  This proves that
\[
\mathcal{K}_{BC, X} \subset \mathcal{K}_{dR,X} \times H^{0}(\mathbb{C},R^1\pi_*\mathcal{O}_{X})
\]
It suffices to show that we have equality.  This will be accomplished by proving $(iii)$.  Fix $\omega_0$ a K\"ahler form on $X$. Since $Y$ is a rational elliptic surface it admits a global section $\sigma:\mathbb{P}^1\rightarrow Y$.  Let $\tau \in H^{0}(\mathbb{C}, R^1\pi_{*}\mathcal{O}_{X})$, and denote by $\Phi_{\tau} \in {\rm Aut}_0(X, \mathbb{C})$ the automorphism induced by Lemma~\ref{lem: globTrans}.  Since $\Phi_{\tau}$ is homotopic to the identity, $[\Phi_{\tau}^*\omega_0]_{dR}=[\omega_0]_{dR}$. We will compute the section of $R^{1}\pi_{*}\mathcal{O}_{X}$ associated to the class  $[\Phi_{\tau}^*\omega_0-\omega_0]_{BC}$.

Let $\Delta^*$ denote a punctured neighborhood of $\infty \in \mathbb{P}^1$.  Over $\Delta^*$  we can write
\[
\tau = [\sqrt{-1}h(z)W d\bar{x}]
\]
where $h|_{\Delta^*}$ is a holomorphic function.  By Corollary~\ref{cor: locBCKahler} we can find a semi-flat metric $\omega_{sf, \sigma',  b_0, \epsilon}$ such that $[\omega_0]_{BC} = [\omega_{sf, \sigma', b_0, \epsilon}]_{BC}$. Thus we have
\[
[T_{h}^*\omega_0-\omega_0]_{BC} = [T_{h}^*\omega_{sf, \sigma', b_0,\epsilon}-\omega_{sf, \sigma', b_0, \epsilon}]_{BC}
\]
in $H^{1,1}_{BC}(X_{\Delta^*}, \mathbb{R})$.  Let $\epsilon= [\omega_0]_{dR}.[F]$, then by Lemma~\ref{lem: transSemiFlat}, $[T_{h}^*\omega_0-\omega_0]_{BC} \in H^{1,1}_{BC}(X_{\Delta^*,} \mathbb{R})$ induces the section
\[
[\frac{\sqrt{-1} \epsilon}{2}h(z) Wd\bar{z}] = \frac{\epsilon}{2}\tau\big|_{\Delta^*} \in H^{0}(\Delta^*,R^1\pi_{*}\mathcal{O}_{X}).
\]
By Remark~\ref{rk: locTransDes} we conclude that $[\Phi_{\tau}^*\omega_0-\omega_0]_{BC}$ induces the global section $\frac{\epsilon}{2}\tau \in H^{0}(\mathbb{C}, R^1\pi_{*}\mathcal{O}_{X})$.
 
\end{proof}

We end by making the following definition.

\begin{defn}\label{defn: rational}
We say that the de Rham cohomology class of a K\"ahler metric $[\omega]_{dR}\in H_{dR}^2(X,\mathbb{R})$ is {\em rational near infinity} if 
\[
[\omega]_{dR}.[C_{m_1,m_2}] =0
\]
for some quasi-bad cycle $C_{m_1,m_2}$.
\end{defn}

Fix a choice of bad cycle $[C]\in H_{2}(X,\mathbb{Z})$ and let $[F]$ denote the class of a fiber.  For $m_1, m_2 \in \mathbb{Z}$ relatively prime define
\begin{equation}\label{eq: rationalPlanes}
V_{m_1,m_2} = \{ [\alpha]_{dR} \in H^2(X,\mathbb{R}) : [\alpha].(m_1[F]+m_2[C])=0\}
\end{equation}
Then the K\"ahler classes which are rational at infinity are precisely those which lie in $V_{m_1,m_2}$ for some $(m_1,m_2)$.  Note that, by Lemma~\ref{lem: cohomProps}, any K\"ahler class which is not rational at infinity is topologically obstructed from admitting a special Lagrangian torus fibration.

\section{Special Lagrangian Fibrations}\label{sec: slag}

Let $Y$ be a rational elliptic surface, $D$ an $I_k$ singular fiber and $X=Y\setminus D$.  By Theorem~\ref{thm: nonStHein}, we may equip $X$ with a complete Calabi-Yau metric $\omega_{CY}$ asymptotic to a (possibly non-standard) semi-flat metric.  In this section we prove the existence of special Lagrangian torus fibrations on $(X, \omega_{CY})$ assuming that the de Rham class $[\omega_{CY}]_{dR}$ is rational near infinity in the sense of Definition~\ref{defn: rational}. The proof proceeds in three steps, following the techniques developed by the authors in \cite{Collins-Jacob-Lin}.   First, we study the existence of special Lagrangian tori in the model geometry $(X_{mod}, \omega_{sf})$, where $\omega_{sf}$ is a semi-flat metric which is either standard or quasi-regular.  Secondly, we transplant these model special Lagrangians into Lagrangians in $(X,\omega_{CY})$ with well-controlled geometry.  We then run the Lagrangian mean curvature flow and establish the convergence of the flow to a special Lagrangian limit.  Finally, we use the theory of holomorphic curves in combination with a hyperK\"ahler rotation trick to deform this family to a special Lagrangian fibration.  Since most of the technical ingredients for this argument were developed by the authors in \cite{Collins-Jacob-Lin}, this argument essentially reduces to understanding special Lagrangians in the model geometry.

\subsection{Ansatz special Lagrangian tori and estimates of geometric quantities} \label{Ansatz SLag} \label{9999}
Let $\Delta = \{ z\in \mathbb{C}: |z|<1\}$, and set $\Delta^* = \Delta \setminus\{0\}$. We begin by first working on the model fibration $\pi_{mod}:X_{mod}\rightarrow \Delta^*$, defined by   
 \be
 \label{modelfibration}
X_{mod} := (\Delta^*\times\mathbb{C})/ \Lambda(z) \qquad \Lambda (z) := \mathbb{Z} \oplus \mathbb{Z}\frac{ k}{2\pi \sqrt{-1}}\log(z).
 \ee
Let $x$ be the standard coordinate on $\mathbb{C}$, and consider logarithmic coordinates  $y=-{\rm log}(z)$ on the universal cover $\mathcal H_{>0}$ of $\Delta^*$. The holomorphic volume form can be expressed in these coordinates as $\Omega={\kappa(e^{-y})}dy\wedge dx$ for a non-vanishing holomorphic function $\kappa: \Delta \rightarrow \mathbb{C}$, which we may assume satisfies $\kappa(0)=1$. 
 
 As a first step we will discuss the case when $\kappa\equiv 1$.  In this case, using Definition \ref{defn: sf metric}, the standard model semi-flat metric is given by
\be
\omega_{sf, \epsilon} :=  \sqrt{-1}  W^{-1}\frac{dy\wedge d\bar{y} }{\epsilon}+ \frac{\sqrt{-1}}{2} W \epsilon (dx+Bdy)\wedge \overline{(dx+Bdy)}\nonumber
\ee
where
\be
W=\frac{4\pi }{k(y+\bar{y})} \qquad B = \frac{\bar{x}-x}{y+\bar{y}}.\nonumber
\ee
It is convenient to work in real coordinates. Set $y={\ell+\sqrt{-1}\theta}$ and $x=x_1+\sqrt{-1}x_2$. Then 
    \bea
\omega_{sf, \epsilon}&=&\left(\frac{k\ell }{\pi\epsilon}+|B|^2\frac{2\pi\epsilon}{k\ell }\right)(d\ell\wedge d\theta)+\frac{2\pi\epsilon}{k\ell }dx_1\wedge dx_2\nonumber\\
    &&+ {\sqrt{-1}}\frac{\pi\epsilon}{k\ell }\left(\bar Bdx\wedge d\bar y-Bd \bar x\wedge dy\right).\nonumber
    \eea
   Using that $B$ is purely imaginary, the second line above reduces to
    \be
    {\sqrt{-1}} \frac{ \pi\epsilon}{k\ell }(-B+\bar B)(dx_1\wedge d\ell+dx_2\wedge d\theta)=\frac{2\pi\epsilon x_2}{k\ell^2}(dx_1\wedge d\ell+dx_2\wedge d\theta). \nonumber
    \ee
Putting everything together
    \bea
    \omega_{sf, \epsilon}&=&\left(\frac{k\ell }{\pi\epsilon}+\frac{2\pi\epsilon x_2^2}{k\ell^3 }\right)(d\ell\wedge d\theta)\nonumber\\
    &&+\frac{2\pi\epsilon}{k\ell }dx_1\wedge dx_2+\frac{2\pi\epsilon x_2}{k\ell^2}(dx_1\wedge d\ell+dx_2\wedge d\theta).\nonumber
    \eea
    Using the complex structure $d\ell\mapsto d\theta$ and $dx_1\mapsto dx_2$,  the corresponding Riemannian metric is given by
    \bea
    \label{sfmetric}
    g_{sf,\epsilon}&=&\left(\frac{k\ell }{\pi\epsilon}+\frac{2\pi\epsilon x_2^2}{k\ell^3}\right)(d\ell^2+d\theta^2 )+\frac{2\pi\epsilon}{k\ell }(dx_1^2+dx_2^2)\\
    &&+\frac{2\pi\epsilon x_2}{k\ell^2}(-dvd\theta -d\ell ds).\nonumber
    \eea
Recall the standard bad cycle $C_0:=\{x_2=0, \ell ={\rm const}\}$. There is a two parameter family of such cycles, parametrized by $x_2, \ell$.   For simplicity, we consider the subfamily with $x_2=0$ and denote this family $C_0(r)$, where $r$ is the intrinsic distance of the cycle from a fixed point in the base $z_0$. See Remark~\ref{rk: locModFib}. We can assume $\ell >|z_0|$.

As mentioned above, $C_0(r)$ is an embedded torus, with tangent space spanned by $\{\frac{\partial}{\partial \theta},\frac{\partial}{\partial x_1}\}$. Right away we see that
    \be
g_{sf,\epsilon}|_{C_0(r)}=\frac{k\ell }{\pi\epsilon}d\theta^2+\frac{2\pi\epsilon}{k\ell }dx_1^2,\nonumber
    \ee
    and since $\ell $ is constant on the cycle the induced metric is flat and the volume form is simply ${\rm vol}_{C_0(r)}=2d\theta\wedge dx_1$. Thus the volume of $C_0(r)$ is independent of $r$. Also note that the cycles defined by $\theta=const$ collapse as $\ell \rightarrow \infty$, while the cycles defined by $v=const$ (and thus the diameter of $C_0(r)$), grow like $\ell ^\frac12$ as $\ell \rightarrow\infty$.

Note that $C_0(r)$ is  a Lagrangian submanifold. Furthermore, under our assumption $k\equiv 1$, the holomorphic volume form is given by $\Omega=dy\wedge dx$, and so
    \be
    \Omega|_{C_0(r)}=\sqrt{-1}d\theta\wedge dx_1=\frac{\sqrt{-1}}2{\rm vol}_{C_0(r)}.\nonumber
    \ee
Thus $C_0(r)$ has constant phase $\frac\pi2$, and is therefore a special Lagrangian. In particular $C_0(r)$ has vanishing mean curvature $H=0$.
    
    Next we turn to the second fundamental form $\Pi$ of $C_0(r)$, which we will estimate using Gauss' Theorem. Since $H=0$, we can bound $\Pi$ by the curvature of $g_{sf,\epsilon}$ and the induced metric on $C_0(r)$. Furthermore, the induced metric on $C_0(r)$ is flat, so we only need to estimate the curvature of $g_{sf,\epsilon}$, which by \cite[Section 3.3]{Hein} satisfies    
    
     \be
     \label{curvsf}
    |Rm_{g_{sf,\epsilon}}|_{C^0(g_{sf,\epsilon})}=O(r^{-2}) .
    \ee
 At a point $p\in C_0(r)$, let $\{E_1,E_2\}$ be an orthonormal basis of the normal space $(TC_0(r))^\perp\subset TM$, and $\{V,W\}$ and basis for $TC_0(r)$. The second fundamental form can be expressed as
    \be
    \Pi(V,W)=\langle S_1(V),W\rangle E_1+\langle S_2(V),W\rangle E_2,\nonumber
    \ee
    where the shape operators $S_i$ are defined by $\langle S_i(V),W\rangle =\langle E_i,\nabla_VW\rangle.$ The mean curvature vector is given by
    \be
    \vec{H}=\frac12\left({\rm Tr}(S_1)E_1+{\rm Tr}(S_2)E_2\right),\nonumber
    \ee
    and since $C_0(r)$ is a minimal surface, we have Tr$(S_i)=0$ for each $i$. Thus each $S_i$ is a trace-free symmetric matrix, and in the basis $\{V,W\}$ we denote
    \be
    S_1=\left( 
    \begin{array}{cc}
    	a & b \\
    	b & -a \end{array} 
    \right),\qquad\qquad S_2=\left( 
    \begin{array}{cc}
    	e & d \\
    	d & -e \end{array} 
    \right).\nonumber
    \ee
 Let $K$ denote the sectional curvature of $g_{sf,\epsilon}$.  Since $g_{sf,\epsilon}|_{C_0(r)}$ is flat,  Gauss' Theorem gives
    \be
    -K(V,W)=\langle \Pi(V,V),\Pi(W,W)\rangle-|\Pi(V,W)|^2.\nonumber
    \ee
    Using our notation for $S_i$ we see
    \bea
    -K(V,W)&=&\langle aE_1+eE_2,-aE_1-eE_2\rangle-\langle bE_1+dE_2,bE_1+dE_2\rangle\nonumber\\
    &=&-a^2-e^2-b^2-d^2.\nonumber
    \eea
    Since $|K(V,W)|_{C^0(g_{sf,\epsilon})}=O(r^{-2})$ by \eqref{curvsf}, the square of every matrix entry for each shape operator must be in $O(r^{-2})$, and so 
    \be
    |\Pi|_{C^0(g_{sf,\epsilon})}=O(r^{-1}).\nonumber
    \ee

We now turn to the non-collapsing scale for $C_0(r)$. We say $C_0(r)$ is $\alpha$ non-collapsed at scale $\delta_0$ if, for every $0<\delta<\delta_0$, and for every $p\in C_0(r)$, we have
\be
{\rm Vol}\left(B(p,\delta ) \subset C_0(r)\right) \geq \alpha \delta^{2},\nonumber
\ee
where all quantities on the left are defined using the induced metric $g_{sf,\epsilon}|_{C_0(r)}$.  Pick a point $p\in C_0(r)$. Since $C_0(r)$ is a flat torus, the local geometry near $p$ can be modeled as follows. Fix an open set $U$ containing $p$ with coordinates $(x_1,\theta)$, and consider the mapping $\Phi:U\rightarrow \mathbb R^2$ defined by $(x_1,\theta)\mapsto (\sqrt{\frac{2\pi\epsilon}{k\ell }} x_1, \sqrt{\frac{k\ell }{\pi\epsilon}}\theta)$. Then $g_{sf,\epsilon}|_{C_0(r)}$ can be viewed as the pullback of the Euclidean metric on $\mathbb R^2$ via $\Phi$. For any $0<\rho<1$, we show the above non-collapsing inequality holds for the geodesic ball $B:=B(p,\rho\sqrt{\frac{2\pi\epsilon}{k\ell }})$. Consider a geodesic square  $\mathcal S$,  with side length $ \rho\sqrt{\frac{4\pi\epsilon}{k\ell }}$ with respect to  $g_{sf,\epsilon}|_{C_0(r)}$,  inscribed inside $B$. If one instead measures the lengths of the sides with the Euclidean metric on $U$, the side of $\mathcal S$ with coordinate $x_1$ will have length  $\sqrt{2}\rho$, while the side with coordinate $\theta$ will have length $\rho\frac{2\pi\epsilon}{k\ell }$. Since $C_0(r)$ has volume form $2dv\wedge d\theta$, we see:
 \be
\int_Bd{\rm vol}_{C_0(r)}\geq \int_{\mathcal S} 2dv\wedge d\theta=  \sqrt{2}\rho^2\left(\frac{4\pi\epsilon}{k\ell }\right)=\sqrt{2}\left(\rho\sqrt{\frac{2\pi\epsilon}{k\ell }}\right)^2.\nonumber
\ee
Setting $\sqrt{2}=\alpha$, it follows that $C_0(r)$ is $\sqrt{2}$-non collapsed  at scale $\sqrt{\frac{2\pi\epsilon}{k\ell }}.$
 By the discussion in Section~\ref{sec: semiflatmet}, we know for $\ell$ large $r\sim \ell^{\frac32}$, and thus $C_0(r)$ is $\sqrt{2}$-non collapsed  at scale $O(r^{-\frac13})$.

  Finally, we need to understand the asymptotics of the first non-zero eigenvalue of the Laplacian on $C_0(r)$, which we denote by $\lambda_1$. Since $g_{sf,\epsilon}|_{C_0(r)}$ is flat we may appeal to work of Li-Yau (for instance \cite[p.116]{SY}) which gives 
   	      \begin{align*}
   	          \lambda_1\geq \frac{-1}{C({\rm diam})^2},
   	      \end{align*} 
	      where $C$ is a constant depending only on dimension. Since diam$({C_0(r)})=O(\ell^{1/2})$, the estimate of Li-Yau gives $\lambda_1\geq O(\ell^{-1})$. Furthermore, we can estimate this eigenvalue from above using the Rayleigh quotient formula (as in \cite{Collins-Jacob-Lin}), to achieve $\lambda_1\leq O(\ell^{-1})$. Thus $\lambda_1= O(r^{-\frac23})$, where we used $r\sim \ell^{\frac32}$.

Summing up, we have now demonstrated the following:
    \begin{lem} \label{model1}
    	Consider the model elliptic fibration $X_{mod}\rightarrow \Delta$ defined by \eqref{modelfibration}, equipped with the holomorphic volume form $\Omega = \frac{dz\wedge dx}{z}$, and the standard model semi flat metric $\omega_{sf, \epsilon}$ relative to $\Omega$. Let $C_0(r)$ be the standard bad-cycle, with $r$  the intrinsic distance of the cycle from a fixed point $z_0$. Then we have the following estimates for the relevant geometric quantities: 
    	\begin{enumerate}
    		\item The second fundamental form   satisfies $  |\Pi|_{C^0(\omega_{sf,\epsilon} )}= O(r^{-1})$.  
    		\item The mean curvature vanishes identically $H\equiv 0$. 
    		\item The volume  is independent of $r$. 
    		\item $C_0(r)$ is $\sqrt{2}$-non-collapsed  at scale $ O(r^{-1/3})$. 
		\item The first eigenvalue of the Laplacian satisfies $\lambda_1 =O(r^{-2/3})$.
    	\end{enumerate}
    \end{lem}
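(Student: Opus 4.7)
The plan is to organize the explicit computations set up in the preceding discussion into a streamlined verification of the five claims, treating them in an order that allows each to leverage the previous ones. Since the model is fully explicit, there is no real obstacle; the key bookkeeping point is that the bad cycle $C_0(r)$ is chosen with $x_2 = 0$, which kills the off-diagonal cross-terms in the real-coordinate expression for $g_{sf,\epsilon}$ and makes every restriction computation elementary. I would establish (3) and (2) first by direct restriction, then deduce (1) via Gauss, and finally handle (4) and (5) by trivializing the induced metric to a flat rectangle.

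For the opening step I would restrict $g_{sf,\epsilon}$ to $C_0(r)$ by setting $x_2 = 0$ and $d\ell = 0$ in the real-coordinate formula for $g_{sf,\epsilon}$; the cross-terms vanish with $x_2$, and one reads off the diagonal flat metric $\tfrac{k\ell}{\pi\epsilon}d\theta^2 + \tfrac{2\pi\epsilon}{k\ell}dx_1^2$ with volume form $2\,d\theta\wedge dx_1$, whose integral over the torus $(\theta,x_1)\in[0,2\pi)\times[0,1)$ is independent of $\ell$, proving (3). For (2), I would restrict $\Omega = dy\wedge dx$ to $C_0(r)$ using $y = \ell + \sqrt{-1}\theta$ and $x = x_1$ to get $\sqrt{-1}\,d\theta\wedge dx_1 = \tfrac{\sqrt{-1}}{2}\mathrm{vol}_{C_0(r)}$, so $C_0(r)$ has constant phase $\pi/2$, is special Lagrangian, and in particular minimal.

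For (1), having $H\equiv 0$ and a flat induced metric, I would apply the Gauss equation. In an orthonormal tangent frame $\{V,W\}$ and normal frame $\{E_1,E_2\}$, each shape operator $S_i$ is trace-free symmetric with entries $a,b$ (for $S_1$) and $e,d$ (for $S_2$); Gauss' identity combined with $K_{\mathrm{induced}}=0$ yields $K_{\mathrm{ambient}}(V,W) = a^2+b^2+e^2+d^2$. The ambient curvature bound $|Rm_{g_{sf,\epsilon}}|_{C^0} = O(r^{-2})$ from Hein's work then forces each shape-operator entry to be $O(r^{-1})$, so $|\Pi|_{C^0(g_{sf,\epsilon})} = O(r^{-1})$.

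For (4), I would use the affine diffeomorphism $\Phi(x_1,\theta) = \bigl(\sqrt{2\pi\epsilon/(k\ell)}\,x_1,\ \sqrt{k\ell/(\pi\epsilon)}\,\theta\bigr)$ identifying $(C_0(r), g_{sf,\epsilon}|_{C_0(r)})$ with a Euclidean piece; for $0<\rho<1$ one inscribes a coordinate square of side $\rho\sqrt{4\pi\epsilon/(k\ell)}$ inside the geodesic ball of radius $\rho\sqrt{2\pi\epsilon/(k\ell)}$ and integrates $2\,d\theta\wedge dx_1$ to produce an area of $\sqrt{2}$ times the squared radius, so using $\ell\sim r^{2/3}$ the non-collapsing scale is $O(r^{-1/3})$. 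For (5), the Li-Yau lower bound applied to the flat torus $C_0(r)$ gives $\lambda_1 \gtrsim \mathrm{diam}^{-2}$, while test functions of the form $\sin(2\pi x_1)$ or $\sin(\theta)$ inserted in the Rayleigh quotient produce the matching upper bound; combined with $\mathrm{diam}(C_0(r)) = O(\ell^{1/2}) = O(r^{1/3})$ these give $\lambda_1 = O(r^{-2/3})$, completing the verification.
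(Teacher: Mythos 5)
Your proposal reproduces the paper's argument essentially step for step: restriction to $C_0(r)$ for (3), the computation that $C_0(r)$ has constant phase $\pi/2$ for (2), the Gauss-equation/shape-operator argument combined with Hein's curvature bound for (1), the rescaling to a Euclidean square for (4), and Li--Yau plus a Rayleigh quotient for (5), with $r\sim\ell^{3/2}$ converting $\ell$-estimates into $r$-estimates throughout. The only small slip is in (5): the test function $\sin(2\pi x_1)$ has Rayleigh quotient of order $\ell$, not $\ell^{-1}$, so only the $\theta$-direction test function $\sin(\theta)$ gives the matching upper bound.
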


  We now consider the general case, when $\Omega = \kappa(z) \frac{dx\wedge dz}{z}$, for a holomorphic function $\kappa$ that satisfies $\kappa(0)=1$.  Let $\omega_\kappa$ denote the model semi-flat metric relative to the  holomorphic volume form $ \kappa(z) \frac{dx\wedge dz}{z}$, and let $\omega_1$ denote the semi-flat metric relative to $\frac{dx \wedge dz}{z}$.  Let $g_\kappa, g_1$ denote the corresponding Riemannian metrics.   Note that the family of bad cycles $C_0(r)$ will be  Lagrangian with respect to both $\omega_1$ and $\omega_{\kappa}$, however, they are no longer special Lagrangian when measured against $\kappa(z) \frac{dx\wedge dz}{z}$.  We show that the defect is exponentially suppressed as $r \rightarrow \infty$, and so we can easily control their geometry in terms of the estimates in Lemma~\ref{model1}.

 Consider the following difference, using coordinates $(y,x)$:
  \be
  \omega_\kappa-\omega_1 = {\sqrt{-1}}(|\kappa(e^{-y})|^2-1)\frac{k|y|}{2\pi\epsilon} {dy\wedge d\bar{y}}.\nonumber
  \ee
 As above it is convenient to work in real coordinates $y = \ell+\sqrt{-1}\theta$. Since $\kappa(0)=1$ we can expand
  \be
|\kappa(z)|^2-1 = \sum_{n>0} \frac{a_{n}}{n!}e^{-ny}\nonumber
\ee
and hence there is a uniform constant $C>0$ so that 
\be
C^{-1}\omega_{1} \leq \omega_{\kappa} \leq C \omega_1.\nonumber
\ee
Furthermore, an easy induction shows that
\be
\pl_{\ell}^m (\ell \kappa(e^{-y})) = m\pl_{\ell}^{m-1}\kappa(e^{-y}) + \ell\pl_{\ell}^m \kappa(e^{-y})\nonumber
\ee
and so, since $a_0=0$, as $\ell \rightarrow \infty$ we have 
\be
\begin{aligned}
\pl_{\ell}^m (\omega_\kappa - \omega_1) &= \left(m\frac{\pl_{\ell}^{m-1}\kappa(e^{-y})}{\ell} + \pl_{\ell}^m \kappa(e^{-y})\right) \frac{\ell}{2\pi \epsilon} dy \wedge d\bar{y}\\
& = O(e^{-\ell})  \frac{\ell}{2\pi \epsilon} dy \wedge d\bar{y}.
\end{aligned}\nonumber
\ee
Since $\omega_\kappa - \omega_{1}$ is independent of the remaining variables we get
\be
|\nabla^{m} (g_\kappa- g_1)| = O(e^{-r^{\frac23}})\nonumber
\ee
as $r\rightarrow +\infty$.  Here $\nabla^m$ denotes the $m$-th covariant derivative with respect to either $g_1$ or $g_\kappa$ and the norm is similarly measured with respect to either metric.  As a consequence, we obtain

    \begin{lem} \label{model}

	Consider the model elliptic fibration $X_{mod}\rightarrow \Delta$ defined by \eqref{modelfibration}, equipped with the holomorphic volume form $\Omega = \kappa(z)\frac{dz\wedge dx}{z}$, and the standard model semi flat metric $\omega_{sf, \epsilon}$ relative to $\Omega$. Let $C_0(r)$ be the standard bad-cycle, with $r$  the intrinsic distance of the cycle from a fixed point $z_0$. Then we have the following estimates for the relevant geometric quantities: 
		    	\begin{enumerate}
    		\item The second fundamental form of $C_0(r)$ satisfies $  |\Pi|_{C^0(\omega_{sf,\epsilon})}\sim O(r^{-1})$.  
    		\item The mean curvature vanishes satisfies $|H|_{C^0(\omega_{sf,\epsilon} )}\sim O(e^{-\delta r^{\frac23}})$.
    		\item The volume satisfies $C^{-1} < {\rm Vol}(C_0(r)) < C$. 
    		\item $C_0(r)$ is $C^{-1}$-non-collapsed at scale $\sim O(r^{-1/3})$. 
		\item The first eigenvalue of the Laplacian satisfies $\lambda_1=O(r^{-2/3})$.
    	\end{enumerate}
	Here $C, \delta>0$ are uniform constants. 
    \end{lem}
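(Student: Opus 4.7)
The plan is to reduce the general case to Lemma~\ref{model1} using the exponential decay estimate $|\nabla^{m}(g_{\kappa}-g_{1})| = O(e^{-r^{2/3}})$ already established just before the statement. All five quantities are defined via local differential invariants of the ambient metric restricted to $C_{0}(r)$, so the strategy is to show that passing from $g_{1}$ to $g_{\kappa}$ perturbs each invariant by an exponentially suppressed error.

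First I would treat $(2)$, which is the only estimate that changes qualitatively. Since $\omega_{\kappa}-\omega_{1}$ has no $dx$-components, the restrictions $\omega_{1}|_{C_{0}(r)}$ and $\omega_{\kappa}|_{C_{0}(r)}$ differ only by an exponentially small multiple of $d\ell\wedge d\theta$, and in particular $C_{0}(r)$ remains Lagrangian with respect to $\omega_{\kappa}$. The McLean formula writes the mean curvature one-form of a Lagrangian as $d\theta_{L}$, where $\theta_{L}$ is the Lagrangian angle; computing $\Omega|_{C_{0}(r)}$ against the induced volume form $d\mathrm{vol}_{g_{\kappa}|_{C_{0}(r)}}$ one finds $\theta_{L}=\tfrac{\pi}{2}+O(e^{-\delta r^{2/3}})$ with derivatives of the same order, giving $|H|_{C^{0}(g_{\kappa})}=O(e^{-\delta r^{2/3}})$. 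This is the main place where the non-triviality of $\kappa$ enters.

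For $(1)$, I would write the second fundamental form as $\Pi_{\kappa}=\Pi_{1}+(\Pi_{\kappa}-\Pi_{1})$; since $\Pi$ is built from first Christoffel symbols of $g_{\kappa}$ restricted to $TC_{0}(r)\subset TX_{mod}$, the difference is controlled by $|\nabla (g_{\kappa}-g_{1})|_{g_{1}}=O(e^{-r^{2/3}})$, which is dominated by the $O(r^{-1})$ bound coming from Lemma~\ref{model1}. For $(3)$, $d\mathrm{vol}_{g_{\kappa}|_{C_{0}(r)}}=(1+O(e^{-\delta r^{2/3}}))\,d\mathrm{vol}_{g_{1}|_{C_{0}(r)}}$, so integrating gives a volume which is a bounded perturbation of the constant volume in Lemma~\ref{model1}, yielding the two-sided bound by a uniform $C$. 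For $(4)$, the uniform equivalence $C^{-1}g_{1}\leq g_{\kappa}\leq Cg_{1}$ deduced from the $C^{0}$ bound on $g_{\kappa}-g_{1}$ transfers the $\sqrt{2}$-non-collapsing at scale $O(r^{-1/3})$ (possibly with a worse constant), giving $C^{-1}$-non-collapsing at the same scale. For $(5)$, the Rayleigh quotient is invariant under uniform equivalence of the metric up to a bounded multiplicative factor, so the two-sided bound $\lambda_{1}\asymp \ell^{-1}\asymp r^{-2/3}$ from Lemma~\ref{model1} persists for $g_{\kappa}$.

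The main obstacle I anticipate is $(2)$: one must be careful that the perturbation of the Lagrangian angle by the $\kappa$-factor does not produce a polynomial contribution when taking the exterior derivative to form $dH$. This is controlled by the full $C^{k}$ decay $|\nabla^{m}(g_{\kappa}-g_{1})|=O(e^{-r^{2/3}})$ for all $m$, which the excerpt establishes; given this, every other step is a routine perturbation argument from Lemma~\ref{model1}.
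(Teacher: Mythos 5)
Your proposal is correct and follows essentially the same route as the paper: the paper establishes the key estimate $|\nabla^m(g_\kappa - g_1)| = O(e^{-r^{2/3}})$ in the preceding paragraph and then simply states the lemma "as a consequence," implicitly leaving the perturbation of each geometric quantity from Lemma~\ref{model1} to the reader. Your write-up supplies exactly those perturbation arguments; the only small imprecision is that $\omega_\kappa - \omega_1$ is proportional to $d\ell\wedge d\theta$ and hence restricts to \emph{zero} on $C_0(r)$ (since $d\ell|_{C_0(r)}=0$), so $C_0(r)$ is exactly Lagrangian for $\omega_\kappa$, not merely approximately so.
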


Additionally, one can consider quasi-bad cycles. Recall that a quasi-bad cycle is given by
\be
C_{m_1,m_2}(r):=\{x_2=\frac{m_1}{m_2}(-\frac k{2\pi}{\rm log}|z|)\frac\theta{2\pi},|z|={\rm const.}\}\nonumber
\ee
for $m_2, m_1$ relatively prime with $m_1>0$.  Let $\eta_{m_1, m_2}=\frac{m_2k}{2m_1(2\pi\sqrt{-1})^2}({\rm log}(z))^2$ be a local multivalued section of $X_{mod}$. The embedded torus $C_{m_1,m_2}(r)=T_{\eta_{m_1,m_2}}^{-1}(C_0(r))$, is Lagrangian with respect to the quasi-regular semi-flat metric $T_{\eta_{m_1,m_2}}^*\omega_{sf, \epsilon}$.  Furthermore, since $T_{\eta_{m_1,m_2}}$ defines a local isometry between the model $C_0(r)$ with the standard semi-flat metric and the quasi-bad cycle equipped with the metric induced by $T_{\eta_{m_1,m_2}}^*\omega_{sf, \epsilon}$, all local geometric estimates carry over verbatim.  However, we need to be slightly more careful with integral estimates. Recall that $\pi(C_{m_1,m_2}(r))$ covers the loop $\{|z| = const.\}$ $m_1$ times, thus ${\rm Vol}(C_{m_1,m_2}(r))=m_1{\rm Vol}(C_{0}(r))$. A factor of $m_1$ also appears in the computation for the first eigenvalue of the laplacian, since the Rayleigh quotient formula involves integration. However, since $m_1$ is fixed, all the results of Lemma~\ref{model} all carry over to the quasi-bad case as well.
  
    \subsection{Existence of the Special Lagrangian Fibration}
  
We now return to the general setting $\pi: X_{\Delta} \rightarrow \Delta$, which we assume has no singular fibers in $\pi: X_{\Delta^*} \rightarrow \Delta^* = \Delta \setminus\{0\}$.  As above $\pi^{-1}(0) = D$ is a singular fiber of type $I_k$.  Let $\sigma: \Delta^*\rightarrow X_{\Delta^*}$ be a holomorphic section of the fibration.  Again, using the Abel-Jacobi map with respect to $\sigma$, we obtain a holomorphic map $F_{AJ,\sigma} : X_{\Delta^*} \rightarrow X_{mod}$,  identifying our space $X_{\Delta^*}$ with the model fibration $X_{mod}$. Since we have just constructed a family of special Lagrangians with respect to the model semi-flat metric on $X_{mod}$, this identification gives a family of special Lagrangians with respect to  $\omega_{sf,\sigma,\epsilon}$, which we denote by $L_r$,   living in a neighborhood of $D$ in $X_\Delta^*$.

 We now construct a special Lagrangian fibration in $X_\Delta^*$ with respect to the Ricci flat metric $\omega_{CY}$. The key input is the following theorem from \cite{Collins-Jacob-Lin}.      
 
 \begin{thm}[Theorem 5.5, \cite{Collins-Jacob-Lin}]\label{thm: oneToMany}
      Let $(X,g)$ be a complete hyperK\"ahler surface,   fix a point $x_0 \in X$, and let $r(x)= d(x_0,x)$.  Suppose that
      \begin{enumerate}
      	\item The sectional curvature of $(X,g)$ is bounded.
      	\item There is a non-increasing function $f : [0,\infty) \rightarrow \mathbb{R}_{>0}$ such that \\ $\int_{0}^{+\infty} f(s)ds = +\infty$, and
        $      
      	{\rm inj}(x) \geq f(r(x)).
      	$
      	\item $X$ has finite Euler characteristic; $\chi(X) < +\infty$.
      \end{enumerate}
      Assume that there exists a (possibly immersed) special Lagrangian torus $L$ with $[L]\in H_{2}(X,\mathbb{Z})$ primitive, and $[L]^2=0$.  Then
      \begin{enumerate}
      	\item $X$ admits a special Lagrangian fibration with $L$ as one of the fibers.
      	\item There are at most $\chi(X)$ singular fibers, each classified by Kodaira, and no fiber is multiple.
      	\item $L$ is a smooth embedded torus.
      \end{enumerate}
  \end{thm}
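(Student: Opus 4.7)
The plan is to exploit the hyperK\"ahler rotation trick to convert the problem about special Lagrangians into a question about a family of holomorphic elliptic curves, to which standard deformation theory and Gromov compactness apply. First, since $(X,g)$ is hyperK\"ahler there is an $S^2$-family of compatible complex structures $I,J,K$, and a submanifold is special Lagrangian with respect to $(I,\Omega_I)$ precisely when it is a (pseudo)-holomorphic curve for the rotated complex structure, say $J$, whose K\"ahler form is $\omega_I$ and whose holomorphic symplectic form is $\omega_K + \sqrt{-1}\omega_I$. So the first step is to pass to the hyperK\"ahler rotation $(X,J)$ in which $L$ becomes a possibly immersed $J$-holomorphic elliptic curve with $[L] \in H_2(X,\mathbb{Z})$ primitive and $[L]^{2}=0$.

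Next I would set up the local deformation theory. Because $K_X$ is holomorphically trivial and $[L]^2=0$, the adjunction formula forces the normal bundle of $L$ to be topologically trivial; together with the fact that $L$ is an elliptic curve this gives $h^0(L,N_{L/X})=1$ and unobstructed deformations, so $L$ moves in a one-complex-dimensional smooth moduli space that sweeps out an open neighborhood of $L$ in $X$ and determines a local holomorphic submersion $\pi_{\mathrm{loc}} : U \to \Delta$. The key point is to propagate this local fibration across the whole of $X$.

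The main obstacle, and the heart of the theorem, is this global extension. I would argue by continuity: consider the maximal connected open set $B^\circ$ in the moduli space $\mathcal{M}$ of $J$-holomorphic curves in class $[L]$ over which a smooth embedded fibration exists, and show that its closure exhausts a Riemann surface $B$ covering all of $X$. For any sequence of curves $L_n$ approaching a boundary point, the cohomological identity $\int_{L_n} \omega_I = [L].[\omega_I]$ bounds their area uniformly, and Gromov's compactness theorem, applied in the noncompact setting using the bounded sectional curvature and the injectivity radius lower bound $\mathrm{inj}(x) \geq f(r(x))$ with $\int_0^\infty f\,ds = \infty$, produces a limit cycle which is either a smooth elliptic curve in class $[L]$ (extending the fibration) or a nodal/multi-component degeneration. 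The divergent injectivity-radius integral is exactly what rules out the curves escaping to infinity before filling out $X$, because it prevents the curves from being trapped in ever-thinner collapsing regions; primitivity of $[L]$ rules out covers, so the only possible degenerations are those in Kodaira's list.

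Finally the counting and regularity statements fall out from classical facts about elliptic surfaces. Every degenerate fiber in Kodaira's classification has strictly positive topological Euler number, so the identity $\chi(X) = \sum_{p \in B^{\mathrm{sing}}} e(\pi^{-1}(p))$ forces at most $\chi(X)$ singular fibers; primitivity of $[L]$ excludes multiple fibers since a multiple fiber would represent a proper divisor of $[L]$ in $H_2(X,\mathbb{Z})$; and once the fibration is established, $L$ is a fiber whose class is primitive and has self-intersection zero, hence is a smooth reduced fiber and in particular a smoothly embedded torus. The delicate step I expect to require the most care is the Gromov compactness argument in the non-compact setting: making rigorous the claim that the injectivity radius hypothesis suffices to prevent the deformation family from ``running off to infinity'' in $\mathcal{M}$ before sweeping out $X$, and controlling the possible bubbling using the primitivity of $[L]$.
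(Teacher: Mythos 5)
Theorem~\ref{thm: oneToMany} is quoted here from \cite{Collins-Jacob-Lin} and not re-proved in the present paper, so there is no internal proof to compare against. Your high-level strategy---hyperK\"ahler rotate so that $L$ becomes a holomorphic elliptic curve, establish a local fibration by deformation theory, globalize via compactness, and read off the fiber structure from Kodaira's theory---is indeed the route \cite{Collins-Jacob-Lin} takes, so the architecture of your proposal is sound.

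Several steps nevertheless need correction or tightening. The rotated complex structure $J$ has K\"ahler form $\omega_J = \mathrm{Re}\,\Omega_I$, not $\omega_I$ (the holomorphic symplectic form $\omega_K + \sqrt{-1}\omega_I$ you give is right). Adjunction gives \emph{holomorphic} triviality of $N_{L/X}$: one has $N_{L/X}\cong K_L\otimes K_X^{-1}|_L = K_L \cong \mathcal{O}_L$; mere topological triviality of a degree-zero line bundle on an elliptic curve would \emph{not} give $h^0 = 1$. And since $h^1(N_{L/X}) = 1 \ne 0$, unobstructedness is not automatic---it requires the unobstructedness of Lagrangian deformations in a holomorphic symplectic manifold, which rests precisely on the isomorphism $N_{L/X}\cong T^*_L$ furnished by the holomorphic $2$-form. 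The embeddedness of $L$ is cleanest established up front: $p_a([L]) = 1 + \tfrac{1}{2}([L]^2 + K_X\cdot[L]) = 1$ by adjunction, so primitivity of $[L]$ forces a degree-one map onto a smooth curve; deriving embeddedness \emph{after} the fibration is built is mildly circular. Disjointness of the deformed curves also merits a line: distinct compact curves in class $[L]$ with $[L]^2=0$ cannot meet, since any intersection point would contribute positively.

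The substantive gap is in the compactness step, which is where hypotheses (1)--(3) actually do work. Your heuristic that the divergent integral ``prevents the curves from being trapped in ever-thinner collapsing regions'' conflates two different phenomena: the injectivity-radius lower bound together with the curvature bound controls non-collapsing of the limit geometry directly, while the \emph{divergence} of $\int_0^\infty f$ is a separate, carefully calibrated non-escape-to-infinity hypothesis. The intended mechanism is that the monotonicity formula for holomorphic curves (applicable at scale $f(r)$ because of the injectivity bound and curvature bound) together with the cohomological area bound $[L]\cdot[\omega_J]$ forces a curve in the family to stay in a bounded region, and divergence of the integral is precisely what makes this quantitative argument close. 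You are right to flag this as the hard step, but as written the appeal to ``Gromov compactness in the noncompact setting'' is a label rather than an argument, and this is the part of the theorem that carries the real analytic content.
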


Using Theorem~\ref{thm: oneToMany}, we prove the main result of this section:
  \begin{thm} \label{154}
     Assume that $Y$ is a rational elliptic surface and $D$ is a singular fiber of type $I_k$. Let $X:=Y\setminus D$, and let $[\omega]_{dR}\in H^2(X,\mathbb{R})$ be a K\"ahler class which is rational near infinity.  If $\omega_{CY}$ denotes any of the Calabi-Yau metrics constructed in Theorem~\ref{thm: nonStHein}, then $(X,\omega_{CY})$ admits a special Lagrangian fibration with fibers homologous to the unique quasi-bad cycle $[C_{m_1,m_2}]$ satisfying $[\omega].[C_{m_1,m_2}]=0$. 
  \end{thm}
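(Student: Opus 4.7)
My plan is to combine the ansatz construction from Subsection \ref{9999} with the asymptotic description of $\omega_{CY}$ from Theorem \ref{thm: nonStHein} and the one-to-many principle of Theorem \ref{thm: oneToMany}. The overall strategy: produce a single honest special Lagrangian torus in $(X,\omega_{CY})$ in the class $[C_{m_1,m_2}]$ via Lagrangian mean curvature flow from a semi-flat ansatz, then upgrade it to a fibration.

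First, I identify the correct model. By Lemma \ref{lem: cohomProps}, Corollary \ref{cor: local deRham}, and Corollary \ref{cor: locBCKahler}, the hypothesis $[\omega]_{dR}.[C_{m_1,m_2}]=0$ together with rationality near infinity pins down the asymptotic semi-flat metric of $\omega_{CY}$ to be, in the notation of Subsection \ref{Ansatz SLag}, either the standard semi-flat metric $\omega_{sf,\sigma,\epsilon}$ (if $[C_{m_1,m_2}]$ is a bad cycle, i.e.\ $m_1=1$) or a quasi-regular non-standard semi-flat metric of the form $T_{\eta_{m_1,m_2}}^*\omega_{sf,\sigma,\epsilon}$ (in the general quasi-bad case). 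In either situation Subsection \ref{Ansatz SLag} supplies the ansatz family $L_r := F_{AJ,\sigma}^{-1}(C_{m_1,m_2}(r))$ of special Lagrangian tori for the model metric, in the desired homology class. Lemma \ref{model} and the remark following it give the uniform geometric control: $|\Pi|_{C^0}=O(r^{-1})$, $|H|_{C^0}=O(e^{-\delta r^{2/3}})$, $\mathrm{Vol}(L_r)$ bounded above and below, non-collapsing at scale $O(r^{-1/3})$, and $\lambda_1(L_r)=O(r^{-2/3})$.

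Second, I transplant into $(X,\omega_{CY})$. By Theorem \ref{thm: nonStHein}(vi), $\omega_{CY}=\omega_\alpha+\ddb\phi$ with all covariant derivatives of $\phi$ exponentially small in $r$. Restricted to $L_r$, the form $\omega_{CY}\big|_{L_r}=d\zeta_r$ with $|\zeta_r|_{C^k(L_r)}=O(e^{-\delta r^{2/3}})$; using the lower eigenvalue bound $\lambda_1(L_r)=O(r^{-2/3})$ on $L_r$, Hodge theory provides a primitive one-form on $L_r$ whose symplectic dual vector field defines an exponentially small perturbation producing a genuine Lagrangian $\widetilde L_r$ with exponentially small Lagrangian angle oscillation. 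This is exactly the setup treated in \cite{Collins-Jacob-Lin}, and the estimates of Lemma \ref{model} transfer to $\widetilde L_r$ with a harmless multiplicative factor. Invoking the long-time existence and convergence theorem for Lagrangian mean curvature flow in hyperK\"ahler surfaces from \cite{Collins-Jacob-Lin} — whose hypotheses (uniform bound on the second fundamental form, non-collapsing, small angle, first eigenvalue control, ambient curvature control from Theorem \ref{thm: nonStHein}(v)) are satisfied for $\widetilde L_r$ once $r$ is large — yields a smooth embedded special Lagrangian torus $L\subset (X,\omega_{CY})$ in the class $[C_{m_1,m_2}]$.

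Finally, I feed $L$ into Theorem \ref{thm: oneToMany} to get a fibration. The three hypotheses are verified directly: $(X,\omega_{CY})$ is a complete hyperK\"ahler surface and has bounded sectional curvature (Theorem \ref{thm: nonStHein}(v)); the injectivity radius satisfies $\mathrm{inj}(x)\gtrsim r(x)^{-1/3}$ by Theorem \ref{thm: nonStHein}(iv), and $f(s)=c(1+s)^{-1/3}$ is non-increasing with $\int_0^\infty f=\infty$; the Euler characteristic $\chi(X)=\chi(Y)-\chi(D)<\infty$. The class $[C_{m_1,m_2}]=m_1[C]+m_2[F]\in H_2(X,\mathbb{Z})$ is primitive because $\gcd(m_1,m_2)=1$, and $[C_{m_1,m_2}]^2=0$ since the cycle is supported in $X_{\Delta^*}$, a Seifert-fibered region. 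Theorem \ref{thm: oneToMany} then promotes $L$ to a special Lagrangian torus fibration $\pi_{CY}:X\to \mathbb{R}^2$ with $L$ as a fiber, finishing the proof.

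The genuinely delicate step is the LMCF convergence in the third paragraph: one must control the flow \emph{uniformly} for tori $\widetilde L_r$ with $r\to\infty$, where the ambient curvature decays but the injectivity radius and the first eigenvalue of $\widetilde L_r$ decay as well. The exponential decay in Theorem \ref{thm: nonStHein}(vi) is what makes the initial defect negligible against the polynomial decay of the intrinsic geometry of $\widetilde L_r$. A secondary subtlety is matching the topological class of the model ansatz with the given class $[C_{m_1,m_2}]$ in $H_2(X,\mathbb{Z})$; this is handled using Lemma \ref{lem: badCycleIntegerAct} and the explicit description of $C_{m_1,m_2}$ after translation by the multivalued section $\eta_{m_1,m_2}$ introduced in Subsection \ref{9999}.
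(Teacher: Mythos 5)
Your proposal is correct and follows essentially the same route as the paper: transplant the model (quasi-)bad-cycle ansatz from Subsection~\ref{Ansatz SLag}, perturb to an honest Lagrangian for $\omega_{CY}$ (the paper does this via Moser's trick, you via a primitive-one-form/Hodge argument — substantively equivalent), run the LMCF convergence argument from \cite{Collins-Jacob-Lin} using the $(C,K,\delta)$ bounded geometry estimates of Lemma~\ref{model}, and then invoke Theorem~\ref{thm: oneToMany} to propagate the single special Lagrangian torus to a fibration. The only cosmetic slip is calling $\lambda_1=O(r^{-2/3})$ a ``lower eigenvalue bound''; what the perturbation step actually needs is the two-sided estimate $\lambda_1\sim r^{-2/3}$ established in Lemma~\ref{model}, and that is what Lemma~\ref{model} provides.
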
   
   \begin{proof}
   	  By Theorem \ref{thm: oneToMany}, it suffices to construct a single special Lagrangian tori representing a fixed class $[L]$. First, write the Ricci-flat metric  as $\omega_{CY}=\alpha \omega_{sf, \sigma,\frac{\epsilon}{\alpha}}+i\partial \bar\partial\phi$, where the function $\phi$ satisfied the exponential decay
	      \be
	   |\nabla^k\phi|_{\omega_{\alpha}}=O(e^{-\delta r^{\frac 23}}).\nonumber
	   \ee
Here $\omega_{\alpha}$ is the background metric constructed in Theorem~\ref{thm: nonStHein}, and $\delta >0$ a uniform constant.  Recall $r$ is the geodesic distance from a fixed point $x_0\in X_{\Delta^*}$ towards $D$. First, using the section $\sigma$, we transplant the model Lagrangians $C(r)$ constructed above to obtain Lagrangians $L_r\subset X_{\Delta^*}$  with respect to $\alpha \omega_{sf, \sigma,\frac{\epsilon}{\alpha}}$. Following the argument of  the proof of  \cite[Theorem 1.1]{Collins-Jacob-Lin}, we can now apply Moser's trick to deform the Lagrangians $L_r$ to disjoint Lagrangian tori $\ti L_{r}$ with respect to  $\omega_{CY}$.  Furthermore, combining Lemma \ref{model}, the estimates of \cite[Section 2]{Collins-Jacob-Lin}, and the decay estimates of $\phi$,  there is a constant $C>0$ such that 
	   	     	    \begin{enumerate}
   	    	
   	    	\item The second fundamental form $\Pi$ of $\ti L_r$ satisfies $\parallel \Pi \parallel \sim O(r^{-1})$.
   	    	\item The mean curvature $H$ of $\ti L_r$ satisfies $\parallel H\parallel\sim O(e^{-\delta r^{ 2/3}})$.
		\item The volume of $\ti L_{r}$ satisfies $C^{-1} < {\rm Vol}(\ti L_{r}) < C$.
		\item $\ti L_r$ is $\kappa=C^{-1}$ non-collapsed at scale $\sim O(r^{-1/3})$. 
   	    	\item The first eigenvalue $\lambda_1$ of the Laplacian on $\ti L_r$ satisfies $\lambda_1= O(r^{-2/3})$.
   	    
   	    \end{enumerate}	
	    
	    This control of the geometry of $\ti L_r$ is precisely what we need to carry out the mean curvature flow argument from \cite[Section 4.3]{Collins-Jacob-Lin}.  According to \cite[Definition 4.16]{Collins-Jacob-Lin} the Lagrangians $\ti L_{r}$ have $(C,K,\delta)$ bounded geometry and hence the argument from  \cite[Theorem 4.22]{Collins-Jacob-Lin} can be applied verbatim.  
   \end{proof}
  \begin{rk}
  	 Different choices of quasi-bad cycle $C_{m,n}$ will arise as special Lagrangian fibers of Ricci-flat metrics $\omega_{CY}$ arising from Theorem \ref{154} that are asymptotic to different (in general, not uniformly equivalent) semi-flat metrics. 
  \end{rk}
   \begin{rk}\label{rk: locModFib}
   In fact, it's not hard to see that the manifolds $C_{t}(r) = \{ {\rm Im}(x) = t, |z|=r \}$ are special Lagrangian in the model space $X_{mod}$ with the holomorphic volume form $\Omega= \frac{dz\wedge dx}{z}$.  Furthermore, these manifolds, equipped with the restriction of the standard semi-flat metric, can easily be shown to be isometric to the manifolds $C_0(r)$ in the above discussion.  Thus, if $[\omega_{CY}]_{dR}$ is rational near infinity, then $(X,\omega_{CY})$ admits a Lagrangian fibration in a neighborhood of infinity with fibers having $(C,K, \delta)$ bounded geometry in the sense of \cite[Definition 4.16]{Collins-Jacob-Lin}.  By \cite[Proposition 5.24]{Collins-Jacob-Lin}, this approximately special Lagrangian fibration is deformed by the Lagrangian mean curvature flow to the special Lagrangian fibration.
   \end{rk}
   
 \begin{rk}
   Given a log Calabi-Yau surface $(Y,D)$, where $D$ is a wheel of $k$ rational curves, Gross-Hacking-Keel \cite{GHK} constructed an integral affine manifold $U^{trop}$, which is the substitute for the base of SYZ fibration for the Gross-Siebert mirror symmetry program. Pascaleff \cite[Section 5]{Pa} proved that the algebraic affine structure $U^{trop}$ agrees with the complex affine structure of the model fibration induced by Hitchin's construction~\cite{Hit}. From \cite[Proposition 5.24]{Collins-Jacob-Lin}, the special Lagrangian constructed in Theorem \ref{154} is asymptotic to the model fibration. Therefore, the complex affine structure of the special Lagrangian fibration constructed in Theorem \ref{154} is asymptotic to $U^{trop}$, after identifying points in the base of the fibration by the Lagrangian mean curvature flow.
 \end{rk}
 \begin{rk}
 	  Given the special Lagrangian fibration in $X$ from Theorem \ref{thm: oneToMany}, here we make some speculation related to the work of Gross-Hacking-Keel. Following the argument in \cite{Lin}, one can show that the open Gromov-Witten invariants defined in \cite{Lin2} coincide with a weighted count of tropical discs. Here the tropical discs have each edge affine with respect to the complex affine structure from the special Lagrangian fibration. One can further prove that the weighted count of tropical $\mathbb{A}^1$-curves can be computed from the weighted count of the tropical discs and the former are independent of the deformation of the special Lagrangian fibration. Now if we consider a family of K\"ahler classes with shrinking exceptional divisors, we expect the limit of distances of singular fibres is infinity. Then we speculate that the special Lagrangian fibrations inside a compact set of $X$ are converging to the pull-back of the toric fibration from the toric model of $(Y,D)$. On the other hand, the open Gromov-Witten invariants for pull-back of the toric fibration from the toric model of $(Y,D)$ is computed in \cite{BCHL} and they coincide with the counting of $\mathbb{A}^1$-curves. This will lead to a proof for the folklore conjecture that the counting of $\mathbb{A}^1$-curves can be computed via certain open Gromov-Witten invariants. 
    \end{rk}

    We conclude this section by noting the following proposition
   
   \begin{prop}
   Let $Y$ be a rational elliptic surface with complex structure $J$ and let $D\subset Y$ be an $I_k$ singular fiber.  Let $[\omega]_{dR}\in H^{2}_{dR}(X,\mathbb{R})$ be a K\"ahler class which is rational near infinity and let $\omega_{CY}$ be the Calabi-Yau metric in $[\omega]_{dR}$ constructed by Theorem~\ref{thm: nonStHein} asymptotic to a quasi-regular semi-flat metric.  Denote by $\pi: Y\setminus D \rightarrow \mathbb{R}^2$ the special Lagrangian fibration of $(Y\setminus D, \omega_{CY})$ constructed in Theorem~\ref{154}.  Then, after hyperK\"ahler rotating to a complex structure $\check{J}$ so that so that $\pi: (Y\setminus D, \check{J}) \rightarrow \mathbb{C}$ is a holomorphic genus $1$ fibration, there is a rational elliptic surface $\pi': Y'\rightarrow \mathbb{P}^1$ with an $I_k$ singular fiber $D'= (\pi')^{-1}(\infty)$ such that $(Y\setminus D, \check{J}) \cong Y'\setminus D'$ and $\pi = \pi'|_{Y'\setminus D'}$.
   \end{prop}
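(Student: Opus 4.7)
The plan is to analyze the asymptotic geometry of $(X, \check{J}, \omega_{CY})$ near the end of $X=Y\setminus D$, identify it with a standard model of an elliptic fibration near an $I_k$ fiber, and then glue in an $I_k$ fiber at infinity to produce a compactification $Y'$ which must be a rational elliptic surface by the classification of algebraic surfaces.

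The first step is asymptotic analysis. By Theorem~\ref{thm: nonStHein} the Calabi-Yau metric has the form $\omega_{CY}=\omega_\alpha+\ddb\phi$ with $\omega_\alpha=\alpha T_h^*\omega_{sf,\sigma,b_0,\epsilon/\alpha}$ near $D$, and $|\nabla^k\phi|_{\omega_\alpha}=O(e^{-\delta r^{2/3}})$. Under the assumption that $[\omega_{CY}]_{dR}$ is rational near infinity and $\omega_{CY}$ is asymptotic to a quasi-regular semi-flat metric, the special Lagrangian fibers produced by Theorem~\ref{154} are asymptotic to the quasi-bad cycles $C_{m_1,m_2}$ (cf.\ Remark~\ref{rk: locModFib}). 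I would compute the hyperK\"ahler rotation $\check{J}$ explicitly on the quasi-regular semi-flat model: interpreting the quasi-regular metric as a standard semi-flat metric with respect to a multi-valued section (Remark~\ref{rk: nonStTansbyMult}), the computation reduces to the hyperK\"ahler rotation of the standard Calabi/semi-flat model, which is carried out in Appendix~\ref{app: hkRot}. The conclusion of that computation is that the rotated geometry is again a neighborhood of an $I_k$ fiber of an elliptic fibration, with the rotated (quasi-)semi-flat metric again of semi-flat type. The exponential decay of $\phi$ then transfers to an exponential closeness of $(\check J,\omega_{CY})$ to this semi-flat model.

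The second step is the compactification. Using the explicit local model in Section~\ref{sec: compAndCoord}, there is a standard complex analytic compactification of a deleted neighborhood of an $I_k$ fiber by adding a wheel of $k$ smooth rational curves intersecting transversally. Because the asymptotic model above is biholomorphic (via the hyperK\"ahler rotation plus exponentially decaying correction) to a punctured neighborhood in this standard model, the complex structure $\check{J}$ extends to a smooth compact complex surface $Y'\supset X$ with $Y'\setminus X=D'$ an $I_k$ fiber, and the fibration $\pi$ extends to a holomorphic map $\pi':Y'\to\mathbb{P}^1$ with $(\pi')^{-1}(\infty)=D'$. The section $\sigma$ used in Theorem~\ref{thm: nonStHein} extends across $D'$ to a holomorphic section of $\pi'$, so $\pi'$ is a relatively minimal elliptic fibration with a section.

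The third step is to show $Y'$ is a rational elliptic surface. Since $Y'$ carries an elliptic fibration over $\mathbb{P}^1$ with a section and a singular fiber of type $I_k$, it is an elliptic surface in the sense of Kodaira and, in particular, algebraic (Moishezon surfaces with an elliptic fibration over a curve are projective). Finally, $Y'$ must be rational: this is precisely the classification argument used in \cite[Theorem 6.4]{Collins-Jacob-Lin}, where one shows via the canonical bundle formula that $Y'$ has Kodaira dimension $-\infty$ and $q(Y')=0$, hence is rational, and combined with the existence of a section and the $I_k$ fiber, is a rational elliptic surface in the standard sense.

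The main obstacle is the asymptotic identification in Step~1: one must show that the hyperK\"ahler rotation of the quasi-regular semi-flat model is itself biholomorphic, not merely diffeomorphic, to a standard local model of an $I_k$ fibration, and that the $b_0$-twist and translation by $h$ are absorbed correctly into the new complex structure. This is nontrivial because the rotated complex structure depends on the full Calabi-Yau data $(\omega_{CY},\Omega)$, and the integer $k$ of the rotated $I_k$ fiber is read off from the slope of the quasi-bad cycle $C_{m_1,m_2}$. Handling this rigorously is what requires the explicit computation in Appendix~\ref{app: hkRot}; once that computation is in hand, gluing and algebraic classification proceed as above.
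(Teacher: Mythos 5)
Your plan follows the same broad arc as the paper's (rotate, analyze asymptotics, compactify, classify), and Steps 2 and 3 track the paper's argument faithfully: the compactification uses the explicit local model and the argument from \cite[Lemma 6.1]{Collins-Jacob-Lin}, after which the classification of surfaces identifies $Y'$ as a rational elliptic surface. You also correctly identify Step 1 as the crux. However, the resolution you propose for Step 1 has a genuine gap.

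The paper does not identify the rotated asymptotic geometry by directly computing the hyperK\"ahler rotation of the quasi-regular semi-flat model. Instead the proof is essentially a citation to \cite[Theorem 6.4]{Collins-Jacob-Lin}: one invokes \cite[Proposition 5.24]{Collins-Jacob-Lin}, which says the model SLag fibration on $X_{mod}$ flows under Lagrangian mean curvature flow to the actual SLag fibration near infinity, so that the monodromy of $\pi$ at infinity coincides with that of the model; the model's monodromy is conjugate to $\left(\begin{smallmatrix}1&k\\0&1\end{smallmatrix}\right)$. This purely topological input is what allows \cite[Lemma 6.1]{Collins-Jacob-Lin} to glue in an $I_k$ fiber. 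Your proposal instead wants an explicit biholomorphic identification of the rotated model and cites Appendix~\ref{app: hkRot} for this, but that appendix computes the rotation \emph{Calabi model $\to$ semi-flat near $I_k$}, i.e.\ the rotation making the $M_{c,K}$ tori (SLags for the Calabi structure $J$) holomorphic. The rotation the proposition requires is the different one making the quasi-bad cycles $C_{m_1,m_2}$ (SLags for the semi-flat structure $J_\tau$) holomorphic, which is another point on the $S^2$ of compatible complex structures; Appendix~\ref{app: hkRot} does not perform this computation. Consequently, the statement ``the rotated geometry is again a neighborhood of an $I_k$ fiber, with the rotated metric again of semi-flat type'' is not justified by the reference you give, and indeed the paper never needs or asserts that the rotated metric is semi-flat. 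To make your route rigorous you would either need to carry out a separate Gibbons--Hawking-ansatz computation for this second rotation, or replace it with the LMCF/monodromy argument of \cite[Proposition 5.24]{Collins-Jacob-Lin}.
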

   \begin{proof}
   The proof is identical to the proof of \cite[Theorem 6.4]{Collins-Jacob-Lin}.  From Proposition \cite[Proposition 5.24]{Collins-Jacob-Lin}, the model special Lagrangian fibration in $X_{mod}$ flows by Lagrangian mean curvature flow to the special Lagrangian fibration in the neighborhood of the infinity in $X$. In particular, the monodromy of the model special Lagrangian coincides with the monodromy at infinity of the special Lagrangian fibration constructed in Theorem \ref{154}. The former is conjugate to $\begin{pmatrix}
   1 & k \\
   0 & 1
   \end{pmatrix}$.    
   Following the proof of \cite[Lemma 6.1]{Collins-Jacob-Lin}, one can compactify $X'$ to an elliptic surface $Y'$ by adding an $I_k$ fiber at infinity. By the classification of compact complex surfaces, $Y'$ is a rational elliptic surface. For more details we refer the details of the proof to \cite[Theorem 6.4]{Collins-Jacob-Lin}.

   \end{proof}

\section{Moduli of Ricci-flat metrics and applications}\label{sec: moduli}

The goal of this section is to to understand the moduli of the Ricci-flat K\"ahler metrics asymptotic to semi-flat metrics.  The main result of this section is the following uniqueness theorem.

\begin{thm}\label{thm: unique}
Let $Y$ be a rational elliptic surface and $D$ an $I_k$ singular fiber.   Suppose $\omega_1, \omega_2$ are complete K\"ahler metrics on $X= Y\setminus D$ satisfying
\begin{enumerate}
\item[$(i)$]  $[\omega_1]_{dR}=[\omega_2]_{dR}$ in $H^{2}_{dR}(X,\mathbb{R})$,
\item[$(ii)$] for $i=1,2$ we have 
\[
\omega_i^2 = \alpha^2 \Omega \wedge \overline{\Omega}
\]
for some $\alpha >0$,
\item[$(iii)$] there is a (possibly non-standard) semi-flat metric $\omega_{sf, \epsilon}$ such that, for $i=1,2$ we have
\[
|\omega_i - \alpha\omega_{sf,\frac{\epsilon}{\alpha}}|_{\alpha\omega_{sf,\frac{\epsilon}{\alpha}}} \leq Cr^{-4/3},
\]
where $r$ denotes the distance from a fixed point $x\in X$ with respect to either $\omega_1$ or $\omega_2$.
\end{enumerate}
Then $\omega_1=\omega_2$.
\end{thm}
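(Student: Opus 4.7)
The overall strategy is to promote the equality of de~Rham cohomology classes to an equality of Bott--Chern classes, and then to argue that the scalar potential governing the difference must vanish via a maximum principle argument.

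For the first step, condition~(i) implies that $[\omega_1-\omega_2]_{BC}$ lies in the kernel of the projection $H^{1,1}_{BC}(X,\mathbb{R}) \to H^{2}_{dR}(X,\mathbb{R})$, which by Lemma~\ref{lem: BCdescrip} is canonically identified with $H^{0}(\mathbb{C},R^{1}\pi_{*}\mathcal{O}_{X}) \cong H^{0}(\mathbb{C},\mathcal{O}_{\mathbb{C}})$. Let $h:\mathbb{C}\to\mathbb{C}$ be the holomorphic function representing this class. In a punctured neighborhood $X_{\Delta^{*}}$ of $D$, Lemma~\ref{lem: transSemiFlat} represents $[\omega_1-\omega_2]_{BC}$ by a translated semi-flat difference $T_{\tilde h}^{*}\omega_{sf}-\omega_{sf}$ modulo an $i\partial\bar\partial$-exact piece. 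Combining this with the sharp dichotomy of Lemma~\ref{lem: transAsymp}---namely that $|T_{\tilde h}^{*}\omega_{sf}-\omega_{sf}|_{\omega_{sf}}$ blows up when $\tilde h$ has a pole at $z=0$, is of exact order $r^{-4/3}$ when $\tilde h(0)\notin\mathbb{R}$, and decays exponentially when $\tilde h(0)\in\mathbb{R}$---with the Liouville theorem for bounded holomorphic functions on $\mathbb{C}$, the decay condition in~(iii) forces $h$ to be a constant. To upgrade this to $h\equiv 0$ we invoke the translation automorphisms $\Phi_{\tau}\in \mathrm{Aut}_{0}(X,\mathbb{C})$ from Lemma~\ref{lem: globTrans} and Proposition~\ref{prop: BCKahlerCone}(iii): a non-real constant $h$ would translate $\omega_2$ to a metric asymptotic to $T_{\tau}^{*}\omega_{sf}$ at leading order $r^{-4/3}$, inconsistent with condition~(iii) which pins down a \emph{single} semi-flat model $\omega_{sf}$ for both metrics. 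Hence $[\omega_1]_{BC}=[\omega_2]_{BC}$.

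Having matched Bott--Chern classes, we write $\omega_1-\omega_2 = i\partial\bar\partial u$ for some real $C^{\infty}$ function $u$ on $X$. The $r^{-4/3}$ decay of the left-hand side, together with standard elliptic estimates on the asymptotically-model Calabi--Yau end (using the bounded curvature and injectivity radius lower bounds from Theorem~\ref{thm: nonStHein}), allow us to select the primitive $u$ so that $u(x)\to 0$ as $r(x)\to\infty$. The Monge--Amp\`ere equations in~(ii) then yield $\omega_1^{2}=\omega_2^{2}$, so
\[
0=\omega_1^{2}-\omega_2^{2}=(\omega_1-\omega_2)\wedge(\omega_1+\omega_2)=i\partial\bar\partial u \wedge (\omega_1+\omega_2),
\]
which is precisely $\Delta_{\frac{1}{2}(\omega_1+\omega_2)}u=0$. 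Thus $u$ is harmonic on the connected complete K\"ahler manifold $(X,\tfrac{1}{2}(\omega_1+\omega_2))$. Since $u$ is bounded and tends to zero at infinity, its supremum and infimum are attained in a compact region; the strong maximum principle forces $u$ to be constant, and the decay at infinity forces this constant to vanish. Therefore $\omega_1=\omega_2$.

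The main obstacle is the Bott--Chern upgrade in the first step. The rate $r^{-4/3}$ in condition~(iii) is precisely the borderline decay produced by a non-real constant fiber-translation of the semi-flat metric (case~(iii) of Lemma~\ref{lem: transAsymp}), so Liouville-type arguments alone do not rule out non-trivial constant classes $h$; this must be excluded using the structural fact that condition~(iii) prescribes the \emph{same} asymptotic semi-flat metric $\omega_{sf}$ for both $\omega_1$ and $\omega_2$, which interacts with the automorphism group $\mathrm{Aut}_{0}(X,\mathbb{C})$ via Proposition~\ref{prop: BCKahlerCone}. The remainder of the argument is comparatively routine, modulo careful function-theoretic estimates producing a primitive $u$ with the required decay on the non-compact end.
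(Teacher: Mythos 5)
Your proposal takes a genuinely different route from the paper, and unfortunately both of its key analytical steps contain gaps that are precisely the hard content of the paper's argument.

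\textbf{Gap 1: matching Bott--Chern classes.} You try to show $[\omega_1]_{BC}=[\omega_2]_{BC}$ directly by identifying the class with a holomorphic function $h$ and arguing via Lemma~\ref{lem: transAsymp} plus a Liouville argument that $h$ is constant, and then ruling out a nonzero constant via ``condition~(iii) pins down a single $\omega_{sf}$.'' Neither step is secure. First, the decay $|\omega_1-\omega_2|_{\omega_{sf}}\leq Cr^{-4/3}$ does \emph{not} directly control the decay of the representative $T_{\tilde h}^*\omega_{sf}-\omega_{sf}$ of the Bott--Chern class: the two forms differ by an arbitrary $i\partial\bar\partial$-exact piece, and nothing a priori prevents that piece from canceling growth in $T_{\tilde h}^*\omega_{sf}-\omega_{sf}$. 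Second, the step excluding a nonzero constant does not work: by Lemma~\ref{lem: transAsymp}(iii) the translated model $T_\tau^*\omega_{sf}$ is itself within $O(r^{-4/3})$ of $\omega_{sf}$, so $\omega_2$ being $O(r^{-4/3})$-close to $T_\tau^*\omega_{sf}$ is perfectly compatible with condition~(iii); there is no inconsistency at the level of asymptotics. The paper sidesteps all of this by a different mechanism: Proposition~\ref{prop: ddb Lemma} shows the decay~(iii) makes $d\beta=\omega_2-\omega_1$ an $L^1$-valued closed current on the compactification $Y$, proves this current pairs to zero with each $D_i$ and hence is exact in $H^2(Y)$, and then applies the $\partial\bar\partial$-lemma for currents on the \emph{compact} surface $Y$ — only this route establishes the existence of a global potential $\phi$.

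\textbf{Gap 2: producing a decaying primitive.} Even granting $[\omega_1]_{BC}=[\omega_2]_{BC}$, you assert that ``standard elliptic estimates on the asymptotically-model end'' allow selecting the primitive $u$ so that $u\to 0$ at infinity. This is not justified: the potential for $i\partial\bar\partial u=\omega_1-\omega_2$ is ambiguous up to an arbitrary pluriharmonic function, and pointwise $r^{-4/3}$ decay of $i\partial\bar\partial u$ does not by itself exclude unbounded pluriharmonic corrections. Proving that some primitive is bounded (not even decaying) is the heart of Proposition~\ref{prop: ddb Lemma}: the paper first obtains $\phi\in L^1(Y)$ from the compact $\partial\bar\partial$-lemma, then controls its fiberwise oscillation via $W^{2,2}$ estimates and Poincar\'e inequalities on the torus fibers, and finally bounds $\phi$ on a.e.\ one-dimensional slice $\{u=c\}$ using explicit sub/superharmonic barriers built from $(-\log|v|)^{-1}$ (equations~\eqref{eq: phiplus},~\eqref{eq: phiminus}). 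None of this is ``standard elliptic estimates.''

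\textbf{Final step.} Your observation that in complex dimension $2$, the Monge--Amp\`ere equalities imply $i\partial\bar\partial u\wedge(\omega_1+\omega_2)=0$, i.e.\ $u$ is \emph{harmonic} (not merely subharmonic) with respect to $\omega_1+\omega_2$, is a correct and slightly stronger observation than what the paper's Lemma~\ref{156} uses. However, your finish via the strong maximum principle needs $u\to 0$ at infinity, which, as above, is unavailable. The paper's Lemma~\ref{156} is designed to work with the weaker information that is actually provable: $u$ bounded below together with the polynomial volume growth $R^{-2}\mathrm{Vol}(B(R))\lesssim R^{-2/3}$ (Remark~\ref{rk: 156}), fed into a Yau-style cutoff/energy estimate rather than the maximum principle. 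If you wish to keep the max-principle finish, you would still have to prove the $L^\infty$ bound of Proposition~\ref{prop: ddb Lemma} first, and even then you would only obtain $u$ bounded, not $u\to 0$, so the energy argument is the correct closing move.
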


The decay rate $r^{-4/3}$ is essentially optimal.  Indeed, by Lemma~\ref{lem: transAsymp}, given a semi-flat K\"ahler metric $\omega_{sf, \sigma, b_0, \epsilon}$, one can produce, via a translation, a new semi-flat metric $\omega_{sf, \sigma', b_0, \epsilon}$ which is de Rham cohomologous, but not Bott-Chern cohomologous, and decays towards $\omega_{sf, \sigma, b_0, \epsilon}$ at the rate $r^{-4/3}$.  This indicates that the decay rate $r^{-4/3}$ is the best possible rate that can be obtained without imposing some extra conditions on the model semi-flat metric at infinity.

Theorem~\ref{thm: unique} yields the following corollary, which shows that the Calabi-Yau metrics constructed by Hein's method in Theorem~\ref{thm: nonStHein} are unique, so in particular do not depend on any choices made during the construction.

\begin{cor}\label{cor: HeinUnique}
Fix a Bott-Chern cohomology class $[\omega]_{BC} \in H_{BC}^{1,1}(X,\mathbb{R})$ containing a K\"ahler form $\omega$.  Then we have
\begin{itemize}
\item[$(i)$] There is a unique semi-flat metric $\omega_{sf , \sigma, b_0, \epsilon}$ in the Bott-Chern cohomology class $[\omega]_{BC}\in H^{1,1}_{BC}(X_{\Delta^*}, \mathbb{R})$.
\item[$(ii)$] For all $\alpha > 0$  there is a unique Calabi-Yau metric $\omega_{CY}$ with $[\omega_{CY}]_{BC}=[\omega]_{BC}$ solving
\[
\omega_{CY}^2= \alpha^2 \Omega\wedge \overline{\Omega}
\]
and satisfying
\[
|\omega_{CY} - \alpha \omega_{sf, \sigma, b_0, \frac{\epsilon}{\alpha}}|_{\omega_{CY}} \leq Cr^{-4/3},
\]
where $r$ is the distance from a fixed point with respect to the Calabi-Yau metric.
\item[$(iii)$] Furthermore, for all $k\in \mathbb{N}$ there is a constant $C_k$ so that $\omega_{CY}$ satisfies the improved decay estimates
\[
|\nabla^{k}(\omega_{CY} - \alpha \omega_{sf, \sigma, b_0, \frac{\epsilon}{\alpha}})|_{\omega_{CY}} \leq C_ke^{-\delta r^{2/3}}.
\]
where $\nabla$ is defined using $\omega_{CY}$.
\end{itemize}
\end{cor}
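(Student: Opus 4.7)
The proof is an assembly of results established earlier in this section and in Section~\ref{sec: semiflatmet}. Part $(i)$ is a direct restatement of the uniqueness clause of Corollary~\ref{cor: locBCKahler}: any Bott-Chern K\"ahler class contains a (possibly non-standard) semi-flat representative $\omega_{sf,\sigma,b_0,\epsilon}$, unique once any translation $T_h$ is absorbed into the choice of (possibly multivalued) section. Hence nothing further needs to be proved for $(i)$.

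For existence in $(ii)$, the plan is to apply Theorem~\ref{thm: nonStHein} taking the semi-flat metric from $(i)$ as the background K\"ahler form $\omega_0$. For all $\alpha > \alpha_0$ the resulting metric $\omega_{CY}$ lies in the prescribed Bott-Chern class, solves the required Monge-Amp\`ere equation, and in fact satisfies the much stronger exponential decay estimates of Theorem~\ref{thm: nonStHein}$(vi)$ on all derivatives, simultaneously establishing part $(iii)$ for this particular metric. For uniqueness in $(ii)$, suppose $\omega_{CY,1}, \omega_{CY,2}$ are two complete K\"ahler metrics satisfying the hypotheses. Since the natural map $H^{1,1}_{BC}(X,\mathbb{R}) \to H^2_{dR}(X,\mathbb{R})$ is well-defined they have the same de Rham class; they solve the same Monge-Amp\`ere equation by hypothesis; and both are asymptotic at rate $r^{-4/3}$ to the same semi-flat metric, namely the one singled out in $(i)$, which is precisely what appears in the decay condition. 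All three hypotheses of Theorem~\ref{thm: unique} are therefore satisfied, giving $\omega_{CY,1}=\omega_{CY,2}$. Combining with the previous paragraph, this shows that part $(iii)$ holds for any Calabi-Yau metric satisfying the polynomial decay of $(ii)$.

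The main substantive input is Theorem~\ref{thm: unique}; once that is granted, the corollary is purely formal. The only point requiring care is ensuring that the two Calabi-Yau metrics in the uniqueness step share a common semi-flat asymptotic model, which is automatic from part $(i)$ because the semi-flat metric named in the decay condition of $(ii)$ is pinned down by the Bott-Chern class itself.
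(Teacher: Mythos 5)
Your proposal is correct in outline and follows essentially the same route as the paper: $(i)$ is Corollary~\ref{cor: locBCKahler}, existence in $(ii)$ and all of $(iii)$ come from Theorem~\ref{thm: nonStHein}, and uniqueness in $(ii)$ is Theorem~\ref{thm: unique}. Your remark about needing the two metrics to share a common semi-flat model, and why that is automatic here from $(i)$, is a legitimate and correctly resolved point of care.

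One small slip: you propose ``taking the semi-flat metric from $(i)$ as the background K\"ahler form $\omega_0$'' when applying Theorem~\ref{thm: nonStHein}. That semi-flat metric lives only on $X_{\Delta^*}$, a punctured neighborhood of the $I_k$ fiber, whereas the background form $\omega_0$ in Theorem~\ref{thm: nonStHein} must be a K\"ahler form on all of $X$. The correct input is the K\"ahler form $\omega$ on $X$ that is given in the hypothesis of the corollary as a representative of $[\omega]_{BC}$; Theorem~\ref{thm: nonStHein} then recovers the holomorphic function $h$ (equivalently, via Corollary~\ref{cor: locBCKahler}, the unique semi-flat representative) from the Bott-Chern class, and the resulting $\omega_{CY}$ has the claimed asymptotics towards that semi-flat metric. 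With that adjustment the argument is exactly the paper's.
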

\begin{proof}
$(i)$ is Corollary~\ref{cor: locBCKahler}.  The existence statement in part $(ii)$ is Hein's theorem, Theorem~\ref{thm: nonStHein}.  The uniqueness part of $(ii)$ is Theorem~\ref{thm: unique}. Part $(iii)$ follows from Hein's estimates; see Theorem~\ref{thm: nonStHein}.
\end{proof}

The study of uniqueness of moduli of complete K\"ahler-Ricci flat metrics is a well-studied topic in K\"ahler geometry and we will not endeavor to give a complete overview of this active subject.  For ALE K\"ahler manifolds we refer the reader to the work of Kronheimer \cite{Kron}, Joyce \cite{Joyce} and Carron \cite{Car}, as well as the references therein.  Uniqueness results for ALF gravitational instantons were obtained by Minerbe \cite{Min} and Chen-Chen \cite{CCII}.  In the ALH  setting, uniqueness results were obtained by Hein \cite{Hein}, and by Chen-Chen \cite{CC} and Chen-Viaclovsky-Zhang \cite{CVZ2} in the ALG and ALG$\,^*$ cases. In higher dimensions, uniqueness results were obtained by Haskins-Hein-Nordstrom \cite{HHN} in the asymptotically cylindrical case, by Conlon-Hein \cite{CH} in the asymptotically conical case and by Chiu-Sz\'ekelyhidi \cite{CS} in the case of maximal volume growth.  Typically, uniqueness results for complete Calabi-Yau metrics are obtained by establishing a relevant Hodge theory in weighted spaces for forms with certain decay properties, which then implies the $\ddb$-lemma as a consequence.  Once a $\ddb$-lemma is established, uniqueness typically follows in a straightforward manner.  We remark that, in the case of maximal volume growth Calabi-Yau manifolds recent work of Sz\'ekelyhidi \cite{S} and Chiu \cite{Chiu} suggests that uniqueness is captured by both the asymptotic cone and some further algebro-geometric data. 

The proof of Theorem~\ref{thm: unique} follows a similar strategy to the above works, but by-passes the development of a general Hodge theory by instead proving that a $(1,1)$ form with sufficiently fast decay extends to a compactification, and then invoking the $\ddb$-lemma for currents on compact K\"ahler manifolds.  The decay estimates are used in a crucial way in this step since, as discussed in Section~\ref{sec: semiflatmet}, the $\ddb$-lemma failed in general.  Once a scalar potential is obtained, we prove estimates for the potential by invoking elliptic theory along the fibers of the elliptic fibration, using in an essential way the geometry of the semi-flat metric.

	 \begin{prop}\label{prop: ddb Lemma}
	 Suppose $d\beta$ is an exact $2$-form on $X$, of type $(1,1)$.  Suppose that there is a semi-flat metric $\omega_{sf, \sigma, b_0, \epsilon}$ on $X_{\Delta^*}$ such that 
	 \begin{equation}\label{eq: decay dB}
	 |d\beta|_{\omega_{sf,\sigma, b_0, \epsilon}} \leq Cr^{-4/3}
	 \end{equation}
	 where $r$ denotes the distance from a fixed point with respect to $\omega_{sf, \sigma, b_0, \epsilon}$. Let $\pi:Y\rightarrow \mathbb{P}^1$ denote the rational elliptic surface compactifying $X$ induced by $\sigma: \Delta^*\rightarrow X_{\Delta^*}$.  Then there exists a $\lambda \in \mathbb{R}$ and a function $\phi \in  L^{\infty}(Y) \cap C^{\infty}(X)$, unique up to addition of a constant, such that 
	 \[
	 d\beta = \ddb\left(-\lambda \log|s_{D}|^2_{\pi^*h_{FS}} + \phi\right)
	 \]
	where $h_{FS}$ is the Fubini-Study metric on $\mathcal{O}_{\mathbb{P}^1}(1)$ and $s_{D} \in H^{0}(Y, \mathcal{O}_{Y}(D))$ is the section defining $D$.
	 \end{prop}
	 \begin{proof}
	 Let $\pi:Y\rightarrow \mathbb{P}^1$ be the rational elliptic surface compactifying $X$ induced by $\sigma: \Delta^*\rightarrow X_{\Delta^*}$, and let $D$ denote the $I_k$ fiber and $X=Y\setminus D$.  Let $\tilde{\omega}$ be a smooth K\"ahler metric on $Y$. The first step is to translate the estimate~\eqref{eq: decay dB} into an estimate for $d\beta$ with respect to $\tilde{\omega}$.  Recall that in Section~\ref{sec: semiflatmet} we constructed explicit coordinate systems near the $I_k$ fiber in $Y$ compactifying $X$.  Let $\{(u,v): |u|<1, |v|<1\}$ denote one of these coordinates patches. For simplicity, denote $\omega_{sf} = \omega_{sf, \sigma, b_0, \epsilon}$ and let $g_{sf}$ denote the K\"ahler metric.  Using the coordinates on $Y$ and the explicit form for $g_{sf}$ (see Section~\ref{sec: semiflatmet} and Section~\ref{sec: slag}), there is a uniform constant $C>0$ so that following estimates hold for $g_{sf}$ in coordinates $(u,v)$
	 \[
	 \begin{aligned}
	C^{-1} \frac{|\log|uv||}{|u|^2} \leq (g_{sf})_{u\bar{u}} \leq C\frac{|\log|uv||}{|u|^2}, \\
	C^{-1}\frac{|\log|uv||}{|v|^2} \leq  (g_{sf})_{v\bar{v}} \leq C \frac{|\log|uv||}{|v|^2}.
	\end{aligned}
	 \]
	 Thanks to the fact that the semi-flat metric solves the Monge-Amp\`ere equation $\omega_{sf}^2= \Omega \wedge \overline{\Omega}$ we can easily get the bound 
	  \[
	 |(g_{sf})_{u\bar{v}}| \leq C \frac{|\log|uv||}{|uv|}.
	 \] 
	 Using the formula for the inverse of a $2\times 2$ matrix, we get
	 \[
	 \begin{aligned}
	 C^{-1} |u|^2|\log|uv|| \leq (g_{sf})^{u\bar{u}} &\leq C |u|^2|\log|uv||,\\
	 C^{-1}|v|^2|\log|uv||\leq  (g_{sf})^{v\bar{v}} &\leq |v|^2|\log|uv||,\\
	 |(g_{sf})^{u\bar{v}}| &\leq C |uv||\log|uv||.
	 \end{aligned}
	 \]
	By the discussion in Section~\ref{sec: semiflatmet} we have $r \sim (-\log|uv|)^{3/2}$.  Thus, the estimate~\eqref{eq: decay dB} implies 
	 \begin{equation}\label{eq: decay dB uv}
	 |d\beta_{u\bar{u}}| \leq \frac{C}{|u|^2|\log|uv||^3}, \quad |d\beta_{v\bar{v}}| \leq \frac{C}{|v|^2|\log|uv||^3}, \quad |d\beta_{u\bar{v}}| \leq \frac{C}{|uv||\log|uv||^3}.
	 \end{equation}
	 A key point is that each quantity appearing on the right hand side above is $L^{1}$ near $\{u=0\}\cup\{v=0\}$ with respect to the Lebesgue measure on $\mathbb{C}^2$.  With this estimate $d\beta$ extends by zero to a well-defined current $T$ on $Y$ with $L^{1}$-valued coefficients.  We claim that this current is closed and exact. 
	 
	 Let us first prove that $T$ is closed.  For the remainder of the proof we use $N_{\mu}$ to denote a tubular neighborhood of radius $0<\mu \ll1$ around $D$.  Let $\alpha$ be a smooth, compactly supported $1$-form on $Y$.  We need to show that
	 \[
	 T(d\alpha) = \lim_{\mu\rightarrow 0} \int_{Y\setminus N_{\mu}} d\beta \wedge d\alpha =0.
	 \]
	 Integration by parts yields
	 \[
	 \int_{Y\setminus N_{\mu}} d\beta \wedge d\alpha = \int_{\del N_{\mu}} \alpha \wedge d\beta.
	 \]
	It suffices to show that
	\[
	\lim_{\mu\rightarrow 0} \int_{\del N_{\mu}}\big|d\beta\big|_{\del N_{\mu}}\big| d\sigma=0
	\]
	where the norm of $d\beta$ is measured with respect to $\tilde{\omega}$ and $d\sigma$ denotes the surface measure on $\del N_{\mu}$ induced by $\tilde{\omega}$.  We may as well assume that in our local coordinates we have
	\[
	\del N_{\mu} = \{|u|=\mu, 0 \leq |v|\leq 1\} \cup \{|v|=\mu, 0 \leq |u|\leq 1\}.
	\]
	Consider the piece of $\del N_{\mu}$ given by $\{|u|=\mu, \mu \leq |v|\leq 1\}$.  We break this up into sets
	\[
	A_1= \{|u|=\mu, \mu \leq |v|\leq \mu^{\frac{1}{2}}\}, \qquad A_{2} := \{|u|=\mu, \mu^{\frac{1}{2}} \leq |v|\leq 1\}.
	\]
	On $A_1$, the estimates in~\eqref{eq: decay dB uv} gives
	\[
	|d\beta| \leq C\frac{1}{\mu^2(-\log(\mu))^3}.
	\]
	On the other hand, the area of $A_1$ with respect to the smooth K\"ahler metric on $Y$ is of order $\mu^2$.  Thus we have
	\[
	\int_{A_1} |d\beta| d\sigma \leq C \frac{1}{(-\log\mu)^3}
	\]
	and the right hand side converges to zero as $\mu\rightarrow 0$.  On $A_2$ we need to be slightly more careful.  Note that $d\beta_{u\bar{u}}|_{A_2} =0$, and so it suffices to estimate only the terms $d\beta_{u\bar{v}}, d\beta_{v\bar{v}}$. Thus, on $A_2$ we have
	\[
	\big|d\beta|_{N_{\mu}}\big| \leq C\left( \frac{1}{\mu|v|(-\log(\mu|v|))^3} + \frac{1}{\mu (-\log\mu)^3}\right).
	\]
	On the other hand, $A_2$ is a cylinder with cross-section of circumference $\mu$, and thus total has area of order $\mu$ with respect to the smooth K\"ahler metric on $Y$.  We get
	\[
	\begin{aligned}
	\int_{A_2} |d\beta\big|_{\del N_{\mu}}| d\sigma &\leq C\left( \frac{1}{(-\log\mu)^3}+ \int_{|v|=\mu^{\frac{1}{2}}}^{|v|=1}\frac{dv\wedge d\bar{v}}{|v|(-\log(|v|))^3} \right)\\
	&\leq  C\left( \frac{1}{(-\log\mu)^3}\right)
	\end{aligned}
	\]
	and the right hand side converges to zero as $\mu\rightarrow 0$.  The remaining piece of $\del N_{\mu}$ is treated identically, and the claim follows.  
	
	Next we identify the cohomology class of $T$.  Since $T$ is closed, it defines a cohomology class $[T]_{dR}\in H^{2}_{dR}(Y,\mathbb{R}) = H^{1,1}(Y,\mathbb{R})$.  Since $T|_{X} = d\beta$ is exact, $[T]_{dR}$ is in the kernel of the restriction map $H^{2}_{dR}(Y,\mathbb{R})\rightarrow H^{2}_{dR}(X,\mathbb{R})$.  Since there is no ambiguity on $Y$, we drop the subscripts and denote $[T]_{dR}=[T]$ to ease notation.  By Lemma~\ref{lem: deRham} we have
	\[
	[T] \in {\rm Span}_{\mathbb{R}} \{ [D_i]: 1 \leq i \leq k\},
	\]
	where $D_i \cong \mathbb{P}^1$ are the irreducible components of $D$.  We claim that if $[T].[D_i] =0$ for each $i$ then $[T]= \lambda [D]$.  To see this, order the $D_i$ so that $D_{i}.D_{i+1}= D_i.D_{i-1}=1$.  If $[T]= \sum_{j=1}^k a_j [D_j] $ and we have
	\[
	[T].[D_i] = a_{i+1} +a_{i-1} -2a_i ,
	\]
	where $i$ is taken ${\rm mod}\, k$.  This implies that $[T]=\lambda \sum_{i=1}^{k}[D_i]= \lambda [D]$ for some $\lambda \in \mathbb{R}$.  Indeed, if not, let $a_*=\min_{1 \leq i \leq k} a_i$ and choose $1 \leq j \leq k$ such that $a_j=a_*$ and $a_{j+1} >a_*$ where $j+1=1$ if $j=k$.  Then $[T].[D_j] >0$, a contradiction.

	To prove that $[T].[D_i]=0$, we let $h_i$ be a smooth metric on $\mathcal{O}_{Y}(D_i)$, and let $\Theta_i$ denote the curvature of $h_i$.  Then, since $[\Theta_i] = [D_i]$ we have
	\[
	[T].[D_i]  =\int_{Y}T\wedge \Theta_i = \lim_{\mu \rightarrow 0} \int_{Y\setminus N_{\mu}}d\beta \wedge \Theta_i.
	\]
Let $s_i \in H^{0}(Y, \mathcal{O}_{Y}(D_i))$ be the defining section of $D_i$.  Since $s_i$ is non-vanishing and holomorphic on $Y\setminus N_{\mu}$ we have
	\[
	[T].[D_i] =  -\lim_{\mu \rightarrow 0} \int_{Y\setminus N_{\mu}}d\beta \wedge \ddb \log|s_i|^2_{h_i}.
	\]
	We now integrate by parts on the second term to get
	\[
	[T].[D_i] = -\lim_{\mu \rightarrow 0} \int_{\del N_{\mu}}d\beta \wedge \sqrt{-1}\,\dbar \log|s_i|^2_{h_i}.
	\]
	It suffices to consider the contribution to this integral from a coordinate patch $\{(u,v): 0\leq |u|, |v|\leq 1\}$ where $D_i = \{u=0\}$.  Again, we can assume that $\del N_{\mu} = \{|u|=\mu\}$.  In this case we have
	\[
	\sqrt{-1}\dbar \log|s_i|^2_{h_i} \sim \sqrt{-1}\frac{d\bar{u}}{\bar{u}} + {\rm smooth}
	\]
	and so by our previous estimates we only need to show that
	\[
	-\lim_{\mu \rightarrow 0} \int_{|u|=\mu}d\beta \wedge \sqrt{-1}\frac{d\bar{u}}{\bar{u}} =0.
	\]
	Convert to polar coordinates, writing $u=re^{\sqrt{-1}\theta}, v= se^{\sqrt{-1}\psi}$.  Then we have
	\[
	-\lim_{\mu \rightarrow 0} \int_{|u|=\mu}d\beta \wedge \sqrt{-1}\frac{d\bar{u}}{\bar{u}} =-\lim_{\mu\rightarrow 0} \int_{r=\mu}\int_{|v|\leq 1} d\beta_{v\bar{v}} dv\wedge d\bar{v} \wedge d\theta.
	\]
	Plugging in the estimate for $d\beta_{v\bar{v}}$ yields
	\[
	\begin{aligned}
	\bigg|\lim_{\mu \rightarrow 0} \int_{|u|=\mu}d\beta \wedge \sqrt{-1}\frac{d\bar{u}}{\bar{u}} \bigg| &\leq \lim_{\mu \rightarrow 0}  C\int_{0\leq |v|\leq 1}\frac{1}{|v|^2|\log|\mu v||^3}dv\wedge d\bar{v}\\
	&= \lim_{\mu \rightarrow 0}  C\int_{0\leq |w|\leq \mu}\frac{1}{|w|^2|\log|w||^3}dw\wedge d\bar{w}\\
	&= \lim_{\mu \rightarrow 0}  \frac{C}{|\log(\mu)|^2}\\
	& =0
	\end{aligned}
	\]
	where in the second line we made the substitution $w= \mu v$.

	 Thus, we have shown that $[T] = \lambda [D]$ for some $\lambda \in \mathbb{R}$.  We observe that $\mathcal{O}_{Y}(D) = \pi^*\mathcal{O}_{\mathbb{P}^1}(1)$.  Let $h_{FS}$ denote the Fubini-Study metric on $\mathcal{O}_{\mathbb{P}^1}(1)$, and let $s_{D}= \pi^*z$ be the section of $\mathcal{O}_{Y}(D)$ vanishing on $D$ (where we have identified $\pi(D)=0 \in \mathbb{P}^1$).  Using $\sigma$ to pass to $X_{mod}$, and working in the coordinates $(x,z)$ we have
	 \[
	 \log|\sigma_{D}|^2_{h_{FS}} = \log\left(\frac{|z|^2}{1+|z|^2}\right)
	 \]
	 and from the explicit form of the semi-flat metric $\omega_{sf,b_0,\epsilon}$ we see that
	 \begin{equation}\label{eq: expDecayFirstOrderCorr}
	 |\ddb \log|s_{D}|^2_{\pi^*h_{FS}}|_{g_{sf}} \leq e^{-\delta r^{2/3}}
	 \end{equation}
	 for some $\delta >1$. Since $[T]=\lambda [D]$ in $H^{1,1}(Y, \mathbb{R})$, the $\ddb$-lemma for currents implies that we can write 
	 \[
	 T= -\lambda \ddb \log \pi^*h_{FS} + \ddb \phi
	 \]
	 for some $\phi \in L^{1}(Y)$, which is unique up to addition of a constant.  Furthermore, since $T$ is smooth on $X=Y\setminus D$, elliptic regularity implies that $\phi \in L^{1}(Y)\cap C^{\infty}(Y\setminus D)$, and from the estimate~\eqref{eq: expDecayFirstOrderCorr}, the bound $|d\beta|_{\omega_{sf,b_0,\sigma, \epsilon}} \leq Cr^{-4/3}$ implies that
	 
	 \begin{equation}\label{eq: decayddbPhi}
	 |\ddb \phi |_{\omega_{sf,b_0,\sigma, \epsilon}} \leq Cr^{-4/3}
	 \end{equation}
	 
	 It remains to prove that $\phi$ is bounded. The above bound~\eqref{eq: decayddbPhi} implies that, along the fiber $\pi^{-1}(z)$ we have
	\begin{align} \label{bound for f}
	(-\log|z|)\big| \phi_{x\bar{x}} \big|\leq C\frac{1}{(-\log|z|)^2}.
    \end{align}
	For fixed $z$, we view $\phi$ as a periodic function on $\mathbb{C}$.  Recall that the lattice generating the fiber $\pi^{-1}(z)$ is spanned by $1, \frac{k}{2\pi\sqrt{-1}}\log(z)$.  We claim that there is a uniform constant $C$ such that the following estimate holds for $\phi$;
	\begin{equation}\label{eq: osc bnd claim}
	{\rm osc}_{\pi^{-1}(z)}  \phi \leq \frac{C}{(-\log|z|)^{1/2}}
	\end{equation}
	provided $|z|$ is sufficiently small.   In what follows $C$ will be a constant which can change from line to line, but is always understood to be independent of $z$.  Let $R= k\sqrt{1 + \frac{1}{2\pi}|\log|z||^2}$, and define $y= R^{-1}x$. The point of this rescaling is that a fundamental domain for the torus $\pi^{-1}(z)$ is rescaled to lie within the ball of radius $1$.  Define
	 \[
	 f = \phi_{x\bar{x}}, \qquad A_{z}=\frac{2\pi}{k(-\log|z|)} \int_{\pi^{-1}(z)} \phi\, dV_x
	 \]
	 where we use the notation $dV_x:=\frac{\sqrt{-1}dx\wedge d\bar{x}}{2}$.  Set
	 \[
	 \widetilde{\phi}(y) = R^{-2}(\phi(Ry)-A_{z}),  \qquad \widetilde{f}(y) =f(Ry)= \widetilde{\phi}_{y\bar{y}}(y) .
	 \]
	  Since $\widetilde{\phi}$ is defined on all of $\mathbb{C}$, the standard elliptic regularity estimate yields
	  \[
	  \|\widetilde{\phi}\|_{W^{2,2}(B_2)} \leq C(\|\widetilde{f}\|_{L^{2}(B_4)} + \|\widetilde{\phi}\|_{L^{2}(B_{4})}).
	  \]
	  Recall that in dimension $2$ the Sobolev Imbedding theorem gives $W^{2,p} \hookrightarrow C^{0}$ for any $p>1$.  Combining this with H\"older's inequality, we obtain
	  \[
	  \|\widetilde{\phi}\|_{L^{\infty}(B_1)} \leq C\left(\|\widetilde{f}\|_{L^{2}(B_4)} + \|\widetilde{\phi}\|_{L^{2}(B_{4})}\right).
	  \]
	  Using $\|\widetilde{f}\|_{L^{2}(B_4)}  \leq C\|f\|_{L^{\infty}(\pi^{-1}(z))}$ and writing everything in terms of $x$, we get
	  \[
	 R^{-2} \|\phi(x)- A_{z}\|_{L^{\infty}(\pi^{-1}(z))} \leq C\left( \|f\|_{L^{\infty}} + R^{-3}\|\phi-A_{z}\|_{L^{2}(B_{4R})}\right).
	  \]
	  Note there is a fixed integer $N$ so that $B_{4R}$ is covered by $\sim NR$ translates of the fundamental domain and so
	  \[
	  \|\phi(x)- A_{z}\|_{L^{\infty}(\pi^{-1}(z))} \leq C\left( \frac{1}{(-\log|z|)} + \|\phi-A_{z}\|_{L^{2}(\pi^{-1}(z),\, dV_x)}\right).
	  \]
	  Recall that the flat torus $(\pi^{-1}(z),dV_x)$ has a uniform Poincar\'e inequality of the form
	  \[
	  \int_{\pi^{-1}(z)} |\phi -A_z|^2 \,dV_x  \leq C (-\log|z|)^2 \int_{\pi^{-1}(z)} |\phi_{x}|^2 \, dV_x .
	  \]
	  From the equation $\phi_{x\bar{x}}=f$ and integration by parts we obtain
	  \[
	  \int_{\pi^{-1}(z)} |\phi_{x}|^2\, dx  \leq C\|\phi-A_{z}\|_{L^{2}(\pi^{-1}(z))}\cdot \|f\|_{L^{2}(\pi^{-1}(z))}.
	  \]
	  Combining these three estimates with the bound for $f$ in \eqref{bound for f} yields
	   \[
	  \|\phi(x)- A_{z}\|_{L^{\infty}(\pi^{-1}(z))} \leq C\left( \frac{1}{(-\log|z|)}+ \frac{1}{(-\log|z|)^{1/2}} \right),
	  \]
	  which implies~\eqref{eq: osc bnd claim}.
	  
	  In order to obtain the $L^{\infty}$ bound we will show that there exists an $x$ the such that $\limsup_{z\rightarrow 0} |\phi(x,z)|<+\infty$.  In fact, we claim that for almost every $c \in \mathbb{C}$ the function $\phi(x,z)$ is $L^{\infty}$ when restricted to the slice $\{x=c\}$.  Since the coordinates $(x,z)$ do not extend over the $I_k$ fiber $D$, it is more convenient to work in the local coordinates $(u,v)$.  In these coordinates a slice $\{x={\rm const.}\}$ is given by $\{u ={\rm const.}\}$, and we use $v$ as a coordinate on the slice.  
	  
	  Consider the functions $\phi, \phi_{v\bar{v}}$.  Since each of these functions is smooth on $X$, and $L^{1}$ on $Y$, it follows from Fubini's theorem that they are $L^{1}$ on almost every slice $\{u =c\}$.  We claim that for almost every $c$, $\phi_{v\bar{v}}$ is the weak Laplacian of $\phi$ restricted to the slice $\{u =c\}$.  Let $\eta(v)$ be a smooth, compactly supported function.  We need to show that for almost every $u$
	  \[
	  \int \phi_{v\bar{v}}(u,v) \eta(v) dv =  \int \phi(u,v) \eta_{v\bar{v}}(v) dv .
	  \]
	  By Fubini's theorem both sides of this equation are $L^1$ functions of $u$.  Let $\psi(u)$ be any smooth compactly supported function.  Then $\psi(u)\eta(v)$ is compactly supported and so
	   \[
	 \int \int \phi_{v\bar{v}}(u,v) \eta(v)\psi(u) du  dv =  \int \int \phi(u,v) \eta_{v\bar{v}}(v)\psi(u) du dv.
	  \]
	  Thus we have
	  \[
	 \int \left(\int \bigg(\phi_{v\bar{v}}(u,v) \eta(v)-\phi(u,v) \eta_{v\bar{v}}(v)\bigg) dv \right)\psi(u) du= 0.
	  \]
	  Since this holds for every smooth compactly supported function $\psi(u)$ we have
	  \[
	  \int \bigg(\phi_{v\bar{v}}(u,v) \eta(v)-\phi(u,v) \eta_{v\bar{v}}(v)\bigg) dv =0 \quad \text{ a.e. } u.
	  \] 
	  
	  We can therefore choose a slice $\{u=c\}$, $c\ne 0$ such that $\phi(c,v)$ is $L^{1}$ and, as a function of $v$ on the slice, the distributional Laplacian $\phi(c,v)_{v\bar{v}}$ is also $L^1$.  At the same time, the estimate~\eqref{eq: decay dB uv} implies that there is a constant $C>0$ so that
	  \begin{equation}\label{eq: ddb phi est on slice}
	  |\phi(c,v)_{v\bar{v}}| \leq \frac{C}{|v|^2(-\log|v|)^3}.
	  \end{equation}
	  Consider
	  \begin{equation}\label{eq: phiplus}
	  \phi_+(v) := \phi(c,v) + \frac{C}{(-\log|v|)}.
	  \end{equation}
	  The function $\phi_+$ is $L^1$, smooth away from $v=0$ and by estimate~\eqref{eq: ddb phi est on slice}, $\phi_+$ satisfies $ \ddb \phi_+ \geq 0$ in the sense of distributions.  Thus, we may redefine $\phi_+$ at $v=0$ to make it upper semi-continuous.
	  By the maximum principle for subharmonic functions, $\phi_+$ is bounded above.  But since $\frac{1}{(-\log|v|)}$ is bounded, we get an upper bound $\sup_{v\ne 0} \phi(c,v) \leq C'$ (note that we omit the origin from the supremum, since $\phi$ may differ from $\phi_{+}$ there). Arguing in the same way for the superhamonic function
	  \begin{equation}\label{eq: phiminus}
	   \phi_-(v) := \phi(c,v) - \frac{C}{(-\log|v|)}
	\end{equation}
	yields a lower bound $\inf_{v\ne 0} \phi(c,v) >-C'$.  This yields the estimate 
	\[
	\|\phi(c,v)\|_{L^{\infty}} <C'
	\]
	for a uniform constant $C'$.  Combining this estimate with~\eqref{eq: osc bnd claim} we conclude that $\phi \in L^{\infty}(Y)$.  
		 \end{proof}

	 With this Proposition~\ref{prop: ddb Lemma} we are in position to prove Theorem~\ref{thm: unique}
	 
	 \begin{proof}[Proof of Theorem~\ref{thm: unique}]
	 Let $\omega_1, \omega_2$ be as in the statement of the theorem.  By Proposition~\ref{prop: ddb Lemma} there is a $\lambda \in \mathbb{R}$ and a function $\phi \in L^{\infty}(Y)\cap C^{\infty}(X)$  such that 
	 \[
	 \omega_2= \omega_1+\ddb \psi  \qquad \psi =-\lambda \log|s_{D}|^2_{\pi^*h_{FS}} + \phi
	 \]
	 After possibly swapping $\omega_1, \omega_2$, we may assume that $\lambda \leq 0$, so that $\psi$ is bounded from above. Since $\omega_1, \omega_2$ solve the same Monge-Amp\`ere equation we have
	 \[
	 1  = \left(\frac{\omega_2^2}{\omega_1^2}\right)^{\frac{1}{2}} \leq \frac{1}{2}{\rm Tr}_{\omega_1}\omega_2 =  1+\frac{1}{2}\Delta_{\omega_1}\psi
	 \]
	 and so $\psi$ is subharmonic.  Note that if $\lambda <0$, then we contradict the maximum principle and so we conclude $\lambda =0$.  In any case, since $\omega_1$ has volume growth
	 \[
	 {\rm Vol}_{\omega_1}(B_{R}) \sim R^{4/3}
	 \]
	 it follows from \cite[Theorem 3.5]{Karp} that $\psi$ is constant.
	 
	 \end{proof}
	 \begin{rk}
	 	 It is likely that the argument above can be generalized to the complement of other singular fibres in a rational elliptic surfaces.  In particular, this may give a different proof of Torelli theorem of gravitational instantons of type $ALG,ALG^*$ by Chen-Viaclovsky-Zhang \cite{CVZ2}. However, determining exactly the relevant decay rates, and whether these decay rates are compatible with the action of the group of local sections on the space of semi-flat metrics would likely require a case-by-case analysis similar to that carried out in Section~\ref{sec: semiflatmet}.
	 \end{rk}
	 	
	In \cite{TY}, Tian-Yau developed a robust technique for constructing complete Ricci-flat metrics on non-compact K\"ahler manifolds with trivial canonical bundle. The authors mention \cite[p. 579]{TY} that the uniqueness of such metrics is likely related to the automorphism group of the manifold.  The following proposition confirms this expectation in the present setting.

	 \begin{prop}\label{prop: reducedModuli}
	Let $Y$ be a rational elliptic surface and $D$ and $I_k$ singular fiber.  Suppose $\omega_1, \omega_2$ are two complete Calabi-Yau metrics on $X= Y\setminus D$ with the following properties
	\begin{itemize}
	\item[$(i)$] $\omega_i^2= \alpha^2 \Omega \wedge \overline{\Omega}$, for $i=1,2$, and
	\item[$(ii)$] $[\omega_1]_{dR} = [\omega_2]_{dR} \in H^{2}_{dR}(X,\mathbb{R})$.
	\item[$(iii)$] There are (possibly non-standard) semi-flat metrics $\omega_{sf, \sigma_i, b_{0,i}, \epsilon_i}$ such that
	\[
	|\omega_i-\alpha \omega_{sf, \sigma_i, b_{0,i}, \frac{\epsilon_i}{\alpha}}| \leq Cr_i^{-4/3}
	\]
	where $r_i$ is the distance from a fixed point with respect to $\omega_i$.
	\end{itemize}
	 Then there is a fiber preserving holomorphic map $\Phi \in {\rm Aut}_0(X,\mathbb{C})$ such that $\Phi^*\omega_2=\omega_1$.
	 \end{prop}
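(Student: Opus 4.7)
The plan is to find a fiber-preserving translation automorphism $\Phi_\tau \in \mathrm{Aut}_0(X,\mathbb{C})$ that aligns the asymptotic semi-flat metric of $\omega_2$ with that of $\omega_1$, and then invoke the uniqueness theorem (Theorem~\ref{thm: unique}). The key observation is that the de Rham equality $[\omega_1]_{dR} = [\omega_2]_{dR}$ controls the difference $[\omega_1]_{BC} - [\omega_2]_{BC}$ only up to an element of $H^0(\mathbb{C}, R^1\pi_*\mathcal{O}_X)$ by Lemma~\ref{lem: BCdescrip}, and precisely this ambiguity is absorbed by the automorphism group $\mathrm{Aut}_0(X,\mathbb{C})$ via Lemma~\ref{lem: globTrans} and Proposition~\ref{prop: BCKahlerCone}(iii).

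First I would apply Proposition~\ref{prop: BCKahlerCone}(ii) to conclude that $[\omega_1]_{BC} - [\omega_2]_{BC}$ lies in $H^{0}(\mathbb{C}, R^1\pi_*\mathcal{O}_X)$. Setting $\tau := (2/\epsilon)\bigl([\omega_1]_{BC} - [\omega_2]_{BC}\bigr)$ with $\epsilon = [\omega_2]_{dR}\cdot[F]$, Proposition~\ref{prop: BCKahlerCone}(iii) produces an automorphism $\Phi_\tau \in \mathrm{Aut}_0(X,\mathbb{C})$ for which $[\Phi_\tau^*\omega_2]_{BC} = [\omega_1]_{BC}$. Set $\widetilde{\omega}_2 := \Phi_\tau^*\omega_2$. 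Three of the four hypotheses of Theorem~\ref{thm: unique} then hold trivially: the de Rham class is preserved because $\Phi_\tau$ is homotopic to the identity, and the Monge--Amp\`ere equation $\widetilde{\omega}_2^2 = \alpha^2\,\Omega\wedge\overline{\Omega}$ is preserved because in local coordinates $(x,z)$ on $X_{mod}$ the map $\Phi_\tau$ has the form $T_h(x,z) = (x+h(z),z)$, which manifestly fixes the volume form $\kappa(z)\frac{dz\wedge dx}{z}$.

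The crux is to verify that $\widetilde{\omega}_2$ and $\omega_1$ are asymptotic to \emph{the same} semi-flat metric with decay $r^{-4/3}$. First, by a local version of Proposition~\ref{prop: ddb Lemma} applied on $X_{\Delta^*}$, the decay assumption $|\omega_i - \alpha\omega_{sf,i}| \le C r_i^{-4/3}$ forces $[\omega_i]_{BC}\big|_{X_{\Delta^*}} = \alpha [\omega_{sf,i}]_{BC}\big|_{X_{\Delta^*}}$ in $H^{1,1}_{BC}(X_{\Delta^*},\mathbb{R})$, so $\omega_{sf,i}$ is precisely the unique semi-flat representative of the local Bott-Chern class of $\omega_i$ guaranteed by Corollary~\ref{cor: locBCKahler}. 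Second, $\Phi_\tau^*\omega_{sf,2}$ is again a semi-flat metric on $X_{\Delta^*}$ (by Remark~\ref{rk: nonStTansbyMult} and the discussion of Section~\ref{subsec: nonSt}) with Bott-Chern class $[\Phi_\tau^*\omega_{sf,2}]_{BC}\big|_{X_{\Delta^*}} = [\omega_1]_{BC}\big|_{X_{\Delta^*}} / \alpha$ by construction of $\tau$. Applying the uniqueness clause of Corollary~\ref{cor: locBCKahler} gives $\Phi_\tau^*\omega_{sf,2} = \omega_{sf,1}$, so $\widetilde{\omega}_2$ is asymptotic to $\alpha\omega_{sf,1}$ with the required decay rate (the rate is preserved because $\Phi_\tau$ is a global biholomorphism that is asymptotically comparable to the identity on the base).

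Having verified the hypotheses, Theorem~\ref{thm: unique} yields $\widetilde{\omega}_2 = \omega_1$, i.e., $\Phi_\tau^*\omega_2 = \omega_1$, so setting $\Phi := \Phi_\tau$ completes the proof. The main obstacle is the third paragraph: one must carefully confirm that translation by $\tau$ both realizes the prescribed global Bott-Chern shift and simultaneously corresponds at infinity to precisely the local translation needed to carry $\omega_{sf,2}$ to $\omega_{sf,1}$. This compatibility is exactly the content of Corollary~\ref{cor: locBCKahler} combined with the explicit computation of the Bott-Chern action of translations in Lemma~\ref{lem: transSemiFlat} and Proposition~\ref{prop: BCKahlerCone}(iii).
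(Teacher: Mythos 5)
Your overall strategy — produce a global translation $\Phi_\tau\in\mathrm{Aut}_0(X,\mathbb{C})$, verify that $\Phi_\tau^*\omega_2$ and $\omega_1$ are asymptotic to the \emph{same} semi-flat metric, then invoke Theorem~\ref{thm: unique} — is the right one, and two of your three paragraphs are sound. The gap is in your third paragraph, at the step where you invoke a ``local version of Proposition~\ref{prop: ddb Lemma}'' to conclude from the decay hypothesis (iii) that $[\omega_i]_{BC}\big|_{X_{\Delta^*}}=\alpha[\omega_{sf,i}]_{BC}\big|_{X_{\Delta^*}}$. This step is false, and there is no such local version. The proof of Proposition~\ref{prop: ddb Lemma} relies essentially on the $\ddb$-lemma for currents on the \emph{compact} K\"ahler surface $Y$ (to pass from $[T]_{dR}=0$ to the existence of a bounded $\phi$); no analogous tool is available on $X_{\Delta^*}$, whose $H^{1,1}_{BC}$ has an infinite-dimensional kernel over $H^2_{dR}$. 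Concretely, take $h\equiv\sqrt{-1}c$ a nonzero imaginary constant: by Lemma~\ref{lem: transAsymp}(iii), $T_h^*\omega_{sf}$ and $\omega_{sf}$ decay toward one another at precisely the rate $r^{-4/3}$, yet by Lemma~\ref{lem: transSemiFlat} the Bott--Chern class of $T_h^*\omega_{sf}-\omega_{sf}$ on $X_{\Delta^*}$ is represented by $\frac{\epsilon h}{2}\in H^0(\Delta^*,R^1\pi_*\mathcal{O})/\mathbb{R}$, which is nonzero whenever $\mathrm{Im}(h)\neq0$. So the decay rate $r^{-4/3}$ simply does not single out a Bott--Chern class, and your conclusion $\Phi_\tau^*\omega_{sf,2}=\omega_{sf,1}$ via Corollary~\ref{cor: locBCKahler} is unjustified for \emph{your} choice of $\tau$.

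The paper sidesteps this by choosing $\tau$ differently: it first writes the given asymptotic models as $\omega_{sf,2}=T_h^*\omega_{sf,1}$ for a holomorphic $h$ on $\Delta^*$ (possibly with a pole at the puncture), then performs a Mittag--Leffler-type split $h=h^++h^-$ into a part $h^+$ holomorphic across $z=0$ and a principal part $h^-$, and defines $\tau$ as the global extension of $h^-$ across the finite part of $\mathbb{P}^1$. With this $\tau$, the equality $T_{-\tau}^*\omega_{sf,2}=T_{h^+}^*\omega_{sf,1}$ holds by construction, and Lemma~\ref{lem: transAsymp}(iii) (holomorphic $h^+$, $b=0$) delivers the required $r^{-4/3}$ decay directly --- no Bott--Chern comparison on $X_{\Delta^*}$ is needed. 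Your $\tau$ is built from the global Bott--Chern difference $[\omega_1]_{BC}-[\omega_2]_{BC}$; it is not a priori equal to the paper's $\tau$ (up to something holomorphic at $0$), and you provide no argument linking the global Bott--Chern difference to the local translation between the two prescribed semi-flat models. That link is exactly what your ``local $\ddb$-lemma'' was supposed to supply, and since it fails, the argument does not close.
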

	 \begin{proof}
	 From $(ii), (iii)$ it follows that
	 \[
	 [ \omega_{sf, \sigma_1, b_{0,1}, \frac{\epsilon_1}{\alpha}}]_{dR} =  [\omega_{sf, \sigma_2, b_{0,2}, \frac{\epsilon_2}{\alpha}}]_{dR} \in H^{2}(X_{\Delta^*}, \mathbb{R})
	 \]
	 and hence the parameters appearing in the semi-flat metrics satisfy $\epsilon_1= \epsilon_2=\epsilon$ and $b_{0,1} =b_{0,2}=b_0$.  Using $\sigma_1$ we may identify $X_{\Delta^*}$ with $X_{mod}$.  Then, setting $\omega_{sf,i} =  \omega_{sf, \sigma_i, b_{0}, \frac{\epsilon}{\alpha}}$ then we have
	 \[
	 \omega_{sf,2} = T_{h}^*\omega_{sf,1}
	 \]
	 for $h(z)[\sqrt{-1}W d\bar{x}] \in H^{0}(\Delta^*, R^1\pi_*\mathcal{O}_{X})$ where $h(z)$ is holomorphic on $\Delta^*$.  We claim that there is a section $\tau \in H^{0}(\mathbb{P}^1-\{\infty\}, R^1\pi_{*}\mathcal{O}_{X})$ such that 
	 \[
	  h(z)[\sqrt{-1}W d\bar{x}] -\tau|_{\Delta^*} = h^+(z) [\sqrt{-1}W d\bar{x}]
	 \]
	 where $h^+(z)$ is holomorphic in the whole disk $\Delta$. Assuming this claim for the moment, we have
	 \[
	 T_{-\tau}^*\omega_{sf,2} = T_{h^+}^*\omega_{sf,1}.
	 \]
	 Now, since $h^+$ is holomorphic on $\Delta$ it follows from Lemma~\ref{lem: transAsymp} that
	 \[
	 |T_{-\tau}^*\omega_{sf,2} - \omega_{sf,1}|_{\omega_{sf,1}} \leq Cr^{-4/3}
	 \]
and hence $T_{-\tau}^*\omega_2, \omega_1$ satisfy the assumptions of Theorem~\ref{thm: unique} and so we conclude that $\omega_1= T_{-\tau}^*\omega_2$, as desired.  It only remains to prove the claim.

First we claim that if $Y$ is the rational elliptic surface compactification $X$ induced by $\sigma_1$, then the section $[\sqrt{-1}Wd\bar{x}]$ extends to a trivialization of $R^1\pi_{*}\mathcal{O}_{Y}$ on $\Delta$.  Indeed, by a direct calculation using the coordinates in Section~\ref{sec: compAndCoord} one can check that $\frac{dx\wedge dz}{\pi^*dz}$ extends over the $I_k$ fiber as a non-vanishing section of  $K_{Y/\mathbb{P}^1}$.  Using that $\pi_*K_{Y/\mathbb{P}^1} =(R^1\pi_{*}\mathcal{O}_{Y})^{\vee}$,   one then observes that $[\sqrt{-1}Wd\bar{x}]$ is precisely dual to $\frac{dx\wedge dz}{\pi^*dz}$ by relative duality. 

Recall that $R^1\pi_{*}\mathcal{O}_{Y} \simeq \mathcal{O}_{\mathbb{P}^1}(-1)$ \cite[Chapter I, Lemma 3.18]{FM}.  Let $[z_1:z_2]$ be homogeneous coordinates on $\mathbb{P}^1$ and identify $\infty = [0:1]$.  Let $U_i= \{z_i\ne 0\}\subset \mathbb{P}^1$.  Let $\hat{z} = \frac{z_2}{z_1}$ be a holomorphic coordinate on $U_1$, and $z= \hat{z}^{-1}$ be a holomorphic coordinate on $U_2$.  We fix the usual trivializations of $\mathcal{O}_{\mathbb{P}^1}(-1)$
\[
\begin{aligned}
\eta_1= (1, \frac{z_2}{z_1}) \quad \text{ on } U_1, \qquad \eta_2= (\frac{z_1}{z_2}, 1) \quad \text{ on } U_2.
\end{aligned}
\]
Write $h(z)[\sqrt{-1} W d\bar{x}] = \tilde{h}(z) \eta_2$ where $\tilde{h}(z)$ is holomorphic on a punctured neighborhood of $\infty \in U_2$.  Decompose $\tilde{h} = \tilde{h}^+ + \tilde{h}^{-}$ where $\tilde{h}^{-}$ is the principal part of the Laurent series expansion of $\tilde{h}(z)$ and $\tilde{h}^+$ extends as a holomorphic function over $\infty$.  We seek a holomorphic function $f=f(\hat{z})$ defined on all of $\mathbb{C}$ such that
\[
\hat{z}f(\hat{z}) = \tilde{h}^{-}(\hat{z}).
\]
Since $\hat{z} = \frac{1}{z}$ and $\tilde{h}^{-}$ is the principle part of the Laurent series expansion of $\tilde{h}(z)$, this formula defines $f(\hat{z})$ as an entire holomorphic function.  Thus $\tau = f\eta_1$ is the desired section in $H^{0}( \mathbb{P}^1-\{\infty\}, R^1\pi_{*}\mathcal{O}_{X})$.  Now, since $[\sqrt{-1}Wd\bar{x}]$ defines a holomorphic section of $R^1\pi_{*}\mathcal{O}_{Y}$ in a neighborhood of $\infty$ we have
\[
\tilde{h}(z) \eta_2- \tau = \tilde{h}^+(z)\eta_{2} = \tilde{h}^+(z)e^{\rho(z)} [\sqrt{-1}Wd\bar{x}]
\]
where $\rho(z)$ is holomorphic in a neighborhood of $\infty \in \mathbb{P}^1$. The claim is proved.

\end{proof}

	 \subsection{Applications}
	 
	 Our main important application of the uniqueness result is to define the K\"ahler moduli for the non-compact Calabi-Yau manifold $X=Y\setminus D$ where $Y$ is a rational ellliptic surface and $D$ is an $I_k$ singular fiber.  
	 
	 \begin{defn}\label{defn: CYmoduli}
	 Define $\widetilde{\mathcal{K}_{CY}}$ to be the set of complete K\"ahler metrics $\omega$ on $X$ satisfying:
	 \begin{itemize}
	 \item[$(i)$] $\omega^2= \Omega \wedge \overline{\Omega}$,
	 \item[$(ii)$] there is a constant $C>0$ such that $\omega$ satisfies
	 \[
	 |\omega- \omega_{sf,\sigma, b_0, \epsilon}|_{\omega} \leq Cr^{-4/3}
	 \]
	 where $\omega_{sf,\sigma, b_0, \epsilon}$ is the unique semi-flat metric in $[\omega]_{BC} \in H^{1,1}(X_{\Delta^*}, \mathbb{R})$, and $r$ is the distance from a fixed point with respect to $\omega$.
	 \end{itemize}
	 We define
	 \[
	 \mathcal{K}_{CY} := \widetilde{\mathcal{K}_{CY}}/{\rm Aut}_0(X,\mathbb{C})
	 \]
	 to be the moduli of asymptotically semi-flat, complete Calabi-Yau metrics.
	 \end{defn}

	 \begin{cor}
	 $\mathcal{K}_{CY} \sim \mathcal{K}_{dR,X}$.  In particular, $\mathcal{K}_{CY}$ has dimension $11-k$.
	 \end{cor}
	 \begin{proof}
	 For any K\"ahler class $[\omega] \in \mathcal{K}_{dR,X}$, Theorem~\ref{thm: nonStHein} (or more precisely Corollary~\ref{cor: HeinUnique}) implies that there is a Calabi-Yau metric $\omega_{CY} \in \widetilde{\mathcal{K}}_{CY}$ with $[\omega_{CY}] = [\omega]$.  Furthermore, by Proposition~\ref{prop: reducedModuli} and Hein's estimates, as recalled in Corollary~\ref{cor: HeinUnique}, this metric is unique up to the action of ${\rm Aut}_0(X,\mathbb{C})$.
	 \end{proof}
	 
	 \begin{rk}\label{rk: RicModuli}
	 Rather than defining the moduli space of Calabi-Yau metrics asymptotic to a semi-flat metric, one could instead study the space of Calabi-Yau metrics asymptotic to a rescaled semi-flat metric $\alpha \omega_{sf, \sigma, b_0, \frac{\epsilon}{\alpha}}$. In this case the uniqueness result Proposition~\ref{prop: reducedModuli} implies that, after modding out by the action of ${\rm Aut}_{0}(X,\mathbb{C})$, we get a moduli space, denoted $\mathcal{M}_{Ric} \sim \mathcal{K}_{dR, X} \times \mathbb{R}_{>0}$ where $\mathcal{K}_{dR, X}$ is the de Rham K\"ahler cone of $X$, and $\mathbb{R}_{>0}$ is identified with the parameter $\alpha>0$.  This yields a $12-k$ dimensional moduli space.  Furthermore, $\mathcal{M}_{Ric}$ has a natural projection onto $\mathcal{K}_{CY}$ with $\mathbb{R}_{>0}$ fibers.
	 \end{rk}

	 Let us describe a second application of the uniqueness result. Let $\check{Y}$ be a del Pezzo surface of degree $0<k\leq 9$ and $\check{D}=s^{-1}(0)$ be a smooth anti-canonical divisor with $s\in H^0(\check{Y},-K_{\check{Y}})$, and let $\check{\Omega} = \frac{1}{s}$ be a holomorphic volume form on $\check{Y}\setminus \check{D}$.  Let $h$ be a positively curved hermitian metric on $-K_{\check{Y}}$ such that the restriction to $\check{D}$ agrees with the (unique up to scale) hermitian metric on $-K_{Y}|_{D}$ whose curvature $\check{\omega}_{\check{D}}$ is the flat metric on $\check{D}$ in the K\"ahler class $c_{1}(\check{Y})|_{\check{D}}$. Tian-Yau \cite{TY} proved that there is a complete, exact K\"ahler metric $\check{\omega}_{TY}$ on $\check{Y}\setminus \check{D}$ asymptotic to the model 
	 \[
	 \omega_{mod} =  \frac{\sqrt{-1}}{2\pi}\partial \bar{\partial}\big(-\log{ |s|_{h_{\check{D}}}^2}   \big)^{\frac{3}{2}}
	 \]
	 solving the Monge-Amp\`ere equation $2\check{\omega}_{TY}^2 = \check{\Omega} \wedge \overline{\check{\Omega}}$. This model geometry is often referred to as the Calabi model. In the late 1980s, Yau \cite{YauComm} asked for a characterization of the hyperK\"ahler rotation of the Tian-Yau metric, both in terms of the symplectic structure and a conjectural compactification.  In the authors' previous work \cite{Collins-Jacob-Lin} we proved (see Theorem~\ref{513}) that any choice of simple closed geodesic $\gamma\subset D$ gives rise to a special Lagrangian fibration
	 \[
	 \check{\pi}_{\gamma}:\check{Y}\setminus \check{D} \rightarrow \mathbb{R}^2.
	 \]
	Normalize the holomorphic volume form so that ${{\rm Im}(\check{\Omega})_{\check{\pi}_{\gamma}^{-1}(b)}=0}$ for $b\in \mathbb{R}^2$, the base of the special Lagrangian fibration, the authors proved
	 \begin{thm}[Theorem 1.5, \cite{Collins-Jacob-Lin}] \label{HKthm} 
	 With the notation above. Let $X$ be the hyperK\"ahler rotation of $\check{X}$ with K\"ahler form and holomorphic volume form given by 
	  \begin{align} \label{HKrel}
   	 \omega&={\rm Re}\check{\Omega}, \notag \\
   	  \Omega&=\check{\omega}-\sqrt{-1}{\rm Im}\check{\Omega}.
   \end{align} 
   Then $X$ can be compactified to a rational elliptic surface $Y$ by adding an $I_k$ fiber $D$ at infinity.
\end{thm}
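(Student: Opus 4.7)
The plan is to build the rational elliptic surface structure directly out of the special Lagrangian fibration of Theorem~\ref{513} using the hyperK\"ahler rotation formula~\eqref{HKrel}. The key observation is that the defining conditions $\check{\omega}_{TY}|_{\check{\pi}_\gamma^{-1}(b)} = 0$ and ${\rm Im}\,\check{\Omega}|_{\check{\pi}_\gamma^{-1}(b)}=0$ for a special Lagrangian torus fiber translate directly, under the rotation, to $\Omega|_{\check{\pi}_\gamma^{-1}(b)} = \check{\omega}_{TY}|_{\check{\pi}_\gamma^{-1}(b)} - \sqrt{-1}\,{\rm Im}\,\check{\Omega}|_{\check{\pi}_\gamma^{-1}(b)} = 0$. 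Hence each $\check{\pi}_\gamma^{-1}(b)$ is a complex submanifold, indeed a smooth elliptic curve, with respect to the rotated complex structure on $X$. Meanwhile $\omega = {\rm Re}\,\check{\Omega}$ together with ${\rm Im}\,\check{\Omega}$ equips the base $\mathbb{R}^2$ with a complex structure making $\check{\pi}_\gamma : X \to \mathbb{C}$ a proper holomorphic elliptic fibration.

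Next I would compactify this fibration near infinity. The Tian-Yau metric is asymptotic to the Calabi model, an $S^1$-bundle over a punctured neighborhood of the elliptic curve $\check{D}$, so it suffices to analyze the hyperK\"ahler rotation of the Calabi model itself. This is the explicit computation carried out in Appendix~\ref{app: hkRot}: one shows that the rotated Calabi model is biholomorphic to a neighborhood of infinity in the model fibration $X_{mod}$ of Section~\ref{sec: semiflatmet}, whose topological monodromy around the puncture is conjugate to $\begin{pmatrix} 1 & k \\ 0 & 1 \end{pmatrix}$. Consequently one may glue in an $I_k$ singular fiber $D$ using the explicit local coordinates of Section~\ref{sec: compAndCoord}, obtaining a smooth compact complex surface $Y = X \cup D$ with a proper holomorphic elliptic fibration $Y \to \mathbb{P}^1$.

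It remains to verify that $Y$ is a rational elliptic surface. Since $\Omega$ extends to $Y$ as a meromorphic $(2,0)$ form with a simple pole along $D$, we have $-K_Y = D \in |{-K_Y}|$, so $K_Y^2 = 0$ and $Y$ admits an effective anti-canonical divisor. Combined with the existence of an elliptic fibration (with a section coming from the asymptotic Calabi model) and the simple-connectedness at infinity inherited from $\check{Y}\setminus \check{D}$, the Enriques--Kodaira classification of compact complex surfaces forces $Y$ to be a relatively minimal rational elliptic surface, as desired.

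The main obstacle is the asymptotic identification on which everything pivots: matching the hyperK\"ahler rotation of the Calabi model with the model fibration $X_{mod}$ and, crucially, showing that the integer appearing in the monodromy is exactly $k = \deg(\check{Y})$. This requires tracking the homology class of the $S^1$-bundle over $\gamma$ through the rotation and identifying the self-intersection $\check{D}^2 = k$ with the monodromy shift. Once this explicit asymptotic identification is in hand (the content of Appendix~\ref{app: hkRot}), the compactification and the classification of $Y$ follow essentially formally.
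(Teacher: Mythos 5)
This theorem is imported by citation from \cite{Collins-Jacob-Lin}; the present paper does not reprove it, so there is no ``paper's own proof'' to match against. That said, your sketch does reconstruct the overall architecture that the authors indicate elsewhere in the paper (cf.\ the remarks referencing \cite[Prop.~5.24, Lem.~6.1, Thm.~6.4]{Collins-Jacob-Lin}): hyperK\"ahler rotation turns the special Lagrangian fibration into a holomorphic elliptic fibration, the asymptotic geometry pins down the monodromy at infinity, one compactifies by an $I_k$ fiber, and the Enriques--Kodaira classification identifies the resulting surface as rational. So the strategy is right.

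The step you should be more careful with is the compactification. You write that because the Tian--Yau metric is asymptotic to the Calabi model, ``it suffices to analyze the hyperK\"ahler rotation of the Calabi model itself'' and then invoke a biholomorphism with a neighborhood of infinity in $X_{mod}$. Asymptotic decay of the metric (and hence of the rotated complex structure) toward the model does \emph{not} by itself give a holomorphic identification of the end of $(X,\check{J})$ with $X_{mod}$, which is what the gluing requires. The argument in the cited work instead proceeds at the level of topology and the holomorphic fibration: one first shows (via the mean-curvature-flow comparison with the model special Lagrangian fibration) that the \emph{topological} monodromy of the rotated elliptic fibration around $\infty$ is conjugate to $\left(\begin{smallmatrix}1 & k\\ 0 & 1\end{smallmatrix}\right)$; then, using a local holomorphic section and Kodaira's theory of relatively minimal elliptic fibrations, the genus-one fibration over a punctured disk with this monodromy and no singular fibers is isomorphic to $X_{mod}$ over $\Delta^*$, after which the $I_k$ fiber is glued in via the explicit coordinates of Section~\ref{sec: compAndCoord}. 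You do flag the monodromy identification as the crux, which is the right instinct, but the inference should run through the topological monodromy and Kodaira's classification rather than through an ``asymptotic biholomorphism.'' One further small anachronism: Appendix~\ref{app: hkRot} is a new computation in this paper (used in Section~\ref{sec: moduli}), not the ingredient in the original proof you are reconstructing, though it does corroborate the same identification.
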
 
	
	Recall that the Tian-Yau metric on $\check{Y}\setminus\check{D}$ is asymptotic (with exponential decay) to the {\em Calabi model} $(\mathcal{C}, \Omega_{\mathcal{C}}, \omega_{\mathcal{C}})$, see \cite[Section 3]{HSVZ}, or Appendix~\ref{app: hkRot} for more discussion.  In Appendix~\ref{app: hkRot} we show that the hyperK\"ahler rotation in Theorem~\ref{HKthm}, applied to the Calabi model, produces exactly a rescaled semi-flat metric
	\[
	{\rm Re}\check{\Omega}_{\mathcal{C}} = \alpha \omega_{sf, \sigma, b_0, \frac{\epsilon}{\alpha}}.
	\]
	Furthermore, the explicit dependence of $\alpha, \epsilon$ on the modulus of the elliptic curve $D$ is computed.  Generic choices of the modulus $\tau$ of the elliptic curve $D$ yield comparatively small values of $\alpha, \epsilon$.  However, if $D_t$ is a family of tori approaching a nodal curve and the geodesic $\gamma_t$ is chosen to be the vanishing cycle, then one can easily check that the calculations in Appendix~\ref{app: hkRot} yield $\alpha \rightarrow +\infty$.  Therefore, we have
	 
	 \begin{prop}\label{prop: hkCal}
	 In the above setting, the symplectic form of the hyperK\"ahler rotated Calabi-Yau structure converges exponentially fast to a rescaled semi-flat metric.  Furthermore, this symplectic form is asymptotic to a standard semi-flat symplectic form, in the sense of Definition~\ref{defn: rel sf metric}, if and only if after the action of $SL(2,\mathbb{Z})$ we have
	 \[
	 D = \mathbb{C}/(\mathbb{Z}+\tau \mathbb{Z})
	 \]
	 with ${\rm Re} (\tau) =0$,  and $[\gamma]$ is the cycle generated by $0, \tau$.  In particular, the hyperK\"ahler rotation of the Tian-Yau metric is equal to a generalized Hein metric produced by Theorem~\ref{thm: nonStHein}.
	 \end{prop}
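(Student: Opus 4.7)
The plan is to reduce everything to an explicit computation on the Calabi model $(\mathcal{C},\omega_{\mathcal{C}},\Omega_{\mathcal{C}})$ and then transfer the conclusion back to $\check{Y}\setminus\check{D}$ via the known exponential decay of the Tian--Yau metric towards the Calabi model. Specifically, by the work of Hein--Sun--Viaclovsky--Zhang and the discussion in \cite{Collins-Jacob-Lin}, there is a diffeomorphism $\Phi$ from a neighborhood of infinity in $\check{Y}\setminus\check{D}$ to a neighborhood of infinity in $\mathcal{C}$ such that $\Phi^{*}(\omega_{\mathcal{C}},\Omega_{\mathcal{C}})-(\check{\omega}_{TY},\check{\Omega})$ decays like $e^{-\delta r^{2/3}}$ together with all covariant derivatives. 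Since the hyperK\"ahler rotation formulas \eqref{HKrel} are algebraic in $(\check{\omega},\check{\Omega})$, this decay is preserved after rotation. Therefore it suffices to prove: the hyperK\"ahler rotation of the Calabi model is \emph{exactly} a rescaled (possibly non-standard) semi-flat K\"ahler metric on $X_{mod}$, with parameters $\alpha$, $\epsilon$, $b_0$ to be computed.

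This is the content of Appendix~\ref{app: hkRot}. The key point is that after choosing a simple closed geodesic $\gamma \subset \check{D}$, the Calabi model admits a natural holomorphic coordinate system in which the $(2,0)$-form $\Omega:=\check{\omega}_{\mathcal{C}}-\sqrt{-1}\,\mathrm{Im}\,\check{\Omega}_{\mathcal{C}}$ has a simple pole along an $I_k$-fiber added at infinity; comparing the resulting expression for $\omega:={\rm Re}\,\check{\Omega}_{\mathcal{C}}$ in the holomorphic coordinates $(x,z)$ of Section~\ref{sec: semiflatmet} against~\eqref{eq: non-St SF2} yields an identification $\omega=\alpha\,\omega_{sf,\sigma,b_0,\epsilon/\alpha}$ with $\alpha,\epsilon,b_0$ explicit functions of the modulus $\tau$ of $\check{D}$ and the class $[\gamma]\in H_{1}(\check{D},\mathbb{Z})$. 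To decide when $\omega$ is \emph{standard}, I will use Lemma~\ref{lem: cohomProps}: standardness is equivalent to $[\omega]_{dR}\cdot[C]\in\mathbb{Z}\cdot[\omega]_{dR}\cdot[F]$ for a bad $2$-cycle $C$, i.e. to $2b_0/k\in\mathbb{Z}$. Evaluating this period directly in the Calabi-model coordinates, the condition collapses to the statement that, after an $SL(2,\mathbb{Z})$ change of basis sending $[\gamma]$ to the cycle generated by $0$ and $\tau$ in $D=\mathbb{C}/(\mathbb{Z}\oplus \tau\mathbb{Z})$, one has $\mathrm{Re}(\tau)=0$; geometrically, $\gamma$ must be $g_{\check D}$-orthogonal to the invariant direction of the monodromy around $\check{D}$. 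The divergence $\alpha\to\infty$ along the nodal degeneration with $\gamma$ the vanishing cycle is then immediate from the explicit formulas (as $\gamma$ shrinks, the transverse period grows).

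With the asymptotic identification in hand, the final claim follows from the uniqueness result of this section. The rotated Tian--Yau metric $\omega$ is a complete K\"ahler metric on $X=Y\setminus D$ satisfying $\omega^{2}=\alpha^{2}\Omega_{Y}\wedge\overline{\Omega_{Y}}$ (where $\Omega_{Y}$ is the meromorphic $(2,0)$-form on $Y$ with simple pole along $D$), and by the previous paragraph
\[
|\omega-\alpha\,\omega_{sf,\sigma,b_0,\epsilon/\alpha}|_{\alpha\omega_{sf,\sigma,b_0,\epsilon/\alpha}}\leq C e^{-\delta r^{2/3}}.
\]
If a Hein-type metric $\omega_{CY}$ from Theorem~\ref{thm: nonStHein} exists in the de Rham class $[\omega]_{dR}$, then by part~$(vi)$ of that theorem it enjoys the same exponential asymptotic, and in particular satisfies hypothesis~$(iii)$ of Theorem~\ref{thm: unique}. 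Applying Theorem~\ref{thm: unique} (or rather its refinement Proposition~\ref{prop: reducedModuli}, modulo the ${\rm Aut}_0(X,\mathbb{C})$ action already built into the freedom of choosing $\sigma$) gives $\omega=\omega_{CY}$. The main obstacle in executing this plan is purely computational: carrying out the hyperK\"ahler rotation of the Calabi model in coordinates adapted to $\gamma$ and correctly identifying the constant $b_0$ in terms of $\tau$ and $[\gamma]$. All the remaining ingredients---exponential decay of Tian--Yau, Lemma~\ref{lem: cohomProps}, and Theorem~\ref{thm: unique}---are already in place.
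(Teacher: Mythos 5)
Your sketch takes essentially the same route as the paper's (implicit) proof: Appendix~\ref{app: hkRot} computes the hyperK\"ahler rotation of the Calabi model explicitly, producing a rescaled semi-flat metric with $\tfrac{2b_0}{k}=-{\rm Re}(\tau)/|\tau|^2$ so that standardness reduces (via Lemma~\ref{lem: cohomProps}) to ${\rm Re}(\tau)=0$; the exponential decay of the Tian--Yau metric to the Calabi model \cite{HSVZ} transfers the identification to $\check X$, and the uniqueness results of Section~\ref{sec: moduli} give the final equality with a Hein metric whenever one exists. The only slip is the geometric aside attributing the ${\rm Re}(\tau)=0$ condition to ``the invariant direction of the monodromy around $\check D$''—since $\check D$ is a smooth divisor in $\check Y$ there is no such monodromy on the del Pezzo side; the relevant monodromy is around the rotated $I_k$ fiber—but this interpretation is not used in the argument.
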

	 
	 \begin{rk}\label{rk: HSVZII}
	  In \cite[Remark 2.4]{HSVZ}, Hein-Sun-Viaclovsky-Zhang observed that the Calabi model is related to the semi-flat metric on the complement of an $I_k$ singular fiber by hyperK\"ahler rotation.  They commented that this observation could lead to an identification of the global hyperK\"ahler rotation of the Tian-Yau space with the asymptotically semi-flat metrics constructed by Hein \cite{Hein}.   Following the appearance of this paper, Hein-Sun-Viaclovsky-Zhang \cite{HSVZII} proved a general result concerning compactification of ``asymptotically Calabi" manifolds, which yields, in particular, a different proof of Theorem~\ref{HKthm}.
	  \end{rk}

\section{Mirror Symmetry and Applications} \label{sec: MS}
In this section we combine our results from Sections~\ref{sec: semiflatmet}-~\ref{sec: moduli} to prove a version of SYZ mirror symmetry.

 \subsection{Collapsing of Del Pezzo Surfaces near the Large Complex Structure Limit}
 The authors are not aware of a notion of the large complex structure limit for pairs $(\check{Y},\check{D})$. Here we will propose a notion of large complex structure limit from the viewpoint of Strominger-Yau-Zaslow conjecture
 
 \begin{defn}\label{def: LCSL}
 A large complex structure limit of a Calabi-Yau pair $(\check{Y},\check{D})$ with $\check{Y}, \check{D}$ smooth is a family of pairs $\pi: (\check{\mathcal{Y}}, \check{\mathcal{D}}) \rightarrow \Delta$ such that, for each $t\in \Delta$, $\check{Y}_{t}:= \pi^{-1}(t)$ is smooth, $\check{Y}_{1} \cong \check{Y}$, $\check{D}_t:= \pi^{-1}(t)$ is smooth for $t\in \Delta^*$ and $\pi:\check{\mathcal{D}}\rightarrow \Delta$ is a large complex structure limit of $\check{D}$.
 \end{defn}
 
In our setting of a pair of a del Pezzo surface with a smooth anti-canonical divisor, $(\check{Y}_t,\check{D}_t)$ goes to a large complex structure limit as $\check{D}_t$ converges to a nodal curve. Kontsevich-Soibelman proposed that the Calabi-Yau manifolds collapsed to affine manifolds with singularities at the large complex structure limit \cite{KS}. This proposal has been confirmed in the case of K3 surfaces and hyperK\"ahler manifolds with abelian fibrations \cite{GW, CVZ, TZ} and a weak formulation is proved in the case of Fermat hypersurfaces \cite{Li19}. 
 
 Let $(\check{Y}_t,\check{D}_t)$ be a flat family of pairs of smooth del Pezzo surfaces $\check{Y}_t$ and anti-canonical divisors $\check{D}_t\in |-K_{\check{Y}_t}|$ over a disc $\Delta \subseteq \mathbb{C}$ such that $\check{D}_t$ are smooth for $t\neq 0\in \Delta$. Denote $\check{X}_t=\check{Y}_t\setminus \check{D}_t$. Let $\check{\omega}_t$ be the corresponding Tian-Yau metric on $\check{X}_t$, for $t\neq 0$. Assume that $\check{D}_0$ is a nodal curve. Let $\alpha,\beta\in H_1(\check{D}_t,\mathbb{Z})$ be a basis and $\alpha$ be the vanishing cycle. Let $\tilde{\alpha}, \tilde{\beta}\in H_2(\check{X}_t,\mathbb{Z})$ denote the homology class of unit $S^1$-bundle (in the normal bundle of $D_t$) over $\alpha,\beta$. Then there exists a unique holomorphic volume form $\Omega_t$ on $\check{X}_t$ such that 
 \[
 2\check{\omega}_t^2=\check{\Omega}_t\wedge \bar{\check{\Omega}}_t, \qquad \int_{\tilde{\alpha}}\check{\Omega}_t\in \mathbb{R}_+.
 \]
From Theorem \ref{513} there exists a special Lagrangian fibration on $\check{X}_t$ with respect to $(\check{\omega}_t,\check{\Omega}_t)$ with fiber class $\tilde{\alpha}\in H_2(\check{X}_t,\mathbb{Z})$. We will explain the behavior of the special Lagrangian fibration as $t\rightarrow 0$. 
 
 Let $l_t(\alpha),l_t(\beta)$ denote length of the geodesics in $D_t$ with respect to the flat metric associated to $c_1(-K_{Y_t})|_{D_t}$. Then $l_t(\alpha)=O((\log{|t|})^{-1/2})\rightarrow 0$ and $l_t(\beta)=O((\log{|t|})^{1/2})\rightarrow \infty$ as $t\rightarrow 0$. One has 
 \begin{align*}
 |\int_{\tilde{\alpha}}\check{\Omega}_t|\rightarrow 0,\quad
 |\int_{\tilde{\beta}}\check{\Omega}_t|\rightarrow \infty,
 \end{align*} 
 as $t\rightarrow 0$ from \cite[Lemma 4.6]{Collins-Jacob-Lin}. 
 We are interested in describing how the corresponding Riemannian metrics $\check{g}_t$ degenerate. Since the Tian-Yau metric is asymptotic to the Calabi ansatz, the torus fibers homologous to $\tilde{\alpha}$ are volume collapsing from equations \cite[Equations (4.1),(4.2)]{Collins-Jacob-Lin}.  To sum up, we have      
 \begin{lem}\label{lem: collapsing}
 	Consider the log Calabi-Yau surface $(\check{X}_t, \check{g}_t)$ with special Lagrangian fibration with fibers homologous to $\tilde{\alpha}$.  Then, as $t\rightarrow 0$, the volume of the special Lagrangian fibers collapses to zero. 
 \end{lem}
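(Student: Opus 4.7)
The plan is to observe that the volume of a special Lagrangian fiber in class $\tilde{\alpha}$ is, by the calibration formula, equal to the period of $\check{\Omega}_t$ along the vanishing cycle, a quantity already estimated in the discussion preceding the lemma.

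First I would invoke the calibration identity for special Lagrangians in a Calabi-Yau surface. Under the Monge-Amp\`ere normalization $2\check{\omega}_t^2 = \check{\Omega}_t \wedge \overline{\check{\Omega}_t}$ imposed in the setup, the form ${\rm Re}\,\check{\Omega}_t$ is a calibration, and any Lagrangian $L$ of phase zero satisfies ${\rm Re}\,\check{\Omega}_t|_L = d{\rm vol}_{\check{g}_t|_L}$ and ${\rm Im}\,\check{\Omega}_t|_L \equiv 0$. The phase normalization $\int_{\tilde{\alpha}}\check{\Omega}_t \in \mathbb{R}_{+}$, combined with the fact that the fibration $\check{\pi}_{\gamma}$ of Theorem \ref{513} (applied to $\gamma$ a representative of the vanishing cycle $\alpha$) produces special Lagrangians in class $\tilde{\alpha}$, pins the phase of these fibers to zero.

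Then I would use closedness of $\check{\Omega}_t$ and $[L] = \tilde{\alpha}$ together with Stokes' theorem to conclude
\[
{\rm Vol}_{\check{g}_t}(L) \;=\; \int_L {\rm Re}\,\check{\Omega}_t \;=\; \int_L \check{\Omega}_t \;=\; \int_{\tilde{\alpha}} \check{\Omega}_t,
\]
reducing the fiber volume to the vanishing-cycle period. The asymptotic $|\int_{\tilde{\alpha}} \check{\Omega}_t| \to 0$ as $t \to 0$, stated in the paragraph before the lemma and originating in \cite[Lemma 4.6]{Collins-Jacob-Lin}, then yields ${\rm Vol}_{\check{g}_t}(L) \to 0$.

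The main (and essentially only) ingredient is the calibration formula, so there is no real obstacle. As a sanity check one could instead read the collapse off the Calabi-ansatz asymptotics of the Tian-Yau metric recalled in the preceding discussion, noting that the fibers in class $\tilde{\alpha}$ lie deeper in the Calabi-model region where the $\tilde{\alpha}$-cycles shrink as $t \to 0$; the calibration route is simply cleaner and completely homological.
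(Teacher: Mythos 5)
Your argument is correct, and it is a cleaner route to the same conclusion the paper intends. The paper's justification is to cite the explicit Calabi-ansatz asymptotics of the Tian--Yau metric (via \cite[Eqs.\ (4.1)--(4.2)]{Collins-Jacob-Lin}) and read off that the fibers in class $\tilde{\alpha}$ shrink, having already recorded that $|\int_{\tilde{\alpha}}\check{\Omega}_t|\to 0$ from \cite[Lemma 4.6]{Collins-Jacob-Lin}. You instead observe that the calibration identity converts the fiber volume directly into the period $\int_{\tilde{\alpha}}\check{\Omega}_t$, which is purely homological and requires no knowledge of the metric's asymptotic model. This is valid: the Monge--Amp\`ere normalization $2\check{\omega}_t^2=\check{\Omega}_t\wedge\overline{\check{\Omega}}_t$ is exactly the Harvey--Lawson calibration normalization in complex dimension two, the phase normalization $\int_{\tilde{\alpha}}\check{\Omega}_t\in\mathbb{R}_{+}$ (together with the choice ${\rm Im}\,\check\Omega|_{\text{fiber}}=0$ from Theorem~\ref{513}) pins the fiber phase to zero, and closedness of $\check{\Omega}_t$ makes the period depend only on $[\,L\,]=\tilde{\alpha}$. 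Both arguments bottom out at the same input, namely $|\int_{\tilde{\alpha}}\check{\Omega}_t|\to 0$; yours makes the logical dependence transparent, while the paper's is a shortcut appealing to the explicit model geometry, which also yields quantitative rates if one wants them. Your closing ``sanity check'' paragraph in fact describes the paper's own route.
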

Lemma \ref{lem: collapsing} and the remark below justify Definition~\ref{def: LCSL} from the SYZ perspective. 
 \begin{rk}
 	Given sequence of triple $(\check{Y}_i,\check{D}_i,p_i )$ such that $(\check{Y},\check{D})$ converging to the large complex structure limit in Definition \ref{def: LCSL},  the point $p_i$ in the ``finite region" of $\check{X}_i=\check{Y}_i\setminus \check{D}_i$. Rescale the Tian-Yau metric on $\check{X}_i$ such that distances between the singular fibres of the special Lagrangian fibrations corresponding to $\tilde{\alpha}$ are bounded independent of $i$. Then the special Lagrangian fibrations collapse with respect to the pointed Gromov-Hausdorff topology \cite{LT}. 
 \end{rk}

 \subsection{SYZ Mirror symmetry between Del Pezzo surfaces and rational elliptic surfaces}
 It is well-known that the del Pezzo surfaces and the rational elliptic surfaces are mirror pairs. In this section, we will focus on SYZ mirror symmetry between del Pezzo surfaces and rational elliptic surfaces. Recall from Conjecture~\ref{conj: introSYZ} that the Strominger-Yau-Zaslow conjecture predicts the existence of a special Lagrangian fibration on a Calabi-Yau $\check{X}$ and a $T$-dual special Lagrangian fibration giving the mirror Calabi-Yau $X$. The special Lagrangian fibrations are dual in the sense that they interchange the induced complex and symplectic affine structures on the base of the SYZ fibrations, see \cite[Conjecture 6.6]{G2}. In this section we will define a mirror map from the complex moduli of del Pezzo pairs to the K\"ahler moduli of a rational elliptic surface with an $I_k$ singular fiber, demonstrating that the special Lagrangian fibrations constructed by the authors in \cite{Collins-Jacob-Lin} (see Theorem~\ref{513}) and Theorem \ref{154} are $T$-dual. 
 
Let us first recall some mirror symmetry of log Calabi-Yau surfaces.
 The mirror of a del Pezzo surface $\check{Y}$ of degree $k$ relative to an anti-canonical divsior with at worst nodal singularities is a Landau-Ginzburg superpotential $W:M\rightarrow \mathbb{C}$, where $M$ is a complex surface and $W$ is a holomorphic function. The fibers are punctured elliptic curves which can be partially compactified to obtain a rational elliptic surface with an $I_k$ fiber at infinity. Auroux-Kartzarkov-Orlov \cite{AKO} proved that the Fukaya-Seidel category of $W$ is equivalent to the derived category of coherent sheaves on $\check{Y}$. Notice that the Fukaya-Seidel category doesn't depend on the (almost) complex structure on $M$.  Hacking-Keating \cite{HK} recently generalized the work of \cite{AKO} to obtain homological mirror symmetry of log Calabi-Yau surfaces $(Y,D)$ where $D$ is a singular nodal curve in $|-K_{Y}|$. They showed that the mirror of $(Y,D)$ is a Calabi-Yau surface $M$ with a mirror superpotential $W:M\rightarrow \mathbb{C}$, which encodes the counting of holomorphic discs intersecting $D$ and with boundaries on the putative SYZ fibers. Hacking-Keating further found that there exists a distinguished complex structure $(Y_e,D_e)$ in the deformation family of $(Y,D)$ such that the derived category of $Y_e$ is equivalent to the Fukaya-Seidel category of $W$ with an exact symplectic form.

Now we review the Torelli theorem of log Calabi-Yau surfaces of Gross-Hacking-Keel \cite{GHK1} (see also \cite{Fr}). Let $(Y,D)$ be a log Calabi-Yau pair. The restriction of Picard group $\mbox{Pic}(Y)\rightarrow \mbox{Pic}(D)\cong \mathbb{C}^*$ induces the marking 
    \begin{align*}
       \alpha_{(Y,D)}: \Lambda(Y,D)\rightarrow \mathbb{C}^*, 
    \end{align*} 
    where $\Lambda(Y,D)$ denotes the subgroup of $\mbox{Pic}(Y)$ perpendicular to each component of $D$.
 The Torelli theorem for log Calabi-Yau surfaces \cite{GHK1} says that a deformation of $(Y,D)$ is determined by its marking. We will use the following weak Torelli theorem; 
 \begin{thm}[Theorem 1.8, \cite{GHK1}]\label{thm: Torelli}
 	If $(Y,D),(Y',D')$ are two deformation equivalent log Calabi-Yau pairs such that $\alpha_{(Y,D)}=\alpha_{(Y',D')}$ under the identification $H^2(Y,\mathbb{Z})\cong H^2(Y',\mathbb{Z})$ from parallel transport, then $(Y,D)\cong (Y',D')$. 
 \end{thm}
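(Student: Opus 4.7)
The plan is to treat the marking $\alpha_{(Y,D)}$ as a period map on the moduli stack of marked log Calabi-Yau pairs in a fixed deformation class and prove its injectivity. The first step is to make this period-theoretic viewpoint precise: for a log Calabi-Yau pair $(Y,D)$ with $D$ a cycle of rational curves, there is a natural residue map sending the holomorphic volume form on $Y \setminus D$ to the standard one on $D \cong \mathbb{G}_m$, and the marking $\alpha_{(Y,D)}: \Lambda(Y,D) \to \mathbb{C}^*$ is essentially the evaluation of classes in $\Lambda(Y,D)$ against this residue, computed via the mixed Hodge structure on $H^2(Y \setminus D)$. I would check (infinitesimal Torelli) that in a small analytic neighborhood of $(Y,D)$ inside the deformation space, the marking varies non-degenerately in all directions. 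This reduces the question to a global statement.

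The second step, and the heart of the argument, is a reconstruction of $(Y,D)$ from the data $(\Lambda(Y,D), \alpha_{(Y,D)})$. Here I would appeal to the theory of \emph{toric models}: after a finite sequence of interior blowdowns at nodes of $D$ one reaches a toric pair $(\bar Y, \bar D)$, whose isomorphism class is determined purely by the self-intersection sequence of $\bar D$, hence by the deformation class of $(Y,D)$. Thus $(Y,D)$ is determined by $(\bar Y, \bar D)$ together with the ordered tuple of points on $\bar D^{\mathrm{sm}} \subset \bar D \cong$ (a cycle of $\mathbb{P}^1$'s) that one blows up, and these points live in $\mathbb{G}_m$-torsors on each boundary component. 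The key identification is that if $E$ is an exceptional divisor of such a blowup at a point $p$, then $\alpha_{(Y,D)}(E) \in \mathbb{C}^*$ recovers $p$ under the canonical trivialization of the torsor. Running this recognition componentwise, the collection of values $\alpha_{(Y,D)}$ on the classes of exceptional divisors recovers the full blowup data and hence $(Y,D)$ up to isomorphism.

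Two subtleties will demand care. First, the toric model and the order of blowups are not canonical, so one must show that different toric models lead to the same reconstruction, which amounts to checking that elementary transformations acting on the blowup data are compatible with the corresponding action on $\Lambda(Y,D)$ and preserve the marking. Second, one must account for the automorphism group of $(\bar Y, \bar D)$: two tuples of blowup points that differ by an automorphism of $(\bar Y, \bar D)$ yield isomorphic $(Y,D)$, and conversely one needs to see that the marking is invariant precisely under this automorphism action. I expect this compatibility step, together with the bookkeeping to ensure the reconstruction is well-defined across all toric models in the deformation class, to be the main obstacle; infinitesimal Torelli and the componentwise recognition of blowup points are comparatively formal once the toric-model framework is in place.
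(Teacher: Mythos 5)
The paper offers no proof of this statement: it is quoted verbatim as Theorem~1.8 of Gross--Hacking--Keel \cite{GHK1} and used as a black box, so there is no argument in the paper against which to compare yours. What you have written is therefore a sketch of the argument in the cited source. Your outline does track the outer structure of GHK's proof --- reduce to a toric model $(\bar Y,\bar D)$, recover the interior blowup data from period values, and reconcile different toric models and quotient by automorphisms --- but two points would stop it as written.

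The lighter issue is that the exceptional class $E$ of an interior blowup satisfies $E\cdot D_i=1$ for exactly one boundary component, so $E\notin\Lambda(Y,D)$ and ``$\alpha_{(Y,D)}(E)$'' is not defined; one can only evaluate $\alpha_{(Y,D)}$ on degree-zero combinations such as $E-E'$, and so one recovers the blowup points on each component only up to the action of the toric torus of $(\bar Y,\bar D)$. That ambiguity is in fact harmless, but it must be tracked. The serious gap is positivity. The raw data $\bigl(\Lambda(Y,D),\alpha_{(Y,D)}\bigr)$ determines $(Y,D)$ only up to the Weyl group generated by reflections in interior $(-2)$-classes $\delta\in\Lambda(Y,D)$ with $\alpha_{(Y,D)}(\delta)=1$ --- exactly as in the Torelli theorem for $K3$ surfaces, where one must also specify the K\"ahler chamber. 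In GHK's Theorem~1.8 this is handled by a further hypothesis that the marking send an ample (or nef) class to an ample (or nef) class; in the version quoted in this paper that positivity is smuggled in through the phrase ``parallel transport,'' since a flat trivialization of a connected family of pairs identifies relative nef cones. Your sketch never consumes this input, so it can only reconstruct $(Y,D)$ up to its Weyl conjugates. Pinning down the correct chamber is the actual content of the cited proof, and the infinitesimal Torelli step you lead with does not address it, because the obstruction is global rather than local.
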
 
From the surjectivity of the period map \cite[Theorem 3.17]{Fr}, there exists a distinguished point $(Y_e,D_e)$ such that $\alpha_{(Y_e,D_e)}\equiv 1$, which is the distinguished complex structure in the work of Hacking-Keating \cite{HK} mentioned above. 
 The marking can also be understood via the classical periods. Consider the following long exact sequence 
 \begin{align}\label{relative homology}
 0 = H_3({Y})\rightarrow H_3({Y},{Y}\setminus {D})\rightarrow H_2({Y}\setminus {D})\rightarrow H_2(\check{Y})\rightarrow H_2({Y},{Y}\setminus {D}).
 \end{align}
 By Poincar\'e duality $H_i({Y},{Y}\setminus \check{D})\cong H^{4-i}({D})$. Any $2$-cycle in ${Y}$ having zero intersection with each component of ${D}$ can be deformed to a cycle in $H_2(X)$ (see \cite[p.22]{Fr}). Recall from Section~\ref{sec: semiflatmet} that the bad cycle $[C]$ is well-defined up to a multiple of the elliptic fiber.  It follows that the integral $\int_{[C]}\Omega$ is well-defined and hence we normalize it to be $1$. Then 
 \begin{align}\label{classical period}
 \big( \gamma\mapsto \exp{(2\pi i\int_{\gamma}\Omega)}  \big) \in \mbox{Hom}(\Lambda(Y,D),\mathbb{C}^*)
 \end{align}
 from the exact sequence (\ref{relative homology}). Moreover, the classical period \eqref{classical period} coincides with $\alpha_{(Y,D)}$ by \cite[Proposition 3.12]{Fr}. 

With the above understanding of the periods, the Torelli theorem limits the possible rational elliptic surfaces obtained from Theorem \ref{HKthm}.
\begin{prop}
	If $(Y,D)$ is a log Calabi-Yau pair coming from Theorem \ref{HKthm}, then
	its period $\alpha_{(Y,D)}\in \mbox{Hom}(\Lambda(Y,D),S^1)$.
\end{prop}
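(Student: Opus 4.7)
The goal is $\alpha_{(Y,D)}(\gamma)\in S^{1}$ for every $\gamma\in\Lambda(Y,D)$, which via $\alpha_{(Y,D)}(\gamma)=\exp(2\pi i\int_{\tilde\gamma}\Omega)$ reduces to $\int_{\tilde\gamma}\Omega\in\mathbb{R}$, or equivalently
\[
\int_{\tilde\gamma}\mathrm{Im}(\check\Omega)=0,
\]
using the decomposition $\Omega=\check\omega-i\,\mathrm{Im}(\check\Omega)$ from Theorem~\ref{HKthm}. The integral is well defined independent of the choice of lift because $\int_{[C]}\mathrm{Im}(\check\Omega)=\mathrm{Im}(\int_{[C]}\Omega)=\mathrm{Im}(1)=0$.

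The key observation is that the hyperK\"ahler rotation turns $J$-holomorphic curves on $Y$ into phase-zero special Lagrangians on the del Pezzo side. Since $\Omega$ is the $(2,0)$-form for $J$, a smooth real surface $C\subset X$ is $J$-holomorphic iff $\Omega|_{C}=0$ pointwise, which by the decomposition above is equivalent to $\check\omega|_{C}=0$ \emph{and} $\mathrm{Im}(\check\Omega)|_{C}=0$; i.e.\ $C$ is a special Lagrangian of phase zero for $(\check\omega,\check\Omega)$. Given $\gamma\in\Lambda(Y,D)\subset\mathrm{Pic}(Y)$, the Lefschetz $(1,1)$ theorem on the rational surface $Y$ allows us to represent $\gamma$ by a $J$-holomorphic curve $C_{\gamma}\subset Y$, whose intersection with each $D_{i}$ has total degree zero because $\gamma\cdot D_{i}=0$. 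Writing the lift as $\tilde\gamma=(C_{\gamma}\setminus N_{\epsilon}(D))\cup\Sigma_{\epsilon}$, with $\Sigma_{\epsilon}\subset\partial N_{\epsilon}(D)\subset X$ a capping $2$-chain, the pointwise vanishing $\mathrm{Im}(\check\Omega)|_{C_{\gamma}}\equiv 0$ immediately gives $\int_{C_{\gamma}\setminus N_{\epsilon}(D)}\mathrm{Im}(\check\Omega)=0$, so the entire period is concentrated on the cap:
\[
\int_{\tilde\gamma}\mathrm{Im}(\check\Omega)=\int_{\Sigma_{\epsilon}}\mathrm{Im}(\check\Omega).
\]

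The hard step is thus to evaluate this cap integral. The plan is to apply the residue calculus: since $\Omega$ has a simple pole along each component $D_{i}$ with holomorphic residue $\mathrm{Res}_{D_{i}}(\Omega)\in H^{0}(D_{i},\Omega^{1}_{D_{i}})$, the standard computation gives $\int_{\Sigma_{\epsilon}}\Omega=2\pi i\sum_{i}\int_{\delta_{i}}\mathrm{Res}_{D_{i}}(\Omega)$, where $\delta_{i}$ is a $1$-chain in $D_{i}$ whose signed boundary matches $C_{\gamma}\cap D_{i}$. The condition $\gamma\cdot D_{i}=0$ means the boundary $0$-chain has total degree zero, so after accounting for the bookkeeping at the nodes of the $I_{k}$-cycle one can close $\delta_i$ up against the bad-cycle contribution (which integrates $\mathrm{Im}(\check\Omega)$ to zero by the normalization $\int_{[C]}\Omega=1$). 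The remaining contributions reduce to real parts of periods of $\mathrm{Res}_{D_{i}}(\Omega)$ over loops in $D_{i}$, and these are shown to be purely imaginary using the explicit form of the hyperK\"ahler rotation of the Calabi model computed in Appendix~\ref{app: hkRot} together with the normalization $\int_{\tilde\alpha}\check\Omega\in\mathbb{R}_{+}$ inherited from Theorem~\ref{HKthm}. An alternative formulation of this final step is to choose $\Sigma_{\epsilon}$ to be a piece of the special Lagrangian fibration $\check\pi_{\gamma}\colon\check X\to\mathbb{R}^{2}$ from Theorem~\ref{513}: by the exponential decay of the Tian-Yau metric to the Calabi ansatz in Theorem~\ref{thm: nonStHein}, one can arrange $\Sigma_{\epsilon}$ to be special Lagrangian modulo terms of order $e^{-\delta r^{2/3}}$, and taking $\epsilon\to 0$ forces the cap integral to vanish exactly. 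The main obstacle either way is the careful combinatorial accounting of residues around the wheel $D_{1}+\cdots+D_{k}$.
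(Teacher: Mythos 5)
Your proof has a genuine gap at the very first step, and the heart of the argument you were looking for is elsewhere.

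The period homomorphism $\alpha_{(Y,D)}(\gamma)=\exp(2\pi i\int_\gamma\Omega)$ is defined using the holomorphic volume form \emph{normalized so that} $\int_{[C]}\Omega=1$. The form coming directly from the hyperK\"ahler rotation~\eqref{HKrel}, call it $\Omega_{\mathrm{raw}}=\check\omega - i\,\mathrm{Im}\check\Omega$, is not this normalized form. Indeed, because the Tian--Yau metric $\check\omega$ is \emph{exact}, one has
\[
\int_{[C]}\Omega_{\mathrm{raw}} \;=\; \int_{[C]}\check\omega \;-\; i\int_{[C]}\mathrm{Im}\check\Omega \;=\; -\,i\,a, \qquad a:=\int_{[C]}\mathrm{Im}\check\Omega\in\mathbb{R}\setminus\{0\},
\]
which is purely imaginary. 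Normalizing therefore requires multiplying $\Omega_{\mathrm{raw}}$ by the \emph{purely imaginary} constant $i/a$, which rotates the real and imaginary parts: the normalized form is
\[
\Omega \;=\; \frac{i}{a}\Omega_{\mathrm{raw}} \;=\; \frac{\mathrm{Im}\check\Omega}{a} + i\,\frac{\check\omega}{a},
\qquad\text{so}\qquad
\mathrm{Im}\,\Omega = \frac{\check\omega}{a}.
\]
Since $\check\omega$ is exact, $\int_\gamma\mathrm{Im}\,\Omega=0$ for \emph{every} $2$-cycle $\gamma$, hence $\int_\gamma\Omega\in\mathbb{R}$ and $\alpha_{(Y,D)}(\gamma)\in S^1$. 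That is the entire proof, and it is what the paper's one-sentence argument means by ``$\mathrm{Im}\,\Omega$ is exact from~\eqref{HKrel}.''

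By conflating $\Omega_{\mathrm{raw}}$ with the normalized $\Omega$, you reduced the problem to showing $\int_\gamma\mathrm{Im}\check\Omega=0$, which is a strictly stronger statement than the proposition and is false in general: since $\mathrm{Im}\check\Omega = a\,\mathrm{Re}\,\Omega$, the vanishing of all such periods would force $\alpha_{(Y,D)}$ to take values in $\{\pm1\}$, i.e.\ essentially pin $(Y,D)$ to the distinguished complex structure $(Y_e,D_e)$. Your well-definedness observation is a symptom of the same confusion: you claim $\int_{[C]}\mathrm{Im}\check\Omega = \mathrm{Im}(1) = 0$, but in fact $\int_{[C]}\mathrm{Im}\check\Omega = a\neq 0$; it is the \emph{real} part $\mathrm{Re}\,\Omega$, not $\mathrm{Im}\,\Omega$, that is proportional to $\mathrm{Im}\check\Omega$ after normalization. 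The elaborate machinery you then invoke --- Lefschetz $(1,1)$ to get a $J$-holomorphic representative, decomposition into $C_\gamma\setminus N_\epsilon(D)$ plus a cap $\Sigma_\epsilon$, residue calculus around the $I_k$ wheel, exponential decay to the Calabi ansatz --- is a large detour aimed at a false target and cannot close. The single fact you needed, and never explicitly used, is that the Tian--Yau K\"ahler form $\check\omega$ is exact.
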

\begin{proof}
	This is the direct consequence of the fact that ${\rm Im}\Omega=\omega$ is exact from equation \eqref{HKrel} and the description of the periods in \eqref{classical period}.
\end{proof}	
 
 	From the classification of surfaces there are $10$ families of del Pezzo surfaces: one for each degree $k\neq 8$ and two for $k=8$ which are $\mathbb{P}^1\times \mathbb{P}^1$ and the Hirzebruch surface $\mathbb{F}_1$. 	
 	On the other hand, it is well-known that there are $10$ deformation families of rational elliptic surfaces with an $I_k$ fiber, for $k\in \{1,\cdots, 9\}$ (see \cite[Propositions 9.15, 9.16]{Fr}). There is one for each $k\neq 8$ and two for $k=8$ which correspond to the mirror families of $\mathbb{P}^1\times \mathbb{P}^1$, and $\mathbb{F}_1$. Recall that the mirror superpotential for $\mathbb{P}^1\times \mathbb{P}^1$ is 
 	\begin{align*}
 	W_{\mathbb{P}^1\times \mathbb{P}^1}=x+y+x^{-1}+y^{-1}:(\mathbb{C}^*)^2\rightarrow \mathbb{C},
 	\end{align*} 
	with fibers quadruple-punctured elliptic curves. One can fiberwise compactify to an elliptic fibration and add an $I_8$ fiber at infinity to get a rational elliptic surface $Y_8$. We can similarly get another rational elliptic surface $Y_{8'}$ from the mirror superpotential $W_{\mathbb{F}_1}=x+y+\frac{1}{xy}+\frac{1}{x}$. 
 	We claim that if $\check{Y}\cong \mathbb{P}^1\times \mathbb{P}^1$, then the rational elliptic surface $Y$ obtained from hyperK\"ahler rotation according to Theorem~\ref{HKthm} belongs to the deformation family containing $Y_8$ and that if $\check{Y}\cong \mathbb{F}_1$, then $\check{Y}$ is in the same deformation family as $Y_{8'}$.   This can be determined from purely topological reasoning.  Indeed, we first look at the long exact sequence of relative homology 
 	\begin{align} \label{long exact sequence check}
 	H_2(\check{Y})\rightarrow H^2(\check{D})\cong \mathbb{Z}\rightarrow H_1(\check{X})\rightarrow H_1(\check{Y})=0.
 	\end{align} 
	When $\check{Y}\cong \mathbb{P}^1\times \mathbb{P}^1$, $H_2(\check{Y})$ is generated by the two rulings and each generator intersects $\check{D}$ twice and so $H_1(\check{X})\cong \mathbb{Z}_2$. When $\check{Y}\cong \mathbb{F}_1$, $H_2(\check{Y})$ is generated by the fibers and the $(-1)$-section. The latter has intersection $1$ with $\check{D}$ and thus $H_1(\check{X})=0$. On the other hand, applying the long exact sequence of relative homology on a rational elliptic surface  
 	\begin{align}\label{long exact sequence}
 	H_2(Y)\rightarrow H^2(D)\cong \mathbb{Z}^8\rightarrow H_1(X)\rightarrow H_1(Y)=0, 
 	\end{align} 
	where the first map is $ H_2(Y)\ni C\mapsto (D_i \mapsto C.D_i)$.  Here we denote by $D_1,\cdots, D_8$ the components of $D$ in cyclic order. In the case of $Y=Y_8$, there are four sections intersecting $D_1,D_3, D_5, D_7$ (with suitable cyclic permutation of the labeling). Direct calculation shows that $H_1(X)\cong \mathbb{Z}_2$. On the other hand, in the case of $Y=Y_{8'}$, there are four sections intersecting $D_1,D_2,D_4, D_7$. Direct calculation shows that $H_1(X)=0$. Since hyperK\"ahler rotation does not change the topology, the rational elliptic surfaces in Theorem \ref{HKthm} always fall in the expected deformation families from the view point of mirror symmetry. 
 
 With the above discussion we will prove the SYZ mirror symmetry between del Pezzo surfaces and rational elliptic surfaces. Let us set up the notation to be used below: let $\check{\mathcal{M}}_{k}$ be the moduli space of pairs $(\check{Y},\check{D})$, where $\check{Y}$ is a del Pezzo surface of degree $k$, $\check{D}$ is a smooth anti-canonical cycle. There is a Torelli theorem for marked pairs due to McMullen \cite{M}: first there is a fibration from $\check{\mathcal{M}}_k$ to the $j$-line of elliptic curves by sending a pair to the $j$-invariant of the anti-canonical divisor. Recall the long exact sequence 
 \begin{align*}
    H^1(\check{D})\rightarrow H_2(\check{X})\rightarrow H_2(\check{Y})\rightarrow H^2(\check{D}).
 \end{align*} Let $\tilde{\alpha},\tilde{\beta}$ be generators of the image of  $H^1(\check{D})\rightarrow H_2(\check{X})$. 
   The periods $\tau_1=\int_{\tilde{\alpha}}\check{\Omega},\tau_2=\int_{\tilde{\beta}}\check{\Omega}$ determine the complex structure of $\check{D}$ from  \cite[Lemma 4.6]{Collins-Jacob-Lin}. Let $\gamma_i$, $1\leq i \leq 9-k$ be the elements in $H_2(\check{X})$ forming a basis with $\tilde{\alpha},\tilde{\beta}$. Then a marking of the pair $(\check{Y},\check{D})$ is given by the data 
     \begin{align*}
       \int_{\gamma_i}\check{\Omega}\in \mathbb{C}/(\mathbb{Z}\tau_1\oplus \mathbb{Z}\tau_2)\cong \check{D}, 
     \end{align*} which is equivalent to an element of $\mbox{Hom}(\Lambda(\check{Y},\check{D}),\mathbb{C}^*)$. The later (up to a quotient of finite group) is the fibre of the projection from $\check{\mathcal{M}}_k$ to the $j$-line. 
 In particular, the moduli space $\check{\mathcal{M}}_k$ has complex dimension $10-k$.

There is a connected $\mathbb{Z}^2$-covering of $\check{\mathcal{M}}_k$ with fibers being elements of the first fundamental group of the corresponding smooth anti-canonical divisor. Let $\check{\mathcal{M}}_{cpx}$ be the loci where the last component is primitive and denote the fiber of the universal family over $\check{q}\in \check{\mathcal{M}}_{cpx}$ by $(\check{Y}_{\check{q}},\check{D}_{\check{q}},\alpha_{\check{q}})$.
 Write $\check{X}_{\check{q}}=\check{Y}_{\check{q}}\setminus \check{D}_{\check{q}}$ with Tian-Yau metric $\check{\omega}_{\check{q}}$ and holomorphic volume $\check{\Omega}_{\check{q}}$.  We fix the above following data (including normalizations) by requiring
 \begin{enumerate}
 	\item The complex Monge-Ampere equation  $2\omega_{\check{q}}^2=\check{\Omega}_{\check{q}}\wedge \overline{\check{\Omega}}_{\check{q}}$ holds.
 	\item $\int_{\tilde{\alpha}_{\check{q}}}\check{\Omega}_{\check{q}}\in \mathbb{R}_+$, where $\tilde{\alpha}_{\check{q}}\in H_2(\check{X}_{\check{q}},\mathbb{Z})$ is the special Lagrangian class corresponding to $\alpha_{\check{q}}$.   
 	\item Fix a primitive class $\beta_{\check{q}}\in H_1(\check{D}_{\check{q}},\mathbb{Z})$ with $\langle \alpha_{\check{q}},\beta_{\check{q}}\rangle=m\in \mathbb{Z}$. Then  $\int_{\tilde{\beta}_{\check{q}}}\mbox{Im}\check{\Omega}_{\check{q}}=m$, where $\tilde{\beta}_{\check{q}}\in H_2(\check{X}_{\check{q}},\mathbb{Z})$ the special Lagrangian class corresponding to $\beta_{\check{q}}$. \footnote{The normalization does not depend on the particular choice of $\beta_q$ chosen.}
 \end{enumerate}

 On the other hand, we let $X_e=Y_e\setminus D_e$ denote the complement of the $I_k$ fiber in the distinguished rational elliptic surface and let $E$ be an elliptic fiber. Let $\mathcal{M}_{K}$ be the complexified K\"ahler moduli of $X_e$, which is defined to be
 \begin{align*}
 \mathcal{M}_{\text{K\"ah}}=\{\mathbf{B}+\sqrt{-1}[\omega]: \mathbf{B}\in H^2(X_e,\mathbb{R}/2\pi \mathbb{Z}), [\omega]\in \bigcup_{\substack{(m_1,m_2) \in \mathbb{Z}^2\\ m_1>0,\, {\rm gcd}(m_1,m_2)=1}} V_{m_1,m_2}\}. 
 \end{align*} 
 where $V_{m_1,m_2}$ is defined in~\eqref{eq: rationalPlanes}.  In other words, we require that $[\omega]_{dR}$ is rational near infinity in the sense of Definition~\ref{defn: rational}.\footnote{This condition is necessary and sufficient for the existence of a special Lagrangian fibration}
 
 \begin{rk}
 The moduli space $\mathcal{M}_{\text{K\"ah}}$ is precisely the complexification of the points in $\mathcal{K}_{CY}$ (see Definition~\ref{defn: CYmoduli}) which are rational near infinity, in the sense of Definition~\ref{defn: rational}.
 \end{rk}
 
 Given $q\in \mathcal{M}_{\text{K\"ah}}$, let $\mathbf{B}_q+\sqrt{-1}[\omega_q]$ to be the corresponding complexified K\"ahler class. Let $\omega_q$ be a Ricci-flat metric on $X_e$ in the K\"ahler class $[\omega_q]_{dR}$ and asymptotic to the semi-flat metric $\alpha_q\omega_{sf,\sigma', b_0,\frac{\epsilon}{\alpha_q}}$ guaranteed by Corollary \ref{cor: HeinUnique} for $\alpha_{q}\gg 0$ (this parameter will be fixed below).  Here $\epsilon$ is the size of the elliptic fiber with respect to $[\omega_q]_{dR}$ and $\sigma'$ is a local holomorphic section in a neighborhood of infinity. Let $\Omega_q$ be the unique holomorphic volume form with simple pole along $D_e$ such that 
   \begin{enumerate}
   	\item $2\omega_q^2=\Omega_q\wedge \bar{\Omega}_q$, and
   	\item ${\rm Im} \Omega_q$ is exact on $X_e$\footnote{The phase we choose here is $\pi/2$ different from the previous sections.}. 
   \end{enumerate} 
   The special Lagrangian in the class of the quasi-bad cycle $C_{q}$ defined by the cohomology class $[\omega_q]_{dR}$ (which is rational near infinity) has volume $\int_{C_q} \Omega_q=m\alpha_q$ if $C_q$ is an $m$-quasi-bad cycle. 
From \eqref{long exact sequence}, the dimension of $\mathcal{M}_{\text{K\"ah}}$ is $10-k$ and hence matches with the dimension of $\check{\mathcal{M}}_{cpx}$.

 \begin{thm} \label{SYZ MS}
 	There exists a mirror map $\check{\mathcal{M}}_{cpx}\rightarrow \mathcal{M}_{K}$ sending $\check{q}\in \check{\mathcal{M}}_{cpx}$ to $q(\check{q})\in \mathcal{M}_{\text{K\"ah}}$ 
 	such that 
 	\begin{enumerate}
 		\item 	the special Lagrangian fibration on $X_{q(\check{q})}$ with respect to $(\omega_{q(\check{q})},\Omega_{q(\check{q})})$ from Theorem \ref{154} and the special Lagrangian fibration on $\check{X}_{\check{q}}$ constructed in Theorem~\ref{513} exchange their complex and symplectic affine structures. 
 		\item the volumes of torus fibers are inverse of each other.
 	\end{enumerate}
 
 \end{thm}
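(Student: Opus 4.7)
The plan is to construct the mirror map $q$ via hyperK\"ahler rotation (Theorem~\ref{HKthm}) and then verify the SYZ properties by explicit comparison of periods. Given $\check{q}=(\check{Y}_{\check{q}},\check{D}_{\check{q}},\alpha_{\check{q}})\in\check{\mathcal{M}}_{cpx}$, I apply Theorem~\ref{HKthm} to the Tian--Yau data $(\check{X}_{\check{q}},\check{\omega}_{\check{q}},\check{\Omega}_{\check{q}})$, producing a log Calabi--Yau pair $(Y_{\check{q}},D_{\check{q}})$ with $D_{\check{q}}$ of Kodaira type $I_k$. The Gross--Hacking--Keel marking $\alpha_{(Y_{\check q},D_{\check q})}$ lies in $\mathrm{Hom}(\Lambda(Y_{\check q},D_{\check q}),S^1)$ by the proposition in Section~\ref{sec: MS} on $S^1$-valued periods, and the topological analysis in~(\ref{long exact sequence check})--(\ref{long exact sequence}) discriminates $Y_8$ from $Y_{8'}$ when $k=8$; Theorem~\ref{thm: Torelli} then furnishes an isomorphism of pairs $\Phi_{\check q}:(Y_{\check q},D_{\check q})\to(Y_e,D_e)$. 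Under the identification $H^2(X_{\check q},\mathbb R)\cong H^2(X_e,\mathbb R)$ induced by $\Phi_{\check q}$, I define
\[
q(\check q):=\mathbf{B}_{\check q}+\sqrt{-1}\,[\omega_{\check q}]\in\mathcal M_K,
\]
where $[\omega_{\check q}]$ is the K\"ahler class of the hyperK\"ahler-rotated metric and $\mathbf B_{\check q}\in H^2(X_e,\mathbb R/2\pi\mathbb Z)$ is recovered from $\alpha_{(Y_{\check q},D_{\check q})}$ via~(\ref{classical period}).

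Next I verify $q(\check q)\in\mathcal M_K$, namely that $[\omega_{\check q}]$ is rational near infinity. Using $\omega_{\check q}={\rm Re}\,\check\Omega_{\check q}$ and the special Lagrangian fiber $\check L$ in the class $\tilde{\alpha}_{\check q}$ produced by Theorem~\ref{513}, primitivity of $\alpha_{\check q}$ together with $\langle\alpha_{\check q},\beta_{\check q}\rangle=m$ forces the existence of a (unique up to sign) primitive quasi-bad cycle $[C_{m_1,m_2}]\in H_2(X_{\check q},\mathbb Z)$ satisfying $[\omega_{\check q}]\cdot[C_{m_1,m_2}]=0$. Theorem~\ref{154} then produces the mirror special Lagrangian fibration on $X_{q(\check q)}$ with fibers in the class $[C_{m_1,m_2}]$, provided the Hein scaling parameter $\alpha_{q(\check q)}$ is sufficiently large---a condition automatically satisfied near the LCSL, because Lemma~\ref{lem: collapsing} forces $\int_{\tilde\alpha_{\check q}}\check\Omega_{\check q}\to 0$.

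For property~(2), the Riemannian volume of $\check L_{\check q}$ equals $\int_{\tilde\alpha_{\check q}}\check\Omega_{\check q}$, which under the hyperK\"ahler identity $\omega_{q(\check q)}={\rm Re}\,\check\Omega_{\check q}$ becomes the elliptic fiber area $\epsilon=[\omega_{q(\check q)}]\cdot[F]$ on the mirror side. The mirror SLag $L_{q(\check q)}$ has volume $m\alpha_{q(\check q)}$ by construction. Hence the inverse-volume property is equivalent to fixing $\alpha_{q(\check q)}=1/(m\int_{\tilde\alpha_{\check q}}\check\Omega_{\check q})$, and the hyperK\"ahler computation of Appendix~\ref{app: hkRot} shows that this is exactly the scaling of the asymptotic semi-flat metric produced from the Calabi model. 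For property~(1), Hitchin's construction \cite{Hit} expresses the complex and symplectic affine coordinates on the base of a SLag fibration in terms of periods of ${\rm Re}\,\Omega$ and $\omega$ respectively over dual cycles. The hyperK\"ahler identities $\omega_{q(\check q)}={\rm Re}\,\check\Omega_{\check q}$ and ${\rm Re}\,\Omega_{q(\check q)}=\check\omega_{\check q}$ interchange these two $2$-forms, and fiber T-duality between $[\tilde{\alpha}_{\check q}]$ and $[C_{m_1,m_2}]$ (Poincar\'e dual within each fiber torus) exchanges the dual cycles, yielding the exchange of affine structures.

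The principal obstacle is the first step: constructing $\Phi_{\check q}$ to depend continuously on $\check q$ and to be compatible with the marking data, particularly in the $k=8$ case where two rational elliptic families occur. A secondary obstacle is ensuring the Hein scaling parameter $\alpha_{q(\check q)}$ exceeds the threshold $\alpha_0$ of Corollary~\ref{cor: HeinUnique} uniformly in $\check q$; this explains why the theorem is stated only near the large complex structure limit, where $\int_{\tilde\alpha_{\check q}}\check\Omega_{\check q}\to 0$ and hence $\alpha_{q(\check q)}\to\infty$.
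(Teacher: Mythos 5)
Your proposal identifies the right tools — hyperK\"ahler rotation, the Gross--Hacking--Keel--Friedman Torelli theorem, period matching — and the broad outline follows the paper's strategy. However, there are two genuine errors that propagate through the argument.

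\textbf{Misapplication of Torelli.} You claim that Theorem~\ref{thm: Torelli} furnishes a biholomorphism $\Phi_{\check q}:(Y_{\check q},D_{\check q})\to(Y_e,D_e)$, where $(Y_{\check q},D_{\check q})$ is the hyperK\"ahler rotation of the Tian--Yau structure and $(Y_e,D_e)$ is the distinguished pair with trivial marking $\alpha\equiv 1$. The proposition preceding your citation shows only that the marking of $(Y_{\check q},D_{\check q})$ lies in $\mathrm{Hom}(\Lambda,S^1)$; it is generically \emph{not} trivial, so Theorem~\ref{thm: Torelli} does \emph{not} apply to this pair of surfaces (they are merely deformation equivalent, hence diffeomorphic). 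What the paper actually does is fix a reference diffeomorphism $\psi_{\check q}:\check X_{\check q}\cong X_e$ by parallel transport, equip $X_e$ (with its fixed complex structure $J_e$) with the K\"ahler class determined by the mirror map, hyperK\"ahler-rotate \emph{both sides} to obtain rational elliptic surfaces $\check Y'_{\check q}$ and $Y'_{q(\check q)}$, and only then invoke Torelli to show $\check Y'_{\check q}\cong Y'_{q(\check q)}$. Crucially, the markings of these two rotations agree precisely because the mirror map equations are chosen to force period agreement — the Torelli step is \emph{downstream} of the definition of $q(\check q)$, not a tool for defining it.

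\textbf{Wrong hyperK\"ahler identity.} You write $\omega_{q(\check q)}=\mathrm{Re}\,\check\Omega_{\check q}$ and $\mathrm{Re}\,\Omega_{q(\check q)}=\check\omega_{\check q}$, and define $[\omega_{\check q}]:=[\mathrm{Re}\,\check\Omega_{\check q}]$. Both are incorrect. From the matching of the meromorphic $2$-forms $\phi_{\check q}^*\bigl(\omega_{q(\check q)}-\sqrt{-1}\,\mathrm{Im}\,\Omega_{q(\check q)}\bigr)=\mathrm{Im}\,\check\Omega_{\check q}+\sqrt{-1}\,\check\omega_{\check q}$, the real part gives $\phi_{\check q}^*\omega_{q(\check q)}=\mathrm{Im}\,\check\Omega_{\check q}$ (not $\mathrm{Re}\,\check\Omega_{\check q}$), and the imaginary part gives $\phi_{\check q}^*\mathrm{Im}\,\Omega_{q(\check q)}=-\check\omega_{\check q}$. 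This is consistent with the paper's mirror map~(\ref{mirror map I}), whose imaginary part sets $\frac{m}{\alpha_{q(\check q)}}[\omega_{q(\check q)}]=\psi_{\check q}^*[\mathrm{Im}\,\check\Omega_{\check q}]$. The paper explicitly warns that the two symplectic forms ${\rm Re}\,\check\Omega_{\check q}$ and $\phi_{\check q}^*{\rm Re}\,\Omega_{q(\check q)}$ on the common HK-rotated surface are generally not equal — your identification would force them to be. This error also invalidates your derivation of the volume relation: $\int_{\tilde\alpha_{\check q}}\check\Omega_{\check q}$ equals the pairing of $\mathrm{Re}\,\check\Omega_{\check q}$ with $\tilde\alpha_{\check q}$, which after $\phi_{\check q}$ is the pairing of ${\rm Re}\,\Omega_{q(\check q)}$ with $C_q$ (the mirror SLag volume, $m\alpha_{q(\check q)}$), not $[\omega_{q(\check q)}]\cdot[F]$. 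Your final formula for $\alpha_{q(\check q)}$ happens to coincide with the paper's normalization~(\ref{mirror map II}), but the chain of reasoning leading to it is not sound.

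In short: you need to \emph{define} $q(\check q)$ by a period-matching condition on the second cohomology (the paper's~(\ref{mirror map I})--(\ref{mirror map II})) before Torelli can be applied, and the K\"ahler class on the mirror side is proportional to $\psi_{\check q}^*[\mathrm{Im}\,\check\Omega_{\check q}]$, not to the HK-rotated K\"ahler class $[\mathrm{Re}\,\check\Omega_{\check q}]$.
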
 
 \begin{proof}

 	Given a triple $(\check{Y}_{\check{q}},\check{D}_{\check{q}}, \alpha_{\check{q}})\in \check{\mathcal{M}}_{cpx}$, there exists a special Lagrangian fibration $\check{X}_{\check{q}}\rightarrow B_{\check{q}}$ from Theorem \ref{513}.  Moreover, from Theorem \ref{HKthm}, the same underlying space of $\check{X}_{\check{q}}$ with K\"ahler form $\mbox{Re}\check{\Omega}_{\check{q}}$ and holomorphic $2$-form $\check{\Omega}_{\check{q}}'={\rm Im}\check{\Omega}_{\check{q}}+\sqrt{-1}\check{\omega}_{\check{q}}$ can be compactified to a rational elliptic surface $\check{Y}_{\check{q}}'$ by adding an $I_k$ fiber $\check{D}'_{\check{q}}$ at infinity and $\check{\Omega}_{\check{q}}'$ extends as a meromorphic $2$-form with simple pole along $\check{D}'_{\check{q}}$. Denote $\check{X}'_{\check{q}}=\check{Y}'_{\check{q}}\setminus \check{D}'_{\check{q}}$. Then $\tilde{\beta}_{\check{q}}$ is an $m$-quasi bad cycle of the rational elliptic surface $\check{Y}'_{\check{q}}$ and $\int_{\tilde{\beta}_{\check{q}}}\check{\Omega}'_{\check{q}}=m$ from the normalization of $\check{\Omega}_{\check{q}}$. Let $\tau_{\check{q}}$ be in the fundamental region such that $\check{D}_{\check{q}}\cong \mathbb{C}/(\mathbb{Z}\oplus \mathbb{Z}\tau_{\check{q}})$ where $\alpha_{\check{q}}$ corresponds to the cycle generated by $0,1$.  It follows that $\beta_{\check{q}}$ corresponds to $n+m\tau_{\check{q}}$ for some $n\in \mathbb{Z}$. The volume of the special Lagrangian fibers is $\int_{\tilde{\alpha}_{\check{q}}}\check{\Omega}_{\check{q}}=\frac{1}{{\rm Im}\tau_{\check{q}}}$ thanks to the choice of normalization $\int_{\tilde{\beta}_{\check{q}}}{\rm Im}\check{\Omega}_{\check{q}}=m$.
 	
 	On the other hand, given $\mathbf{B}_q+\sqrt{-1}[\omega]_q\in \mathcal{M}_{\text{K\"ah}}$ and a choice of $m$-quasi bad cycle $C_q$,  Theorem \ref{154} yields the existence of a special Lagrangian fibraton $X_{q}\rightarrow B_q$ with respect to $(\omega_q,\Omega_q)$ with fibers homologous to $C_q$. Moreover, the same underlying space $X_e$ with K\"ahler form ${\rm Re}{\Omega}_{q}$ and holomorphic $2$-form
 	 \begin{align}\label{HK1}
 	 	  \Omega'_q=\omega_q-\sqrt{-1}{\rm Im}{\Omega}_q
 	 \end{align}	 
  can be compactified to a rational elliptic surface $Y'_q$ by adding an $I_k$ fiber $D'_q$ at infinity and $\Omega'_q$ is meromorphic on $Y'_q$ with simple pole along $D'_q$. Denote $X'_{q}=Y'_q\setminus D'_q$. Then the integral of $\Omega'_q$ on an $m$-quasi bad cycle of $Y'_q$ is $m\alpha_q$.
 	Therefore, to prove the theorem it suffices to show that the rational elliptic surfaces $Y'_{q}$ and $\check{Y}'_{\check{q}}$ are biholomorphic and the meromorphic volume forms $\omega_q-\sqrt{-1}{\rm Im}\Omega_q$, and ${\rm Im}\check{\Omega}_{\check{q}}+\sqrt{-1}\check{\omega}_{\check{q}}$ have the same phase. 
 	
 	Fix a reference point $(\check{Y}_{\check{q}_0}, \check{D}_{\check{q}_0}, \alpha_{\check{q}_0})\in \check{\mathcal{M}}_{cpx}$ and a diffeomorphism $\check{X}_{\check{q}_0}\cong X_e$ sending the class of SYZ fibers in $\check{X}_{\check{q}_0}$ to a class  which is not the class of elliptic fibers of $X_e$. Such a diffeomorphism can be constructed as follows: choose a primitive class $\beta_{q_0}\in H_1(\check{D}_0,\mathbb{Z})$ with $\langle \alpha_{\check{q}_0},\beta_{\check{q}_0}\rangle =m$. Again by Theorem \ref{513}, there exists a special Lagrangian fibration on $\check{X}_{\check{q}_0}$ with fiber class $\tilde{\beta}_{\check{q}_0}$.  By Theorem~\ref{HKthm}, after hyperK\"ahler rotating to an elliptic fibration $\check{X}_{\check{q}_0}''\rightarrow \mathbb{C}$, $\check{X}_{\check{q}_0}''$ can be compactified to a rational elliptic surface  $\check{Y}_{\check{q}_0}''$ by adding an $I_k$ fiber $\check{D}''_{\check{q}_0}$ at infinity. Since there are exactly $10$ families of rational elliptic surfaces corresponding to the $10$ families of del Pezzo surfaces, $(\check{Y}_{\check{q}_0}'',\check{D}_{\check{q}_0}'')$ and $(Y_e, D_e)$ are deformation equivalent. In particular, there exists a diffeomorphism $\check{X}_{\check{q}_0}''\cong X_e$ sending $\tilde{\beta}_{\check{q}_0}$ to the class of the elliptic fiber. Since the hyperK\"ahler rotation does not change the underlying space, we have a diffeomorphism $\check{X}_{\check{q}_0}\cong X_e$ sending $\tilde{\alpha}_{\check{q}_0}$ to a non-fiber class, which is necessarily an $m$-quasi-bad cycle $C_q$. 
 	
 	From the Mayer-Vietoris sequence this diffeomorphism induces an isomorphism of lattices $H^2(Y'_{\check{q}_0},\mathbb{Z})\cong H^2(Y'_{q},\mathbb{Z})$, for any $q\in \mathcal{M}_{\text{K\"ah}}$. Again $Y'_{q}$ and $Y'_{\check{q}_0}$ are in the same deformation family and therefore, by Theorem~\ref{thm: Torelli}, it suffices to check that they share the same periods under the natural identification of the lattices arising from the deformation.

 	Under the identification $\check{X}_{\check{q}_0}\cong X_e$ and parallel transport $\check{X}_{\check{q}}\cong \check{X}_{\check{q}_0}$, one has a diffeomorphism $\psi_{\check{q}}:\check{X}_{\check{q}}\cong  X_e$. Define $\mathbf{B}_{q(\check{q})}+i[\omega_{q(\check{q})}]\in \mathcal{M}_{\text{K\"ah}}$ such that 
 	\begin{align}
 	\mathbf{B}_{q(\check{q})}+\sqrt{-1}\frac{m[\omega_{q(\check{q})}]}{\alpha_{q(\check{q})}}&=\psi_{\check{q}}^*[\mbox{PD}([\check{\sigma}_{\check{q}}])+\check{\Omega}_{\check{q}}] \label{mirror map I}\\
 	 {\rm Im}\tau_{\check{q}}&=m\alpha_{q(\check{q})}\label{mirror map II}.
 	\end{align} 
	where $\check{\sigma}_{\check{q}}$ is a choice of topological section which is flat with respect to $\check{q}$, and $\text{PD}$ denotes the Poincar\'e dual.
 	Here, we view 
 	\begin{align*}
 	[\check{\sigma}_{\check{q}}]\in H_2(\check{X}_{\check{q}},\partial\check{X}_{\check{q}};\mathbb{Z})\cong  H_2(\check{X}_{\check{q}_0},\partial\check{X}_{\check{q}_0};\mathbb{Z})\cong H^2(\check{X}_{\check{q}_0},\mathbb{Z})\cong H^2(X_e,\mathbb{Z}).
 	\end{align*} 
	Notice that \eqref{mirror map I} determines $[\omega_{q(\check{q})}]$ up to $\mathbb{R}^*$-scaling and such scaling is determined by \eqref{mirror map II}. Similar mirror maps also appeared in \cite{G2}\cite{HK} and one expects to read it off from \cite[Theorem 4.4]{RS}. One will see from the proof that the mirror map arises naturally from the SYZ perspective. Indeed,
 	straightforward calculation shows that the volume of the special Lagrangian in the class $C_q$ with respect to $(\omega_q,\Omega_q)$ is $|\int_{C_q}\Omega_q|=m\alpha_q$. Together with \eqref{mirror map II}, this implies the second part of the theorem. Recall that the elliptic fiber $E$ of $X_e$ becomes an $m$-quasi-bad cycle in $Y'_{q(\check{q})}$ and $\int_{E}\omega_q-\sqrt{-1}{\rm Im}\Omega_q=\alpha_q$. Thus, $\frac{m}{\alpha_{q(\check{q})}}\Omega'_{q(\check{q})}$ is the meromorphic volume form on $Y'_{q(\check{q})}$ with integral $1$ on the bad cycle of $Y'_{q(\check{q})}$. From Theorem \ref{thm: Torelli} of log Calabi-Yau surfaces there exists an isomorphism $\phi_{\check{q}}:\check{Y}'_{\check{q}}\cong Y'_{q(\check{q})}$. Notice that since the imaginary parts of both $\phi_{\check{q}}^*\big(\omega_{q(\check{q})}-\sqrt{-1}{\rm Im}\Omega_{q(\check{q})}\big)$ and ${\rm Im}\check{\Omega}_{\check{q}}+\sqrt{-1}\check{\omega}_{\check{q}}$ are exact, we have 
 	 \begin{align*}
 	    \phi_{\check{q}}^*\big(\omega_{q(\check{q})}-\sqrt{-1}{\rm Im}\Omega_{q(\check{q})}\big)={\rm Im}\check{\Omega}_{\check{q}}+\sqrt{-1}\check{\omega}_{\check{q}}. 
 	 \end{align*} 
	 $\phi_{\check{q}}$ induces an isomorphism on the bases of $\check{Y}'_{\check{q}}$ and $Y'_{q(\check{q})}$, denoted by  $\underline{\phi}_{\check{q}}:\mathbb{P}^1\rightarrow \mathbb{P}^1$. In particular, the restriction of $\underline{\phi}_{\check{q}}$ yields a map (still denoted by $\underline{\phi}_{\check{q}}$) identifying the bases of the two special Lagrangian fibrations, 
 	    \begin{align*}
 	       \xymatrix{
 	        \check{X}_{\check{q}}\ar[d] & X_{q(\check{q})} \ar[d] \\
 	        B_{\check{q}} \ar[r]^{\underline{\phi}_{\check{q}}}     & B_{q(\check{q})} .
          }
 	    \end{align*} 
 	    For any $b\in B_{\check{q}}$ not in the discriminant locus, $\phi_{\check{q}}$ induces an isomorphism 
 	        \begin{align*}
 	            (\phi_{\check{q}})_*:H_1\big((\check{X}_{\check{q}})_b,\mathbb{Z}\big) \cong H_1\big((X_{q(\check{q})} )_{\underline{\phi}_{\check{q}}(b)} ,\mathbb{Z}\big).
 	        \end{align*}
 	     Thus, for any $v\in T_bB_{\check{q}}$ and $\gamma\in H_1\big((\check{X}_{\check{q}})_b,\mathbb{Z}\big)$ 
 	    \begin{align*}
 	       \int_{\gamma}\iota_v \check{\omega}_{\check{q}}=\int_{(\phi_{\check{q}})_*\gamma} \iota_{(\underline{\phi}_{\check{q}})_*v} \mbox{Im}\Omega_{q(\check{q})} \\  
 	        \int_{\gamma}\iota_v \mbox{Im}\check{\Omega}_{\check{q}}=\int_{(\phi_{\check{q}})_*\gamma} \iota_{(\underline{\phi}_{\check{q}})_*v} \omega_{q(\check{q})}. 
 	    \end{align*} 
 	    In other words, the symplectic and complex affine structures of the two special Lagrangian fibrations are exchanged.

 \end{proof}

\begin{rk}
   In the proof of Theorem \ref{SYZ MS}, there is a choice of diffeomorphism $\check{X}_{\check{q}_0}\cong X_e$ sending the fiber class $\tilde{\alpha}_{\check{q}_0}\in H_2(\check{X}_{\check{q}_0},\mathbb{Z})$ to an $m$-quasi-bad cycle $C_{m,m'}\in H_2(X_e,\mathbb{Z})$. After fixing such choice, the mirror map sends $\check{\mathcal{M}}_{cpx}$ to $V_{m,m'}\subseteq \mathcal{M}_{\text{K\"ah}}$. It is worth pointing out that the monodromy group of $\check{\mathcal{M}}_{cpx}$ induces a group action on $H_1(\check{D}_{\check{q}_0},\mathbb{Z})$  acting transitively on primitive elements. Therefore, for different choices of the diffeomoprhisms $\check{X}_{\check{q}_0}\cong X_e$, the mirror maps differ by composing with a self-diffeomorphsim of $X_e$ (or $\check{X}_{\check{q}_0}$) naturally identifying components of $\mathcal{M}_{\text{K\"ah}}$. 
\end{rk}

\begin{rk}
We do not need to assume that the special Lagrangian is simple in the sense of \cite{G2}.
\end{rk}
\begin{rk}
   In summary, the mirror pairs $(\check{Y}_{\check{q}},\check{D}_{\check{q}})$ and $(Y_{q(\check{q})}, D_{q(\check{q})})$ are related by two hyperK\"ahler rotations of the form \eqref{HK1}\eqref{HKrel} from the proof. However, it is worth noticing that the symplectic structures on $\check{Y}'_{\check{q}}\cong Y'_{q(\check{q})}$ induced by hyperK\"ahler rotation are generally not the same. 
\end{rk}

 \subsection{Automorphisms of del Pezzo surfaces} Another application of the special Lagrangian fibrations constructed in \cite{Collins-Jacob-Lin} and the uniqueness result in Theorem \ref{thm: unique} is to produce automorphisms of del Pezzo surfaces fixing a given smooth anti-canonical divisor. Let $(\check{Y},\check{D})$ be a pair of a del Pezzo surface of degree $k$ and $D$ a smooth anti-canonical divisor. Equip $\check{X}=\check{Y}\setminus \check{D}$ with the Tian-Yau metric $\check{\omega}$ and a holomorphic volume form $\check{\Omega}$ having a simple pole on $\check{D}$.  Fix a choice of primitive class $[\gamma] \in H_1(\check{D}, \mathbb{Z})$ and let $X$ be the hyperK\"ahler rotation of $\check{X}$ with K\"ahler form $\omega$ and holomorphic volume form $\Omega$ given by \eqref{HKrel}.  From Proposition~\ref{prop: hkCal}, the K\"ahler form $\omega$ satisfies.
 \[
 |\omega-\omega_{sf,\sigma'',b_0,\epsilon}|_{\omega}\leq  C e^{-\delta r^{2/3}}
 \]
 for certain data $\sigma'', b_0,\epsilon$ (see Appendix~\ref{app: hkRot}).  Let $(Y,D)$ denote the pair of a rational elliptic surface with an $I_k$ fiber obtained by compactifying $X$ using the section $\sigma''$, as described in Section~\ref{sec: semiflatmet}.
 
The automorphism groups of rational elliptic surfaces $Y$ were studied by Karayayla \cite{Ka}; since $K_Y\cong \mathcal{O}_Y(-E)$, where $E$ is any fiber, any automorphism $\sigma\in \mbox{Aut}(Y)$ of $Y$ must send fibers to fibers and induce an automorphism of the base $\mathbb{P}^1$. Moreover, Karayayla proved that 
   \begin{align} \label{Aut RES}
      \mbox{Aut}(Y)=MW(Y)\rtimes \mbox{Aut}_{\sigma}(Y),
   \end{align} where $MW(Y)$ is the Modell-Weil group and $\mbox{Aut}_{\sigma}(Y)$ is the subgroup of $\mbox{Aut}(Y)$ fixing the section $\sigma$. Recall that $MW(Y)$ is the group of the sections of $Y$ with a choice of zero section $\sigma$. For $\sigma'\in MW(Y)$, we denote $T_{\sigma'}$ be the translation of the section $\sigma'$ with respect to $\sigma$, the corresponding element in $\mbox{Aut}(Y)$. 
   
 Denote by $\mbox{Aut}(Y,D)$ the group of automorphisms of $Y$ preserving $D$ and denote $\mbox{Aut}(\check{Y},\check{D})$ similarly. We first have the following proposition:
 \begin{prop} \label{aut}
 	Let $\phi \in \mbox{Aut}(Y,D)$ such that $\phi^*\Omega=\Omega$.  Then the same underlying map induces an symplectomorphism $\check{\phi}\in \mbox{Sympl}(\check{X},\check{\omega})$. Furthermore, if $\phi^*[\omega]=[\omega]\in H^2(X,\mathbb{R})$, then $\check{\phi}\in \mbox{Aut}(\check{Y},\check{D})$. 
 \end{prop}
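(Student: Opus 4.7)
The first claim is immediate from the hyperK\"ahler relations~\eqref{HKrel}. Since $\check\omega = \mathrm{Re}(\Omega)$ and $\mathrm{Im}(\check\Omega) = -\mathrm{Im}(\Omega)$, the hypothesis $\phi^*\Omega = \Omega$ at once yields $\check\phi^*\check\omega = \check\omega$, giving the claimed symplectomorphism, and also $\check\phi^*\mathrm{Im}(\check\Omega) = \mathrm{Im}(\check\Omega)$.

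For the second claim, the strategy is to show that $\check\phi^*\check\Omega = \check\Omega$; since $\check\Omega$ spans the $(2,0)$-forms of $\check J$, this will imply that $\check\phi$ is holomorphic with respect to $\check J$ on $\check X$. Writing $\check\Omega = \omega + \sqrt{-1}\,\mathrm{Im}(\check\Omega)$ and using the preservation of $\mathrm{Im}(\check\Omega)$ from above, this reduces to proving $\phi^*\omega = \omega$ as forms on $X$. Now $\phi^*\omega$ is a complete K\"ahler form on $(X,J)$ lying in the de Rham class $[\omega]$ (by hypothesis) and satisfying $(\phi^*\omega)^2 = \tfrac{1}{2}\Omega\wedge\overline{\Omega}$ (from $\phi^*\Omega = \Omega$). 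By Proposition~\ref{prop: reducedModuli}, there exists a fiber-preserving translation $\Psi \in \mathrm{Aut}_0(X,\mathbb{C})$ with $(\phi\circ\Psi)^*\omega = \omega$.

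The key step is to show that $\Psi$ is in fact the identity, so that $\phi^*\omega = \omega$ directly. Since $\phi\in\mathrm{Aut}(Y,D)$ preserves the elliptic fibration $\pi:Y\to\mathbb{P}^1$, the pulled-back metric $\phi^*\omega$ is asymptotic to $\phi^*\omega_{sf,\sigma,b_0,\epsilon} = \omega_{sf,\phi^{-1}\sigma,b_0,\epsilon}$, where $\phi^{-1}\sigma$ is again a global holomorphic section of $\pi$ over all of $\mathbb{P}^1$. Consequently, the required translation $\Psi$ corresponds to the Mordell--Weil difference $\phi^{-1}\sigma - \sigma$, which extends over $D$. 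Combining this with Proposition~\ref{prop: BCKahlerCone}~(iii) and the vanishing $H^0(\mathbb{P}^1, R^1\pi_*\mathcal{O}_Y) = 0$ (since $R^1\pi_*\mathcal{O}_Y \cong \mathcal{O}_{\mathbb{P}^1}(-1)$) forces the global section corresponding to $\Psi$ under the identification of Lemma~\ref{lem: LocalBCdescrip} to vanish, yielding $\Psi = \mathrm{id}$. This is the most delicate point of the proof, since the space $\mathrm{Aut}_0(X,\mathbb{C})$ a priori contains infinitely many translations by sections not extending over $D$, and ruling these out requires carefully tracking the action of $\phi$ on both the de Rham and the Bott--Chern data associated to the asymptotic semi-flat metric.

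Once $\phi^*\omega = \omega$ is established, $\check\phi$ preserves the entire hyperK\"ahler structure of $X$ and is therefore holomorphic with respect to $\check J$ on $\check X$. To extend $\check\phi$ across $\check D$, note that $\check\phi$ is a holomorphic isometry of the Tian--Yau metric, whose asymptotics near $\check D$ are precisely controlled by the Calabi ansatz (see Appendix~\ref{app: hkRot}). Thus $\check\phi$ preserves the unique geometric end of $\check X$, and an application of Riemann's removable singularity theorem in local $\check J$-holomorphic coordinates on $\check Y$ near $\check D$ yields a holomorphic extension $\check\phi:\check Y\to\check Y$ with $\check\phi(\check D) = \check D$, i.e.\ $\check\phi\in\mathrm{Aut}(\check Y,\check D)$.
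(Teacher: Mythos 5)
Your overall strategy matches the paper's: show $\phi^*\omega=\omega$, then extend $\check{\phi}$ over $\check{D}$ using boundedness. The first and last steps are fine. But the justification of the central step --- that the translation $\Psi$ produced by Proposition~\ref{prop: reducedModuli} is the identity --- is not correct as written.

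You conflate two different objects. In the proof of Proposition~\ref{prop: reducedModuli}, the two asymptotic semi-flat metrics are related by $T_h$ with $h$ holomorphic only on $\Delta^*$, and $\Psi = T_{-\tau}$ where $\tau \in H^0(\mathbb{C},R^1\pi_*\mathcal{O}_X)$ is a \emph{global-over-$\mathbb{C}$} section absorbing the \emph{principal part} of $h$ at $z=0$; $\Psi$ is a Jacobian translation, not a Mordell--Weil translation, and nothing a priori says it extends over $D$. Consequently, invoking $H^0(\mathbb{P}^1,R^1\pi_*\mathcal{O}_Y)=0$ does not kill it: the relevant parameter space is $H^0(\mathbb{C},R^1\pi_*\mathcal{O}_X)$, which is infinite dimensional. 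Proposition~\ref{prop: BCKahlerCone}(iii) controls how translations act on Bott--Chern classes, but the hypothesis only fixes $\phi^*[\omega]_{dR}$, not $\phi^*[\omega]_{BC}$, so that tool is not available either. The observation you make --- that $\phi^{-1}\sigma - \sigma$ is a global Mordell--Weil element extending over $D$ --- is the right one, but the reason it implies $\Psi=\mathrm{id}$ is simply that such a section corresponds under Lemma~\ref{lem: localSecDes} and Lemma~\ref{lem: allCompSame}(ii) to an $h$ with \emph{no principal part} at $z=0$, so $\tau=0$.

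The paper bypasses $\Psi$ entirely and is more precise about the structure of $\phi$. Using Karayayla's decomposition $\mathrm{Aut}(Y)=MW(Y)\rtimes\mathrm{Aut}_\sigma(Y)$, write $\phi=\phi_1\phi_2$ with $\phi_1=T_{\sigma'}$ a Mordell--Weil translation and $\phi_2$ fixing $\sigma$. The factor $\phi_2$ preserves $\omega_{sf,\sigma,b_0,\epsilon}$ exactly, while $\sigma'$ is a global holomorphic section of $Y$, so the associated $h$ is holomorphic across $z=0$ and Lemma~\ref{lem: transAsymp}(iii) gives $|\omega_{sf,\sigma,b_0,\epsilon}-\phi_1^*\omega_{sf,\sigma,b_0,\epsilon}|=O(r^{-4/3})$. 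Hence $\omega$ and $\phi^*\omega$ are asymptotic at rate $O(r^{-4/3})$ to the \emph{same} semi-flat metric, and Theorem~\ref{thm: unique} gives $\phi^*\omega=\omega$ directly, with no need for Proposition~\ref{prop: reducedModuli}. Your proof would become correct if you replace the cohomological justification by this local "$h$ has no principal part" argument and make the $MW\rtimes\mathrm{Aut}_\sigma$ decomposition explicit (your implicit equality $\phi^*\omega_{sf,\sigma,b_0,\epsilon}=\omega_{sf,\phi^{-1}\sigma,b_0,\epsilon}$ also needs the $\mathrm{Aut}_\sigma$ factor handled separately, since $\phi$ may act nontrivially on the base near $\infty$).
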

 \begin{proof}
 	The first part of the proposition directly follows from (\ref{HKrel}). 
 	
 	Now assume that $\phi^*[\omega]=[\omega]$ and $\phi^*\Omega=\Omega$. From Appendix A, we have $|\omega-\omega_{sf,\sigma'',b_0,\epsilon}|_{\omega}\leq  C e^{-\delta r^{2/3}}$. We may replace the local section $\sigma''$ by a global section $\sigma$ up to loosening the estimate to $|\omega-\omega_{sf,\sigma,b_0,\epsilon}|_{\omega}\sim O(r^{-\frac{4}{3}})$ thanks to Lemma \ref{lem: localSecDes} and Lemma \ref{lem: transAsymp}. From \eqref{Aut RES}, we can write $\phi=\phi_1\phi_2$, where $\phi_1=T_{\sigma'}^*$ is the translation of $\sigma'$ with respect to $\sigma$ for some global holomorphic section $\sigma'$ of $Y$ and $\phi_2\in \mbox{Aut}_{\sigma}(Y)$ automorphism fixing $\sigma$.  
 	Since any automorphism $\phi_2\in \mbox{Aut}_{\sigma}(Y)$ preserves the $j$ invariant of the elliptic fiber and $\sigma$, (so on the local model $X_{mod}$ it acts as $z\mapsto e^{i\theta}z, x\mapsto x$) we have $\phi_2^*\omega_{sf,\sigma,b_0,\epsilon}=\omega_{sf,\sigma,b_0,\epsilon}$. Since $\sigma,\sigma'$ are global sections of $Y$, Lemma \ref{lem: localSecDes} and Lemma \ref{lem: transAsymp} and explicit calculation show that $|\omega_{sf,\sigma,b_0,\epsilon}-\phi_1^*\omega_{sf,\sigma,b_0,\epsilon}| \sim O(r^{-\frac{4}{3}})$. To sum up, we have $|\omega_{sf,\sigma,b_0,\epsilon}-\phi^*\omega_{sf,\sigma,b_0,\epsilon}|\sim O(r^{-\frac{4}{3}})$. 
Then $\phi^*\omega=\omega$ by Theorem \ref{thm: unique}. 
   Therefore, $\phi$ induces an automorphism $\check{\phi}$ of $\check{X}$. Since $\phi$ preserves a tubular neighborhood of $D$, $\check{\phi}$ preserves a tubular neighborhood of $\check{D}$. Since a locally bounded holomorphic function extends over the divisors, $\check{\phi}$ extends to $\check{Y}$. 
 	
 \end{proof}
 
 Proposition~\ref{aut} provides an analytic approach to study the automorphism groups $\mbox{Aut}(\check{Y},\check{D})$. 
For instance, the above proposition recovers the classical result for plane cubics (see \cite[p. 22]{Web}) in the case $\check{Y}=\mathbb{P}^2$. 
 \begin{cor}
 	Given any smooth plane cubic $E\subseteq \mathbb{P}^2$, there exists $\mathbb{Z}_3\oplus \mathbb{Z}_3\subseteq \mbox{Aut}(\mathbb{P}^2)$ fixing $E$.
 \end{cor}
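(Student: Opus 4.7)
The strategy is to produce $\mathbb{Z}_3\oplus\mathbb{Z}_3\subset\mathrm{Aut}(\mathbb{P}^2,E)$ as the direct sum of two $\mathbb{Z}_3$ subgroups arising from the Mordell-Weil groups of two rational elliptic surfaces, obtained by hyperK\"ahler rotating $(\mathbb{P}^2,E)$ along two independent $1$-cycles of $E$. Fix a basis $\gamma_1,\gamma_2\in H_1(E,\mathbb{Z})$ and, for each $i\in\{1,2\}$, apply Theorem~\ref{HKthm} to $(\mathbb{P}^2,E)$ with the Tian-Yau metric and the loop $\gamma_i$ to obtain a rational elliptic surface $Y_i$ compactifying the hyperK\"ahler rotated $\check{X}$ by an $I_9$ fiber $D_i$ at infinity. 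Since $\chi(Y_i)=12$ while $\chi(I_9)=9$, together with the Shioda-Tate constraint $\mathrm{rank}(MW(Y_i))\geq 0$ and the classification of extremal rational elliptic surfaces carrying an $I_9$ singular fiber, one has $MW(Y_i)\cong\mathbb{Z}_3$.

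Next, for each nontrivial $\sigma\in MW(Y_i)$ the fiberwise translation $T_\sigma\in\mathrm{Aut}(Y_i,D_i)$ verifies both hypotheses of Proposition~\ref{aut}. Invariance of the holomorphic volume form, $T_\sigma^*\Omega_i=\Omega_i$, is immediate from the local expression $\Omega_i=\frac{dx\wedge dz}{z}$ near $D_i$ from Section~\ref{sec: compAndCoord} and the action $(x,z)\mapsto(x+\sigma(z),z)$, combined with the uniqueness of $\Omega_i$ up to scale. For the de Rham invariance $T_\sigma^*[\omega_i]_{dR}=[\omega_i]_{dR}$, I would use that $H^2(X_i,\mathbb{R})$ is $2$-dimensional with basis dual to the fiber class $[F_i]$ and a bad cycle class $[C_i]$; $T_\sigma$ manifestly preserves $[F_i]$, and by Lemma~\ref{lem: badCycleIntegerAct} one has $T_{\sigma,*}[C_i]=[C_i]+n_\sigma[F_i]$ for some integer $n_\sigma$. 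The key point is that $\sigma\mapsto n_\sigma$ is a group homomorphism $MW(Y_i)\to\mathbb{Z}$ (since translations compose additively and preserve $[F_i]$), and must therefore vanish because $MW(Y_i)$ is torsion. Proposition~\ref{aut} thus yields an injective homomorphism $\mathbb{Z}_3\cong MW(Y_i)\hookrightarrow\mathrm{Aut}(\mathbb{P}^2,E)$ for each $i$.

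To conclude that the two images together generate $\mathbb{Z}_3\oplus\mathbb{Z}_3$, I would identify each image with a $\mathbb{Z}_3$ subgroup of the translation subgroup $E[3]\subset\mathrm{Aut}(\mathbb{P}^2,E)$ and verify that the two are distinct. Since $\check{T}_\sigma$ preserves $\check{\Omega}$, its restriction to $E$ preserves the holomorphic $1$-form $\mathrm{Res}_E\check{\Omega}$, forcing it to be a translation on $E$; the torsion of $MW(Y_i)$ then places this translation in $E[3]$. Under the hyperK\"ahler rotation with loop $\gamma_i$ the generic elliptic fibers of $Y_i$ are smoothly identified with the special Lagrangian fibers of $(\check{X},\check{\omega})$ homologous to $\gamma_i$, so the Mordell-Weil translations on $Y_i$ descend, upon restriction to $E$, to translations along the $\gamma_i$-direction of $E$. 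Since $\gamma_1,\gamma_2$ generate $H_1(E,\mathbb{Z})$, the two resulting $\mathbb{Z}_3$ subgroups are distinct in $E[3]$ and jointly span $\mathbb{Z}_3\oplus\mathbb{Z}_3\subseteq\mathrm{Aut}(\mathbb{P}^2,E)$. The subtlest step is this final identification of the restricted action on $E$ with translation along a prescribed direction, which requires tracing through the correspondence between the bad cycle on $Y_i$ and $\gamma_i\subset E$ made explicit in Appendix~\ref{app: hkRot}.
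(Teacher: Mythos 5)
Your approach is genuinely different from the paper's. The paper uses a \emph{single} hyperK\"ahler rotation to obtain the extremal rational elliptic surface $Y$ with configuration $I_9I_1^3$, then cites Karayayla's Table 11 directly for the existence of $\mathbb{Z}_3\oplus\mathbb{Z}_3\subset\mathrm{Aut}(Y)$ preserving $\Omega$; with $MW(Y)\cong\mathbb{Z}_3$, the second $\mathbb{Z}_3$ necessarily lives in $\mathrm{Aut}_\sigma(Y)$, not in the Mordell--Weil group, and the paper's decomposition $\phi=\phi_1\phi_2$ is used precisely to handle that second factor (which is \emph{not} isotopic to the identity, so preserving $[C]$ has to be verified separately). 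You instead try to extract $\mathbb{Z}_3\oplus\mathbb{Z}_3$ from Mordell--Weil groups of two different rational elliptic surfaces obtained by rotating along two independent cycles. Your verification of the hypotheses of Proposition~\ref{aut} for a Mordell--Weil translation is correct, though somewhat heavier than needed: the paper simply notes that a translation by a section is isotopic to the identity and hence acts trivially on $H_2$, making your homomorphism-vanishing argument for $n_\sigma$ superfluous.

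The gap is in your final step. You assert that the restriction to $E$ of the automorphism induced by a Mordell--Weil translation on $Y_i$ is a translation of $E$ by a $3$-torsion point ``along the $\gamma_i$-direction,'' and conclude that the two resulting $\mathbb{Z}_3\subset E[3]$ are distinct. This is the entire content of the corollary and it is not proven. Preserving $\mathrm{Res}_E\check\Omega$ does force the restriction to $E$ to be a translation, and order $3$ places it in $E[3]$, but nothing establishes \emph{which} $3$-torsion point is obtained, and in particular nothing rules out that both constructions land in the same cyclic subgroup of $E[3]$. The $3$-torsion section has local expression $\sigma'(z)\sim\frac{3}{2\pi\sqrt{-1}}\log z$; under the hyperK\"ahler rotation and the asymptotic identification with the Calabi model of Appendix~\ref{app: hkRot} (where, note, the loop you rotate along is $\gamma_2$ in that appendix's notation, not $\gamma_1$), the corresponding translation mixes the $S^1$-fiber direction $\psi$ with the $E$-tangent direction $\xi_2$, and the limiting translation of $E$ is \emph{not} transparently along $\gamma_i$. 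Pinning this down would require a genuine computation in the coordinates of Appendix~\ref{app: hkRot}, which you acknowledge as the subtlest step but do not carry out. As it stands you have produced $\mathbb{Z}_3\subset\mathrm{Aut}(\mathbb{P}^2,E)$ in two ways, not $\mathbb{Z}_3\oplus\mathbb{Z}_3$. (A minor imprecision: the special Lagrangian fibers are homologous to the $2$-cycle $\tilde\gamma_i$, the unit $S^1$-bundle over $\gamma_i$, not to the $1$-cycle $\gamma_i$.)
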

 \begin{proof} From Theorem \ref{513} and Theorem \ref{HKthm}, there exists a special Lagrangian fibration on $\check{X}=\mathbb{P}^2\setminus E$ such that after hyperK\"ahler rotation, it compactifies to the extremal rational elliptic surface
 	with singular configuration $I_9I_1^3$ \cite[Corollary 1.4]{Collins-Jacob-Lin}. Any automorphism $\phi$ of $Y$ preserves the $I_9$ fiber $D$ and thus $\mbox{Aut}(Y)=\mbox{Aut}(Y,D)$.
 	From \cite[Table 11]{Ka}, there exists $\mathbb{Z}_3\oplus \mathbb{Z}_3\subseteq \mbox{Aut}(Y)$ preserving $\Omega$. With the decomposition $\phi=\phi_1\phi_2$ as in Proposition~\ref{aut}, it suffices to check that $\phi_i^*[\omega]=[\omega]\in H_2(X)$, $i=1,2$. Since $\phi_1$ is translation by a section, it is isotopic to the identity. It is easy to see that $\phi_2$ preserves the homology class of the bad cycle. Since $H_2(X)$ is generated by the fiber and the bad cycle, we have $\phi_2^*[\omega]=[\omega]$ as well. Then the corollary follows from Proposition~\ref{aut}. 
 	
 \end{proof}

\appendix

\section{hyperK\"ahler rotation and semi-flat metrics}\label{app: hkRot}

In this appendix we demonstrate how the Calabi ansatz Ricci-flat metric behaves under hyperK\"ahler rotation by constructing an explicit hyperK\"ahler rotation along a special Lagrangian fibration from the Calabi model to the semi-flat model near an $I_k$ fiber.

We begin with an explicit Ricci-flat metric on the total space of an ample line bundle over a torus, called the  Calabi model. Let $D$ be a complex torus, and $L\rightarrow D$ be a holomorphic line bundle of degree $k>0$.  Fix a primitive homology class $[\gamma_1]\in H_1(D,\mathbb{Z})$ represented by a simple closed loop, and let $[\gamma_2]\in H_1(D,\mathbb{Z})$ be a complementary simple closed loop so that $[\gamma_1].[\gamma_2]=1$.  After fixing a point $q_0\in D$, the Abel-Jacobi map identifies $D\cong\mathbb{C}/\Lambda$ for the lattice $\Lambda := \mathbb{Z} + \tau \mathbb{Z}$ with ${\rm Im}(\tau)>0$.  We choose $\gamma_2$ in such a way that $\tau$ lies in a fundamental domain for the $PSL(2,\mathbb{Z})$ action on the upper half-plane; namely ${\rm Re}(\tau) \in [-\frac{1}{2}, \frac{1}{2})$ and $|\tau| \geq 1$.  Let $\xi= \xi_1 +\sqrt{-1}\xi_2$ denote the standard complex coordinate on $\mathbb{C}$.  Through the Abel-Jacobi map we can identify $L$ with the quotient of $\mathbb{C}\times \mathbb{C}$ by the action of $\Lambda$ by
\[
\gamma. (\xi,w) \longmapsto (\xi+\gamma, e_{\gamma}(\xi)w),
\]
where 
\[
e_{\gamma}(\xi) =  a(\gamma)e^{\frac{k\pi}{{\rm Im}(\tau)}({\overline{\gamma}}\xi+\frac{|\gamma|^2}{2})}, \qquad a \in {\rm Hom}(\Lambda, S^1).
\]
Evidently, $a \in {\rm Hom}(\Lambda, S^1)$ is determined by $a(1), a(\tau)$.  Write
\[
a(1) = e^{-\sqrt{-1}k\pi\beta_1}, \qquad a(\tau) = e^{-\sqrt{-1}k\pi\beta_2}.
\]
Let $q= -\beta_2+ \beta_1\tau$.  Let $T_{q}:D\rightarrow D$ denote the translation map.  Then one can check that, up to isomorphism, $T_{q}^*L$ is the bundle determined by
\[
\tilde{e}_{\gamma}(\xi) =  \tilde{a}(\gamma)e^{\frac{k\pi}{{\rm Im}(\tau)}({\overline{\gamma}}\xi+\frac{|\gamma|^2}{2})},
\]
where $\tilde{a} \in {\rm Hom}(\Lambda, S^1)$ has $\tilde{a}(1) = \tilde{a}(\tau)=1$. In particular, after changing the base point for the Abel-Jacobi map, we can assume that $L$ is identified with the bundle of degree $k$ defined by $\tilde{a}\in {\rm Hom}(\Lambda, S^1)$.  For a nice discussion of theory of holomorphic line bundles on abelian varieties we refer the reader to \cite{Beauville}.  We can choose a metric $h=e^{-\phi}$ on $L$ for which the associated curvature form is a multiple of the flat area form, i.e.
\[
\phi(z) = \frac{k\pi}{{\rm Im}(\tau)}|z|^2.
\]
Equivalently, the curvature two from $\Theta(h)= -\ddb \log(h)$ induces a flat Riemannian metric on $D$ given by
\[
g_{D} =  \frac{2\pi k}{{\rm Im}(\tau)} (d\xi_1^2 +d\xi_2^2).\nonumber
\]
Define the space $\mathcal{C}$ to be the punctured tubular neighborhood of the zero section
\[
{\mathcal C}:=\{\zeta\in L\,|\, 0<|\zeta|_h<1\}.\nonumber
\]
The Calabi model is the space $\mathcal C$ equipped with the natural complex structure $J$,  the symplectic form
\[
\omega_{ {J}} = \frac{2}{3}\ddb(-\log|\xi|^2_{h})^{\frac{3}{2}} = -\frac{1}{3}dJd (-\log|\xi|^2_{h})^{\frac{3}{2}}.\nonumber
\]
and the holomorphic volume form with a simple pole on the zero section of $L$ (see below for an explicit formula).  We work in coordinates $(\xi_1,\xi_2)$ on the fundamental domain for $D$. Equip $L$ with complex coordinate $w=re^{\sqrt{-1}\psi}$, so a section $\zeta$ has norm $|\zeta|^2_h=|w|^2e^{-\phi}=r^2 e^{-\phi}$. Furthermore define
\[
\ell:= (-\log|\zeta|^2_{h})^{\frac{1}{2}}\nonumber
\]
and note that 
\[
d\ell = -\ell^{-1}(\frac{dr}{r} - \frac{1}{2}d\phi), \qquad Jd\ell = \ell^{-1}(d\psi +\frac{1}{2}Jd\phi).\nonumber
\]
Introduce the $1$-form $\theta := \ell J d\ell=d\psi +\frac{1}{2}Jd\phi$, and compute
\[
\begin{aligned}
\omega_{J} = -\frac{1}{3}dJd(-\log|\zeta|^2_{h})^{\frac{3}{2}} &= -\frac{1}{2}d\left((-\log|\zeta|^2_{h})^{\frac{1}{2}}J(-2\frac{dr}{r} + d\phi)\right)\\
&=d\left(\ell J(\frac{dr}{r} - \frac{1}{2}d\phi)\right)\\
&= d\ell \wedge (-\theta) -\ell\frac{1}{2}dJd\phi\\
& = \theta \wedge d\ell +\frac{2\pi k \ell}{{\rm Im}(\tau)} d\xi_1\wedge d\xi_2,
\end{aligned}\nonumber
\]
where in the last line we used our explicit formula for the curvature  $\Theta(h)$. Since the Riemannian metric is given by $g(\cdot, \cdot,) = \omega (\cdot, J \cdot)$, one can easily check 
\[
\begin{aligned}
g&= \frac{1}{\ell} \left(\left(\frac{dr}{r}-\frac{1}{2}d\phi\right)^2 + (d\psi +\frac{1}{2}Jd\phi)^2\right) + \ell g_{D}\\
&= \ell(d\ell^2+ g_{D}) + \ell^{-1}\theta^2.
\end{aligned}\nonumber
\]
Written in this way, the metric is in the form of the Gibbons-Hawking ansatz (see e.g. \cite{GW, HSVZ}). Here the harmonic function (usually denoted $V$) is simply the coordinate function $\ell$.

To get a better feel for the one-form $\theta$ is precisely the connection on the $S^1$ principal bundles $\{|s|_{h}={\rm const}\} \subset L$. induced by the Chern connection.  In particular, a parallel section of $L$ lies in the kernel of $\theta$.  

The Calabi space $\mathcal{C}$ has a natural holomorphic volume form with a simple pole on the zero section of $L$.  For our purposes we choose the normalization
\[
\begin{aligned}
\Omega_{J} &:= \sqrt{-1}\frac{\bar{\tau}}{|\tau|}\left(\frac{2\pi k}{{\rm Im}(\tau)}\right)^{\frac{1}{2}}\frac{dw}{w}\wedge (d\xi_1+\sqrt{-1}d\xi_2)\\
& = \sqrt{-1}\frac{\bar{\tau}}{|\tau|} \left(\frac{2\pi k}{{\rm Im}(\tau)}\right)^{\frac{1}{2}}(\frac{1}{2} d\phi -\ell d\ell + \sqrt{-1}d\psi)\wedge (d\xi_1+\sqrt{-1}d\xi_2).
\end{aligned}
\]
With this normalization we have
\[
\omega_{J}^2 = \frac{1}{2}\Omega_{J}\wedge \overline{\Omega}_{J}.
\]
Let $L_{(\xi_1, \xi_2)}$ denote the fiber of $L$ over $(\xi_1, \xi_2) \in D$.  Consider the torus fibration of $\mathcal{C}$ by 
\begin{equation}\label{eq: McK}
M_{c, K} = \bigg\{(\xi_1,\xi_2, s) \in D\times L_{(\xi_1,\xi_2)}\bigg| \begin{aligned} &{\rm Im}(\tau)\xi_1- {\rm Re}(\tau)\xi_2=c\\  &(-\log|s|^2_{h})^{\frac{1}{2}} = K \end{aligned} \bigg\}.
\end{equation}
The curves $\{{\rm Im}(\tau)\xi_1- {\rm Re}(\tau)\xi_2=c\}$ represent the homology class ${[\gamma_2]\in H_1(D,\mathbb{Z})}$.  Note that, up to the action of $PSL(2,\mathbb{Z})$, our choice of simple closed curve $[\gamma_2]\in H_1(D,\mathbb{Z})$ is arbitrary.  We parametrize these lines by the constant $c$, chosen so that this line intersects the $\xi_2=0$ line in the fundamental domain (ie. $\frac{c}{{\rm Im}(\tau)} \in [0,1)$); such a choice is possible since ${\rm Im}(\tau)\ne 0$.  We claim that the $M_{c,K}$ form a special Lagrangian torus fibration; ie.
\begin{equation}\label{eq: McksLag}
\omega_{J}\big|_{M_{c,K}}=0 \qquad {\rm Im}(\Omega_{J})\bigg|_{M_{c,K}}=0.
\end{equation}
The first equality is clear since $\ell=K$ on $M_{c,K}$ by definition.  For the second equality, observe that
\[
d\xi_1 +\sqrt{-1}d\xi_2\big|_{M_{c,K}} = \tau \frac{d\xi_2}{{\rm Im}(\tau)}.
\]
Since $\phi$ depends only on $\xi_1,\xi_2$, we have
\[
\begin{aligned}
\Omega_{J}\bigg|_{M_{c,K}} &= \sqrt{-1}\frac{\bar{\tau}}{|\tau|}\left(\frac{2\pi k}{{\rm Im}(\tau)}\right)^{\frac{1}{2}}\left(\sqrt{-1}d\psi \wedge \tau \frac{d\xi_2}{{\rm Im}(\tau)}.\right)\\
&= -\frac{|\tau|}{{\rm Im}(\tau)}\left(\frac{2\pi k}{{\rm Im}(\tau)}\right)^{\frac{1}{2}}d\psi \wedge d\xi_2
\end{aligned}
\]
which proves the second equality in~\eqref{eq: McksLag}.  Thus, the fibration induced by the map
\[
\pi(\ell, \psi, \xi_1, \xi_2)= (\ell, {\rm Im}(\tau)\xi_1 - {\rm Re(\tau)}\xi_2),
\]
is a special Lagrangian fibration for the Calabi-Yau structure $(\omega_J, \Omega_{J})$.  We can now hyperK\"ahler rotate so that this fibration becomes a genus $1$ holomorphic fibration.  To ease notation, let us define
\begin{equation}\label{eq: atauDef}
a_{\tau} = \frac{{\rm Im}(\tau)}{|\tau|}, \qquad b_{\tau} = -\frac{{\rm Re}{\tau}}{|\tau|},\qquad c_{\tau} = \left(\frac{2\pi k}{{\rm Im}(\tau)}\right)^{\frac{1}{2}}.
\end{equation}
Consider the symplectic forms
\begin{equation}\label{eq: ansatzHKTrip}
\begin{aligned}
\omega_{J} &= \theta \wedge d\ell +c_{\tau}^2 d\xi_1\wedge d\xi_2\\
\omega_{I} &= c_{\tau}\left(\theta\wedge d\xi_2 + \ell d\ell \wedge d\xi_1\right)\\
\omega_{K} &=c_{\tau}\left( d\xi_1\wedge\theta +\ell d\ell \wedge d\xi_2\right).
\end{aligned}
\end{equation}
One can easily check that these symplectic forms are closed and generate a hyperK\"ahler triple (using, e.g. \cite{Don06}). The associated complex structures $J, I, K$ are given by
\begin{align}
J d\ell &= \ell^{-1}\theta&Jd\xi_1 &= -d\xi_2\nonumber\\
I d\xi_2 &=c_{\tau}^{-1}\ell^{-1}\theta&    Id\xi_1 &=c_{\tau}^{-1}d\ell\nonumber\\
K d\xi_1 &= -c_{\tau}^{-1}\ell^{-1}\theta&  K d\xi_2&= c_{\tau}^{-1}d\ell.\nonumber
\end{align}

\noindent Using this hyperK\"ahler triple, we consider the symplectic form
\begin{equation}\label{eq: omegaTau}
\begin{aligned}
\omega_{\tau} &= a_{\tau}\omega_{I} +b_{\tau} \omega_{K}\\
 &= c_{\tau}\bigg( \theta \wedge (a_{\tau}d\xi_2-b_{\tau}d\xi_1) +\ell d\ell \wedge (a_{\tau}d\xi_1+b_{\tau}d\xi_2)\bigg),
 \end{aligned}
\end{equation}
along with the associated complex structure $J_{\tau}:=a_\tau I + b_{\tau} K$.

Define a $J_{\tau}$ holomorphic coordinate by
\[
y = y_1 +\sqrt{-1}y_2 :=|\tau| c_{\tau}^{-1}\ell + \sqrt{-1}({\rm Im}(\tau) \xi_1-{\rm Re}(\tau)\xi_2).
\]
Note that under the lattice $\mathbb{Z}+\tau\mathbb{Z}$ we have
 \[
 y_{2} \sim y_{2} +{\rm Im}(\tau)\cdot \mathbb{Z}.
 \]
It follows that the fibration $\pi$ is holomorphic with respect to $J_{\tau}$ with the fibers given by $\{y={\rm const}\}$. We next need to construct a $J_{\tau}$ holomorphic coordinate on the (universal cover of the) total space $\mathcal{C}$ which restricts to a coordinate along the fibers of $\pi$.  We will call this coordinate $x= x_1+\sqrt{-1}x_2$.  Define 
\[
x_2= c_{\tau}\ell \xi_2
\]
 which is well defined on the universal cover. Then 
\begin{equation}\label{eq: Jdx2}
J_{\tau} dx_2 = a_{\tau}\theta - c_{\tau}^2(a_{\tau}\xi_2d\xi_1+b_{\tau}\xi_2 d\xi_2) +b_{\tau}\ell d\ell
\end{equation}
which is a well-defined, closed $1$-form, since
\[
d\theta= \frac{1}{2}dJd\phi = -\ddb \phi = -c_{\tau}^2d\xi_1\wedge d\xi_2.
\]
In order to construct the holomorphic coordinate $x$, it suffices to find a $J_{\tau}$ holomorphic section $\sigma$ of the fibration such that $Jdx_2\big|_{\sigma} =0$.  If such a section can be found then we can integrate the closed $1$-form $J_{\tau}dx_2$ to find $x_1$. Consider a general map
\[
\sigma(y) \longmapsto (\ell(y),  \psi(y), \xi_1(y), \xi_2(y)) = \left(\frac{c_\tau}{|\tau|} y_1, \psi(y_1,y_2), \frac{y_2}{{\rm Im}(\tau)},0\right).
\]
The easiest way to find a holomorphic section of the fibration is to find a section of the fibration which is special Lagrangian for the data $(\omega_J, \Omega_J)$.  We can appeal to equations~\eqref{eq: ansatzHKTrip} to see that 
\[
\begin{aligned}
\omega_{J}\big|_{\sigma} &=\theta \wedge d\ell = \theta \wedge \frac{c_{\tau}}{|\tau|}dy_1\\
{\rm Im}(\Omega_{J})\big|_{\sigma} &=\sqrt{-1}\frac{\bar{\tau}}{|\tau|} c_{\tau}(\frac{1}{2}d\phi -\ell d\ell +\sqrt{-1}d\psi)\wedge d\xi_1\\
&=\sqrt{-1}\frac{\bar{\tau}}{|\tau|} c_{\tau}(-\ell d\ell +\sqrt{-1}d\psi)\wedge d\xi_1\\
&= \sqrt{-1}\frac{\bar{\tau}}{|\tau|} c_{\tau}(-\frac{c_{\tau}^2}{|\tau|^2}y_1 dy_1 +\sqrt{-1}d\psi)\wedge \frac{dy_2}{{\rm Im}(\tau)}.
\end{aligned}
\]
From the equation $\omega_{J}|_{\sigma}$ we see that $\theta|_{\sigma}=0$, or in other words, we should look for a parallel section $s : \{\xi_2=0\} \rightarrow L\big|_{\{\xi_2=0\}}$.  Let us show that such a section exists.  Let $s_0 \in L_{(0,0)}$ be a unit length section. From the formula for $\phi$ we have 
\[
\frac{1}{2}Jd\phi = \frac{k\pi}{{\rm Im}(\tau)}  \xi_2d\xi_1-\xi_1d\xi_2,
\]
and so restricting to $\{\xi_2=0\}$ gives $\theta\big|_{\{\xi_2=0\}} = d\psi$.  Thus, under parallel transport we have $\psi(\xi_1) = \psi(0)$.  Recall that, from our choice of base point for the Abel-Jacobi map, the identification of the fibers $L_{(0,0)}$ and $L_{(1,0)}$ is given by the transition function $e_1(0)= e^{\frac{k\pi}{2{\rm Im}(\tau)}}$, which is real.  Since the connection is unitary, parallel transport along $x_2=0$ has trivial monodromy and so any parallel section is well-defined.  Let $s_0(\xi_1)$ be the parallel transport of $s_0$ and consider the map
\[
(\ell, \xi_1)\mapsto\sigma(\ell, x_1):= e^{-\frac{\ell^2}{2}-\sqrt{-1}\frac{b_\tau \ell^2}{2a_{\tau}}}s_0\left(\xi_1\right)  \in L_{\xi_1,0},
\]
or, written in terms of the coordinates on the base of the fibration,
\[
(y_1,y_2) \mapsto\sigma(y_1,y_2):=  e^{-\frac{c_{\tau}^2y_1^2}{2}-\sqrt{-1}\frac{b_\tau c_{\tau}^2 y_1^2}{2a_{\tau}}}s_0\left(\frac{y_2}{{\rm Im}(\tau)}\right) \in \pi^{-1}(y_1,y_2).
\]
It remains to show that this section is holomorphic and that $J_{\tau}dx_2\big|_{\sigma}=0$.  We will work in coordinates $\xi_1, \ell$ to avoid unnecessary factors of $c_{\tau}$.  Since $s_0(\xi_1)$ is parallel, we have
\[
\theta|_{\sigma} = -\frac{b_\tau \ell}{a_{\tau}}d\ell
\]
and so $\omega_{J}\big|_{\sigma}=0$.  Similarly, we have
\[
d\psi = \frac{\del \psi}{\del \xi_1} d\xi_1 + \frac{\del \psi}{\del \ell} d\ell = -\frac{1}{2}\frac{\del \phi}{\del \xi_2} d\xi_1 -\frac{b_\tau \ell}{a_{\tau}}d\ell
\]
where we used again that $s_0(\xi_1)$ is parallel.  Therefore
\[
\begin{aligned}
\Omega_{J}\big|_{\sigma} &= \sqrt{-1}\frac{\bar{\tau}}{|\tau|} c_{\tau}(-\ell d\ell -\sqrt{-1}\frac{b_\tau \ell}{a_{\tau}}d\ell)\wedge d\xi_1\\
&= \sqrt{-1}\frac{\bar{\tau}}{|\tau|} \frac{c_{\tau}}{|\tau|a_{\tau}}(-{\rm Im}(\tau) +\sqrt{-1}{\rm Re}(\tau))\ell d\ell\wedge d\xi_1\\
&=-\frac{c_{\tau}}{a_{\tau}}\ell d\ell\wedge d\xi_1.
\end{aligned}
\]
So indeed we have ${\rm Im}(\Omega_{J})\big|_{\sigma} =0$ and hence $\sigma$ is holomorphic with respect to $J_{\tau}$. Furthermore, using again the parallel transport equation we have
\[
J_{\tau}dx_2\big|_{\sigma} = a_{\tau}( -\frac{b_\tau \ell}{a_{\tau}}d\ell)+ b_{\tau}\ell d\ell =0.
\]
This completes the construction of the holomorphic coordinate $x$. 

Let us now identify the lattice of the torus fibration.  Without loss of generality, we can assume that $\psi(s_0(0,0)) =0$. First, it is clear that $x \sim x+a_{\tau}2\pi$, which follows from computing the integral
\[
\int_{(\psi, \xi_2)=(-\frac{b_{\tau}}{2a_{\tau}}\ell^2, 0)}^{(2\pi-\frac{b_{\tau}}{2a_{\tau}}\ell^2, 0)} dx_1. 
\]
Next we claim that
\begin{equation}\label{eq: appxsim}
x\sim x+a_{\tau}2\pi, \qquad x\sim x+ \sqrt{-1}c_{\tau}^2y_1\frac{{\rm Im}(\tau)}{|\tau|} -\frac{2\pi k}{|\tau|}y_2.
\end{equation}
To see this, recall that the fiber of $\pi$ over $(\ell, y_2)=(K, c)$ is given by~\eqref{eq: McK}.

Under the action of $\mathbb{Z}+\tau \mathbb{Z}$ we have
\begin{equation}\label{eq: startEndPts}
\left(\frac{c}{{\rm Im}(\tau)}, 0s_0(\xi_1)\right) \sim \left(\frac{c}{{\rm Im}(\tau)}+ {\rm Re}(\tau), {\rm Im}(\tau), a(\tau)e^{\frac{k\pi}{{\rm Im}(\tau)}(\bar{\tau}\xi_1)+\frac{|\tau|^2}{2}}s_0(\xi_1)\right),
\end{equation}
and from our choice of origin we have $a(\tau)=1$.  Thus, it suffices to compute the integral of $dx_1= Jdx_2$ over any curve in $M_{c,K}$ connecting the points on the left and right hand sides of~\eqref{eq: startEndPts}. Along the fiber we can write $dx_1=a_{\tau}d\psi - \frac{k\pi}{{\rm Im}(\tau)|\tau|}y_2 d\xi_2$, and so
\[
\begin{aligned}
\int_{(\psi, \xi_2)=(-\frac{b_{\tau}}{2a_{\tau}}\ell^2, 0)}^{(-k\pi\frac{y_2}{{\rm Im}(\tau)}-\frac{b_{\tau}}{2a_{\tau}}\ell^2, {\rm Im}(\tau))} dx_1=&\int_{(\psi, \xi_2)=(-\frac{b_{\tau}}{2a_{\tau}}\ell^2, 0)}^{(-k\pi\frac{y_2}{{\rm Im}(\tau)}-\frac{b_{\tau}}{2a_{\tau}}\ell^2, {\rm Im}(\tau))} a_{\tau}d\psi - \frac{k\pi}{{\rm Im}(\tau)|\tau|}y_2 d\xi_2\\
&=-a_{\tau} k\pi\frac{y_2}{{\rm Im}(\tau)} - \frac{k\pi}{|\tau|}y_2 = -\frac{2k\pi}{|\tau|}y_2.
\end{aligned}
\]
Writing these relations in terms of $y_1, y_2$ yields~\eqref{eq: appxsim}.  Define 
\[
\tilde{x}= \frac{x}{a_{\tau}2\pi}.
\]  
Then using the definition of $a_{\tau}, c_{\tau}$ we arrive at
\[
\tilde{x}\sim \tilde{x}+1, \qquad \tilde{x} \sim \tilde{x} + \sqrt{-1}\frac{ky_1}{{\rm Im}(\tau)}- \frac{k}{{\rm Im}(\tau)}y_2.
\]
Set $ z= e^{-\frac{2\pi}{{\rm Im}(\tau)}(y_1+\sqrt{-1}y_2)}$ so that $z$ is a well defined coordinate on $\Delta = \{|z|<1\} \subset \mathbb{C}$.  Then the holomorphic fibration $\pi: (\mathcal{C}, J_{\tau})\rightarrow \Delta^*$ is determined by the lattice
\[
\mathbb{Z} \oplus \frac{k}{2\pi \sqrt{-1}}\log(z) \cdot \mathbb{Z}.
\]
Let us write the symplectic form $\omega_{\tau}$, given in equation~\eqref{eq: omegaTau}, in the $(\tilde{x}, z)$ coordinates.  As a first step, we will write the symplectic form in the coordinates $(x,y)$.  The most laborious term to rewrite is the term in~\eqref{eq: omegaTau} involving $\theta$, and so we will take this on first.  Recall that these coordinates are given by
\[
\begin{aligned}
dx_1&= a_{\tau}\theta - c_{\tau}^2(a_{\tau}\xi_2d\xi_1+b_{\tau}\xi_2 d\xi_2) +b_{\tau}\ell d\ell,& \qquad x_2&= c_{\tau}\ell\xi_2,\\
y_1&= |\tau| c_{\tau}^{-1}\ell,& \qquad y_2 &=({\rm Im}(\tau) \xi_1-{\rm Re}(\tau)\xi_2).
\end{aligned}
\]
As a first step, we calculate $d\xi_1, d\xi_2$ in terms of $(x,y)$.  Note that we can write $x_2= c_{\tau}^2|\tau|^{-1}y_1\xi_2$, and so
\[
d\xi_2 = \frac{|\tau|}{c_{\tau}^2y_1}\left(dx_2-\frac{x_2}{y_1}dy_1\right).
\]
From the formula for $y_2$ we get
\[
 d\xi_1 = \frac{dy_2}{a_{\tau}|\tau|}-\frac{|\tau|b_{\tau}}{a_{\tau}c_{\tau}^2y_1}\left(dx_2-\frac{x_2}{y_2}dy_1\right).
\]
Combining these formulas gives
\[
\begin{aligned}
a_{\tau}d\xi_2-b_{\tau}d\xi_1 &= \left(\frac{a_{\tau}|\tau|}{c_{\tau}^2}+\frac{|\tau|b_{\tau}^2}{a_{\tau}c_{\tau}^2}\right)\frac{1}{y_1}\left(dx_2-\frac{x_2}{y_2}dy_1\right)- \frac{b_{\tau}}{a_{\tau}|\tau|}dy_2\\
&= \frac{|\tau|^2}{2\pi ky_1}\left(dx_2-\frac{x_2}{y_1}dy_1\right)- \frac{b_{\tau}}{a_{\tau}|\tau|}dy_2,
\end{aligned}
\]
where in the last line we used the definition of $a_{\tau}, b_{\tau}, c_{\tau}$ (see ~\eqref{eq: atauDef}).  Let us now rewrite $\theta$. From~\eqref{eq: Jdx2} we have
\[
\begin{aligned}
a_{\tau}\theta &= dx_1+c_{\tau}^2(a_{\tau}\xi_2d\xi_1+b_{\tau}\xi_2 d\xi_2) -b_{\tau}\ell d\ell\\
&= dx_1 + \frac{x_2}{y_1}dy_2 - \frac{b_{\tau} c_{\tau}^2}{|\tau|^2} y_1dy_1.
\end{aligned}
\]
Define a $(1,0)$-form by
\[
\begin{aligned}
E &:= \frac{|\tau|^2}{2\pi k} \left(dx+\frac{x_2}{\sqrt{-1}y_1}dy\right) - \frac{b_{\tau}}{|\tau|a_{\tau}}y_1dy\\
&= \frac{|\tau|^2}{2\pi k} \left(\left(dx+\frac{x_2}{\sqrt{-1}y_1}dy\right) - \frac{b_{\tau}c_{\tau}^2}{|\tau|^2}y_1dy\right),
\end{aligned}
\]
where we used again the definitions of $a_{\tau}, c_{\tau}$.  Then we can write
\[
\theta=\frac{2\pi k}{a_{\tau}|\tau|^2}{\rm Re}(E), \quad a_{\tau}d\xi_2-b_{\tau}d\xi_1 = \frac{1}{y_1} {\rm Im}(E)
\] 
and so
\[
\omega_{\tau} = \frac{\sqrt{-1}}{2} c_{\tau}^3 \left( \frac{y_1}{|\tau|^3}dy\wedge d\bar{y} + \frac{1}{y_1|\tau|} E\wedge \overline{E}\right).
\]
We now write this in terms of $(\tilde{x}, z)$, where we recall that
\[
\tilde{x} = \frac{x}{2\pi a_{\tau}}, \qquad \log z = -\frac{2\pi}{{\rm Im}(\tau)} y.
\]
We have
\[
y_1 dy\wedge d\bar{y} = \left(\frac{{\rm Im}(\tau)}{2\pi}\right)^3 (-\log|z|)\frac{dz\wedge d\bar{z}}{|z|^2}.
\]
We also compute
\[
E= \frac{|\tau|{\rm Im}(\tau)}{k}\left(\left(d\tilde{x} - \frac{\tilde{x}_2}{\sqrt{-1}z(-\log|z|)} dz\right) + \frac{b_{\tau}k}{4\pi^2|\tau|} (-\log|z|) \frac{dz}{z}\right).
\]
After some routine manipulations we arrive at
\[
\omega_{\tau} = \alpha \left(\frac{W^{-1}\alpha}{\epsilon} \sqrt{-1}\frac{dz \wedge d\bar{z}}{|z|^2} + \frac{W\epsilon}{\alpha}\frac{\sqrt{-1}}{{2}} \tilde{E}\wedge \overline{\tilde{E}}\right)
\]
where
\[
W= \frac{2\pi}{k |\log|z||}, \qquad \epsilon = 2\pi|\tau| \sqrt{\frac{2\pi k}{{\rm Im}(\tau)}},\qquad  \alpha = \frac{\sqrt{k\pi {\rm Im}(\tau)}}{|\tau|}
\]
and we have denoted
\[
 \tilde{E} = \left(d\tilde{x} - \frac{\tilde{x}_2}{\sqrt{-1}z(-\log|z|)} dz\right) + \frac{b_{\tau}k}{4\pi^2|\tau|} (-\log|z|) \frac{dz}{z}.
\]
If ${\rm Re}(\tau)=0$ then this is clearly a rescaling of a standard semi-flat metric in the sense of Definition~\ref{defn: sf metric}. On the other hand  if ${\rm Re}(\tau) \ne 0$ then
\[
\frac{2}{k}\frac{1}{2|\tau|}b_{\tau}k= \frac{b_{\tau}}{|\tau|} = \frac{-{\rm Re}(\tau)}{|\tau|^2} \in(-1,0)\cup (0,1)
\]
since ${\rm Re}(\tau)\in [-\frac{1}{2}, \frac{1}{2})$ and $|\tau| \geq 1$.  Thus, $\frac{1}{2|\tau|}b_{\tau}k \notin \frac{k}{2}\mathbb{Z}$ and so by Lemma~\ref{lem: cohomProps} $\omega_{\tau}$ is not a rescaling of a standard  semi-flat metric.  Thus, if ${\rm Re}(\tau)\ne 0$, the hyperK\"ahler rotation of the Tian-Yau metric is always a non-standard semi-flat metric in the sense of Definition~\ref{defn: non-stand sf metric}.

\section{Hein's construction}\label{app: HeinApp}

In this appendix we provide the necessary details needed for Theorem \ref{thm: nonStHein}, adapted from \cite{Hein}.  In particular, we show that with our modified assumptions one can still construct a background K\"ahler metric with the properties needed to deduce the existence of the desired Calabi-Yau metric from the results of \cite{Hein}.  We emphasize that the discussion here can be extracted directly from Hein's work.

Recall the setting of Theorem \ref{thm: nonStHein}.  We let $\omega_0$ be a smooth K\"ahler metric on $X$ satisfying
\[
[\omega_{0}]_{dR}.[F] =\epsilon,\qquad[\omega_{0}]_{dR}.[C] = \frac{2b_0}k\epsilon \notin \mathbb{Z} \epsilon.
\]
In order construct the background K\"ahler metric, we need to glue $\omega_0$ to a semi-flat metric in the neighborhood of the singular $I_k$ fiber.  By Corollary \ref{cor: local deRham} and Corollary~\ref{cor: locBCKahler} (cf. \cite[Claim 1]{Hein}), there exists a (possibly nonstandard) semi flat metric $\omega_{sf,b_0,\sigma,\epsilon}$ such that
\[
[\omega_{0}]_{BC} = [\omega_{sf, \sigma,b_0,\epsilon}]_{BC} \in H^{1,1}_{BC}(X_{\Delta^*}, \mathbb{R}).
\]
We can then write
\[
\omega_{sf,\sigma, b_0,\epsilon} =\omega_{0}+ \ddb u_1
\]
for some (non-unique) function $u_1:X_{\Delta^*}\rightarrow\mathbb R$. To ease notation, let us suppress the dependence of the metric on the section $\sigma$, writing $\omega_{sf, b_0, \epsilon}$ in place of $\omega_{sf,\sigma, b_0,\epsilon}$. We work on $X_{mod}$ with model coordinates. First, define
\[
\omega_{sf,b_0,\epsilon}(\alpha) =\sqrt{ \alpha }\omega_{sf, b_0,\frac{\epsilon}{\sqrt{\alpha}}}
\]
for $\alpha>0$.  Note that this notation differs slightly from the definition in the main body of the paper where we have used the convention $\alpha \mapsto \alpha^2$.  We have chosen this convention for the appendix as it is consistent with Hein's notation \cite{Hein}.   We have
\[
\omega_{sf,b_0,\epsilon}(\alpha) - \omega_{sf,b_0,\epsilon} = (\alpha-1)\sqrt{-1}|\kappa(z)|^2 \frac{k|\log|z||}{2\pi \epsilon}\frac{dz\wedge d\bar{z}}{|z|^2} =(\alpha-1)\ddb u
\]
for some smooth potential $u$, which exists since $\Delta^*$ satisfies the $\bar\partial$-Poincar\'e lemma. Note that $u$ is the same function  for both standard and non-standard semi-flat metrics and hence has all the same properties as the similarly defined function $u$ in \cite{Hein}.  We now write
\[
\omega_{sf, b_0,\epsilon}(\alpha) = \omega_{0} + \ddb u_{\alpha}
\]
where $u_{\alpha} = u_1 + (\alpha-1)u$.  Since these functions are not unique we fix choices for them once and for all. 

Introduce the notation $\Delta(r) = \{ |z|<r\} \subset \Delta$.    Let $\psi= \psi_{r,s}$ be a radial cut-off function with $\psi \equiv 1$ on $\Delta(r+s)$ and ${\rm supp}(\psi) \subset \Delta(r+2s)$, satisfying
\[
s|\psi_{z}| + s^2 |\psi_{z\bar{z}}| < C_0.
\]
As in \cite{Hein}, we let $C_0$ denote a constant which depends only on fixed data and $C_0(r,s)$ a constant depending on $r,s$.  Let $\beta$ be a  $(1,1)$ form on $\mathbb{P}^1$ such that ${\rm supp}(\beta) \subset \Delta(r+3s)\setminus \Delta(r)$, $0\leq \beta \leq |dz|^2$, and $\beta = |dz|^2$ on $\Delta(r+2s)\setminus \Delta(r+s)$.  We identify $\psi, \beta$ with forms on $X_{\Delta^*}$.

First, since the function $\ddb u$ does not depends on whether the semi-flat metric is standard or non-standard, we have

\begin{lem}[Claim 2, \cite{Hein}]\label{lem: Claim2}
If $C_0r<1, C_0s<r$, and $v$ is harmonic with the same boundary values as $u$ on $\Delta(r+3s)\setminus \Delta(r)$, then
\[
\sup_{\Delta(r+2s)\setminus\Delta(r+s) }(s^{-2}|u-v| +s^{-1}|(u-v)_{z}|) \leq C_0 \sup_{\Delta(r+3s)\backslash \Delta(r)} u_{z\bar{z}}
\]
\end{lem}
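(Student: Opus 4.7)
The plan is to analyze the difference $w := u - v$, which satisfies $w_{z\bar z} = u_{z\bar z}$ on the annulus $A := \Delta(r+3s)\setminus\overline{\Delta(r)}$ since $v$ is harmonic there, and vanishes on $\partial A$. Let $M := \sup_{A} u_{z\bar z}$, which is non-negative because $u$ is a local potential for the K\"ahler form $\omega_{sf,b_0,\epsilon}$. The strategy has two steps: first a maximum principle argument for the $C^0$ bound, then an interior gradient estimate obtained by rescaling a disk of radius $s$ to the unit disk.

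For the $C^0$ bound I would construct a radial barrier $\phi(z) = \Phi(|z|)$ on $\overline{A}$ with $\phi \geq 0$, $\phi_{z\bar z} \geq M$, and $\phi \leq C_0 M s^2$. In polar coordinates, the equation $\phi_{z\bar z} = M$ for a radial function becomes $\Phi''(\rho) + \rho^{-1}\Phi'(\rho) = 4M$ on $[r, r+3s]$. Because the hypotheses $r < C_0^{-1}$ and $s < C_0^{-1} r$ force $\rho^{-1}$ to be bounded below by $(r+3s)^{-1}$ and above by $r^{-1}$, while the width $3s$ of the interval is much smaller than $r$, a routine ODE argument produces a solution with zero boundary data satisfying $|\Phi| \leq C_0 M s^2$. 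Then $\phi \pm w$ are (sub/super)harmonic on $A$ with vanishing boundary values, so the maximum principle yields $|w| \leq \phi \leq C_0 M s^2$ throughout $A$.

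For the gradient bound, I would apply a standard interior estimate for Poisson's equation on balls of radius $s$. For any $z_0 \in \Delta(r+2s)\setminus\Delta(r+s)$, the disk $B_s(z_0)$ lies inside $A$; rescaling $B_s(z_0)$ to the unit disk and invoking the usual $C^1$ estimate for $\Delta w = 4 u_{z\bar z}$ gives
\[
|w_z(z_0)| \leq C_0\bigl(s^{-1}\sup_{B_s(z_0)}|w| + s \sup_{B_s(z_0)} u_{z\bar z}\bigr) \leq C_0 M s,
\]
which, after multiplying by $s^{-1}$, is precisely the required gradient control. The main technical care — rather than a genuine obstacle — is to check that all constants obtained this way are independent of $r,s$ within the admissible range: the hypothesis $C_0 s < r$ makes the annulus comparable to a flat strip of width $3s$ at scale $s$, so the radial barrier behaves like its Euclidean counterpart and the rescaled Poisson estimate uses universal constants, while $C_0 r < 1$ keeps the construction inside $\Delta$ where $u$ is defined.
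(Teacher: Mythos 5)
The paper does not actually prove this lemma; it is quoted from Hein as ``Claim 2'' and used as a black box, so there is no in-paper argument to compare against. Judged on its own, your proof is correct and is the standard approach (presumably also Hein's): a radial barrier plus the maximum principle for the zeroth-order bound, then a rescaled interior elliptic estimate for the gradient. Two remarks on the write-up. First, a sign slip: with zero boundary data and $\phi_{z\bar z}=M>0$, the radial solution is \emph{non-positive} on $A$ (it is subharmonic and vanishes on $\partial A$), so one cannot have both $\phi\geq 0$ and $\phi_{z\bar z}\geq M$; the correct comparison, using $0\leq u_{z\bar z}\leq M$, is $\phi\leq w\leq -\phi$, giving $|w|\leq|\phi|$. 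Second, and more substantively, the claim $|\Phi|\leq C_0Ms^2$ genuinely needs verification: the explicit radial solution $\Phi(\rho)=b\log(\rho/r)+M(\rho^2-r^2)$, with $b=-M\,\frac{(r+3s)^2-r^2}{\log\left((r+3s)/r\right)}$, has each term individually of size $O(Msr)$, which is worse than $Ms^2$ when $s\ll r$; the two contributions cancel to leading order in $s/r$, and it is this cancellation that produces the $Ms^2$ bound. Your ``flat strip of width $3s$'' heuristic is exactly the right explanation, and can be made precise either by expanding the logarithms or, more cleanly, by the substitution $\tau=\log(\rho/r)$, which straightens the annulus to an interval of length $\log(1+3s/r)=O(s/r)$ on which the one-variable barrier estimate $|\Phi|\leq\tfrac18\,\bigl(\sup|\Phi''|\bigr)\cdot(\text{length})^2$ gives $O(Ms^2)$. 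The gradient step, rescaling $B_s(z_0)$ to the unit disk and invoking the interior $C^1$ estimate for the Poisson equation, is correct as stated.
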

Define the $(1,1)$ form
\[
\omega_\alpha( t) := \begin{cases} \omega_0+ t\beta + \ddb (\psi \widetilde{u}_{\alpha}) & \text{ outside} \,\,X_{\Delta(r)}\\
							\omega_0 + t\beta + \ddb u_{\alpha} & \text{ over } X_{\Delta(r+s)^{*}}
				\end{cases}
\]
where 
\[
\widetilde{u}_{\alpha} = u_1+ (\alpha-1) (u-v) \qquad \text{ on } X_{\Delta(r+3s)\backslash \Delta(r)}.
\]
A few remarks are in order.  First, $\beta$ is  not exact on $\mathbb{P}^1$, but it is exact on $\mathbb{P}^1\setminus \{\infty\}$, thus $\omega_\alpha(t)$ is in the same Bott-Chern class as $\omega_0$.  Secondly, note that
\begin{equation}\label{eq: gluedMetric}
\omega_\alpha(t) = \begin{cases} \omega_0 &\text{outside}\,\, X_{\Delta(r+3s)}\\
\omega_{sf,\sigma, b_0,\epsilon}(\alpha) &\text{ over } X_{\Delta(r)^*}
\end{cases}
\end{equation}
We now check that this form is positive.  

\begin{lem}[\cite{Hein}, Claim 3]\label{lem: Claim 3}
There exists a constant $C_0(r,s) > 0$, depending only on $\psi, u_1$, such that if $t>C_0(r,s) + C_0|\alpha-1|\sup_{\Delta(r+3s)\backslash \Delta(r)} u_{z\bar{z}}$, then, on all of $X$ we have
\[
\omega_\alpha(t) \geq \frac{1}{2} (\omega_0 + \psi \ddb u_{\alpha})>0.
\]
\end{lem}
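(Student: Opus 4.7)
The plan is to check the inequality $\omega_\alpha(t) \geq \tfrac{1}{2}(\omega_0 + \psi\ddb u_\alpha) > 0$ region by region on $X$, decomposed according to the supports of the cutoff $\psi$ and the cohomology-correction $\beta$. Before turning to the verification, I would record two preliminary observations that control the whole argument. First, since $v$ is harmonic on the base annulus $\Delta(r+3s)\setminus\Delta(r)$, its pullback to $X$ satisfies $\ddb v = 0$; hence $\ddb\tilde u_\alpha = \ddb u_\alpha$ on $X_{\Delta(r+3s)\setminus\Delta(r)}$, which makes the two definitions of $\omega_\alpha(t)$ agree on the overlap $X_{\Delta(r+s)\setminus\Delta(r)}$ and reduces $\ddb(\psi\tilde u_\alpha) - \psi\ddb u_\alpha$ to terms involving derivatives of $\psi$. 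Second, the difference $\tilde u_\alpha - u_1 = (\alpha-1)(u-v)$ depends only on the base coordinate $z$, so its derivative in the fiber direction is independent of $\alpha$.

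With these in hand, on the inner region $X_{\Delta(r+s)^*}$, where $\psi\equiv 1$, one gets $\omega_\alpha(t) = \omega_{sf,b_0,\epsilon}(\alpha) + t\beta$ and $\omega_0 + \psi\ddb u_\alpha = \omega_{sf,b_0,\epsilon}(\alpha)$, so the inequality is immediate from $t\beta \geq 0$. On the outer region $X\setminus X_{\Delta(r+2s)}$, where both $\psi$ and $\beta$ vanish, $\omega_\alpha(t) = \omega_0 = \omega_0 + \psi\ddb u_\alpha$, and the inequality holds trivially as well.

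The real work is confined to the transition annulus $X_{\Delta(r+2s)\setminus\Delta(r+s)}$, where $\beta = |dz|^2$. Expanding via the Leibniz rule and using $\ddb v = 0$ yields
\[
\omega_\alpha(t) - \tfrac{1}{2}(\omega_0 + \psi\ddb u_\alpha) = \tfrac{1}{2}\bigl((1-\psi)\omega_0 + \psi\,\omega_{sf,b_0,\epsilon}(\alpha)\bigr) + t\,|dz|^2 + \mathcal{E},
\]
where $\mathcal{E} := \tilde u_\alpha\,\psi_{z\bar z}\,dz\wedge d\bar z + \psi_z\,dz\wedge\bar\partial\tilde u_\alpha + \partial\tilde u_\alpha\wedge\psi_{\bar z}\,d\bar z$. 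The first bracket is a convex combination of two K\"ahler forms and so is positive definite in both the vertical and horizontal directions with a lower bound depending only on $(r,s)$. For the error $\mathcal{E}$, I would invoke Lemma~\ref{lem: Claim2} together with the bounds $s^2|\psi_{z\bar z}| + s|\psi_z|\leq C_0$ to get $|\tilde u_\alpha| = O\bigl(C_0(r,s) + |\alpha-1|s^2\sup u_{z\bar z}\bigr)$ and $|(\tilde u_\alpha)_z| = O\bigl(C_0(r,s) + |\alpha-1|s\sup u_{z\bar z}\bigr)$, while by the second preliminary observation $(\tilde u_\alpha)_x = (u_1)_x$ is bounded independently of $\alpha$.

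To finish, I would write the Hermitian matrix of the error in the $(dz,dx)$ frame. The purely horizontal $(1,\bar 1)$-entry is of size $C_0(r,s) + C_0|\alpha-1|\sup u_{z\bar z}$ and is absorbed directly by $t|dz|^2$; the mixed $(1,\bar 2)$, $(2,\bar 1)$ entries are, crucially, only of size $C_0(r,s)$ (no $\alpha$-dependence, thanks to $(\tilde u_\alpha)_x = (u_1)_x$), and are handled by the elementary $2\times 2$ Hermitian positivity criterion against the strictly positive vertical contribution of $\tfrac{1}{2}(1-\psi)\omega_0$, at the cost of enlarging $t$ by an amount depending only on $(r,s)$. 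Putting everything together, the inequality holds as soon as $t > C_0(r,s) + C_0|\alpha-1|\sup_{\Delta(r+3s)\setminus\Delta(r)} u_{z\bar z}$. The main point to watch --- and what I would flag as the central subtlety --- is the potential $\alpha$-dependence of the off-diagonal entries of $\mathcal{E}$; it is precisely the subtraction of the harmonic function $v$ in the definition of $\tilde u_\alpha$ that neutralizes this, placing all $\alpha$-growth into the horizontal entry where it is absorbable by $t|dz|^2$.
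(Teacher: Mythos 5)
Your proposal is correct and follows essentially the same route as the paper: the trivial regions, the use of $\ddb v = 0$ to identify $\ddb\tilde u_\alpha$ with $\ddb u_\alpha$, the decomposition of the error into horizontal, mixed, and vertical entries, the key observation that $\partial_x\tilde u_\alpha = \partial_x u_1$ makes the mixed entries $\alpha$-independent, and finally the absorption of the $\alpha$-growth (controlled via Lemma~\ref{lem: Claim2}) into $t\beta$. One small slip to fix: you attribute the positive vertical contribution used against the mixed entries to $\tfrac12(1-\psi)\omega_0$, but this vanishes at the inner boundary where $\psi\to 1$; the correct source is the $x\bar x$-component of the full convex combination $\tfrac12(\omega_0 + \psi\ddb u_\alpha) = \tfrac12\bigl((1-\psi)\omega_0 + \psi\,\omega_{sf,b_0,\epsilon}(\alpha)\bigr)$, which is uniformly positive on the annulus and is itself $\alpha$-independent since both $u$ and $v$ are fiber-independent. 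With that adjustment the argument is complete.
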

\begin{proof}
Cleary the required estimate holds outside $X_{\Delta(r+2s)}$, since $\psi$ is zero there and $\beta \geq 0$.  Also, on $\Delta(r+s)$, $\psi\equiv 1$ and the estimate is again obvious from the definition.  So we only need to check on $\Delta(r+2s)\setminus \Delta(r+s)$.  On this region we have
\[
\begin{aligned}
\omega_{\alpha}(t) &= \omega_0 + t\beta +\widetilde{u}_{\alpha} \psi_{z\bar{z}} \sqrt{-1}dz \wedge d\bar{z} + \psi \ddb u_{\alpha} + 2{\rm Re}\left(\sqrt{-1}\del \widetilde{u}_{\alpha} \wedge \dbar \psi \right) \\ 
&= (\omega_0 + \psi \ddb u_{\alpha}) + t\beta + \widetilde{u}_{\alpha} \psi_{z\bar{z}} \sqrt{-1}dz \wedge d\bar{z} +  2{\rm Re}\left(\sqrt{-1}\del \widetilde{u}_{\alpha} \wedge \dbar \psi\right)
\end{aligned}
\]
where we used that $\ddb \widetilde{u}_{\alpha} = \ddb u_{\alpha}$ since $v$ is pluriharmonic.  Note that
\[
(\omega_0 + \psi \ddb u_{\alpha}) = (1-\psi)\omega_0 + \psi \omega_{sf,\sigma, b_0,\epsilon}(\alpha)  > 0.
\]
Because $u, v$ are independent of the fiber coordinate $x$, we have $\del_x \widetilde{u}_{\alpha} = \del_x u_1$,
and so we can rewrite the expression for $\omega_{\alpha}( t)$ as
\[
\begin{aligned}
\omega_{\alpha}( t) &=  (\omega_0 + \psi \ddb u_{\alpha}) + t\beta + 2{\rm Re}\left((u_1)_{x}\psi_{\bar{z}} \sqrt{-1}dx\wedge d\bar{z}\right)\\
&+  \left(\psi_{z\bar{z}}\widetilde{u}_{\alpha} + \psi_{z}(\widetilde{u}_{\alpha})_{\bar{z}} + \psi_{\bar{z}}(\widetilde{u}_{\alpha})_{z}\right)  \sqrt{-1}dz\wedge d\bar{z}
\end{aligned}
\]
To deal with the $dx\wedge d\bar{z}$ terms we evaluate on $c_{x} \frac{\del}{\del x}, c_{z} \frac{\del}{\del z}$ and use the fact that
\[
2{\rm Re}(ac_{x}\bar{c_z}) \geq -\frac{|a|}{\delta} |c_z|^2 - \delta |a||c_x|^2,
\]
and so if we choose $\delta$ such that
\[
2\delta \sup_{\Delta(r+2s)\setminus \Delta(r+s)} |\psi_{\bar{z}}(u_1)_{x}|= \inf_{\Delta(r+2s)\setminus \Delta(r+s)} (\omega_0+\psi\ddb u_1)_{x\bar{x}},
\]
 we then can absorb the $|c_x|^2$ term into the corresponding term from $(\omega_0+\psi\ddb u_1)$.  Taking $C_0(r,s) \gg 1$ depending on $u_1, \psi$ we can absorb the $|c_z|^2$ term into $t\beta$.  We now need to deal with the $\sqrt{-1}dz\wedge d\bar{z}$ terms.  This can be done in the following way.   We write
\[
\begin{aligned}
\left(\psi_{z\bar{z}}\widetilde{u}_{\alpha} + \psi_{z}(\widetilde{u}_{\alpha})_{\bar{z}} + \psi_{\bar{z}}(\widetilde{u}_{\alpha})_{z}\right) &=\left(\psi_{z\bar{z}}u_1 + \psi_{z}(u_1)_{\bar{z}} + \psi_{\bar{z}}(u_1)_{z}\right) \\
&+ (\alpha -1)\left(\psi_{z\bar{z}}(u-v) +2{\rm Re}\left( \psi_{z}(u-v)_{\bar{z}}\right)\right)
\end{aligned}
\]
The $u_1$ contribution can be controlled by increasing $C_0(r,s)$ (and hence $t$). The $(\alpha-1)(u-v)$ contribution is controlled uniformly by Lemma~\ref{lem: Claim2}
\[
|(\alpha -1)\left(\psi_{z\bar{z}}(u-v) + 2{\rm Re}(\psi_{z}(u-v)_{\bar{z}})\right) | \leq C_0 |(\alpha-1)| \sup_{\Delta(r+3s)\setminus \Delta(r)} u_{z\bar{z}}.
\]
This completes the proof of the lemma.
\end{proof}


We have shown that, for $\alpha>0$, if $t$ is sufficiently positive then there is a complete K\"ahler metric $\omega_\alpha( t)$ agreeing with the semi-flat metric on $X_{\Delta(r)}$ and agreeing with $\omega_0$ outside of  $X_{\Delta(r+3s)}$.   This proves points $(i)$ and $(ii)$ from the statement of Theorem~\ref{thm: nonStHein}.

It remains to show that, up to adjusting the metric outside a a neighborhood of the $I_k$ singular fiber, we can satisfy the integrability condition
\[
\int_{X}\omega_\alpha(t)^2- \alpha \Omega \wedge \overline{\Omega} =0.
\]
From here the conclusion of Theorem~\ref{thm: nonStHein} follows from Hein's argument \cite{Hein}.  To do this we exploit the fibration structure following the idea of Chen-Chen \cite{CC}. 

Fixing $r,s$ as above, we have a K\"ahler form $\omega_{\alpha}(t)$ on $X$ which coincide with a (possibly non-standard) semi-flat metric $\omega_{sf,\sigma,b_0,\epsilon}(\alpha)$ on $X_{\Delta(r)^*}$.  We now essentially repeat the construction immediately preceding Lemma~\ref{lem: Claim2}.  Fix $r_1,s_1>0$ such that $0<r_1+3s_1<r$. 	Let $\beta_1$ be a $(1,1)$-form on $\mathbb{P}^1$ such that $\mbox{supp}(\beta_1)\subseteq \Delta(r_1+3s_1)\setminus \Delta(r_1)$, $0\leq \beta_1\leq |dz|^2$, and $\beta=|dz|^2$ on $\Delta(r_1+2s_1)\setminus \Delta(r_1+s_1)$. We will again view $\beta_1$ as a form on $X$ via pull-back. Then we take 
	\begin{align*}
			\omega_{\alpha}(a_1,t)=\omega_{\alpha}(t)+a_1\beta_1.
	\end{align*}  
	
	Thanks to the fibration structure and semi-positivity of $\beta_1$ we have
 \[
 \int_{X}\omega_{\alpha}(t)\wedge \beta_1>0
 \]
 and thus, if $a_1 \gg 0$ then
 \[
 \int_X \omega_{\alpha}(a_1,t)^2-\alpha\Omega\wedge \bar{\Omega}>0.
 \]

Now let $r_2>0$ to be determined later and take $\beta_2=\frac{i\delta}{|z|^2}dz\wedge \bar{dz}$, where $\delta$ is a radial cut-off function with $\delta\equiv \delta_0$ (for $\delta_0>0$ to be determined) on $\Delta(r+s)\setminus \Delta(r)$ and $\mbox{supp}(\delta)\subseteq \Delta(r+2s)\setminus \Delta(\frac{1}{2}r_2)$, satisfying 
 \[
 (r+2s-\frac{1}{2}r_2)|\delta_z|+(r+2s-\frac{1}{2}r_2)^2|\delta_{z\bar{z}}|<C_0.
 \]
 Using that $\omega_{\alpha}(a_1,t)\geq \omega_{sf,\sigma,b_0,\epsilon}(\alpha)$ on $\Delta(r)$ and the explicit form of the (non-standard) semi-flat metric, we may choose $\delta_0>0$, depending only on $r,\epsilon$ such that $\omega_{\alpha}(a_1,r_2,t)=\omega_{\alpha}(a_1,t)-\beta_2 >0$  is still a K\"ahler form. Since $\beta_2$ is pulled-back from $\mathbb{P}^1$ and semi-positive, we have 
     \[
    \lim_{r_2\rightarrow 0} \int_X \omega_{\alpha}(a_1,t)\wedge \beta_2= \infty
     \]
     as $r_2\rightarrow 0$ since $\int_{\Delta^*}\beta_2\rightarrow +\infty$ as $r_2\rightarrow 0$.  Therefore,
    \[
    \int_X \omega_{\alpha}(a_1,r_2,t)^2-\alpha \Omega\wedge \bar{\Omega}< 0
    \]
    for $a_1$ fixed and $r_2\rightarrow 0$. By the intermediate value theorem, we can find $r_2,a_1>0$ such that 
    \[
    \int_X \omega_{\alpha}(a_1,r_2,t)^2-\alpha \Omega\wedge \bar{\Omega}=0. 
\]

\end{document}